\tikzset{every loop/.style={}}
\tikzset{
    labl/.style={anchor=south, rotate=90, inner sep=.5mm}
}
\title{The finitely generated intersection property in fundamental groups of graphs of groups}
\author{
    Jordi Delgado, \small{\url{jorge.delgado@upc.edu}}\\
    \small{Universitat Politècnica de Catalunya - BarcelonaTech (UPC), Barcelona, Spain}\\
    \small{ORCID: \url{https://orcid.org/0000-0002-8365-8929}}
    \and
    Marco Linton, \small{\url{marco.linton@icmat.es}}\\
    \small{Instituto de Ciencias Matemáticas (ICMAT), Madrid, Spain}\\
    \small{ORCID: \url{https://orcid.org/0000-0002-1081-5268}}
    \and
    Jone Lopez de Gamiz Zearra, \small{\url{jone.lopezdegamiz@ehu.eus}}\\
    \small{University of the Basque Country - EHU, Bilbao, Spain}\\
    \small{ORCID: \url{https://orcid.org/0000-0002-3725-9740}}
    \and
    Mallika Roy, \small{\url{mallikaroy75@gmail.com}}\\
    \small{Harish-Chandra Research Institute, A CI of HBNI, India}\\
    \small{ORCID: \url{https://orcid.org/0000-0002-9730-5980}}
    \and
    Pascal Weil, \small{\url{pascal.weil@cnrs.fr}}\\
    \small{CNRS, ReLaX, IRL 2000, Siruseri, India}\\
    \small{CNRS, Univ. Sorbonne Paris Nord, LIPN, UMR 7030, F-93430 Villetaneuse, France}\\
    \small{ORCID: \url{https://orcid.org/0000-0003-2039-5460}}
}
\date{\today}
\begin{document}
\maketitle

\begin{abstract}
A group $G$ is said to satisfy the finitely generated intersection property (\fgip) if the intersection of any two finitely generated subgroups of $G$ is again finitely generated.
The aim of this article is to understand when the fundamental group of a graph of groups has the \fgip\ Our main results are general criteria for the \fgip\ in graphs of groups which depend on properties of the vertex groups, properties of certain double cosets of the edge groups and the structure of the underlying graph. For acylindrical graphs of groups, we also obtain criteria for the strong \fgip\ (\sfgip). Our results generalise classical results due to Burns and Cohen on the \fgip\ for amalgamated free products and HNN extensions. As a concrete application, we show that a graph of locally quasi-convex hyperbolic groups with virtually $\ZZ$ edge groups (for instance, a generalised Baumslag--Solitar group) has the \fgip\ if and only if it does not contain $F_2\times\ZZ$ as a subgroup. In addition, we show that this condition is decidable. The main tools we use are the explicit constructions of pullbacks of immersions into a graph of group, obtained by the authors in a previous paper, and a technical condition on coset interactions, introduced in this paper.
\end{abstract}

\vspace{15pt}
\noindent\textbf{Keywords:} finitely generated intersection property; Howson property; subgroup intersections; graphs of groups; Bass--Serre theory.

\bigskip
\noindent\textbf{MSC (2020):} 20F65 (primary); 20E06, 20E07, 20E08.

\newpage
\tableofcontents

\section*{Introduction}
\addcontentsline{toc}{section}{Introduction}

A group $G$ is said to satisfy the \emph{finitely generated intersection property}\footnote{Also known as the~Howson property.} (\fgip) if the intersection of any two finitely generated subgroups of $G$ is again finitely generated.
Interest in the \fgip\ dates back to the mid-20th century, when Howson proved that free groups enjoy this property~\cite{how54}.
Since Howson's foundational work, the \fgip\ has been revisited from multiple perspectives and across increasingly general classes of groups --- notably including amalgamated free products and HNN extensions ---
producing a variety of obstructions and of sufficient conditions for a product of groups to have the \fgip

We recall that an \emph{amalgamated free product} $A *_C B = \langle A, B \mid c = \psi(c),\, c \in C \rangle$ is formed by taking the free product of two groups $A$ and $B$ and identifying a common subgroup $C$ (embedded via a monomorphism $\psi$); whereas an \emph{HNN extension} ${A\ast_\psi = \langle A, t\mid t^{-1} c t = \psi(c),\ c\in C \rangle}$ starts from a single group $A$ with two $\psi$-isomorphic subgroups $C, D \leqslant A$ and introduces a new stable letter that conjugates one subgroup to the other.
Both constructions are special (single-edge) cases of the more general notion of a \emph{graph of groups}, where groups are assigned to the vertices and edges of a graph, with each edge group embedded into its adjacent vertex groups. The associated \emph{fundamental group} captures the global structure by amalgamating vertex groups along edge groups according to the graph’s connectivity. These constructions effectively encode the idea of “gluing groups along subgroups” over an arbitrary graph, rather than just a single edge (see Section~\ref{sec: define gog} for precise definitions and details).

Our goal in this paper is to unify and extend previous results on the \fgip\ (and related properties) summarised in the technical overview below, to the broader setting of fundamental groups of graphs of groups, using a common framework based on the pullback construction in~\cite{dllrw_pullback}, and a notion of finite coset interaction introduced in this paper.

\medskip

The earliest extensions of Howson’s theorem began with Greenberg’s 1960 result in \cite{gre60}, showing that surface groups also satisfy the \fgip, thus identifying a significant class of non-free groups where the property holds.
This was followed by Baumslag's extension of Howson's result to free products (the free product of two groups with the \fgip\ has again the \fgip\
\cite{bau66}). 
This result marked the starting point for the investigation of the \fgip\ under more general constructions, such as amalgamated free products, HNN extensions, and more generally, fundamental groups of graphs of groups.

For example, in~\cite{mol68} Moldavanskiĭ adresses the \fgip\ within the family 
of Baumslag--Solitar groups (which are HNN extensions of $\ZZ$) and proves that it holds just for those of the form $\bs(1,n)$.
In the same paper, a prototypical example of a non-\fgip\ group
(showing, in particular, that the \fgip\ property does not pass through direct products) is also provided:
the direct product $F_2 \times \ZZ$ does not have the \fgip\
Since the \fgip\ is clearly subgroup hereditary, containing $F_2 \times \ZZ$ as a subgroup constitutes a first natural obstruction for a group to have the \fgip\
It is worth noting that, within certain relevant families of groups (\eg right-angled Artin groups \cite{sds90}, and Baumslag--Solitar groups \cite{pa12}), this turns out to be the only obstruction to having the \fgip\ In this paper we extend this characterisation to  generalised Baumslag-Solitar groups and beyond.

During the 1970s, substantial progress was made on the \fgip\ in the context of amalgamated products and HNN extensions. Karrass and Solitar~\parencite{ks70,ks71} proved that both constructions inherit the \fgip\ from the factors when the respective associated subgroup is finite. 
However, it soon became clear that the class of \fgip\ groups is not closed under amalgamations and HNN extensions along infinite subgroups.
Shortly afterward, Burns~\parencite{bur72,bur73} undertook a detailed study of conditions under which amalgamated products and HNN extensions inherit the \fgip\ 
This work was later expanded by Cohen \parencite{coh74,coh76}, who, drawing on the newly developed Bass--Serre theory,  further clarified and extended Burns' approach.
These developments converged into the notion of \emph{Burns subgroup}, a somewhat technical condition (discussed in the Appendix) requiring that the amalgamated subgroup is embedded in each factor in a ``tame'' way. Specifically, avoiding large coset intersections and ensuring that intersections with finitely generated subgroups remain finitely generated.

An important step forward in this program was made two decades later in \cite{kap97}, where Kapovich  extended the Burns--Cohen results to the realm of hyperbolic groups by essentially showing that any maximal cyclic subgroup of a torsion-free locally quasi-convex hyperbolic group is a Burns subgroup. 
In the following years, the \fgip\ was confirmed in other significant classes. Quite remarkably, limit groups --- those sharing the same first order theory as free groups --- also satisfy the \fgip, a result fully established by Dahmani~\cite{dah2003} after Kapovich~\cite{kap02} had proved it for the hyperbolic case.

As a counterpart of his positive results in \cite{kap97}, Kapovich showed in \cite{kap99} that the \fgip\ can also fail within the geometrically desirable class of word hyperbolic groups, providing new counterexamples to this property.
In a similar direction, other classical counterexamples to the \fgip, such as free-by-cyclic groups \cite{bb79}, have recently been extended: first to ascending HNN extensions of finitely generated free groups by Bamberger and Wise~\cite{bw2022}, and more generally, to all ascending HNN extensions of arbitrary free groups by Linton~\cite{li25}.

Finally, we mention that the \fgip\ is a generic property for finitely presented groups \cite{Ar98} --- for each integer $\ell$, there exists a generic class of presentations in which every $\ell$-generated subgroup is quasi-convex. Using the same method in \cite{Ar2000} Arzhantseva showed a generic property of normal subgroups of a free group $F_n$: for a fixed finitely generated infinite index subgroup $H$ of $F_n$, a generic normal subgroup of $F_n$ trivially intersects with $H$. Also, we note that the \fgip\ is not a first order property
\cite[Theorem 10.4.13]{fgmrs2014}, reinforcing its geometric nature and dependence on subgroup structure.
Moreover, as a Markov property, it is algorithmically undecidable whether a given finite presentation defines a group with the \fgip\ A consequence of our results is that this obstruction can be overcome when restricting to fundamental groups of graphs of locally quasi-convex hyperbolic groups with virtually $\mathbb{Z}$ edge groups. 

\medskip

Our main tools for investigating the \fgip\ in graphs of groups are, first, the construction of pullbacks of immersions into connected pointed graphs of groups, developed in~\cite{dllrw_pullback} precisely for computing intersections of subgroups of the fundamental group of a graph of groups, and, second, the \emph{finite coset interaction property} (\fcip),  
which identifies situations where finite graphs of groups give rise to an infinite pullback and hence to a non-finitely generated intersection.
This technical notion and its many variants (notably the $k$-\fcip, where $k$ is an integer) are defined in the technical overview below. For now, we emphasise the fact that the \fcip\ of the vertex groups relative to their adjacent edge groups, implies local finiteness properties of the pullbacks of finite graphs of groups (and, in fact, it is a necessary condition for these finiteness properties, in a precise sense discussed in the corresponding section).

This allows us to prove a number of theorems on the preservation of the \fgip\ and the \sfgip\ by graphs of groups, some of which are rather technical. Highlights of these results, which are described in more detail below, are as follows. Let $(\AA,u_0)$ be a pointed graph of groups.

\begin{itemize}
    \item Suppose that the edge groups of $\AA$ are finite. We retrieve Cohen's result \cite[Theorem~7]{coh74}, which states that the fundamental group $\pi_1(\AA,u_0)$ has the \fgip\ if and only if its vertex groups have the \fgip\ We show that this equivalence does not hold for the \sfgip, unless $\AA$ is acylindrical. In general, we show that $\pi_1(\AA,u_0)$ has the \sfgip\ if and only if its vertex groups have the \fgip\ and if it does not contain a subgroup of the form $C\times \ZZ$, where $C$ is non-trivial, finite and cyclic.
    \item If the vertex groups of $\AA$ have the \fcip\ relative to their incoming edge groups and if $\pi_1(\AA,u_0)$ has the \fgip\ relative to its edge stabilisers,  we show that $\pi_1(\AA,u_0)$ has the \fgip\ if and only if its vertex groups do. The equivalence extends to the \sfgip\ if $\AA$ is acylindrical. 
    
    \item Let $G = A\ast_\phi$ be an HNN extension, where $\phi$ is an injective morphism from a subgroup $C$ of $A$ into $A$. We give sufficient conditions for $G$ to have the \fgip\ in terms of the interactions between $A$ and the subgroups $C$ and $\phi(C)$, which generalise Burns' result~\cite[Theorem 1.2]{bur73}.

    \item Let $G = A\ast_CB$ be an amalgamated product, where $C$ is given by injective morphisms into $A$ and $B$. We give sufficient conditions for $G$ to have the \fgip\ in terms of the interactions between $A$, $B$ and the images of $C$, which generalise Cohen's result~\cite[Theorem 2]{coh76}.
    
    \item If $\AA$ is a graph of locally quasi-convex hyperbolic groups with virtually $\ZZ$ edge groups, we show that its fundamental group has the \fgip\ if and only if it avoids the subgroup $F_2\times \ZZ$. Moreover, this property is decidable. In the case where $\AA$ has a single edge, that is, its fundamental group is an HNN extension or an amalgamated free product, we characterise the \fgip\ under more general conditions than having virtually $\ZZ$ edge groups.
\end{itemize}

\subsection*{Technical overview}
\addcontentsline{toc}{subsection}{Technical overview}

In this section we present a technical overview of the main points of our strategy for proving \fgip\ criteria for graphs of groups. We shall assume some familiarity with graphs of groups and their morphisms, which will be explained in detail in Section \ref{sec: graphs of groups}.

We first emphasise that if $\AA$ is a graph of groups, then each subgroup $B\leqslant \pi_1(\AA, u_0)$ can be realised uniquely by an immersion of a pointed core graph of groups $(\BB, v_0)\to (\AA, u_0)$, see Proposition \ref{main_prop_A} below. Moreover, if $(\BB, v_0)\to (\AA, u_0)$ and $(\CC, w_0)\to (\AA, u_0)$ are immersions of graphs of groups, then:
\begin{itemize}
    \item The (pointed) pullback $(\BB\times_{\AA}\CC, x_0)$ exists and $\pi_1(\BB\times_{\AA}\CC, x_0) \cong \pi_1(\BB, v_0)\cap \pi_1(\CC, w_0)$.
    \item The (pointed) pullback is precisely the component of the $\AA$-product $\BB\wtimes_{\AA}\CC$ containing the basepoint, a construction which was introduced and described in depth in \cite{dllrw_pullback}.
    \item Each component of the $\AA$-product $\BB\wtimes_{\AA}\CC$ corresponds in a precise way to an intersection $\pi_1(\BB, v_0)^g\cap \pi_1(\CC, w_0)$ for $g\in \pi_1(\AA, u_0)$. 
\end{itemize} 
In particular, understanding the structure of the $\AA$-product $\BB\wtimes_{\AA}\CC$ will help understand intersections of subgroups of $\pi_1(\AA, u_0)$ and their conjugates. More details on the facts above can be found in Sections~\ref{sec: graphs of groups} and \ref{sec: pullbacks}.

We shall be interested in some variations of the \fgip\ property, which will allow us to constrain the groups appearing in the $\AA$-products: 
\begin{itemize}
\item If $G$ is a group and $\mathcal{A}$ is a collection of subgroups of $G$, we say that $G$ has the \emph{\fgip\ relative to $\mathcal{A}$} if, for all finitely generated subgroups $H\leqslant G$ and all $A\in \mathcal{A}$, we have that $H\cap A$ is finitely generated.
\item If $G$ is a group, we say that $G$ has the \emph{\sfgip} if $G$ has the \fgip\ and if, for all pairs of finitely generated subgroups $B, C\leqslant G$, there are finitely many $(B, C)$-double cosets $BgC$ so that $B^g\cap C\neq 1$. 
\item If $A\leqslant G$ is a subgroup, we say that \emph{$(G, A)$ has the $A$-\fgip} if, for all pairs of finitely generated subgroups $B, C\leqslant G$, we have that for all but finitely many elements $g\in A$ in distinct $(B, C)$-double cosets, the intersection $B^g\cap C$ lies in $A$. 
\end{itemize}
Precise definitions and some generalisations can be found in Section \ref{ssec: intersection}.
The reason we are interested in these properties are the following propositions. 

If $\AA$ is any graph of groups, we define the \emph{edge subgroups} of $\AA$ to be the subgroups of the form $p\alpha_e(A_e)p^{-1}\leqslant\pi_1(\AA, u_0)$, where $A_e$ is an edge group of $\AA$ and where $p$ is any $\AA$-path connecting $u_0$ with $o(e)$. These are precisely the subgroups which fix an edge in the $\pi_1(\AA, u_0)$-action on the Bass--Serre tree associated with $\AA$.

\begin{mainproposition}[Theorem \ref{thm: bijection subgroups immersions covers}]
\label{main_prop_A}
    Let $\AA$ be a graph of groups and let $B\leqslant \pi_1(\AA, u_0)$ be a finitely generated subgroup. If $\pi_1(\AA, u_0)$ has the \fgip\ relative to its collection of edge subgroups, then there is a pointed immersion $\mu^B\colon (\BB, v_0) \to (\AA, u_0)$ such that $\mu_*^B(\pi_1(\BB, v_0)) = B$ and such that $(\BB, v_0)$ is a finite core pointed graph of finitely generated groups.
\end{mainproposition}

Recall that an edge $e$ in a graph of groups $\AA$ is not reduced if the edge group inclusion $\alpha_e\colon A_e\to A_{o(e)}$ is an isomorphism.

\begin{mainproposition}[Proposition \ref{prop: fgip_summary}]\label{prop: B}
    Let $\AA$ be a graph of groups, let $\BB, \CC\to\AA$ be immersions of finite graphs of finitely generated groups. Then:
    \begin{itemize}
        \item If each vertex group (respectively, edge group) of $\AA$ has the \fgip, then each vertex group (respectively, edge group) of $\BB\wtimes_{\AA}\CC$ is finitely generated.
        \item If each vertex group (respectively, edge group) of $\AA$ has the \sfgip, then $\BB\wtimes_{\AA}\CC$ has finitely many non-trivial vertex groups (respectively, edge groups).
        \item If $e\in E(\gr{A})$ is such that $(A_{o(e)}, \alpha_{e}(A_e))$ has the $\alpha_e(A_e)$-\fgip, then all but finitely many edges in $\BB\wtimes_{\AA}\CC$ that map to $e$ are non-reduced.
    \end{itemize}
\end{mainproposition}

The first two points in Proposition \ref{prop: fgip_summary} hold because vertex and edge groups of the $\AA$-product are intersections of conjugates of vertex and edge groups of $\BB$ with vertex and edge groups of $\CC$. The third point is more technical and has to do with the adjacency map in $\AA$-products.
 
We continue with the key technical definition of the article. The map appearing in the definition below may appear unnatural, but it is essentially the edge adjacency map for the $\AA$-product $\BB\wtimes_{\AA}\CC$. As such, it is deeply related with the structure of the underlying graphs of $\AA$-products (and hence, pullbacks).

\begin{defn*}[Definition \ref{defn: finite coset interaction2}]
\label{def: fcip_short}
    If $G$ is a group and $\mathcal{A}$ is a finite collection of finitely generated subgroups, we say that $G$ has the \emph{\fcip\ relative to $\mathcal{A}$} if the following holds: for each pair of finitely generated subgroups $B, C\leqslant G$ and for any collection of $(B, A)$-double cosets $\mathcal{F}_A$ and $(C, A)$-double cosets $\mathcal{G}_A$ as $A$ ranges over elements of $\mathcal{A}$, the following map
    \begin{align*}
    \bigsqcup_{A\in \mathcal{A}}\bigsqcup_{\substack{f\in \mathcal{F}_A}{g\in \mathcal{G}_A}}(A\cap B^f)\backslash A/(A\cap C^g) &\to B\backslash G/ C\\
    (A\cap B^f)a(A\cap C^g) &\mapsto Bfag^{-1}C
    \end{align*}
    is injective on all but finitely many double cosets. 

    We say that $G$ has the \emph{$k$-\fcip\ relative to $\mathcal{A}$} if each map as above is at most $k$-to-one on all but finitely many double cosets.
\end{defn*}

Let us first mention some examples of the \fcip\ As an important degenerate case, a group $G$ has the $0$-\fcip\ relative to $\mathcal{A}$ if and only if $\mathcal{A}$ is a finite collection of finite subgroups. In the appendix, we show that if $G$ is a group and $A\leqslant G$ is a Burns subgroup --- a technical definition introduced by Burns in \cite{bur72} to study the \fgip\ --- then $G$ has the \fcip\ relative to $A$ and $(G, A)$ has the $A$-\fgip\ We do not know whether the converse holds. We also show that the same holds if $A$ is replaced by a Burns collection of subgroups --- a variation introduced by Kapovich \cite{kap02}. In some examples in Section \ref{sec: fcip} we explicitly characterise the \fcip\ in abelian groups. With more work, we also obtain the following important examples of groups with the \fcip\ 

\begin{maintheorem}[Corollaries \ref{cor: A-fgip} \& \ref{fcip_qc_hyperbolic_2}]\label{prop: C}
    If $G$ is a locally quasi-convex hyperbolic group and $\mathcal{A}$ is an almost malnormal collection of finitely generated subgroups, then $G$ has the \fcip\ relative to $\mathcal{A}$ and, for all $A\in \mathcal{A}$, $(G, A)$ has the $A$-\fgip\
\end{maintheorem}

The point of requiring that the vertex groups of a graph of groups $\AA$ have the \fcip\ is that it implies local finiteness properties of the $\AA$-products of immersions into $\AA$. For instance, we have the following result.

\begin{mainproposition}[Corollary \ref{cor: k-fcip}]\label{prop: D}
Let $k\geqslant 0$ and let $\AA$ be a finite graph of groups such that each vertex group has the $k$-\fcip\ relative to the collection of adjacent edge groups. Then for any pair of immersions $\mu^B\colon \BB\to \AA$, $\mu^C\colon \CC\to \AA$ of finite graphs of finitely generated groups, the $\AA$-product $\BB\wtimes_{\AA}\CC$ is locally finite and has finitely many vertices with strictly more than $k$ outgoing edges. In particular:
\begin{enumerate}[(1)]
    \item If $k = 0$, then $\BB\wtimes_{\AA}\CC$ has finitely many edges.
    \item If $k = 1$, then each component of $\BB\wtimes_{\AA}\CC$ has finitely many edges.
\end{enumerate} 
\end{mainproposition}

It is straightforward to deduce from Propositions~\ref{prop: B} and~\ref{prop: D}
that a graph of groups in which vertex groups have the \fgip\ and edge groups are finite has the \fgip\ This result is originally due to Cohen \cite[Theorem 7]{coh74}. We go further with our analysis and also characterise the \sfgip\ in this setting.

\begin{maintheorem}[Theorem~\ref{thm: charact sfgip finite edge groups}]\label{thm: main A}
     Let $\AA$ be a finite graph of groups with finite edge groups. The following are equivalent:
    \begin{enumerate}[(1)]
        \item $\pi_1(\AA, u_0)$ has the \sfgip
        \item Each vertex group of $\AA$ has the \sfgip\ and $\pi_1(\AA, u_0)$ does not contain any subgroup isomorphic to $\Z/k\Z\times\Z$ for any $k\geqslant 2$.
    \end{enumerate}
\end{maintheorem}

Under the weaker assumption of \fcip, we are also able to obtain criteria for the \fgip\ and \sfgip\ thanks to Proposition~\ref{prop: D}. 

\begin{maintheorem}[Theorem~\ref{fgip_criterion_1}]\label{thm: main B}
Let $(\AA,u_0)$ be a core pointed graph of groups with finite underlying graph and finitely generated vertex and edge groups. Suppose that each vertex group has the \fcip\ relative to the collection of adjacent edge groups and that $\pi_1(\AA,u_0)$ has the \fgip\ relative to all edge subgroups. Then the following holds:
\begin{enumerate}[(1)]
\item $\pi_1(\AA, u_0)$ has the \fgip\ if each vertex group of $\AA$ has the \fgip. 
\item $\pi_1(\AA, u_0)$ has the \sfgip\ if each vertex group of $\AA$ has the \sfgip\ and $\AA$ is acylindrical.
\end{enumerate} 
\end{maintheorem}

Without the assumptions from Theorem \ref{thm: main B}, it becomes more difficult to constrain the structure of the $\AA$-product. By adding stronger \fgip\ assumptions, we may weaken our \fcip\ assumptions and still obtain \fgip\ results. Since, from this point on, our results become much more technical, we shall only state their single edge graphs of groups version. In other words, we only state the criteria for the special cases of amalgamated free products and HNN extensions.

\begin{maintheorem}[Corollary~\ref{cor: single_edge_2}]\label{thm: G}
    Let $G = A*_{\varphi}$ be a HNN extension, where $\varphi\colon C_1\to C_2$ identifies two subgroups of $A$. Suppose $A$ has the \fgip, $A$ has the \fcip\ relative to $C_i$ and $(A, C_i)$ has the $C_i$-\fgip\ for $i = 1, 2$. Then $G$ has the \fgip\ if and only if $G$ has the \fgip\ relative to $C_1$.
\end{maintheorem}

Note that Theorem~\ref{thm: G} does not follow from Theorem \ref{thm: main B} since we are not assuming that $A$ has the \fcip\ relative to $\{C_1, C_2\}$. In fact, for Theorem~\ref{thm: G}, we are using the fact that $A$ has the 2-\fcip\ with respect to the pair of adjacent edge groups. 

We remark that Theorem~\ref{thm: G} recovers a result due to Burns \cite[Theorem 1.2]{bur73}. In the same article, Burns asks whether the assumptions from \cite[Theorem 1.2]{bur73} need only be made on one of the conjugated subgroups in $A$ for $A*_{\varphi}$ to have the \fgip\ In Remark \ref{rem: no_fgip} we provide a counterexample to this. However, we also show that Burns' question can be answered positively if one adds the assumption that $C$ has the \sfgip\

\begin{maintheorem}[Corollary~\ref{cor: single_edge_3}]\label{thm: main D}
    Let $G = A*_{\varphi}$ be a HNN extension, where $\varphi\colon C\to \varphi(C)$ identifies two subgroups of $A$. Suppose $A$ has the \fcip\ relative to $C$, $(A, C)$ has the $C$-\fgip\ and $C$ has the \sfgip\ Then $G$ has the \fgip\ if and only if $G$ has the \fgip\ relative to $C$.
\end{maintheorem}

An immediate corollary of Theorem \ref{thm: main D} is that Baumslag--Solitar groups $\bs(1, n)$ have the \fgip, recovering a classical result due to Moldavanskiĭ~\cite{mol68}.

In our final criterion, we recover a result of Cohen's~\cite[Theorem 2]{coh76} on the \fgip\ for amalgamated free products.

\begin{maintheorem}[Corollary~\ref{cor: single_edge_4}]\label{thm: main E}
    Let $G = A*_CB$, where $A$ has the \fcip\ relative to $C$, $(A, C)$ has the $C$-\fgip, and $B$ has the \fgip\ Then $G$ has the \fgip\ if and only if $G$ has the \fgip\ relative to $C$.
\end{maintheorem}

As far as we are aware, our criteria above recover and generalise all known criteria for the \fgip\ in graphs of groups.

Before concluding, we mention some more concrete applications of our criteria. In Section~\ref{sec: fcip for qcv subgroups} we study the \fcip\ property and its variations in hyperbolic groups and, in combination with Theorem \ref{thm: main D} and Theorem \ref{thm: main E}, establish the following two results which generalise results due to Kapovich \cite{kap97}.

\begin{maintheorem}[Theorem~\ref{hyperbolic_amalgam}]\label{thm: main F}
    Let $G \cong A*_CB$ be an amalgamated free product where $A$ is locally quasi-convex hyperbolic group, $B$ has the \fgip\ and $C$ is finitely generated and almost malnormal in $A$. Then $G$ has the \fgip\ relative to $C$ if and only if $G$ has the \fgip\
\end{maintheorem}

\begin{maintheorem}[Theorem~\ref{hyperbolic_HNN}]\label{thm: main G}
  Let $G = A*_{\varphi}$ be a HNN extension, where $\varphi\colon C\to \varphi(C)$ identifies two subgroups of $A$, where $A$ is locally quasi-convex hyperbolic and where $C$ is an almost malnormal subgroup of $A$. Then $G$ has the \fgip\ relative to $C$ if and only if $G$ has the \fgip
\end{maintheorem}

Note that it was known that, under the hypotheses of Theorem \ref{thm: main F}, if $B$ is also locally quasi-convex hyperbolic, then $G$ is locally quasi-convex hyperbolic (and thus has the \fgip) if and only if $G$ has the \fgip\ relative to $C$. This follows from a Theorem of Short's \cite{sho91} and a result of Tomar's \cite[Proposition 5.1]{To25}. By the same results and under the hypotheses of Theorem \ref{thm: main G}, it was also already known that $G$ is locally quasi-convex hyperbolic if and only if $G$ has the \fgip\ relative to $C$ and if the HNN-extension is weakly acylindrical (there exists an integer $k$ such that segments of length $k$ in the associated Bass--Serre tree have finite stabiliser).

With some more work, we obtain a complete characterisation of the \fgip\ for graphs of locally quasi-convex hyperbolic groups with virtually $\Z$ edge groups, generalising results of Burns and Brunner (\cite[Theorem 1]{bur72} and \cite[Theorem 2]{bb79}), who characterised when an amalgamated free product or an HNN extension of a free group over a cyclic group has the \fgip\ 

\begin{maintheorem}[Corollaries~\ref{cor: main locally qc characterization}, \ref{cor: decidability for locally qcv hyperbolic}]\label{thm: main H}
    The fundamental group of a connected graph of locally quasi-convex hyperbolic groups with virtually $\Z$ edge groups has the \fgip\ if and only if it does not contain any subgroup isomorphic to $F_2\times \Z$. In addition, this property is decidable.
\end{maintheorem}

Note that it was known that a group as in Theorem \ref{thm: main H} is locally quasi-convex hyperbolic if and only if the graph of groups is weakly acylindrical. This can be shown to be equivalent to no Baumslag--Solitar subgroups being present. As before, this follows from a Theorem of Short's \cite{sho91} and a result of Tomar's \cite[Proposition 5.1]{To25}.

In the special case of graphs of virtually $\ZZ$ groups with virtually $\ZZ$ edges, Theorem~\ref{thm: main H} translates to a particularly simple form.

\begin{maintheorem}[Theorem~\ref{thm: vcyclic_fgip}]\label{thm: main M}
    The fundamental group of a connected graph of virtually $\ZZ$ groups with virtually $\Z$ edge groups has the \fgip\ if and only if it is virtually $\Z$, an amalgamated product $A\ast_CB$ of two virtually $\ZZ$ groups such that $C$ has index 2 in both $A$ and $B$, or an HNN extension $A\ast_{\varphi}$, where $\varphi$ identifies $A$ with a subgroup of itself.
\end{maintheorem}

We also include the following application of the $\AA$-product construction. The \emph{free factor rank} of a group $G$ is the maximal rank of a freely indecomposable free factor of $G$ ---  that is, the maximum rank of a non-free factor in the Grushko decomposition of $G$, if such a decomposition exists. The \emph{finite factor rank intersection property} (\ffrip) for $G$ expresses the fact that any  
intersection of two finitely generated subgroups of $G$ has finite free factor rank. The strong \ffrip\ is a technical strengthening of this condition, in the spirit of the definition of the strong \fgip\ We show the following resut.

\begin{maintheorem}[Theorem~\ref{thm: on ffrip}]\label{thm: N}
    The fundamental group of a graph of groups, where each vertex group has the strong \ffrip\ and each edge group is trivial or infinite cyclic, has the strong \ffrip
\end{maintheorem}

\subsection*{Outline of the paper}

Section \ref{sec: graphs and groups} sets the terminology and notation on the basic structures we manipulate, namely graphs and groups. It also contains (Section~\ref{ssec: intersection}) the precise definitions of the variants of the \fgip\ we consider.

Basics about graphs of groups, their fundamental groups, their morphisms and the representation of subgroups by immersions are summarised in Section~\ref{sec: graphs of groups}.

Section~\ref{sec: pullbacks} outlines the construction of the $\AA$-product of immersions into a graph of groups $\AA$, and the properties of this product that will be used in this paper. The content of this section is taken from \cite{dllrw_pullback}.

The notion of the strong free factor rank intersection property of a group $G$ is discussed in Section~\ref{sec: ffrip}, and Theorem~\ref{thm: N} is proved there.

Section~\ref{sec: fcip} contains the essential technical discussion of the finite coset interaction property, its variants, and their consequences on the structure of $\AA$-products. Proposition~\ref{prop: D} is a first example of the translation of the \fcip\ into finiteness properties of $\AA$-products. Turning to applications of the \fcip\ to the \fgip, Theorem~\ref{thm: main A} is proved there, using the notion of 0-\fcip\ The notion of 1-\fcip\ is mobilised to prove Theorem~\ref{thm: main B}. Theorem~\ref{thm: G} is proved using the notion of 2-\fcip\ Finally, Theorems~\ref{thm: main D} is proved using a restriction of the 1-\fcip\ to an orientation of the underlying graph of $\AA$, while Theorem~\ref{thm: main E} uses a further restriction of the 1-\fcip\ to an acyclic orientation.

The application to graphs of virtually $\ZZ$ groups with virtually $\ZZ$-edges (Theorem~\ref{thm: main M}) is discussed in Section~\ref{sec: vcyclic}, while Section~\ref{sec: hyperbolic} contains the applications to graphs of locally quasi-convex hyperbolic groups (Proposition~\ref{prop: C} and Theorems~\ref{thm: main F}, \ref{thm: main G} and \ref{thm: main H}). The latter uses crucially the results of Section~\ref{sec: vcyclic}.

\section{Graphs and groups}\label{sec: graphs and groups}

In Sections~\ref{ssec: graphs} and~\ref{ssec: on groups} we recall basic definitions and well-known concepts from graph theory and group theory, and set up conventions for the notation and terminology used throughout the article. In Section~\ref{ssec: intersection} we discuss the finitely generated intersection property and the variations which shall appear in this paper.

\subsection{On graphs}\label{ssec: graphs}

In this paper, by \emph{graph} we mean a \emph{directed graph}. Formally, a \emph{graph} $\gr{A}$ consists of a set of vertices $V(\gr{A})$, a set of edges $E(\gr{A})$, origin and target maps $o, t\colon E(\gr{A})\to V(\gr{A})$, and a fixpoint free involution $e \mapsto e\inv$  on $E(\gr{A})$, such that $t(e\inv) = o(e)$ (equivalently, $o(e\inv) = t(e)$) for every edge $e$; we say that $e^{-1}$ is the \emph{inverse edge} of $e$.

That is, in our graphs the set of edges can be partitioned into pairs $\{e,e^{-1}\}$ of mutually inverse edges.
An \emph{orientation} in a graph $\gr{A}$ is a subset $O \subseteq E(\gr{A})$ consisting of exactly one edge from each such pair; or, equivalently, satisfying $E(\gr{A}) = O \sqcup O^{-1}$. When an orientation $O$ is chosen in $\gr{A}$, it is usually denoted $E^+(\gr{A})$.

The \emph{star} of a vertex $v\in V(\gr{A})$ is defined as $\Star(v) = \{e \mid e\in E(\gr{A}), o(e) = v\}$, the collection of edges in $\gr{A}$ with origin $v$.

In a graph $\gr{A}$, a \emph{path} is a sequence $p= (e_1,\dots, e_k)$ of consecutive edges, that is, $t(e_i) = o(e_{i+1})$ for each $i \in [1,k-1]$. The \emph{length} of the path $p$ is the number $k$ of edges traversed by the path. The vertices $o(e_1)$ and $t(e_k)$ are called the \emph{initial} and \emph{terminal} vertices for a path \emph{from $o(e_1)$ to $t(e_k)$}. Two paths are said to be \emph{coterminal} if they have the same initial and terminal vertices. 
The \defin{inverse} of a path $(e_1,\ldots,e_k)$ is the path $(e_k^{-1},\ldots,e_1^{-1})$. Conventionally, we consider that there exists a \emph{trivial path} (of length 0) at every vertex of $\gr{A}$. The \emph{concatenation} of two consecutive paths is defined as follows: if $(e_1,\dots, e_k)$ and $(e_{k+1},\dots, e_\ell)$ are two paths with $t(e_k) = o(e_{k+1})$, then the \emph{concatenation} is the path $(e_1,\dots, e_k, e_{k+1},\dots, e_\ell)$, of length $k+\ell$. Trivial paths are neutral with respect to concatenation in the natural way. 

A \emph{backtracking} in $\gr{A}$ is a path consisting of two successive edges inverse of each other; that is, a (sub)path of the form $(e,e^{-1})$ is a \emph{backtracking at $t(e)$} for $e \in E(\gr{A})$. A path is said to be \emph{reduced} if it does not contain any backtracking. Two coterminal paths in a graph $\gr{A}$ are (\emph{homotopically}) \emph{equivalent} if one can be obtained from the other through a finite sequence of backtracking insertions and removals. Each equivalence class of paths contains a unique reduced path. Equivalence is compatible with concatenation, endowing the set of reduced paths in $\gr{A}$ with the structure of a groupoid, called the \emph{fundamental groupoid} of the graph $\gr{A}$ and denoted by $\pi_1(\gr{A})$.

A path $p = (e_1,\dots, e_k)$ is said to be \emph{closed} (or a \emph{circuit}) at $u\in V (\gr{A})$, if its initial and terminal vertices are equal to $u$, that is, if $o(e_1) = t(e_k) = u$. For any arbitrary vertex $u \in V(\gr{A})$, the restriction of $\pi_1(\gr{A})$ to reduced $u$-circuits is a group called the \defin{fundamental group} of $\gr{A}$ at $u$, denoted by $\pi_1(\gr{A},u)$.

\begin{defn}\label{def: core graphs}
Let $\gr{A}$ be a graph and let $p = (e_1,\dots, e_k)$ be an circuit in $\gr{A}$.
    
\begin{itemize}
    \item[(1)] $\gr{A}$ is said to be \emph{core with respect to a vertex} $u \in V(\gr{A})$ if every vertex in $\gr{A}$ appears in some reduced $u$-circuit.

    \item[(2)] The circuit $p = (e_1,\dots, e_k)$ is said to be \emph{cyclically reduced} if it is reduced and $e_k \neq e_1^{-1}$.

    \item[(3)] $\gr{A}$ is said to be \emph{core} if every vertex in $\gr{A}$ appears in some non-trivial cyclically reduced circuit.
\end{itemize}

\end{defn}

\begin{rem}
A graph that is core with respect to a vertex is always connected, while a core graph is not necessarily connected. The \emph{core of a graph $\gr{A}$ with respect to a vertex $u$} is the union of all reduced circuits based at $u$. The \emph{core of $\gr{A}$} is the union of all cyclically reduced circuits.
\end{rem}

All the maps between graphs considered in this paper are morphisms of graphs, that is, they map vertices to vertices and edges to edges, and they preserve  the adjacency relations.

We will use the following construction. Let $\gr A$, $\gr B$ and $\gr C$ be graphs and let $\mu^B\colon \gr B\to\gr A$ and $\mu^C\colon \gr C \to \gr A$ be graph morphisms. The \emph{fibered product}, or \emph{pullback}, of $\mu^B$ and $\mu^C$, written $\gr B \times_{\gr A} \gr C$, is the graph whose vertex and edge sets are 
$$\{(v,w) \in V(\gr B) \times V(\gr C) \mid \mu^B(v) = \mu^C(w)\} \quad\textrm{and}\quad\{(f,g) \in E(\gr B) \times E(\gr C) \mid \mu^B(f) = \mu^C(g)\},$$
respectively, with incidence maps given by $o(f,g) = (o(f),o(g))$ and $t(f,g) = (t(f),t(g))$.
The first and second component projections from $\gr B \times_{\gr A} \gr C$ to $\gr B$ and $\gr C$, respectively, are denoted by $\rho^B$ and $\rho^C$.

In the sequel, we use \texttt{sans-serif} typeface ($\gr{A},\gr{B},\gr{C}, \ldots$) to denote graphs.

\subsection{On groups}\label{ssec: on groups}
Most of the general notation and terminology for groups used in this paper is standard. However, we outline below some specific conventions adopted.

For any group $G$ and any subset $S$ of $G$, we denote by $\langle S\rangle$ the subgroup of $G$ generated by $S$. If $G = \langle S\rangle$ for some finite set $S$, we say that $G$ is \emph{finitely generated}. For a group $G$, we write $\rank(G)$ to denote the \emph{rank} of $G$, i.e., the minimal cardinal of a generating set for $G$.

We write $H \leqslant G$ to denote that $H$ is a subgroup of $G$.
If $H \leqslant G$, a \emph{left coset} (\resp \emph{right coset}) of $H$ is a set of the form $g H = \{gh \st h \in H \}$ (\resp $H g = \{hg \st h \in H \}$), where $g\in G$. We denote by $G/H$ (\resp $H\backslash G$) the set of left (right) cosets of $H$ in $G$.

Similarly, if $H,K \leqslant G$, an \emph{$(H,K)$-double coset} is a set of the form $H\,g\,K
=
\{ hgk \st h \in H \text{ and } k \in K\}$, where $g\in G$; and we denote by $\dblcoset{H}{G}{K}$ the set of $(H,K)$-double cosets in $G$. 

Given a group $G$ and elements $x,g\in G$, we write $x^g$ for the conjugate $g\inv xg$, and we denote by $\gamma_g$ the inner automorphism $x \mapsto x^g$ of $G$.

We also introduce the notion of \emph{fibered product}, or \emph{pullback}, for groups. If $A$, $B$ and $C$ are groups and $\mu^B\colon B\to  A$ and $\mu^C\colon C \to A$ are group morphisms, the \emph{fibered product} of $\mu^B$ and $\mu^C$ is the group $B \times_A C = \{(b,c) \in B \times C \mid \mu^B(b) = \mu^C(c)\}$, with the natural product (inherited from $B \times C$). The first and second component projection, from $B\times_AC$ to $B$ and $C$ are denoted by $\rho^B$ and $\rho^C$, respectively.

For convenience, we introduce the following notation. If $\mu^B$ and $\mu^C$ are as above, and if $a \in A$, we denote by $\twpb BaAC$ the pullback of $\gamma_a\circ\mu^B$ and $\mu^C$, that is, the set of pairs $(b,c) \in B\times C$ such that $a\inv\,\mu^B(b)\, a = \mu^C(c)$. 

\subsection{Properties related to intersections}\label{ssec: intersection}

Below, we summarise the main properties related to intersections of subgroups and cosets used in this paper (more specific or technical ones are provided in context in the corresponding sections).
\begin{defn}\label{defn: fgip}
    Let $G$ be a group and let $X \subseteq G$. The group $G$ is said to satisfy:
    \begin{enumerate}[(1)]
        \item the \emph{finitely generated intersection property (\fgip)}
        if the intersection of two finitely generated subgroups of $G$ is always finitely generated.
        \item the \emph{finitely generated intersection property relative to a subgroup} $H$ of $G$
        if, for every finitely generated subgroup $K$ of $G$, the intersection $K\cap H$ is finitely generated.
        \item the \emph{$X$-\fgip} if it satisfies the \fgip\ and for all finitely generated subgroups $H, K$ of $G$, there are finitely many double cosets $HgK$ that meet $X$ such that $H^g\cap K\neq 1$. Note that the \fgip\ is the same as the $\{1\}$-\fgip
        \item the \defin{strong finitely generated intersection property (\sfgip)} if it satisfies the $G$-\fgip, that is, $G$ satisfies the \fgip\ and for all finitely generated subgroups $H, K$ of $G$, there are finitely many double cosets $H\,g\,K$ such that $H^g\cap K\neq 1$.
    \end{enumerate}
Moreover, if $\mathcal{A}$ is a collection of subgroups of $G$, we say that
\begin{enumerate}[(1)]
\setcounter{enumi}{4}
    \item $(G,\mathcal{A})$ satisfies the \emph{$X$-\fgip}
    (or \sfgip\ if $X = G$) if $G$ satisfies the \fgip\ and, for all finitely generated subgroups $H, K$ of $G$, there exist only finitely many double cosets $HgK$ intersecting $X$ such that $g\in X$ and $H^g\cap K$ does not lie in a subgroup belonging to the family $\mathcal{A}$. Note that if $\mathcal{A}$ consists only of the trivial subgroup, we recover the previous notion. That is, $(G, \{1\})$ satisfies the $X$-\fgip\ precisely when $G$ has the $X$-\fgip
\end{enumerate}

\end{defn}
Note that $G$ has the \sfgip\ precisely if for all finitely generated subgroups $H,K$ of $G$, we have 
\[
\sum_{HgK \in H\backslash G /K} \rank(H^g \cap K) < \infty
\]
and $G$ has the $X$-\fgip\ precisely if for all finitely generated subgroups $H, K$ of $G$, we have
\[
\sum_{\substack{HgK \in H\backslash G /K\\ HgK\cap X\neq\emptyset}} \rank(H^g \cap K) < \infty
\]

\begin{exm} 
    Free groups have the \sfgip\ \cite{neu57,neu90}, see Remark~\ref{rem: uniform sfgip} below for more details.
    Every group trivially has the \fgip\ relative to its Noetherian subgroups.
\end{exm}

Hyperbolic groups provide many examples of groups with the \fgip\ and the \sfgip\ We refer the reader to Section \ref{sec: hyperbolic} for the definition of hyperbolic groups and quasi-convex subgroups. 

\begin{prop}
\label{prop: hyperbolic_sfgip_2}
If $G$ is a locally quasi-convex hyperbolic group, then:
\begin{enumerate}[(1)]
    \item $G$ has the \fgip\
    \item $(G, \mathcal{F})$ has the \sfgip, where $\mathcal{F}$ denotes the collection of finite subgroups of $G$.
    \item $G$ has the \sfgip\ if and only if $G$ does not contain any subgroup isomorphic to $\Z/k\Z\times\Z$ for any $k\geqslant 2$.
\end{enumerate}
\end{prop}

\begin{proof}
The first two statements follow directly from Short's Theorem \cite{sho91} and a result of Kharlampovich--Myasnikov--Weil \cite[Propositions 6.7 \& 6.9]{kmw17}. Now we establish the third. Suppose for a contradiction that there are finitely generated subgroups $H, K\leqslant G$ such that for infinitely many distinct double cosets $HgK$ the intersection $H^g\cap K$ is non-trivial. By \cite[Propositions 6.7 \& 6.9]{kmw17}, at most finitely many of these double cosets result in infinite intersections. Since $H$ and $K$ contain finitely many conjugacy classes of finite subgroups, by the pigeonhole principle, there are infinitely many distinct double cosets $Hg_1K, Hg_2K, \ldots$ so that $H^g_i\cap K$ lies in the same $H$-conjugacy class of finite subgroup for all $i$ and so that $H\cap K^{g_i^{-1}}$ lies in the same $K$-conjugacy class for all $i$. This implies that the finite subgroup $Q = H^{g_1}\cap K$ has infinite normaliser and hence that there is an infinite order element $g$ so that $Q^g = Q$. In particular, since $Q$ is finite, for $n$ large enough we have $q^{g^n} = q$ for each $q\in Q$. Thus, $\Z/k\Z\times\Z\leqslant G$ for some $k\geqslant 2$. Since $\Z/k\Z\times\Z$ does not have the \sfgip\ for any $k\geqslant 2$ (see Example \ref{ex: sfgip example} below), the proof is complete.
\end{proof}

\begin{exm}\label{ex: sfgip example}
In Proposition \ref{prop: hyperbolic_sfgip_2}, we cannot replace $(G, \mathcal{F})$ with $G$ in the case torsion is present. For example, for each $k\geqslant 2$, consider the (locally quasi-convex hyperbolic) group $\Z\times\Z/k\Z = \langle a, b \mid [a, b], b^k\rangle$. The double cosets $\langle b\rangle\, a^i\langle\, b\rangle$ are distinct for all $i\in \Z$, but $\langle b\rangle^{a^i}\cap \langle b\rangle = \langle b\rangle\neq 1$ for all $i\in \Z$ and so $\Z\times\Z/k\Z$ does not have the \sfgip\ On the other hand, $(\Z\times\Z/k\Z, \{\Z/k\Z\})$ has the \sfgip\
\end{exm}

\begin{rem}\label{rem: uniform sfgip}
We point the readers' attention to the fact that another notion of \emph{strongly Howson property}, occurs in \cite{ass15} with a different meaning involving a uniform bound on the rank of intersections. We would call it the \emph{uniform \fgip}: a group $G$ is said to have the \emph{uniform \fgip} if there exists a function $f$ on pairs of integers such that, if $H$ and $K$ are subgroups of finite rank at most $m$ and $n$, respectively, then the rank of $H\cap K$ is at most $f(m,n)$. Similarly, $G$ is said to have the \emph{uniform \sfgip} if there exists a function $f$ on pairs of integers, such that, if $H$ and $K$ are non-trivial subgroups of finite rank $m$ and $n$, respectively, then the sum of the ranks of the $H^g \cap K$, taken over the double cosets $H\,g\,K$ is at most $f(m,n)$. Some of our results  can be refined using this notion. We chose not to develop it, to keep this paper within reasonable bounds.

It is worth noting however that every free group $F$ has the uniform \fgip\ and the uniform \sfgip\ Indeed, Hanna Neumann proved~\cite{neu57} that, if $H$ and $K$ are non-trivial subgroups of $F$, then
$$\rank(H \cap K) - 1 \leq 2 (\rank(H) - 1)(\rank(K) - 1)$$
and she conjectured (the so-called Hanna Neumann Conjecture) that the factor 2 can be eliminated. This gave rise to an abundant literature, dealing with improved bounds and special cases. Walter Neumann then proved~\cite{neu90} that
\begin{equation}\label{equ: SHNC}
\sum\limits_{HgK\in H\backslash F_n/K} \hspace{-15pt} (\rank(H^g\cap K) -1) \leq 2 (\rank(H) - 1)(\rank(K) - 1),
\end{equation}
where the sum is restricted to the double cosets such that $H^g \cap K \ne 1$. This translates to a bound on the number of intersections $H^g \cap K$ which are neither trivial nor infinite cyclic. Walter Neumann also conjectured that the factor 2 could be dispensed with. This Strengthened Hanna Neumann Conjecture was established independently by Friedman~\cite{fri15} and by Mineyev~\cite{min12}. Finally, in \cite{Li21} it was shown that free groups have the uniform \sfgip, with the function $f(m, n)$ quadratic in both variables. It is unknown whether $f(m, n)$ can be taken to be linear in each variable.
\end{rem}

\section{Graphs of groups and their fundamental groups}\label{sec: graphs of groups}

In this section we shall provide an overview of basic facts about graphs of groups, following \cite{dllrw_pullback}, which itself is based on work of Bass \cite{bas93} and Serre \cite{ser80}. We begin by defining graphs of groups, $\AA$-paths and the fundamental group of a graph of groups. Then, in Section \ref{ssec: morphisms}, we introduce morphisms of graphs of groups, focusing on immersions. We conclude by stating Theorem \ref{thm: bijection subgroups immersions covers} which allows us to canonically realise any subgroup of the fundamental group of a graph of groups as an immersion of graphs of groups.

\subsection{Graphs of groups}\label{sec: define gog}

A \emph{graph of groups} $\AA = (\gr{A}, \{A_u\}, \{A_e\}, \{\alpha_e, \omega_e\})$ consists of
\begin{itemize}
\item[(1)] an underlying graph $\gr{A}$;
\item[(2)] a collection of groups $\{A_u\}$ indexed by $V(\gr{A})$, called the \emph{vertex groups};
\item[(3)] a collection of groups $\{A_e\}$ indexed by $E(\gr{A})$, such that $A_e = A_{e^{-1}}$, called the \emph{edge groups};
\item[(4)] and monomorphisms $\alpha_e\colon A_e\to A_{o(e)}$, $\omega_e\colon A_e\to A_{t(e)}$ called the \emph{edge maps}, satisfying $\alpha_{e} = \omega_{e^{-1}}$ and $\omega_e = \alpha_{e^{-1}}$. 
\end{itemize}
If $u$ is a vertex of $\gr A$, we say that $(\AA,u)$ is a \emph{pointed graph of groups}. The vertex $u$ is referred to as the \emph{basepoint}.

To facilitate the reading of this paper, we will systematically denote by $\gr A, \gr B, \gr C, \gr D$, etc, the underlying graphs of the graphs of groups $\AA, \BB, \CC, \DD$, etc.

\subsubsection{$\AA$-paths and core graphs of groups}

An \emph{$\AA$-path from $u$ to $u'$} (where $u, u'\in V(\gr A)$) is a sequence $p = (a_0, e_1, a_1, \dots, e_k, a_k)$, where $(e_1, e_2, \dots, e_k)$ is a path from $u$ to $u'$ in $\gr A$, each $a_{i-1}$ is in $A_{o(e_i)}$ and $a_k\in A_{t(e_k)}$ for all $i\in [1,k]$. We say that the $\AA$-path $p$ is closed if its \emph{underlying path} $(e_1,\dots,e_k)$ is closed, that two $\AA$-paths are coterminal if their underlying paths are coterminal, and that $p$ has length $n$ if its underlying path has length $n$. If $u\in V(\gr A)$, we say that the $\AA$-path $(1_{A_u})$, of length 0, is the \emph{trivial $\AA$-path at $u$}.

The concatenation of $\AA$-paths is defined as follows: if $p = (a_0, e_1, a_1, \dots, e_k, a_k)$ is an $\AA$-path from $u$ to $u'$ and $p' = (a'_0, e'_1, a'_1, \dots, e'_\ell, a'_\ell)$ is an $\AA$-path from $u'$ to $u''$, the concatenation $p\,p'$ is the $\AA$-path $(a_0, e_1, a_1, \dots, a_{k-1}, e_k, (a_ka'_0), e'_1, a'_1, \dots, e'_\ell, a'_\ell)$ from $u$ to $u''$.

The $\AA$-path $p$ is \emph{reduced} if, for all $1\le i < k$, we have either $e_{i+1} \ne e_i\inv$, or $e_{i+1} = e_i\inv$ and $a_i \notin \omega_{e_i}(A_{e_i})$. If $p$ is a closed $\AA$-path, we say that it is \emph{cyclically reduced} if it has positive length and $p^2$ is reduced.

If $u_0\in V(\gr A)$, the \emph{core of $\AA$} at $u_0$, written $\core(\AA,u_0)$ is the subgraph of groups of $\AA$ where we drop every vertex and edge that do not sit on a reduced closed $\AA$-path at $u_0$. The pointed graph of groups $(\AA,u_0)$ is said to be \emph{core} if it coincides with $\core(\AA,u_0)$.

Similarly, the \emph{core of $\AA$}, written $\core(\AA)$, is the subgraph of groups of $\AA$ where we drop every vertex and edge that do not sit on a cyclically reduced closed $\AA$-path. If $\AA = \core(\AA)$, we say that $\AA$ is \emph{core}.

\subsubsection{The fundamental group of a graph of groups}

We define the following two relations on $\AA$-paths:
\begin{itemize}
    \item The relation $\sim_\AA$ is the congruence\footnote{The set of $\AA$-paths under concatenation is a category, and we mean here that $\sim_\AA$ is a congruence for this algebraic structure.} generated by the pairs $\left((\alpha_e(x),e,\omega_e(x)\inv),\ (1,e,1)\right)$ ($e\in E(\gr A)$, $x\in A_e$). We note that $\sim_\AA$-equivalent $\AA$-paths must be coterminal and have the same length.

    \item The relation $=_\AA$ is the congruence generated by the pairs $\left((1,e,\omega_e(x),e\inv,1),\ (\alpha_e(x))\right)$ ($e\in E(\gr A)$, $x\in A_e$). Here, $=_\AA$-equivalent $\AA$-paths are coterminal, and their lengths differ by an even number.
\end{itemize}

The following is elementary (see \cite{bas93}): if $p \sim_A q$, then $p =_\AA q$. The converse holds if $p$ and $q$ are reduced. In addition, every $\AA$-path is $=_\AA$-equivalent to a reduced one. It is also immediately verified that, if $p = (a_0,e_1,\dots, a_k,e_k)$ is an $\AA$-path and $p\inv$ is the $\AA$-path $(a_k\inv, e_k\inv, \dots, e_1\inv, a_0\inv)$, then $p\,p\inv$ (resp. $p\inv\,p$) is $\AA$-equivalent to the trivial $\AA$-path at $o(p)$ (resp. $t(p)$).

As a result, the quotient of the set of $\AA$-paths by the relation $=_\AA$ is a groupoid, called the \emph{fundamental groupoid of $\AA$}, written $\pi_1(\AA)$. If $u_0 \in V(\AA)$, the set of closed $\AA$-paths at $u_0$ is a group, called the \emph{fundamental group of $\AA$ at $u_0$} and written $\pi_1(\AA,u_0)$.

The following statement will be used repeatedly.

\begin{prop}\label{prop: finite generation}
Let $(\AA, u_0)$ be a pointed graph of groups.
\begin{enumerate}[(1)]
    \item\label{item: generators of pi1} $\pi_1(\AA,u_0)$ is isomorphic to a quotient of the free product $F\ast(\bigast_{v\in V(\gr A)}A_v)$, where $F$ is the free group on $E(\gr A)$.
    \item\label{item: finite_gen_implies_finite_graph} If $\pi_1(\AA, u_0)$ is finitely generated, then $\core(\AA,u_0)$ has finite underlying graph.
    \item\label{item: finite generation} Suppose that the edge groups of $\AA$ are finitely generated. Then $\pi_1(\AA, u_0)$ is finitely generated if and only if the underlying graph of $\core(\AA,u_0)$ is finite and its vertex groups are finitely generated.
\end{enumerate}
\end{prop}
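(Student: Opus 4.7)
The plan is to prove the three items in order. Item~(1) provides a presentation-style surjection which is reused in~(3), while~(2) is a graph-theoretic finiteness statement that also feeds into~(3).

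For part~(1), I will construct a surjective homomorphism $\phi\colon F\ast(\bigast_{v\in V(\gr A)} A_v)\to \pi_1(\AA,u_0)$ after fixing auxiliary data. Factors indexed outside the connected component of $u_0$ in $\gr A$ play no role in $\pi_1(\AA,u_0)$, so I send everything in those factors to $1$. For each vertex $v$ in the component of $u_0$, fix an $\AA$-path $p_v$ from $u_0$ to $v$ with trivial vertex-group labels, with $p_{u_0}$ the trivial path. Define $\phi$ on the free generator $e\in E(\gr A)$ by $\phi(e)=[p_{o(e)}\cdot (1,e,1)\cdot p_{t(e)}^{-1}]$, and on $a\in A_v$ by $\phi(a)=[p_v\cdot (a)\cdot p_v^{-1}]$. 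A direct verification shows that $\phi|_{A_v}$ is a group homomorphism, and $\phi$ extends to the free product by its universal property. Surjectivity is then a short calculation: a closed $\AA$-path $(a_0,e_1,a_1,\dots,e_k,a_k)$ at $u_0$, after inserting trivial factors $p_{v_i}^{-1}\cdot p_{v_i}$ between consecutive blocks (where $v_i$ is the vertex reached after the $i$-th edge), rewrites as the product $\phi(a_0)\phi(e_1)\phi(a_1)\cdots\phi(e_k)\phi(a_k)$.

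For part~(2), take a finite generating set $\{g_1,\dots,g_n\}$ of $\pi_1(\AA,u_0)$ and represent each $g_i$ by its unique reduced closed $\AA$-path $p_i$ at $u_0$. Let $\BB$ be the subgraph of groups of $\AA$ whose underlying graph $\gr B$ is the (finite) union of the underlying paths of the $p_i$'s, with vertex and edge groups inherited from $\AA$. The inclusion $\BB\hookrightarrow \AA$ induces $\iota\colon \pi_1(\BB,u_0)\to \pi_1(\AA,u_0)$, which is surjective since every generator $g_i$ lies in its image. Injectivity uses that any $=_\AA$-reduction move applied to a $\BB$-path is simultaneously a $=_\BB$-reduction, since the edge involved must already lie in $\gr B$: thus the confluent $\AA$-reduction of any $\BB$-path to the trivial path proceeds entirely inside $\BB$, so $\iota$ is injective, and hence an isomorphism. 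Given any $g\in \pi_1(\AA,u_0)$, pick a $\BB$-path representative and reduce it inside $\BB$; the result is a reduced $\BB$-path that is simultaneously the unique reduced $\AA$-representative of $g$. Consequently every reduced closed $\AA$-path at $u_0$ lies in $\BB$, which forces $\core(\AA,u_0)\subseteq \BB$, whose underlying graph is finite.

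For part~(3), the ``if'' direction is immediate from~(1) applied to $\core(\AA,u_0)$: with finitely many edges, finitely many vertices, and finitely generated vertex groups, the free product $F\ast(\bigast_{v} A_v)$ is finitely generated, and so is its quotient $\pi_1(\core(\AA,u_0),u_0)=\pi_1(\AA,u_0)$. For the ``only if'' direction, part~(2) already yields finiteness of the underlying graph of $\core(\AA,u_0)$, so it remains to prove that its vertex groups are finitely generated. I would argue by induction on the number of edge-pairs of $\core(\AA,u_0)$: removing one pair $\{e,e^{-1}\}$ exhibits $\pi_1(\AA,u_0)$ as either an HNN extension over $A_e$ (if $e$ is a loop or a non-separating edge) or an amalgamated product over $A_e$ (if $e$ is a bridge) involving the fundamental group(s) of the smaller graph(s) of groups so produced. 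The main step, and the principal obstacle of the argument, is the classical Karrass--Solitar-type fact: if $A\ast_C$ (resp.\ $A\ast_C B$) is finitely generated and $C$ is finitely generated, then $A$ (resp.\ $A$ and $B$) is finitely generated. This is shown by putting each element of a finite generating set of the big group in Britton (resp.\ Bass--Serre) normal form and verifying that the $A$-syllables so produced, together with a finite generating set of $C$, generate $A$. Feeding this lemma into the induction closes the argument, establishing finite generation of each vertex group of $\core(\AA,u_0)$.
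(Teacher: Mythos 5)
Your proof is correct, and for parts~(1) and~(2) it follows essentially the paper's own route: the paper dismisses~(1) as elementary (you supply the explicit surjection), and its argument for~(2) is exactly your observation that reduced $\AA$-circuits at $u_0$ are forced, by the edge-deleting nature of $=_\AA$-reduction, to live in the finite subgraph spanned by reduced representatives of a finite generating set. The genuine divergence is in part~(3). The paper handles the ``only if'' direction by citing \cite[Theorem 1.3]{hw21} (attributed to Dicks--Dunwoody): a graph of groups with finite underlying graph, finitely generated edge groups and finitely generated fundamental group has finitely generated vertex groups. You instead reprove this (for finite graphs) from scratch, by induction on the number of edge pairs, peeling off one edge at a time to reduce to the classical one-edge statements: if $A\ast_C$ or $A\ast_C B$ is finitely generated and $C$ is finitely generated, then so are the base/factor groups. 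That lemma is correct and your normal-form sketch is the standard proof (set $A'=\langle\text{$A$-syllables of the generators}\rangle\cup C\cup\varphi(C)$, observe $\langle A',t\rangle$ resp.\ $\langle A',B'\rangle$ is again an HNN extension resp.\ amalgam and intersects $A$ in $A'$ by Britton's lemma resp.\ the normal form theorem, hence $A'=A$). What your route buys is self-containedness and an elementary argument; what the citation buys is brevity and, in the Dicks--Dunwoody formulation, a statement not restricted to peeling edges one at a time. Two small points to tighten if you write this up: the reduced representative of an element is unique only up to $\sim_\AA$ (though its underlying edge path is genuinely unique, which is all you need), and in the injectivity step of~(2) one should invoke confluence of reduction explicitly, since $=_\AA$-equivalence a priori allows insertions of backtracks over edges outside $\gr B$ — the resolution being that any $\BB$-path reduces, using only removals and hence only edges of $\gr B$, to the unique reduced form of its $=_\AA$-class.
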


\begin{proof}
The first statement is elementary. For the second statement, we note that every element of $\pi_1(\AA,u_0)$ is $=_\AA$-equivalent to the concatenation of elements of a finite collection of reduced $\AA$-circuits at $u_0$: those representing the generators of $\pi_1(\AA,u_0)$ and their inverses. Thus any reduced $\AA$-circuit at $u_0$ is $=_\AA$ to such a concatenation, and hence uses vertices and edges from a finite subgraph of $\gr A$.

The third statement follows from \cite[Theorem 1.3]{hw21} (which the authors attribute to Dicks-Dunwoody \cite{dd89}), which states that, if a graph of groups has finite underlying graph, finitely generated edge groups and finitely generated fundamental group, then it also has finitely generated vertex groups.
\end{proof}

Finally, we say that an edge $e$ in a graph of groups is \emph{reduced} if it is a loop or $\alpha_e\colon A_e \to A_{o(e)}$ is not an isomorphism. A graph of groups $\AA$ is said to be \emph{reduced} if it is connected and every one of its edges is reduced. The following technical observation is well-known. The reader can find details in \cite[Section 2.2 \& Proposition 2.1]{df05}. 

\begin{prop}\label{prop: reduced gog}
If $\AA$ is a graph of groups with finitely many non-reduced edges and $v\in V(\gr A)$, then there is a pointed reduced graph of groups $(\AA',v')$ such that $\pi_1(\AA,v)$ is isomorphic to $\pi_1(\AA',v')$.
\end{prop}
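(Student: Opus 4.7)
The plan is to proceed by induction on the (finite) number of non-reduced edges of $\AA$. After restricting, if needed, to the connected component of $v$ in $\gr A$ (which has no effect on $\pi_1(\AA, v)$), we may assume $\gr A$ is connected; the base case, no non-reduced edges, is then immediate.

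For the inductive step, let $e$ be a non-reduced edge. By definition $e$ is not a loop and $\alpha_e\colon A_e\to A_{o(e)}$ is an isomorphism, so $\sigma := \omega_e\circ\alpha_e^{-1}\colon A_{o(e)}\to A_{t(e)}$ is a well-defined monomorphism. Construct $\AA'$ by merging $o(e)$ and $t(e)$ into a single vertex $v^*$ with vertex group $A_{v^*}:=A_{t(e)}$, deleting the pair $\{e, e^{-1}\}$ from the edge set, and for each remaining edge $f$ with $o(f)=o(e)$ rerouting its origin to $v^*$ and replacing $\alpha_f$ by $\sigma\circ\alpha_f$ (and symmetrically at the target). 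The new edge maps are compositions of monomorphisms, so $\AA'$ is a well-formed and still connected graph of groups. A case check shows the number of non-reduced edges strictly decreases: the pair $\{e, e^{-1}\}$ is removed (with $e$ non-reduced); any edge $f$ whose $\alpha_f$ was a proper inclusion has new edge map with image properly contained in $\sigma(A_{o(e)})\leqslant A_{v^*}$, hence still proper (unless $f$ has become a loop, which is automatically reduced by convention); and any remaining non-reduced edge either becomes a loop or is left untouched.

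The main technical point --- and the main obstacle --- is to verify that $\pi_1(\AA, v)\cong\pi_1(\AA', v')$, where $v'=v^*$ if $v\in\{o(e), t(e)\}$ and $v'=v$ otherwise. This amounts to a standard Tietze-style rewriting: each $\AA$-path at $v$ is sent to an $\AA'$-path at $v'$ by replacing every triple $(a, e, b)$ with $a\in A_{o(e)}$, $b\in A_{t(e)}$, by the single label $\sigma(a)\cdot b\in A_{v^*}$, and treating $e^{-1}$ symmetrically; the relations $=_{\AA'}$ then match the relations $=_\AA$ not involving $e$, while the relations for $e$ collapse to the identification of $A_{o(e)}$ with $\sigma(A_{o(e)})\leqslant A_{v^*}$. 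Equivalently, on the Bass--Serre side, one collapses the $\pi_1(\AA, v)$-orbit of a fixed lift of $e$ in the Bass--Serre tree $T_{\AA}$; since $\alpha_e$ is an isomorphism, the edge-stabiliser of this lift coincides with the stabiliser of its initial endpoint, so the quotient remains a tree and its quotient graph of groups is precisely $\AA'$. The detailed bookkeeping is standard and carried out in \cite{bas93} and \cite[Section 2.2]{df05}, whence the conclusion by induction.
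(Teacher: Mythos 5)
Your proof is correct and takes essentially the same route as the paper: the paper itself defers the details to the literature but describes, in the remark immediately following the proposition, exactly this procedure of iteratively collapsing a non-reduced edge via $\omega_{e}\circ\alpha_{e}^{-1}$ and terminating because each collapse strictly decreases the number of non-reduced edges. Your additional justification of the isomorphism on fundamental groups (path rewriting / collapsing an edge orbit in the Bass--Serre tree) matches the standard argument the paper cites.
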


\begin{rem}\label{rk: reduction is computable}
If the underlying graph $\gr A$ is finite, the procedure to go from $(\AA,v)$ to $(\AA',v')$ in Proposition~\ref{prop: reduced gog} is quite simple. It consists in repeatedly applying the following transformation: if $e_0$ is a non-reduced edge of $\gr A$, from $u$ to $u'$, delete the vertex $u$ and the edge $e_0$ and redirect every other edge $e \ne e_0, e_0\inv$ of $\gr A$ with target $u$, so that $t(e) = u'$ and set $\omega_{e} = \omega_{e_0}\circ \alpha_{e_0}\inv \circ \omega_e$. If $v\ne u$, let $v' = v$. If $v = u$, let $v' = u'$. Then $(\AA', v')$ is a pointed graph of groups with $\pi_1(\AA, v)\cong \pi_1(\AA', v')$. The procedure terminates after finitely many steps since $(\AA', v')$ is a graph of groups with strictly fewer non-reduced edges than $(\AA, v)$.

Suppose that the graph of groups $\AA$ is given effectively, in the following sense: the underlying graph $\gr A$ is finite, the vertex and edge groups are given explicitly, and one can decide whether a given morphism from an edge group to a vertex group is an isomorphism and, in that case, compute its inverse. Then there is an algorithm computing $(\AA',v')$.
\end{rem}

\begin{rem}\label{rk: reduced more general}
A more general statement than Proposition~\ref{prop: reduced gog} holds without the finiteness hypothesis, assuming only that $\AA$ does not have an infinite ray consisting only of non-reduced edges (traversed in their positive direction). We will not need this stronger statement in this paper.
\end{rem}

\subsection{Morphisms between graphs of groups, immersions and coverings}\label{ssec: morphisms}

We refer the reader to \cite{bas93} and to \cite[Sections 1.3 \& 4.2]{dllrw_pullback} for a more detailed discussion and for the proofs of the statements listed here.

\subsubsection{Morphisms}\label{sec: def morphisms}

Let $\AA = (\gr{A}, \{A_u\}, \{A_e\}, \{\alpha_e, \omega_e\})$ and $\BB = (\gr{B}, \{B_v\}, \{B_f\}, \{\alpha_f, \omega_f\})$ be graphs of groups. A \emph{morphism of graphs of groups} $\mu\colon\BB\to \AA$ consists of
\begin{itemize}
\item a morphism of underlying graphs $[.]\colon \gr{B}\to \gr{A}$ (sometimes also written $\mu$);
\item for every $v\in V(\gr{B})$, a monomorphism $\mu_v\colon B_v\to A_{[v]}$;
\item for every $f\in E(\gr{B})$, a monomorphism $\mu_f\colon B_f\to A_{[f]}$; and
\item for every edge $f\in E(\gr{B})$ from $v$ to $v'$, elements $f_{\alpha}\in A_{[v]}$, $f_{\omega}\in A_{[v']}$, called the \emph{twisting elements for $f$}, such that $(f\inv)_{\alpha} = f_{\omega}$ and $(f\inv)_\omega = f_\alpha$,
\end{itemize}
satisfying the following property: for each edge $e\in E(\gr{A})$, each edge $f\in E(\gr{B})$ from $v$ to $v'$ such that $[f] = e$, we have
\begin{align} 
\label{eq: twisted commutation alpha}
\alpha_e\circ\mu_{f} &= \inn{f_{\alpha}}\! \circ \mu_{v} \circ \alpha_{f}\\
\label{eq: twisted commutation omega}
\omega_e\circ\mu_f &= \inn{f_{w}} \! \circ \mu_{v'} \circ \omega_{f}.
\end{align}
If $\mu\colon \BB\to\AA$ is a morphism and $v\in V(\gr B)$, we say that $\mu\colon(\BB,v)\to(\AA,[v])$ is a \emph{morphism of pointed graphs of groups}. 

The morphism $\mu\colon \BB\to\AA$ naturally extends to a map from $\BB$-paths to $\AA$-paths: if
\[
p = (b_0, f_1, b_1, \dots, f_k, b_k)
\]
is a $\BB$-path, with $f_i$ an edge from $v_{i-1}$ to $v_i$ and $e_i = \mu(f_i) \in E(\gr{A})$, then $\mu(p)$ is the $\AA$-path
\[
\mu(p) = \left(\mu_{v_0}(b_0)\, (f_1)_\alpha,\, e_1,\, (f_1)\inv_\omega\,\mu_{v_1}(b_1)\, (f_2)_\alpha, \dots,\,  e_k,\,  (f_k)_\omega\inv\, \mu_{v_k}(b_k)\right).
\]
This map sends $=_\BB$-equivalent paths to $\AA$-equivalent paths, that is, it induces a morphism of groupoids $\mu_*\colon \pi_1(\BB) \to \pi_1(\AA)$. Moreover, if $v \in V(\gr{B})$ and $u = \mu(v)$, the restriction of $\mu_*$ to $\pi_1(\BB,v)$ is a group morphism $\mu_*\colon \pi_1(\BB,v) \to \pi_1(\AA,u)$.

\subsubsection{Immersions and coverings}\label{sec: immersions coverings}

Consider the following conditions, for a morphism of graphs of groups $\mu\colon \BB \to \AA$.
\begin{enumerate}[(1)]
    \item\label{item: immersion 1} for all edges $f, f' \in E(\gr{B})$ with the same image $e = \mu(f) = \mu(f') \in E(\gr{A})$ and the same initial vertex $v = o(f) = o(f')$, then $f = f'$ if and only if $\mu_v(B_v)\, f_{\alpha}\, \alpha_e(A_e) = \mu_v(B_v)\, f_{\alpha}'\, \alpha_e(A_e)$;

    \item\label{item: immersion 2} for every edge $f \in V(\gr B)$, if $e = \mu(f)$ and $v = o(f)$, then $(\alpha_e\circ\mu_f)(B_f) = \mu_v(B_v)^{f_{\alpha}}\cap \alpha_e(A_e)$.

    \item\label{item: covering 0} for every vertex $v\in V(\gr B)$, every edge $e\in E(\gr A)$ such that $o(e) = \mu(v)$ and every double coset $\mu_v(B_v)\,a\,\alpha_e(A_e) \in \dblcoset{\mu_v(B_v)}{A_e}{\alpha_e(A_e)}$, there exists $f\in \gr B$ such that $o(f) = v$, $\mu(f) = e$ and $\mu_v(B_v)\, f_{\alpha}\, \alpha_e(A_e) = \mu_v(B_v)\, a\, \alpha_e(A_e)$.    
\end{enumerate}

\begin{defn}\label{def: folded}
Let $\mu\colon \BB\to\AA$ be a morphism of graphs of groups. If Conditions~\eqref{item: immersion 1} and~\eqref{item: immersion 2} hold, we say that $\mu$ is an \emph{immersion}. If all three conditions hold, then we say that $\mu$ is a \emph{covering}.
\end{defn}

\begin{rem}
\label{rem: folded}
    The definition of immersions and coverings above is, in fact, equivalent to that of
    Bass \cite{bas93}, see the discussion in \cite[Remarks 1.12 \& 4.7]{dllrw_pullback}.
\end{rem}

Immersions and coverings have the following important properties, see \cite{bas93}.

\begin{prop}\label{folded}\label{prop: folded morphisms}
    A morphism $\mu\colon \BB\to \AA$ of graphs of groups is an immersion if and only if it sends reduced $\BB$-paths to reduced $\AA$-paths.

    The composition of two immersions is an immersion.

    If $\mu\colon \BB \to \AA$ is an immersion, $v\in V(\gr B)$ and $u = \mu(v)$, then the morphism $\mu_*\colon \pi_1(\BB,v) \to \pi_1(\AA,u)$ is injective.
\end{prop}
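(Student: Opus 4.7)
The plan is to prove the three statements in sequence, the first one carrying the main technical burden; the other two then follow with little extra work. Throughout, write $v_j = t(f_j) = o(f_{j+1})$ and $e_j = \mu(f_j)$ for a $\BB$-path $p = (b_0, f_1, b_1, \dots, f_k, b_k)$.

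For the forward direction of the equivalence, I assume $\mu$ is an immersion and suppose, towards a contradiction, that $p$ is reduced but $\mu(p)$ has a backtracking at some position $i$. Unpacking the formula for $\mu(p)$, this means $e_{i+1} = e_i^{-1}$ and that the middle coefficient $(f_i)_\omega^{-1}\mu_{v_i}(b_i)(f_{i+1})_\alpha$ belongs to $\omega_{e_i}(A_{e_i}) = \alpha_{e_{i+1}}(A_{e_{i+1}})$. Using $(f_i^{-1})_\alpha = (f_i)_\omega$, I rearrange this as the equality of double cosets
\[
\mu_{v_i}(B_{v_i})\,(f_{i+1})_\alpha\,\alpha_{e_{i+1}}(A_{e_{i+1}}) \,=\, \mu_{v_i}(B_{v_i})\,(f_i^{-1})_\alpha\,\alpha_{e_{i+1}}(A_{e_{i+1}}).
\]
Since $f_{i+1}$ and $f_i^{-1}$ share origin $v_i$ and image $e_{i+1}$, Condition~\eqref{item: immersion 1} forces $f_{i+1} = f_i^{-1}$. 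Substituting, the original membership places $\mu_{v_i}(b_i)$ into $(f_i^{-1})_\alpha\,\alpha_{e_{i+1}}(A_{e_{i+1}})\,(f_i^{-1})_\alpha^{-1}$, so its conjugate by $(f_i^{-1})_\alpha$ lies in $\mu_{v_i}(B_{v_i})^{(f_i^{-1})_\alpha}\cap\alpha_{e_{i+1}}(A_{e_{i+1}})$. By Condition~\eqref{item: immersion 2} this intersection equals $\alpha_{e_{i+1}}(\mu_{f_i^{-1}}(B_{f_i^{-1}}))$; combining with the twisted commutation~\eqref{eq: twisted commutation alpha} for $f_i^{-1}$ and the injectivity of $\mu_{v_i}$ then yields $b_i \in \omega_{f_i}(B_{f_i})$, contradicting that $p$ is reduced at position $i$.

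For the converse, I argue contrapositively by exhibiting reduced $\BB$-paths whose images are unreduced. If Condition~\eqref{item: immersion 1} fails, there are distinct edges $f, f'$ with common origin $v$ and image $e$, together with $b \in B_v$ and $c \in A_e$ such that $(f')_\alpha = \mu_v(b^{-1}) f_\alpha \alpha_e(c)$; the $\BB$-path $(1, f^{-1}, b, f', 1)$ is reduced because $f' \ne f$, and its middle $\AA$-coefficient computes to $\alpha_e(c) \in \alpha_e(A_e)$, producing a backtracking in $\mu(p)$. If Condition~\eqref{item: immersion 2} fails for some edge $f$, the corresponding failure at the $\omega$-side (the same condition applied to $f^{-1}$) yields an element $d \in B_{t(f)}\setminus\omega_f(B_f)$ with $\mu_{t(f)}(d) \in f_\omega\,\omega_e(A_e)\,f_\omega^{-1}$; the reduced $\BB$-path $(1, f, d, f^{-1}, 1)$ then has middle $\AA$-coefficient $f_\omega^{-1}\mu_{t(f)}(d)f_\omega \in \omega_e(A_e)$, giving the desired backtracking. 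The composition statement now follows directly: composing two morphisms that each preserve reducedness yields a morphism that preserves reducedness, hence by what was just proved it is again an immersion.

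For the injectivity of $\mu_*\colon \pi_1(\BB, v) \to \pi_1(\AA, u)$, a non-trivial element is represented by a reduced $\BB$-path $p$ that is not the trivial path $(1_{B_v})$. Either $p$ has positive length, in which case $\mu(p)$ has the same positive length and is reduced by what was proved above, so it is non-trivial in $\pi_1(\AA, u)$; or $p = (b_0)$ with $b_0 \ne 1$, in which case $\mu(p) = (\mu_v(b_0))$ with $\mu_v(b_0) \ne 1$, since $\mu_v$ is a monomorphism by the definition of a morphism of graphs of groups. In either case $\mu_*(p)$ is non-trivial. The main obstacle of the argument is the forward direction of the first equivalence: one must marry Conditions~\eqref{item: immersion 1} and~\eqref{item: immersion 2} with the twisted commutation relations~\eqref{eq: twisted commutation alpha}--\eqref{eq: twisted commutation omega}, keeping precise track of whether the twisting element entering each calculation is $f_\alpha$ or $f_\omega$. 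The rest of the proposition then follows with comparatively little additional work.
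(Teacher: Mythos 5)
Your proof is correct. Note that the paper itself does not prove this proposition at all --- it is stated with a pointer to Bass's paper --- so there is no internal argument to compare against; what you have written is a complete, self-contained verification, and every step checks out. The forward direction is handled exactly right: the backtracking hypothesis on $\mu(p)$ yields the double-coset equality, Condition (1) of the immersion definition forces $f_{i+1}=f_i^{-1}$, and Condition (2) applied to $f_i^{-1}$ together with the twisted commutation relation $\alpha_{e}\circ\mu_{f_i^{-1}} = \inn{(f_i^{-1})_\alpha}\circ\mu_{v_i}\circ\omega_{f_i}$ and injectivity of $\mu_{v_i}$ pins $b_i$ into $\omega_{f_i}(B_{f_i})$, contradicting reducedness. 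The converse via the explicit length-two witnesses $(1,f^{-1},b,f',1)$ and $(1,f,d,f^{-1},1)$ is also correct. Two small presentational points. First, in the Condition (2) case you say that a failure ``for some edge $f$'' yields ``the corresponding failure at the $\omega$-side''; strictly, failure at $f$ does not imply failure at $f^{-1}$ --- what you are really using is that the condition is quantified over all edges, so a failure somewhere can be relabelled as a failure at $g^{-1}$ for the edge $g=f^{-1}$, after which your construction applies verbatim. Second, the composition statement silently uses that the composite of two morphisms of graphs of groups is again a morphism (with composed twisting elements); that is standard and harmless, but worth a sentence. Finally, your injectivity argument correctly splits off the length-zero case and uses that $=_{\AA}$ restricted to reduced paths preserves length, which is exactly the fact recorded in Section 2.1.2 of the paper.
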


\subsubsection{Realising subgroups as immersions of graphs of groups}

The following statement is a combination of classical results from \cite{bas93} and of \cite[Corollaries 4.14 \& 4.16]{dllrw_pullback}. The uniqueness statements in~\eqref{item: unique covering} and~\eqref{item: unique immersion} below are translations of category-theoretic uniqueness statements, in the category of connected pointed graphs of groups investigated in \cite{dllrw_pullback}. Statement~\eqref{item: fg vertex and edge groups}, due to Haglund and Wise \cite{hw21}, generalises results by Karrass and Solitar \cite{ks70,ks71}.

\begin{thm}\label{thm: bijection subgroups immersions covers}
    Let $A = \pi_1(\AA,u_0)$ be the fundamental group of a core pointed graph of groups $(\AA,u_0)$ and let $B$ be a subgroup of $A$.
    \begin{enumerate}[(1)]
        \item\label{item: covering} There exists a covering $\mu^B\colon (\BB,v_0) \to (\AA,u_0)$ such that $\gr B$ is connected and $\mu^B_*(\pi_1(\BB,v_0)) = B$.
        \item\label{item: realisation} There exists an immersion $\nu^B\colon (\BB',v'_0) \to (\AA,u_0)$ such that $(\BB',v'_0)$ is core and such that $\nu^B_*(\pi_1(\BB',v'_0)) = B$.
        \item\label{item: unique covering} The covering $\mu^B\colon (\BB,v_0) \to (\AA,u_0)$ is unique up to isomorphism in the following sense: if $\mu^C\colon (\CC,w_0) \to (\AA,u_0)$ is a covering such that $\gr C$ is connected and $\mu^C_*(\pi_1(\CC,w_0)) = B$, then there exist morphisms of graphs of groups $\sigma\colon (\BB,v_0) \to (\CC,w_0)$ and $\tau\colon (\CC,w_0) \to (\BB,v_0)$ which are mutually reciprocal isomorphims on the underlying graph and on each vertex and edge group, and which satisfy $\mu^B_* = \mu^C_* \circ \sigma_*$ and $\mu^C_* = \mu^B_* \circ \tau_*$.
        \item\label{item: unique immersion} The pointed graph of groups $(\BB',v'_0)$ can be taken to be $\core(\BB,v_0)$, with $\nu^B$ the restriction of $\mu^B$ to $\core(\BB,v_0)$. In particular, $\nu^B$ satisfies a uniqueness statement analogous to that for $\mu^B$.
        \item\label{item: fg vertex and edge groups} If $B$ is finitely generated, then $\core(\BB)$ is a finite graph of groups. If, additionally, for each edge $e\in E(\gr A)$ and for each reduced $\AA$-path $p$ from $u_0$ to $o(e)$, the intersection $B \cap (p\, \alpha_e(A_e)\, p\inv)$ is finitely generated, then all vertex and edge groups of $\BB$ are finitely generated.
    \end{enumerate}
\end{thm}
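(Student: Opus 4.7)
The plan is to build on Bass--Serre theory and the categorical framework of \cite{dllrw_pullback} to obtain all five parts. For part~\eqref{item: covering}, I would construct the Bass--Serre tree $T$ associated with $(\AA,u_0)$, on which $A = \pi_1(\AA,u_0)$ acts with vertex and edge stabilisers conjugate to the vertex and edge subgroups. The subgroup $B$ inherits the action, and the quotient graph $\gr B = B\backslash T$, together with the stabiliser data, assembles into a graph of groups $\BB$. Choosing $v_0 \in V(\gr B)$ as the $B$-orbit of a suitable lift of $u_0$ and picking coset representatives for the adjacency data yields twisting elements making the natural projection a morphism $\mu^B\colon(\BB,v_0)\to(\AA,u_0)$ of pointed graphs of groups. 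The covering conditions~\eqref{item: immersion 1}--\eqref{item: covering 0} of Definition~\ref{def: folded} are then verified directly from the fact that $A$ acts transitively on edges of $T$ above each edge of $\AA$ and that the stabilisers in $B$ of these edges give the desired coset description. For part~\eqref{item: realisation}, I set $(\BB',v_0') = \core(\BB,v_0)$ and take $\nu^B$ to be the restriction of $\mu^B$; since coverings are immersions and immersions send reduced $\BB$-paths to reduced $\AA$-paths (Proposition~\ref{prop: folded morphisms}), restricting to the core preserves the immersion property, and reduced $\AA$-circuits at $u_0$ lift to reduced $\BB$-circuits at $v_0$ that lie in $\core(\BB,v_0)$, so $\nu^B_*(\pi_1(\BB',v_0')) = B$ is preserved.

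For the uniqueness statements~\eqref{item: unique covering} and~\eqref{item: unique immersion}, the idea is a path-lifting argument: given any covering $\mu^C\colon(\CC,w_0)\to(\AA,u_0)$ with $\mu^C_*(\pi_1(\CC,w_0)) = B$, Condition~\eqref{item: covering 0} guarantees that every reduced $\AA$-path from $u_0$ lifts uniquely to a reduced $\CC$-path from $w_0$ (modulo a chosen coset representative on each edge), and similarly for $\BB$. This lifting pairs vertices and edges of $\BB$ and $\CC$ lying above the same vertex or edge of $\AA$ in the same $B$-orbit, which assembles into mutually inverse morphisms $\sigma$ and $\tau$ that preserve twistings and vertex/edge group embeddings. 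Part~\eqref{item: unique immersion} then follows because two core pointed graphs of groups realising $B$ by an immersion embed canonically into their respective coverings and inherit the isomorphism from~\eqref{item: unique covering}; in particular $(\BB',v_0')$ must coincide with $\core(\BB,v_0)$.

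For part~\eqref{item: fg vertex and edge groups}, assume $B$ is finitely generated. Since $\nu^B_*\colon \pi_1(\BB',v_0') \to A$ is injective (Proposition~\ref{prop: folded morphisms}) with image $B$, the fundamental group $\pi_1(\BB',v_0')$ is finitely generated, and Proposition~\ref{prop: finite generation}\eqref{item: finite_gen_implies_finite_graph} gives that the underlying graph of $\core(\BB',v_0') = (\BB',v_0')$ is finite. Under the additional hypothesis, I observe that by construction in part~\eqref{item: covering}, each edge group of $\BB$ is identified with the stabiliser in $B$ of an edge of $T$, which is precisely $B \cap (p\,\alpha_e(A_e)\,p^{-1})$ for some reduced $\AA$-path $p$; by hypothesis each such intersection is finitely generated, so the same holds for each edge group of $\BB'\subseteq \BB$. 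Proposition~\ref{prop: finite generation}\eqref{item: finite generation}, applied to the finite core graph of groups $(\BB',v_0')$ with finitely generated edge groups and finitely generated fundamental group, then forces all vertex groups of $\BB'$ to be finitely generated. The main subtlety I expect lies in the careful bookkeeping of coset representatives for the twisting elements to ensure that the Bass--Serre construction of part~\eqref{item: covering} is a covering in the strong sense of Definition~\ref{def: folded}, and in matching the intersections $B \cap p\,\alpha_e(A_e)\,p^{-1}$ with the correct edge groups of $\BB$ in part~\eqref{item: fg vertex and edge groups}.
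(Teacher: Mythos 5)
Your proposal follows essentially the same route as the paper, which does not prove this theorem from scratch but assembles it from Bass's quotient-of-the-Bass--Serre-tree construction of the $B$-cover, the categorical uniqueness statements of \cite[Corollaries 4.14 \& 4.16]{dllrw_pullback}, and the Haglund--Wise result recorded as Proposition~\ref{prop: finite generation}~\eqref{item: finite generation}. Your sketches of parts~\eqref{item: covering}--\eqref{item: unique immersion} and of the first half of~\eqref{item: fg vertex and edge groups} are the standard arguments and are fine.

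The one place where your argument falls short of the stated conclusion is the second half of part~\eqref{item: fg vertex and edge groups}: the theorem asserts that \emph{all} vertex and edge groups of the covering $\BB$ are finitely generated, whereas your application of Proposition~\ref{prop: finite generation}~\eqref{item: finite generation} only yields this for the finite core $\BB' = \core(\BB,v_0)$. The covering $\BB$ is in general an infinite graph of groups, and you still need to handle its vertices outside the core. The missing observation is that if $w$ is a vertex of the Bass--Serre tree lying off the minimal $B$-invariant subtree, then every element of $B$ fixing $w$ also fixes the geodesic from $w$ to that subtree, so $\mathrm{Stab}_B(w)$ coincides with $\mathrm{Stab}_B(f)$ for the adjacent edge $f$ pointing towards the subtree; equivalently, in $\BB$ the edge map $\alpha_f$ into such a vertex group is an isomorphism (this is the same phenomenon exploited in the proof of Lemma~\ref{lem: conjugate subgroup}). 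Since every edge group of $\BB$ is of the form $B\cap p\,\alpha_e(A_e)\,p^{-1}$ and hence finitely generated by hypothesis, the vertex groups outside the core are finitely generated as well, completing the claim. A similar but milder caveat applies to part~\eqref{item: unique immersion}: your phrase ``embed canonically into their respective coverings'' is exactly the content of the completion of an immersion to a covering, which deserves a citation or an argument rather than being taken for granted.
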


\begin{rem}
    The covering $\mu^B\colon (\BB,v_0) \to (\AA,u_0)$ whose existence is asserted in Theorem~\ref{thm: bijection subgroups immersions covers}~\eqref{item: covering}, is called the \emph{$B$-cover}. It is unique up to isomorphism, as explained in Theorem~\ref{thm: bijection subgroups immersions covers}~\eqref{item: unique covering}.
    Bass \cite{bas93} gives an explicit construction of $(\BB,v_0)$ and $\mu^B$, see also \cite[Section 4.2]{dllrw_pullback}.
\end{rem}

Finally, we shall also need the following slight improvement to Theorem \ref{thm: bijection subgroups immersions covers}~\eqref{item: realisation}.

\begin{lem}
\label{lem: conjugate subgroup}
    Let $(\AA, u_0)$ be a pointed graph of finitely generated groups so that each vertex group of $\AA$ has the \fgip\ relative to its adjacent edge groups. Let $B_1, B_2\leqslant \pi_1(\AA, u_0)$ be conjugate subgroups and let $\BB_1', \BB_2'$ be the (unique) core graphs of groups from Theorem \ref{thm: bijection subgroups immersions covers}. Then $\BB_1'$ is a finite graph of finitely generated groups if and only if $\BB_2'$ is.
\end{lem}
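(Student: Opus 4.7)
The plan is to reduce the statement to showing that the $B_1$-cover $\BB_1$ has finitely generated vertex and edge groups everywhere (not only on the subgraph $\BB_1'$), and then to propagate finite generation outward from $\BB_1'$ using the hypothesis together with a Bass--Serre tree argument. By symmetry I may assume $\BB_1'$ is a finite graph of finitely generated groups and show that $\BB_2'$ is as well.

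First, Proposition~\ref{prop: finite generation}(\ref{item: finite generation}) applied to $\BB_1'$ yields that $B_1=\pi_1(\BB_1',v_0^{(1)})$ is finitely generated, and therefore so is its conjugate $B_2$. By Theorem~\ref{thm: bijection subgroups immersions covers}(\ref{item: fg vertex and edge groups}) applied to $B_2$, the underlying graph of $\BB_2'$ is finite, and to establish finite generation of its vertex and edge groups it suffices (by the second half of that statement) to verify that $B_2\cap p\alpha_e(A_e)p^{-1}$ is finitely generated for every edge $e\in E(\gr A)$ and every reduced $\AA$-path $p$ from $u_0$ to $o(e)$. Conjugating by $g$ and replacing $g^{-1}p$ by an $=_{\AA}$-equivalent reduced $\AA$-path $q$, this is equivalent to showing that $B_1\cap q\alpha_e(A_e)q^{-1}$ is finitely generated for every such $q$ and $e$, which in turn (by the same theorem applied now to $B_1$) is equivalent to asking that all vertex and edge groups of the cover $\BB_1$ be finitely generated.

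To prove this last statement, I will propagate finite generation outward from $\BB_1'$ in $\BB_1$, relying on two observations. (i) If $v$ is any vertex of $\BB_1$ with $B_v$ finitely generated and $f$ is any edge of $\BB_1$ at $v$, then the cover condition~\eqref{item: immersion 2} of Definition~\ref{def: folded} yields $(\alpha_{\mu(f)}\circ\mu_f)(B_f)=\mu_v(B_v)^{f_\alpha}\cap\alpha_{\mu(f)}(A_{\mu(f)})$, and this intersection is finitely generated by the hypothesis that $A_{\mu(v)}$ has the \fgip\ relative to its adjacent edge subgroups; hence $B_f$ is finitely generated. (ii) If $v$ is a vertex of $\BB_1$ outside the unpointed core $\core(\BB_1)$, then a Bass--Serre calculation shows that the $B_1$-stabilizer of a lift $\tilde v$ of $v$ to the Bass--Serre tree $T$ of $\AA$ equals the $B_1$-stabilizer of the first edge $\tilde f$ on the geodesic from $\tilde v$ to the minimal $B_1$-invariant subtree $T_{\min}$; in $\BB_1$ this translates into the identification $B_v=B_f$, where $f$ is the unique edge at $v$ along the shortest path from $v$ to $\core(\BB_1)$.

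Combining these with the inclusion $\core(\BB_1)\subseteq\BB_1'$ (which holds because, for any vertex $v$ on a cyclically reduced circuit $C$, the $v_0^{(1)}$-circuit $pCp^{-1}$ formed from a reduced path $p$ from $v_0^{(1)}$ to $v$ and a cyclic rotation of $C$ based at $v$ reduces to a reduced $v_0^{(1)}$-circuit that still contains $v$), every vertex $v$ of $\BB_1$ outside $\BB_1'$ also lies outside $\core(\BB_1)$, and so observation (ii) applies and yields $B_v=B_f$ for an adjacent edge $f$. An induction on graph distance from $\core(\BB_1)$ in $\BB_1$, starting from the vertex groups of $\BB_1'$ (finitely generated by assumption), then propagates finite generation via (i) and (ii) to every vertex and edge group of $\BB_1$, concluding the argument. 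The main obstacle is observation (ii) --- the Bass--Serre identification at non-core vertices --- which bridges the gap between finite generation at edges and finite generation at vertices: without it, one would be forced to show that arbitrary subgroups of finitely generated groups are themselves finitely generated, which is false in general.
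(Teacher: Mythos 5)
Your overall strategy is genuinely different from the paper's: rather than transferring the unpointed core from $\BB_1'$ to $\BB_2'$ via its conjugation-invariance and then treating the tail to the basepoint separately, you reduce both sides to a single statement --- that the full $B_1$-cover $\BB_1$ has finitely generated vertex and edge groups --- using Theorem~\ref{thm: bijection subgroups immersions covers}~\eqref{item: fg vertex and edge groups} together with the fact that conjugation permutes the family of intersections $B\cap q\,\alpha_e(A_e)\,q\inv$. In the case where $B_1$ contains a hyperbolic element this works: $\core(\BB_1)$ is non-empty and contained in $\BB_1'$, the complement of the core is a forest, and your observations (i) and (ii) propagate finite generation outward by induction on distance to the core. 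Two small points here: the ``equivalence'' you invoke at the end of the reduction actually uses the converse direction --- that every edge group of the cover is isomorphic to some $B_1\cap q\,\alpha_e(A_e)\,q\inv$ --- which is true by the Bass--Serre description of the cover but is not part of the cited statement; and the inclusion $\core(\BB_1)\subseteq\BB_1'$ should be justified with $p\,C^n\,p\inv$ for $n$ large rather than $p\,C\,p\inv$, since for a short cyclically reduced $C$ the two cancellation fronts can erase the occurrence of $v$.

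The genuine gap is the locally elliptic case, which the paper treats as a separate and substantial half of its proof. If every element of $B_1$ conjugates into a vertex group, then (since $B_1$ is finitely generated) $B_1$ fixes a vertex of the Bass--Serre tree, so $\core(\BB_1)=\emptyset$ and ``the minimal $B_1$-invariant subtree $T_{\min}$'' does not exist; your observation (ii) and the induction ``on graph distance from $\core(\BB_1)$'' are both undefined in exactly this situation. The argument can be repaired: replace $T_{\min}$ by the subtree $T^{B_1}$ of points fixed by all of $B_1$, observe that every vertex in its image has vertex group equal to $B_1$ (hence finitely generated), and induct on distance to that image instead. But note that this base set is in general neither contained in nor equal to $\BB_1'$ (which in this case is just a line out of the basepoint), so the induction cannot literally ``start from the vertex groups of $\BB_1'$'' as written. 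With that case added, your argument goes through and in fact proves the stronger statement that the entire cover $\BB_1$ is a graph of finitely generated groups.
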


\begin{proof}
    Let us assume that $\BB_1'$ is a finite graph of finitely generated groups. We need to show that $\BB_2'$ is a finite graph of finitely generated groups and we will be done by symmetry. We have two cases to consider, the first is when $B_1$ contains only elements which conjugate into vertex groups (locally elliptic case) and the second is when $B_1$ contains an element which does not conjugate into a vertex group (hyperbolic case). 
    
    In the first case, since $\BB_1'$ is a finite graph of finitely generated groups, both groups $B_1, B_2$ are finitely generated and so must conjugate entirely into a single vertex group by a result of Bass \cite[Corollary 7.3]{bas93}. Thus, $\BB_1'$ and $\BB_2'$, being pointed core, must both have underlying graphs isomorphic to a line leading out from the basepoint to vertices $v_1', v_2'$ respectively and such that each edge $f$ leading away from the basepoint has $\alpha_f$ an isomorphism. It follows that $B_1$, $B_{v_1'}$, $B_{v_2'}$ and $B_2$ are isomorphic and, hence, the vertex groups $B_{v_1'}$ and $B_{v_2'}$ are finitely generated. If $\BB_2'$ has no edges, then we are done. Otherwise, let $f\in E(\gr{B}_2)$ be the edge with $t(f) = v_2'$. By Property \eqref{item: immersion 2} in the definition of an immersion of graphs of groups, the groups $B_{f}$ and $\omega_{[f]}(A_{[f]})\cap B_{v_2'}$ are isomorphic. Since vertex groups of $\AA$ have the \fgip\ relative to their adjacent edge groups by assumption, $B_{f}$ is finitely generated. Since $B_{o(f)} = \alpha_f(B_f)$, we see that $B_{o(f)}$ is finitely generated. Repeating this argument we see that $\BB_2'$ is also a finite graph of finitely generated groups.

    In the second case, we use \cite[Corollary 4.18]{dllrw_pullback} to conclude that $\core(\BB_1')$ and $\core(\BB_2')$ are non-empty and isomorphic.
    Since $\core(\BB_1')$ is a subgraph of groups of $\BB'_1$, it follows that $\core(\BB_2')$ is a finite graph of finitely generated groups. By \cite[Remark 4.4]{dllrw_pullback},
     $\BB_2' = \mathbb{L}\cup \core(\BB_2')$ where $\mathbb{L}$ has underlying graph a finite line connecting the basepoint with $\core(\BB_2')$, such that for each edge $f\in E(\gr{L})$ pointing away from the basepoint, the map $\alpha_f$ is an isomorphism. Now we may conclude exactly as before that $\mathbb{L}$ must also be a graph of finitely generated groups. It follows that $\BB_2'$ is a finite graph of finitely generated groups, as claimed.
\end{proof}

\section{Products of graphs of groups and intersections}\label{sec: pullbacks}
\def\calC{\mathcal{C}}

Let $\AA$, $\BB$ and $\CC$ be graphs of groups and let $\mu^B\colon \BB \to \AA$ and $\mu^C\colon \CC \to \AA$ be immersions.
The  \emph{$\AA$-product} of $\mu^B$ and $\mu^C$, written $\BB\wtimes_\AA\CC$, was introduced in \cite{dllrw_pullback}. Its definition is rather technical and we highlight here only the notions that will be important for our study of the \fgip\ The reader should have in mind that this graph of groups, together with natural morphisms $\rho^B, \rho^C\colon \BB\wtimes_{\AA}\CC\to \BB, \CC$, can be thought of as the pullback.

For each vertex $(v,w)$ and edge $(f,g)$ of $\gr B\times_{\gr A}\gr C$, if $u = \mu^B(v) = \mu^C(w) \in V(\gr A)$ and $e = \mu^B(f) = \mu^C(g) \in E(\gr A)$, we let
\begin{align*}
	V_{v,w}(\gr D) &= \dblcoset{B_v}{A_u}{C_w} = \{ \mu_v^B(B_v)\,a\,\mu_w^C(C_w) \mid a\in A_u \},\\
	 E_{f,g}(\gr D) &=  \dblcoset{B_f}{A_e}{C_g} = \{ \mu_f^B(B_f)\,a\,\mu_g^C(C_g) \mid a\in A_e \}.
\end{align*}
The underlying graph $\gr D$ of $\BB\wtimes_\AA\CC$ has vertex set the disjoint union of the $V_{v,w}(\gr D)$ and edge set the disjoint union of the $E_{f,g}(\gr D)$. The incidence maps are the following. If $e \in E(\gr{A})$ and $(f,g) \in E(\gr{B} \times_\gr{A} \gr{C})$ is an edge from $(v,w)$ to $(v',w')$ such that $[f] = [g] = e$ and $a\in A_e$, then the origin and terminal vertices of the $(f,g)$-edge $\mu_f^B(B_f)\,a\,\mu_g^C(C_g)$ are given by
\begin{equation} \label{eq: incidence in D}
\begin{aligned} 
o\left(\mu_f^B(B_f)\,a\,\mu_g^C(C_g)\right) &= \mu_v^B(B_v)\,(f_{\alpha}\,\alpha_e(a)\,g_{\alpha}^{-1})\,\mu_w^C(C_w),\\
t\left(\mu_f^B(B_f)\,a\,\mu_g^C(C_g)\right) &= \mu_{v'}^B(B_{v'})\,(f_{\omega}\,\omega_e(a)\,g_\omega^{-1})\,\mu_{w'}^C(C_{w'}).
\end{aligned}
\end{equation}
This definition seems to depend on the choice of the representative $a$ of the double coset defining the edge; it is verified in \cite[Def. 3.1]{dllrw_pullback} (or directly, using the definition of morphisms) that it does not.

For each vertex $x\in V_{v,w}(\gr D)$ and edge $h_{f,g}\in E(\gr D)$, we choose representatives $\tilde x$ and $\tilde h$ of the corresponding double cosets. We then let
$$D_x = \twpb{B_v}{\tilde x}{A_u}{C_w}\quad\textrm{and}\quad D_h = \twpb{B_f}{\tilde h}{A_e}{C_g},$$
where $u$ is the image of $v$ and $w$ in $V(\gr A)$ and $e$ is the image of $f$ and $g$ in $E(\gr A)$. The projection morphisms $\rho^B\colon \DD \to \BB$ and $\rho^C\colon \DD\to \CC$ on the vertex and edge groups are given by the projection morphisms associated with the fibered products defining the vertex and edge groups of $\DD$. This definition yields the following isomorphisms
$$D_x \simeq \mu^B_v(B_v)^{\tilde x} \cap \mu^C_w(C_w)\textrm{ and }D_h \simeq \mu^B_f(B_f)^{\tilde h} \cap \mu^C_g(C_g).$$

Since we are not going to use them, we refer the reader to \cite[Definition 3.4]{dllrw_pullback} for the definition of the edge maps from edge groups to vertex groups, as well as for the twisting elements of $\rho^B$ and $\rho^C$.

\begin{rem}
The technical definition of $\AA$-products in \cite[Section 3.1]{dllrw_pullback} depends on the choice of several parameters. However, if $\DD$ and $\DD'$ are $\AA$-products which result from different choices (with projection morphisms $\rho^B, \rho^C$ and $\rho'^B, \rho'^C$, respectively), $\DD$ and $\DD'$ have the same underlying graph $\gr D$ and they satisfy the following property \cite[Lemma 3.5]{dllrw_pullback}: if $x \in V_{v,w}(\gr D)$ and $u = \mu^B(v) = \mu^C(w)$, then there exist immersions $\sigma\colon (\DD,x) \to (\DD',x)$ and $\sigma'\colon (\DD',x) \to (\DD,x)$ such that $\sigma_*$ and $\sigma'_*$ are mutually inverse isomorphisms between $\pi_1(\DD,x)$ and $\pi_1(\DD',x)$, $(\rho^B)_* = (\rho'^B)_*\circ\sigma_*$ and $(\rho^C)_* = (\rho'^C)_*\circ\sigma_*$.
\end{rem}

If $\AA$, $\BB$ and $\CC$ are equipped with base vertices $u_0$, $v_0$ and $w_0$, such that $\mu^B(v_0) = \mu^C(w_0) = u_0$, we let $x_0 = \mu_{v_0}^B(B_{v_0})\,\mu_{w_0}^C(C_{w_0}) \in V_{v_0,w_0}(\gr D)$ be the associated base vertex. We refer to $(\BB\wtimes_\AA\CC,x_0)$ as the \emph{pointed $\AA$-product} of $\mu^B$ and $\mu^C$ (now seen as morphisms of pointed graphs of groups).

The main properties of $\AA$-products which we will use are summarised in the following statements. The first one is a combination of Lemma 3.8 and Corollary 3.17(2)(3) in \cite{dllrw_pullback}.

\begin{thm}\label{thm: product of immersions}
Let $\mu^B\colon (\BB,v_0) \to (\AA,u_0)$ and $\mu^C\colon (\CC,w_0) \to (\AA,u_0)$ be immersions between connected pointed graphs of groups, let $(\DD,x_0)$ be their pointed $\AA$-product, with projection morphisms $\rho^B \colon \DD\to \BB$ and $\rho^C\colon \DD\to \CC$. Then,
\begin{enumerate}[(1)]
\item $\rho^B$ and $\rho^C$ are immersions.
\item\label{item: intersection of subgroups} $(\mu^B\circ\rho^B)_*(\pi_1(\DD, x_0)) = (\mu^C\circ\rho^C)_*(\pi_1(\DD, x_0)) = \mu^B_*(\pi_1(\BB, v_0))\cap \mu^C_*(\pi_1(\CC, w_0))$.
\item \label{item: localy_elliptic_double_cosets} Let $x\in V(\DD)$, $v = \rho^B(x)$ and $w = \rho^C(x)$. Let also $p_v$ be a reduced $\BB$-path from $v_0$ to $v$ and $q_w$ be a reduced $\CC$-path from $w_0$ to $w$. Then we have 
    \[
    \mu^C_*\circ\rho^C_*(\pi_1(\DD, x))^{\mu^C(q_w)^{-1}} = \mu^B_*(\pi_1(\BB, v_0))^{\left(\mu^B(p_v)\tilde{x}\mu^C(q_w)^{-1}\right)}\cap \mu^C_*(\pi_1(\CC, w_0))
    \]
\end{enumerate}
\end{thm}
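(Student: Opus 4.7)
The plan is to prove the three statements in order, using the explicit construction of the $\AA$-product $\DD = \BB\wtimes_{\AA}\CC$ recalled above. For part (1), I would verify directly that $\rho^B$ satisfies the two conditions of Definition~\ref{def: folded}; the argument for $\rho^C$ is symmetric. Given edges $h_1, h_2 \in E(\gr D)$ with common image $f = \rho^B(h_1) = \rho^B(h_2)$ in $E(\gr B)$ and common origin $x$, each $h_i$ is a double coset $\mu_f^B(B_f)\,a_i\,\mu_{g_i}^C(C_{g_i})$ with $\mu^C(g_i) = \mu^B(f)$ and $o(g_i) = \rho^C(x)$. The origin condition, via the incidence formula~\eqref{eq: incidence in D}, together with the definition of the twisting elements of $\rho^B$ in terms of the chosen representatives $\tilde x$ and $\tilde h$, translates exactly to the coset equality of Definition~\ref{def: folded}\eqref{item: immersion 1}. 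Condition \eqref{item: immersion 2} is then a consequence of the definition of the edge group $D_h$ as a fibered product over $A_{[f]}$, using the explicit formula for $\rho^B$ on edge groups.

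For part (2), I would prove both inclusions. Any edge $h = \mu_f^B(B_f)\,a\,\mu_g^C(C_g)$ of $\DD$ lies over matching edges $f \in E(\gr B)$ and $g \in E(\gr C)$ with $\mu^B(f) = \mu^C(g) = e$. A careful bookkeeping of the incidence formula~\eqref{eq: incidence in D} and of the twisting elements of $\rho^B$ and $\rho^C$ shows that the $\AA$-paths $\mu^B(\rho^B(h))$ and $\mu^C(\rho^C(h))$ represent the same class in $\pi_1(\AA)$, differing only by an $A_e$-piece absorbed at the edge $e$. Extending from single edges to full $\DD$-paths yields $(\mu^B \circ \rho^B)_* = (\mu^C \circ \rho^C)_*$ on $\pi_1(\DD, x_0)$, so this common image lies in the intersection. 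For the reverse inclusion, given $\gamma \in \mu^B_*(\pi_1(\BB, v_0)) \cap \mu^C_*(\pi_1(\CC, w_0))$, I would choose reduced $\BB$- and $\CC$-circuits $p, q$ at $v_0, w_0$ with $\mu^B(p) =_\AA \mu^C(q)$ both representing $\gamma$; since $\mu^B$ and $\mu^C$ are immersions, $\mu^B(p)$ and $\mu^C(q)$ are reduced by Proposition~\ref{prop: folded morphisms}, hence $\sim_\AA$-equivalent. This common underlying reduced $\AA$-path then lifts uniquely to a $\DD$-circuit at $x_0$ whose $\rho^B$- and $\rho^C$-projections recover $p$ and $q$, respectively, and which maps to $\gamma$.

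For part (3), the key idea is a basepoint change. A $\DD$-circuit at $x$ projects via $\rho^C$ to a $\CC$-circuit at $w = \rho^C(x)$, and applying $\mu^C$ and then conjugating by $\mu^C(q_w)$ transfers it to an element of $\mu^C_*(\pi_1(\CC, w_0)) \leqslant \pi_1(\AA, u_0)$. The same $\DD$-circuit projects via $\rho^B$ to a $\BB$-circuit at $v = \rho^B(x)$, yielding after conjugation by $\mu^B(p_v)$ an element of $\mu^B_*(\pi_1(\BB, v_0))$. The representative $\tilde x \in A_{\mu^B(v)}$ encodes precisely the local offset between the $\AA$-image of the $\BB$-component and the $\AA$-image of the $\CC$-component at $x$, so the two projections of any $\DD$-circuit at $x$ differ, as $\AA$-paths at $\mu^C(w)$, by conjugation by $\tilde x$; translating back to the common basepoint $u_0$ yields the overall conjugation factor $\mu^B(p_v)\,\tilde x\,\mu^C(q_w)^{-1}$ between the two sides. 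The main obstacle throughout will be the careful tracking of twisting elements and of the non-canonical choices of representatives of the double cosets defining vertices and edges of $\DD$; it is precisely this bookkeeping that is carried out in detail in \cite{dllrw_pullback}, and any first-principles proof essentially retraces that argument.
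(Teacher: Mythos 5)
Your outline is correct, but there is nothing in the paper to compare it against: the paper offers no proof of Theorem~\ref{thm: product of immersions} at all, stating only that it is ``a combination of Lemma 3.8 and Corollary 3.17(2)(3) in \cite{dllrw_pullback}.'' Your sketch is, in effect, a reconstruction of the argument carried out in that reference, and it follows the only natural route: direct verification of the immersion conditions of Definition~\ref{def: folded} from the incidence formula~\eqref{eq: incidence in D} for part (1), a lifting argument for part (2), and a basepoint change for part (3). You identify correctly that the substance lies in the bookkeeping of twisting elements and double-coset representatives, and you acknowledge that this is exactly what \cite{dllrw_pullback} does.

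The one place where your sketch is genuinely thin rather than merely compressed is the reverse inclusion in part (2): the claim that a common reduced $\AA$-path underlying $\mu^B(p)$ and $\mu^C(q)$ ``lifts uniquely to a $\DD$-circuit at $x_0$.'' This is where the actual content of the pullback construction sits. The $\sim_\AA$-equivalence of $\mu^B(p)$ and $\mu^C(q)$ produces, edge by edge, elements of the edge groups $A_{e_i}$ interpolating between the vertex-group letters of the two paths, and it is these elements that single out the double cosets in $E_{f,g}(\gr D)$ constituting the lifted path; one must also check that the lift is a \emph{circuit}, i.e.\ that it closes up at $x_0$, which uses the compatibility of the chosen representatives $\tilde x$. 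If you were asked to write this out in full rather than cite \cite{dllrw_pullback}, that is the step you would need to expand; as a blind reconstruction of a result the paper itself imports wholesale, your proposal is sound.
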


\def\C{\mathcal{C}}

Before giving the next statement, we need a few definitions. Let $B$ be a subgroup of the fundamental group $\pi_1(\AA,u_0)$, for some pointed graph of groups $(\AA,u_0)$. We say that $B$ is \emph{locally elliptic} if each of its elements conjugates to a vertex group element, that is: if, for each $b\in B$, there is an $\AA$-path $p$ starting at $u_0$ such that $p\inv\, b\, p \in A_{t(p)}$.

Let $\mu^B\colon (\BB,v_0) \to (\AA,u_0)$ and $\mu^C\colon (\CC,w_0) \to (\AA,u_0)$ be immersions between connected pointed graphs of groups, and let $(\DD,x_0)$ be their pointed $\AA$-product, with projection morphisms $\rho^B \colon \DD\to \BB$ and $\rho^C\colon \DD\to \CC$. Let $A = \pi_1(\AA,u_0)$, and $B = \mu^B_*(\pi_1(\BB,v_0)) = \mu^C_*(\pi_1(\CC,w_0))$. For each vertex $v\in V(\gr B)$, let $p_v$ be a fixed $\BB$-path from $v_0$ to $v$. Similarly, for each $w\in V(\gr C)$, fix a $\CC$-path $q_w$ from $w_0$ to $w$. If $(v,w)$ is a vertex of $\gr B \times_{\gr A}\gr C$, we let $\C_{v,w}$ be the following map:
\begin{equation} \label{eq: Cvw}
\begin{aligned}
\C_{v,w}\colon\enspace V_{v,w}(\gr D) &\enspace\longrightarrow\enspace \dblcoset BAC \\
\mu^B(B_v)\, a\, \mu^C(C_w) &\enspace\longmapsto\enspace B\, (\mu^B(p_v)\, a\, \mu^C(q_w)^{-1})\, C.
\end{aligned}
\end{equation}
It is shown in \cite[Section 4.3]{dllrw_pullback} that $\C_{v,w}$ does not depend on the choice of the $\BB$-paths $p_v$ and the $\CC$-paths $q_w$. Using the fact that $V(\gr D)$ is the disjoint union of the $V_{v,w}(\gr D)$, we then define
\begin{equation}\label{eq: C}
\begin{aligned}
\C \colon V(\gr D) &\enspace\longrightarrow\enspace \dblcoset BAC\\
            x &\enspace\longmapsto\enspace \C_{\rho^B(x), \rho^C(x)}(x).
\end{aligned}
\end{equation}

The following statement is a combination of Lemmas 4.19 and 4.20, of Theorems 4.21 and 4.22 and of Proposition 4.23 in \cite{dllrw_pullback}.

\begin{thm}\label{thm: summary immersions}
Let $\mu^B\colon (\BB,v_0) \to (\AA,u_0)$ and $\mu^C\colon (\CC,w_0) \to (\AA,u_0)$ be immersions between connected pointed graphs of groups, and let $A = \pi_1(\AA,u_0)$, $B = \mu^B_*(\pi_1(\BB,v_0)) = \mu^C_*(\pi_1(\CC,w_0))$.  Let also $\DD = \BB\wtimes_\AA\CC$ be the $\AA$-product of $\mu^B$ and $\mu^C$, with projection morphisms $\rho^B \colon \DD\to \BB$ and $\rho^C\colon \DD\to \CC$. Denote by $\pi_0(\DD)$ the set of connected components of $\DD$.
\begin{enumerate}[(1)]
\item \label{item: same_double_coset} Let vertices $x,x'$ be vertices in $V(\gr D)$. Then $x$ and $x'$ are in the same connected component if and only if $\C(x) = \C(x')$. In particular, $\C$ induces an injective map $\C_0\colon \pi_0(\DD) \to \dblcoset BAC$.
\item \label{item: components_bijection}$\C_0$ induces a bijection from $\pi_0(\core(\DD))$ to the set of double cosets $B\,g\,C$ ($g \in A$) such that $B^g\cap C$ is not locally elliptic.
\item \label{item: conjugate_into_edge_group}If $B\,g\,C\notin \calC(V(\BB\wtimes_{\AA}\CC))$, then $B^g\cap C$ conjugates into an edge group of $\AA$.
\item If $\mu^B$ is a covering, then the map $\C_0$ is a bijection. Moreover, $\C_0$ also establishes a bijection from the set of connected components of $\DD$ with non-trivial fundamental group and the set of double cosets $B\,g\,C$ such that $B^g \cap C \ne 1$.
\item For each $\DD'\in \pi_0(\DD)$, $x\in V(\gr{D}')$ and each $g\in \mathcal{C}_0(\DD')$, we have $\pi_1(\DD', x')\cong B^g\cap C$.
\end{enumerate}
\end{thm}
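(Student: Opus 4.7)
The five assertions are a packaging of results established in \cite[Lemmas 4.19 \& 4.20, Theorems 4.21 \& 4.22, Proposition 4.23]{dllrw_pullback}, so the plan is to describe how the pieces fit together rather than to reprove them from scratch.

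For (1), I would first verify directly that $\mathcal{C}$ is constant on connected components: given an edge $h \in E_{f,g}(\gr D)$ from $x \in V_{v,w}(\gr D)$ to $y \in V_{v',w'}(\gr D)$, extend the chosen $\BB$-path $p_v$ through $f$ and the $\CC$-path $q_w$ through $g$, and use the incidence formulas \eqref{eq: incidence in D} together with the definition \eqref{eq: Cvw} and the twisted commutation relations \eqref{eq: twisted commutation alpha}--\eqref{eq: twisted commutation omega}: the twisting elements cancel and one obtains $\mathcal{C}(y) = \mathcal{C}(x)$. The harder injectivity statement is the content of the cited lemma: given two vertices $x, x'$ with $\mathcal{C}(x) = \mathcal{C}(x')$, a reduced $\AA$-path representative of the common double coset is chosen, and its syllable structure is used to lift compatible $\BB$- and $\CC$-paths that assemble into a single $\DD$-path connecting $x$ to $x'$.

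For (2) and (3), the bridge between the underlying combinatorics of $\DD$ and the algebra of intersections is part (5): a component $\DD'$ lies in $\core(\DD)$ exactly when it supports a non-trivial cyclically reduced closed $\DD$-path, which unfolds under $\mu^B\circ\rho^B$ to a cyclically reduced closed $\AA$-path, witnessing a non-locally-elliptic element of $B^g\cap C$ for $g\in\mathcal{C}_0(\DD')$. The reverse implication uses the same circuit-lifting idea applied to a hyperbolic element of $B^g\cap C$. For (3), if $B\,g\,C$ lies outside the image of $\mathcal{C}$, then no reduced $\AA$-path representative of $g$ can be written as $\mu^B(p_v)\, a\, \mu^C(q_w)^{-1}$ in the form required by~\eqref{eq: Cvw}, so any element of $B^g\cap C$ is forced by a Bass--Serre normal-form analysis to conjugate into an edge group of $\AA$. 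Part (4) then follows because the extra condition~\eqref{item: covering 0} in Definition~\ref{def: folded} guarantees that every double-coset representative in $A_e$ is realised by an edge of $\gr B$ at every vertex, making $\mathcal{C}_0$ surjective; the non-trivial-$\pi_1$ statement is immediate from (5).

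Assertion (5) is essentially an immediate consequence of Theorem~\ref{thm: product of immersions}: part~\eqref{item: intersection of subgroups} handles the component containing the basepoint $x_0$, while~\eqref{item: localy_elliptic_double_cosets} transports the identification to any other component by means of a $\DD$-path to its basepoint and the conjugating element recorded by $\mathcal{C}_0$. The main obstacle throughout is the injectivity of $\mathcal{C}_0$ in (1), which depends on the careful normal-form analysis of $\AA$-paths baked into the construction of $\BB\wtimes_\AA\CC$; once this is in hand, assertions (2)--(5) cascade from it together with the immersion and covering properties recalled in Theorem~\ref{thm: product of immersions}.
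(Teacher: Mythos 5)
The paper gives no proof of this theorem beyond the single sentence citing Lemmas 4.19--4.20, Theorems 4.21--4.22 and Proposition 4.23 of \cite{dllrw_pullback}, which is exactly the route you take; your supplementary sketches (constancy of $\C$ along edges via the incidence formulas \eqref{eq: incidence in D}, the core/non-locally-elliptic correspondence, surjectivity from the covering condition, and deducing (5) from Theorem~\ref{thm: product of immersions}) are all consistent with the definitions recalled in Section~\ref{sec: pullbacks}. Your proposal therefore matches the paper's treatment.
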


Our last statement concerns the case of acylindrical graphs of groups. Let $k\ge 1$. A graph of groups $\AA$ is \emph{$k$-acylindrical} if, for every reduced $\AA$-path $p$ of length $k$ and every non-trivial element $a\in A_{t(p)}$, the $\AA$-path $pap^{-1}$ does not reduce to an element of $A_{o(p)}$. It is \emph{acylindrical} if it is $k$-acylindrical for some $k$. This is equivalent to the classical definition in terms of acylindrical actions on trees, see \cite[Lemma 4.24]{dllrw_pullback}. The statement below is a direct consequence of \cite[Lemma 4.24, Proposition 4.26 \& Theorem 4.27]{dllrw_pullback}.

\begin{thm}
\label{thm: acylindrical}
    Let $(\AA,u_0)$ be a pointed graph of groups, let $B\leqslant \pi_1(\AA, u_0)$ be a finitely generated subgroup and let $\mu\colon (\BB, v_0)\to (\AA, u_0)$ be the corresponding $B$-cover. If the following conditions hold:
    \begin{itemize}
        \item $\BB$ is acylindrical and $B_e$ is finitely generated for every $e\in E(\core(\BB))$,
        \item for each $e\in E(\AA)$, $A_{o(e)}$ has the \sfgip\ relative to $\alpha_e(A_e)$,
    \end{itemize}
    then there exists a pointed graph of groups $(\BB',v'_0)$ such that $\BB'$ has non-empty core, $\gr B'$ is finite and all the vertex and edge groups of $\BB'$ are finitely generated, and there exists an immersion $\mu^B\colon (\BB',v'_0) \to (\AA,u_0)$ such that, if $\mu^C\colon (\CC,w_0)\to (\AA,u_0)$ is an immersion where $\CC$ has non-empty core, $C = \mu^C_*(\pi_1(\CC,w_0))$ and $\DD$ is the union of the components of $\BB'\wtimes_{\AA}\CC$ with non-trivial fundamental group, then the map $\C_0$ establishes a bijection between $\pi_0(\DD)$ and the set of double coset $B\,g\,C$ such that $B^g \cap C \ne 1$.
\end{thm}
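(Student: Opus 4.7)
My approach is to build $(\BB', v_0')$ as a finite truncation of the $B$-cover of $(\AA, u_0)$, and to transfer the bijection of Theorem~\ref{thm: summary immersions}~(4)---which applies to the cover---to this truncation by means of acylindricity.

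Start with the $B$-cover $\mu\colon(\BB, v_0)\to(\AA, u_0)$ from Theorem~\ref{thm: bijection subgroups immersions covers}~\eqref{item: covering}. Since $B$ is finitely generated, Proposition~\ref{prop: finite generation}~\eqref{item: finite_gen_implies_finite_graph} gives that $\core(\BB, v_0)$ has finite underlying graph, and by hypothesis its edge groups are finitely generated. Let $k$ be the acylindricity constant of $\BB$; beyond a $k$-neighborhood of $\core(\BB)$ no new reduced-path conjugation information can appear. Restrict $\BB$ to such a $k$-neighborhood together with a finite tree joining $v_0$ to it, and at each retained vertex $v$ replace the possibly infinitely generated $B_v$ by a finitely generated subgroup $B_v'\leqslant B_v$ containing the images of all adjacent edge groups and enough extra elements to still recover $B$ as the fundamental group. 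Here the \sfgip\ of $A_{\mu(v)}$ relative to each adjacent $\alpha_e(A_e)$ ensures that only finitely many double cosets of the form $\mu_v(B_v)\,a\,\alpha_e(A_e)$ carry non-trivial intersection data, so a finite enlargement suffices. The resulting $(\BB', v_0')$, together with the restriction $\mu^B$ of $\mu$, is an immersion realizing $B$ with $\gr B'$ finite, all vertex and edge groups finitely generated, and non-empty core.

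With $(\BB', v_0')$ and $\mu^B$ in hand, set $\DD' := \BB'\wtimes_\AA \CC$ and consider the map $\C_0$ of~\eqref{eq: C}. Theorem~\ref{thm: summary immersions}~\eqref{item: same_double_coset} gives injectivity of $\C_0$; and parts~\eqref{item: components_bijection}--\eqref{item: conjugate_into_edge_group} together imply that every double coset $BgC$ outside $\C_0(\pi_0(\DD'))$ has $B^g\cap C$ conjugating into an edge group of $\AA$, hence locally elliptic. The only remaining task is therefore to handle double cosets with $B^g\cap C$ non-trivial and locally elliptic. For these, apply Theorem~\ref{thm: summary immersions}~(4) to the cover $\BB \wtimes_\AA \CC$ to obtain, for each such $g$, a component with fundamental group isomorphic to $B^g\cap C$. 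Acylindricity guarantees that such a component is supported within the $k$-neighborhood used to define $\BB'$ (a longer supporting configuration would force non-trivial commutation across a reduced $\BB$-path of length exceeding $k$, contradicting the acylindricity constant), so the component persists in $\DD'$ with the same non-trivial fundamental group. Restricting to $\DD$ (the components of $\DD'$ with non-trivial $\pi_1$) then yields the claimed bijection with the double cosets $BgC$ for which $B^g\cap C\neq 1$.

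The main obstacle is the first step: coordinating the three features of $(\BB', v_0')$---finiteness of the underlying graph, finite generation of the vertex and edge groups, and preservation of the ``non-trivial'' components of the pullback---that are each controlled by a separate hypothesis (respectively, finiteness of $\core(\BB, v_0)$, the relative \sfgip, and acylindricity) while keeping $\mu^B$ an immersion that realizes $B$. It is this simultaneous control that forces the full strength of the hypotheses and that houses the bulk of the technical work, following the detailed analyses in \cite{dllrw_pullback}.
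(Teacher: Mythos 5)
The paper does not actually prove this statement: it is imported verbatim as a ``direct consequence'' of \cite[Lemma 4.24, Proposition 4.26 \& Theorem 4.27]{dllrw_pullback}, so the real work lives in the companion paper. Your sketch has the right overall shape (truncate the $B$-cover, use the relative \sfgip\ to force finiteness, use acylindricity to guarantee nothing is lost), but as written it has two genuine gaps. First, the ``$k$-neighbourhood of $\core(\BB)$'' is not a finite graph: $\mu$ is a covering, so the star of a vertex $v$ over an edge $e$ of $\AA$ is in bijection with $\dblcoset{\mu_v(B_v)}{A_{o(e)}}{\alpha_e(A_e)}$, which is typically infinite. Obtaining a finite $\gr B'$ requires pruning all but finitely many edges at each retained vertex, and the justification that the pruned edges carry no intersection data is exactly where the hypothesis that $A_{o(e)}$ has the \sfgip\ relative to $\alpha_e(A_e)$ must be applied --- to a \emph{finitely generated} replacement of $B_v$, which creates an order-of-operations problem your sketch does not resolve (you invoke this finiteness of double cosets only to justify enlarging the vertex groups, which is a non sequitur, and you would also need to shrink the edge groups consistently so that condition~\eqref{item: immersion 2} of an immersion still holds).

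Second, the surjectivity of $\C_0$ onto $\{B\,g\,C : B^g\cap C\neq 1\}$ for the \emph{truncated} immersion is asserted via ``the component persists in $\DD'$,'' but components of $\BB\wtimes_\AA\CC$ and of $\BB'\wtimes_\AA\CC$ are not comparable in this way: the vertex $x$ with $D_x=\mu^B_v(B_v)^{\tilde x}\cap\mu^C_w(C_w)\neq 1$ witnessing a locally elliptic intersection may sit over a vertex $v$ or an edge that was discarded, and a component of the cover's product may be disconnected or emptied by the truncation. The appeal to acylindricity (``a longer supporting configuration would force non-trivial commutation across a reduced $\BB$-path of length exceeding $k$'') is the right instinct --- one must show that a non-trivial $B_v^{\tilde x}\cap C_w$ with $v$ far from the retained finite part forces a reduced $\BB$-path $p$ of length $\geqslant k$ with $p^{-1}B_{t(p)}p\cap B_{o(p)}\neq 1$ --- but this implication is precisely the technical content of \cite[Proposition 4.26 \& Theorem 4.27]{dllrw_pullback} and is not established by your sketch. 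Until both points are argued, the claimed bijection between $\pi_0(\DD)$ and the double cosets with $B^g\cap C\neq 1$ is not proved.
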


\section{The finite factor rank intersection property}\label{sec: ffrip}

We start with an investigation of the \emph{finite factor rank intersection property} (\ffrip), which evaluates the maximum rank of a freely indecomposable free factor in a group $A$, and of a strong version of that property. We show that a graph of groups where each vertex group has the strong \ffrip\ and with trivial or infinite cyclic edge groups also has the strong \ffrip

Recall that a group $A$ is said to \emph{split as a proper free product} if $A$ has non-trivial subgroups $B$ and $C$ such that $A = B * C$. The group $A$ is said to be \defin{freely indecomposable} otherwise.  A \emph{Grushko decomposition} of $A$ is a free product splitting of the form $A = F*(\bigast_{\alpha}A_{\alpha})$, where $F$ is a free group and each of the groups $A_{\alpha}$ is non-trivial, freely indecomposable, and not infinite cyclic. If such a decomposition exists, it is always unique up to permutation of the factors and conjugation. In particular, the sequence of isomorphism types of the factors is unique up to permutation (see, for instance, \cite{sta65} for the finitely generated case and \cite{imr84} for the infinitely generated case). The so-called \emph{Grushko decomposition theorem} asserts that any non-trivial finitely generated group admits such a (finite) decomposition.

\begin{defn}
    Let $A$ be a group which admits a Grushko decomposition. The \defin{factor rank} of $A$, denoted by $\frank(A)$, is the supremum of the ranks of the non-free factors in a Grushko decomposition of $A$ or, equivalently,
\[
\frank(A)
\,=\,
\sup\left\{ \,
\rank(B) \mid A=B*C \text{, $B\ne \Z$ and $B$ is freely indecomposable}\,
\right\}
\in \mathbb{N}\cup\{+\infty\}.
\]
Note that $\frank(A) \le \rank(A)$, and the factor rank of a group admitting a Grushko decomposition is $0$ if and only if it is free (maybe trivial). Moreover, if $A \ne \Z$ is freely indecomposable, then $\frank(A) = \rank(A)$.

We say that $A$ satisfies the  \emph{finite factor rank intersection property} (\ffrip) if, for every pair of finitely generated subgroups $B$ and $C$ of $A$,
$\frank(B\cap C)$ is well-defined and finite.
Similarly, $A$ is said to satisfy the \defin{strong finite factor rank intersection property} (strong \ffrip) if, for every pair of finitely generated subgroups $B$ and $C$ of $A$ and every $a \in A$,
$\frank(B^a\cap C)$ is well-defined and $\sum\frank(B^a\cap C) <\infty$, where the sum runs over all double cosets $B\,a\,C$.
\end{defn}

In order to investigate the inheritability of these properties from the vertex groups to the fundamental group of a graph of groups, we need a few technical propositions concerning free product decompositions.
The first of these results is elementary.

\begin{prop}\label{prop: splitting over trivial edge groups}
Let $\AA$ be a connected graph of groups, $u$ a vertex of $\gr{A}$ and $e$ an edge of $\gr A$ such that $A_e = 1$.
\begin{enumerate}[(1)]
\item If $\gr{A} - e$ is connected, denote by
$\AA_{\bar{e}}$  the subgraph of groups of $\AA$ whose underlying graph is $\gr{A}-e$. Then $\pi_1(\AA, u)$ is isomorphic to $\pi_1(\AA_{\bar{e}}, u)*\ZZ$, where the $\ZZ$ factor is generated by a $u$-circuit
in $\AA$ traversing $e$.
\item\label{item: split pi1} If $\gr{A}-e$ is not connected, denote by $\AA_{\bar{e}, 1}$ and $\AA_{\bar{e}, 2}$ the subgraphs of groups of $\AA$ whose underlying graphs are the two components of $\gr{A}-e$. Then $\pi_1(\AA, u)$ is isomorphic to the free product $\pi_1(\AA_{\bar{e}, 1}, u_1)*\pi_1(\AA_{\bar{e}, 2}, u_2)$, where $u_1$ and $u_2$ are arbitrary vertices in the corresponding connected components. 
\item\label{item: split pi1 many trivial edges}  If $E_{1}\subseteq E(\gr{A})$ is a subset of edges with $A_e = 1$ for all $e\in E_1$ and if $\{\AA_{\alpha}\}$ is the collection of subgraphs of groups of $\AA$
with underlying graphs the connected components of $\gr{A} - E_{1}$, then $\pi_1(\AA, u)$ is isomorphic to
\[
F*\left(\bigast_{\alpha}\pi_1(\AA_{\alpha}, u_{\alpha})\right),
\]
where $F$ is a free group and $u_{\alpha}$ is any choice of a basepoint in $\AA_{\alpha}$.
\end{enumerate}
\end{prop}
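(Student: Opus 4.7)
The plan is to exploit the classical presentation of $\pi_1(\AA,u)$ relative to a spanning tree $T$ of $\gr A$: generators are the elements of the vertex groups $A_v$ together with a stable letter $t_e$ for each $e \in E(\gr A)$ (subject to $t_e = 1$ for $e \in T$ and $t_{e^{-1}} = t_e^{-1}$), and relations are the vertex group relations together with $t_e^{-1}\alpha_e(x)t_e = \omega_e(x)$ for each edge $e$ and each $x \in A_e$. This presentation is a standard consequence of the $\AA$-path definition of $\pi_1(\AA,u)$ (and can be extracted from item~\eqref{item: generators of pi1} of Proposition~\ref{prop: finite generation}), so I would recall it briefly at the start of the proof. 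Each part of the statement then follows by a judicious choice of spanning tree adapted to the trivial-edge-group structure.

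For part~(1), I would choose $T$ to be a spanning tree of $\gr A - e$, which exists by connectivity, and note that $T$ is also a spanning tree of $\gr A$. The edge $e$ then contributes a stable letter $t_e$ whose conjugation relation is vacuous (as $A_e = 1$), so the presentation of $\pi_1(\AA,u)$ is that of $\pi_1(\AA_{\bar e},u)$ with a freely adjoined generator $t_e$. This yields $\pi_1(\AA,u) \cong \pi_1(\AA_{\bar e},u) * \langle t_e \rangle \cong \pi_1(\AA_{\bar e},u) * \ZZ$, with $t_e$ represented by the $u$-circuit that reaches $o(e)$ via $T$, crosses $e$, and returns via $T$. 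For part~(2), I would instead take $T = T_1 \cup \{e\} \cup T_2$, where $T_i$ is a spanning tree of the $i$-th component of $\gr A - e$; then $e \in T$ contributes no stable letter and, since $A_e = 1$, no relation, while the remaining generators and relations partition into two disjoint sets corresponding to $\AA_{\bar e,1}$ and $\AA_{\bar e,2}$. This gives the free product $\pi_1(\AA,u) \cong \pi_1(\AA_{\bar e,1},u_1) * \pi_1(\AA_{\bar e,2},u_2)$, with the two factors identified via the tree path through $e$ (and the choice of $u_1, u_2$ within the components is immaterial up to isomorphism).

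For part~(3), I would choose a spanning tree $T$ of $\gr A$ whose restriction to each component of $\gr A - E_1$ is a spanning tree of that component, and such that $T \cap E_1$ projects to a spanning tree of the quotient graph obtained from $\gr A$ by contracting each component of $\gr A - E_1$ to a single vertex. Then, as in part~(2), the tree edges in $E_1$ each contribute no generator or relation and decouple the presentation into disjoint pieces indexed by the $\AA_\alpha$; as in part~(1), each non-tree edge in $E_1$ contributes one free generator to a free factor $F$. Combining these observations produces the announced decomposition $\pi_1(\AA,u) \cong F * \bigl(\bigast_\alpha \pi_1(\AA_\alpha, u_\alpha)\bigr)$. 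The main technical point, and the principal obstacle, is that $E_1$ may be infinite: in that case the required spanning tree is obtained via Zorn's lemma, and the iteration of (1) and (2) gives a possibly infinite free product, which is nevertheless well-defined thanks to the disjoint-union structure of the generators and relations in the presentation.
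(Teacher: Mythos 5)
Your proof is correct: the paper itself gives no argument for this proposition (it is introduced with the remark that ``the first of these results is elementary''), and your spanning-tree presentation argument --- choosing the tree to avoid $e$ in part (1), to contain $e$ in part (2), and to restrict to spanning trees of the components of $\gr{A}-E_1$ while lifting a spanning tree of the contracted quotient graph in part (3) --- is exactly the standard way to fill in the omitted proof. Your closing remark about Zorn's lemma and infinite free products correctly handles the only point where the general statement requires any care.
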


\begin{prop}\label{prop: split DDi}
Let $(\AA,u_0)$ be a connected pointed graph of groups. Suppose that, for each $u\in V(\gr A)$, the vertex group $A_u$ splits as a free product $A_u = A'_u * A''_u$ in such a way that $A'_u$ contains $\alpha_e(A_e)$ for each edge $e\in E(\gr A)$ originating at $u$. Let $\AA'$ be obtained from $\AA$ by replacing each vertex group $A_u$ by $A'_u$.
Then $\pi_1(\AA,u_0)$ is isomorphic to the free product $\pi_1(\AA',u_0) *\bigast_{u\in V(\gr A)} A''_u$.
\end{prop}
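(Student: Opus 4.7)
The plan is to realise both factorisations simultaneously in an auxiliary graph of groups $\widetilde{\AA}$, and then peel off the $A''_u$ summands using Proposition~\ref{prop: splitting over trivial edge groups}~\eqref{item: split pi1 many trivial edges}. Define $\widetilde{\AA}$ by adjoining to $\AA$, at each vertex $u \in V(\gr A)$, a new pendant vertex $u''$ with vertex group $A''_u$ together with a new edge pair $\{e_u, e_u^{-1}\}$ between $u$ and $u''$ carrying the trivial edge group; simultaneously, replace the vertex group at $u$ by $A'_u$ and reinterpret each edge map $\alpha_e\colon A_e \to A_u$ (for $e$ with $o(e) = u$) as a map into $A'_u$. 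This is a well-defined graph of groups precisely because the hypothesis guarantees $\alpha_e(A_e) \subseteq A'_u$ (the symmetric condition for $\omega_e$ being automatic from $\omega_e = \alpha_{e^{-1}}$). By construction, the subgraph of groups of $\widetilde{\AA}$ supported on $\gr A$ is exactly $\AA'$.

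The first step is to verify that $\pi_1(\widetilde{\AA}, u_0) \cong \pi_1(\AA, u_0)$ by comparing standard Bass--Serre presentations. Pick a spanning tree $T$ of $\gr A$ and extend it to the spanning tree $\widetilde T = T \cup \{e_u : u \in V(\gr A)\}$ of the underlying graph of $\widetilde{\AA}$; all the new edges are tree edges and contribute no relations, since their edge groups are trivial. The resulting presentation of $\pi_1(\widetilde{\AA}, u_0)$ has generators the elements of each $A'_u$ and each $A''_u$, together with the non-tree edges of $\gr A$, modulo the vertex-group relations, the edge relations $e^{-1}\alpha_e(x)e = \omega_e(x)$, and the relations setting tree-edge letters equal to 1. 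Every edge relation involves only elements of the subgroups $A'_u$, so identifying $A_u$ with $A'_u * A''_u$ rewrites the vertex-group generators of $\AA$ exactly as those of $A'_u$ and $A''_u$ with no cross-relations, making this presentation identical to the one for $\pi_1(\AA, u_0)$ with respect to $T$.

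The second step is to apply Proposition~\ref{prop: splitting over trivial edge groups}~\eqref{item: split pi1 many trivial edges} to $\widetilde{\AA}$ with $E_1 = \{e_u, e_u^{-1} : u \in V(\gr A)\}$. Each $e_u$ is separating, so the connected components obtained after removing $E_1$ are $\gr A$ itself (supporting $\AA'$) together with the isolated vertices $u''$ (each supporting the one-vertex graph of groups with vertex group $A''_u$). A quick Euler-characteristic check, or equivalently the observation that collapsing each component to a point yields a tree, shows that the free factor $F$ in the conclusion of that proposition is trivial, yielding
\[
\pi_1(\widetilde{\AA}, u_0) \;\cong\; \pi_1(\AA', u_0) * \bigast_{u \in V(\gr A)} A''_u,
\]
which, combined with the first step, is the stated isomorphism.

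The only delicate point is the presentation comparison of the first step, which hinges entirely on the hypothesis $\alpha_e(A_e) \subseteq A'_u$: without it, the edge relations would mix generators from $A'_u$ and $A''_u$, and the edge maps of $\AA'$ would not be well defined. Everything else is a bookkeeping exercise once $\widetilde{\AA}$ is set up correctly.
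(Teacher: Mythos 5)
Your proposal is correct and follows essentially the same route as the paper: the paper's proof constructs exactly the same auxiliary graph of groups (pendant vertices carrying the $A''_u$, joined by trivial edge groups), cites \cite[Proposition 2.1]{df05} for the isomorphism $\pi_1(\widetilde{\AA},u_0)\cong\pi_1(\AA,u_0)$ where you verify it directly via Bass--Serre presentations, and then invokes Proposition~\ref{prop: splitting over trivial edge groups} to peel off the $A''_u$ factors. Your explicit presentation comparison and the Euler-characteristic check that the free factor $F$ is trivial are both sound.
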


\begin{proof}
Let $\hat\AA$ be the graph of groups obtained from $\AA$ by adding new vertices $\hat u$ and new edges $e_u$ from $u$ to $\hat u$ for each $u\in V(\gr A)$, and taking vertex and edge groups as follows: $\hat A_u = A'_u$, $\hat A_{\hat u} = A''_u$ and $\hat A_{e_u} = 1$ for each $u \in V(\gr A)$, and $\hat A_e = A_e$ for each $e\in E(\gr A)$. By \cite[Proposition 2.1]{df05} (or \cite[Section 2]{bf91}), the fundamental groups $\pi_1(\AA,u_0)$ and $\pi_1(\hat\AA,u_0)$ are isomorphic. Moreover, Proposition~\ref{prop: splitting over trivial edge groups}~\eqref{item: split pi1} shows that $\pi_1(\hat\AA,u_0)$ splits as the free product of $\pi_1(\hat\AA',u_0)$ and the $A''_u$ ($u\in V(\gr A))$.
\end{proof}

The main result in this section is the following.

\begin{thm}\label{thm: on ffrip}
Suppose that a group $A$ splits as the fundamental group of a graph of groups $\AA$ whose vertex groups satisfy the strong \ffrip\ and whose edge groups are trivial or infinite cyclic. Then $A$ satisfies the strong \ffrip~as well.
\end{thm}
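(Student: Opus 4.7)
The plan is to encode the intersections $B^g\cap C$ and the double cosets $BgC$ as components of an $\AA$-product of immersions representing $B$ and $C$. Given finitely generated subgroups $B,C\leqslant A=\pi_1(\AA,u_0)$, since the edge groups of $\AA$ are Noetherian, Theorem~\ref{thm: bijection subgroups immersions covers}~\eqref{item: fg vertex and edge groups} supplies core pointed immersions $\mu^B\colon(\BB,v_0)\to(\AA,u_0)$ and $\mu^C\colon(\CC,w_0)\to(\AA,u_0)$ of finite graphs of finitely generated groups realising $B$ and $C$. I would form the pullback $\DD=\BB\wtimes_{\AA}\CC$. By Theorem~\ref{thm: summary immersions}~\eqref{item: conjugate_into_edge_group}, the double cosets $BgC\notin\calC(V(\DD))$ satisfy that $B^g\cap C$ conjugates into an edge group of $\AA$, hence is trivial or infinite cyclic and contributes $0$ to the factor rank sum; for the remaining double cosets, item~(5) of Theorem~\ref{thm: summary immersions} gives $\pi_1(\DD')\cong B^g\cap C$ for $g\in\calC_0(\DD')$. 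It thus suffices to show that each $\pi_1(\DD')$ admits a Grushko decomposition of finite factor rank, and that $\sum_{\DD'\in\pi_0(\DD)}\frank(\pi_1(\DD'))<\infty$.

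Next I would analyse the structure of $\DD$. Its vertices partition as a disjoint union over the finitely many pairs $(v,w)\in V(\gr B)\times_{\gr A}V(\gr C)$, with the vertices above $(v,w)$ in bijection with $B_v\backslash A_u/C_w$ and $D_x\cong B_v^a\cap C_w$ for a representative $a\in A_u$. The strong \ffrip\ of $A_u$ gives $\sum_a\frank(B_v^a\cap C_w)<\infty$ for each such pair, so that $\sum_{x\in V(\DD)}\frank(D_x)<\infty$. Likewise, an edge of $\DD$ with non-trivial edge group lies above a pair $(f,g)\in E(\gr B)\times_{\gr A}E(\gr C)$ with $A_e\cong\ZZ$ and both $B_f,C_g$ non-trivial, and above each such pair there are only $|B_f\backslash A_e/C_g|<\infty$ edges; hence $\DD$ has only finitely many edges with non-trivial (cyclic) edge group.

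Applying Proposition~\ref{prop: splitting over trivial edge groups}~\eqref{item: split pi1 many trivial edges} to split off the trivial edges in each component $\DD'$ yields a free product decomposition $\pi_1(\DD')=F\ast\bigast_{\sigma}\pi_1(\DD'_\sigma)$, where each $\DD'_\sigma$ is either a single vertex $\{x\}$ (with $\pi_1=D_x$) or a finite connected graph of groups with only non-trivial cyclic edges; the latter type appears in only finitely many $\sigma$ in total, since $\DD$ contains only finitely many cyclic edges. For each such finite cyclic $\DD'_\sigma$, a Bass--Serre and Kurosh analysis of the Grushko decompositions of the vertex groups $D_y$, combined with Proposition~\ref{prop: split DDi}, shows that $\pi_1(\DD'_\sigma)$ admits a Grushko decomposition of finite factor rank: each non-free freely indecomposable factor is either conjugate into a Grushko factor of some $D_y$ (of rank at most $\max_y\frank(D_y)$), or else arises by amalgamating finitely many such Grushko factors along the finitely many cyclic edges of $\DD'_\sigma$ (of rank bounded in terms of $|V(\DD'_\sigma)|$, $|E(\DD'_\sigma)|$ and $\max_y\frank(D_y)$). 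Putting everything together,
\[
\sum_{\DD'\in\pi_0(\DD)}\frank(\pi_1(\DD'))\;\leqslant\;\sum_{x\in V(\DD)}\frank(D_x)\,+\,\sum_{\sigma\text{ cyclic}}\frank(\pi_1(\DD'_\sigma))\;<\;\infty,
\]
which yields the strong \ffrip. The main obstacle lies precisely in the analysis of these finite cyclic sub-components: since the vertex groups $D_y$ may be infinitely generated (they are only known to have finite factor rank), one cannot directly conclude that $\pi_1(\DD'_\sigma)$ is finitely generated, and Kurosh's subgroup theorem is needed to describe each freely indecomposable factor as a tractable combination of finitely many Grushko factors of the vertex groups.
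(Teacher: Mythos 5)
Your proposal is correct and follows essentially the same route as the paper's proof: realise $B$ and $C$ by immersions of finite graphs of finitely generated groups, form the $\AA$-product, establish the two finiteness claims (the vertex factor-rank sum and the finitely many edges with non-trivial edge group), dispose of the double cosets outside $\calC(V(\DD))$ via the edge groups, split off the trivial edges with Proposition~\ref{prop: splitting over trivial edge groups}, and treat the finitely many remaining components using Proposition~\ref{prop: split DDi}. The paper's handling of that last step is a bit more direct than your Kurosh-style description --- it simply splits each vertex group as $D'_x * D''_x$ with $D'_x$ of finite rank containing all incident cyclic edge images, so that $\pi_1(\DD_i)$ becomes a free product of the $D''_x$ with a finite-rank group --- but this is the same idea and the same key tool.
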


\begin{proof}
Let $(\AA, u_0)$ be a connected pointed graph of groups whose vertex groups satisfy the strong \ffrip\ and whose edge groups are trivial or infinite-cyclic and let $A = \pi_1(\AA, u_0)$. Let $B$ and $C$ be arbitrary finitely generated subgroups of $A$. We want to show that the sum $\sum \frank(B^a \cap C)$ is well-defined and finite, where the sum runs over the double cosets $B\, a\, C$.
    
By Proposition~\ref{thm: bijection subgroups immersions covers}~\eqref{item: fg vertex and edge groups}, there exist core pointed graphs of groups $(\BB,v_0)$ and $(\CC,w_0)$, with finite underlying graphs and finitely generated vertex and edge groups, and there exist immersions $\mu^B\colon (\BB,v_0)\to (\AA,u_0)$ and $\mu^C\colon (\CC,w_0)\to (\AA,u_0)$ such that $B = \mu^B_*(\BB,v_0)$ and $C = \mu^C_*(\CC,w_0)$.
Let $\DD = \BB\wtimes_\AA\CC$. We proceed with two claims.

\begin{claim}\label{claim: vertex groups have bounded factor rank}
The sum $\sum_x\frank(D_x)$, where $x$ runs over the vertices $x\in V(\gr D)$, is finite. In particular, $D_x$ is free for all but a finite number of vertices of $\gr D$.
\end{claim}

\begin{proof}
Let $x \in V(\gr D)$. Suppose that $x\in V_{v,w}(\gr D)$ with $v\in V(\gr B)$ and $w\in V(\gr C)$, and let $\mu^B(v) = \mu^C(w) = u \in V(\gr A)$. Then $D_x = \mu^B_v(B_v)^{\tilde x} \cap \mu^C_w(C_w) \le A_u$, where $\tilde x \in A_u$. Since $B_v$ and $C_w$ are finitely generated, the fact that $A_u$ has the strong \ffrip\ implies that $\sum\frank(D_x)$ is finite, where $x$ runs over the $(v,w)$-vertices of $\DD$. The result follows since $V(\gr B)$ and $V(\gr C)$ are finite.
\end{proof}

\begin{claim}\label{claim: edges with non-trivial edge group}
$\DD$ has only finitely many edges with non-trivial edge groups.
\end{claim}

\begin{proof}
Let $h$ be an edge in $\DD$, say, $h = \mu^B_f(B_f)\,\tilde h\,\mu^C_g(C_g)$ for some $f\in E(\gr B)$, $g\in E(\gr C)$ and $\tilde h \in A_e$, where $e = \mu^B(f) = \mu^C(g)$. The corresponding edge group $D_h$ is equal to $\mu^B_f(B_f)^{\tilde h}\cap \mu^C_g(C_g)$. If $D_h$ is non-trivial, then $\mu^B_f(B_f)$ and $\mu^C_g(C_g)$ are non-trivial subgroups of $A_e$. In view of our hypothesis on the edge groups of $\AA$, the quotients $\mu^B_f(B_f)\backslash A_e$ and $A_e/\mu^C_g(C_g)$ are finite, and hence $E_{f,g}(\gr D) = \dblcoset{B_f}{A_e}{C_g}$ is finite. Again, the result follows since $E(\gr B)$ and $E(\gr C)$ are finite.
\end{proof}

\def\TT{\mathbb{T}}

Let $B\,a\,C$ ($a\in A$) be a double coset. If $B^a\cap C = 1$, then $\frank(B^a\cap C) = 0$ by definition of $\frank$. 

Now suppose that $B^a\cap C \ne 1$. If $B^a\cap C$ is not of the form $\calC(x)$ for a vertex $x$ of $\DD$ (where~$\calC$ is the map introduced in \eqref{eq: C}), then $B^a\cap C$ is isomorphic to a subgroup of an edge group of $\AA$ by Theorem~\ref{thm: summary immersions}~\eqref{item: conjugate_into_edge_group}, and since we assumed these groups to be trivial or infinite cyclic, we have that $\frank(B^a\cap C) = 0$.

Finally, if $B^a\cap C = \calC(x)$ for a vertex $x$ of $\DD$, then $B^a\cap C$ is isomorphic to $\pi_1(\DD,x)$ by Theorem~\ref{thm: product of immersions}~\eqref{item: localy_elliptic_double_cosets}. Moreover Theorem~\ref{thm: summary immersions}~\eqref{item: same_double_coset} shows that different $(B,C)$-double cosets are $\calC$-images of vertices in different connected components of $\DD$. Hence, we need to show that $\sum_{x\in R} \frank(\pi_1(\DD, x))<\infty$, where $R$ is a set of vertices, one in each connected component of $\DD$.

Let $(\DD_j)_{j\in J}$ be the (maybe infinite) collection of the connected components of the graph of groups obtained from $\DD$ by removing every edge with associated trivial edge group. 
For each $j$, fix $x_j \in V(\gr D_j)$. For each $x\in R$, let $J_x$ be the set of indices $j$ such that $\DD_j$ is in the connected component of $\DD$ containing $x$. By Proposition~\ref{prop: splitting over trivial edge groups}~\eqref{item: split pi1 many trivial edges}, we have
\begin{align*}
\frank(\pi_1(\DD, x)) &= \sup_{j \in J_x} \frank(\pi_1(\DD_j, x_j))\quad\textrm{and hence} \\
\sum_{x\in R}\frank(\pi_1(\DD, x)) &= \sum_{x\in R} \left(\sup_{j\in J_x} \frank(\pi_1(\DD_j, x_j)\right) \le \sum_{x\in R,\,j \in J_x} \frank(\pi_1(\DD_j, x_j)). 
\end{align*}
    
If $\DD_j$ does not contain any edge, then $\pi_1(\DD_j,x_j) = D_{x_j}$. By Claim~\ref{claim: vertex groups have bounded factor rank}, it follows that
$$\sum_{x\in R,\,j \in J'_x}\frank(\pi_1(\DD_j, x_j)) < \infty,$$
where $J'_x$ is the set of indices $j\in J_x$ such that $\DD_j$ has no edge.

Consider now the other components $\DD_j$, which contain at least one edge. Claim~\ref{claim: edges with non-trivial edge group} shows that there are finitely many such components, say $\DD_1, \ldots, \DD_k$, and each $\gr D_i$ ($i\in [1,k]$) is finite.

Now fix $i\in [1,k]$. From \ref{claim: vertex groups have bounded factor rank}, we see that, if $x \in V(\gr D_i)$, then $D_x$ is isomorphic to the free product of a free group $F_x$ and a free product of freely indecomposable groups $D_{x,\beta}$, each of which has rank at most some constant $N$. If $h \in E(\gr D_i)$ and $o(h) = x$, then the edge map $\alpha_h$ sends $D_h$ injectively to $D_x$.
Since $D_h$ is infinite cyclic, its image is either in one of the $D_{x,\beta}$ or in $F_x$. In the latter case, $F_x$ splits as a free product of a finite rank free group containing $\alpha_h(D_h)$ and another free group. Since $E(\gr D_i)$ is finite, it follows that $D_x$ splits as $D'_x * D''_x$, where $D'_x$ is the free product of a finite rank free factor $F'_x$ of $F_x$ and of finitely many of the $D_{x,\beta}$, so that $D'_x$ contains $\alpha_h(D_h)$ for every edge $h$ such that $o(h) = x$, and where $D''_x$ is the free product of the free complement of $F'_x$ in $F_x$ and of the $D_{x,\beta}$ which meet trivially each $\alpha_h(D_h)$ originating at $x$.  In particular, $D'_x$ has finite rank and $D''_x$ has finite factor rank.

By Proposition~\ref{prop: split DDi}, it follows that $\pi_1(\DD_i,x_i)$ splits as the free product of the $D''_x$ ($x \in V(\gr D_i)$) and of the fundamental group of a finite graph of groups whose vertex groups are the $D'_x$ ($x \in V(\gr D_i)$). By Proposition~\ref{prop: finite generation}~\eqref{item: generators of pi1}, that fundamental group has finite rank, and hence finite factor rank. It follows that $\frank(\pi_1(\DD_i,x_i))$ is finite, and this concludes the proof.
\end{proof}

\begin{rem}
    The definition of $\frank$ could be generalised to be the maximal rank of a vertex group in a maximal splitting over finite groups (as opposed to the maximal rank of a vertex group in a maximal splitting over $1$). The definition of \ffrip\ could similarly be generalised and a version of Theorem \ref{thm: on ffrip} would still remain true, replacing trivial and infinite cyclic with finite and virtually infinite cyclic. However, one would need extra assumptions in Theorem \ref{thm: on ffrip} since finitely generated groups are not necessarily accessible over finite groups by a result of Dunwoody \cite{Du93}.
\end{rem}

\section{The \fcip\ and general criteria for the \fgip}\label{sec: fcip}

In this section we develop tools to determine whether a given (fundamental group of a) graph of groups has the \fgip\ or \sfgip\ Our tools are primarily based on a property we call the finite coset interaction property (\fcip), which we introduce in Section \ref{sec: fcip2} along with some variations. This is a property which allows us to deduce properties on the structure of arbitrary $\AA$-products. For example, we prove that given a graph of groups $\AA$ in which edge groups have the \fcip\ in their adjacent vertex groups, any $\AA$-product of finite graphs of finitely generated groups will have the property that each component has finite underlying graph. In Section \ref{sec: fgip fcip interplay} we use the properties of $\AA$-products derived using the \fcip, in conjunction with Theorem \ref{thm: bijection subgroups immersions covers}, to establish concrete criteria for the \fgip\ in graphs of groups. 

As noted in the introduction, our results proved here recover all known criteria for the \fgip\ for fundamental groups of graphs of groups. We will indicate precisely how known results can be recovered in the appropriate sections.

\subsection{The finite coset interaction property and variations}\label{sec: fcip2}

In this section we introduce several variants of a \emph{coset interaction property} which will allow us to constrain the structure of the underlying graphs of $\AA$-products. These constraints will then play a key role in developing criteria for a graph of groups to have the finitely generated intersection property (\fgip).

Let $A, B, C$ be finitely generated subgroups of a group $G$ and let $f, g\in G$. Let $q^A_{f,g}$ be the following map:
\begin{align*}
q^A_{f,g}\colon \dblcoset{(A\cap B^f)}A{(A\cap C^g)} & \longrightarrow \dblcoset BGC \\
(A\cap B^f)\,a\,(A\cap C^g) & \longmapsto B\,(f\,a\,g\inv)\,C.
\end{align*}
This map is well-defined. Indeed, if $a, a'\in A$ and $a'\in (A\cap B^f)\,a\,(A\cap C^g)$, then $a' = f\inv\,b\,(f\,a\,g\inv)\,c\,g$ for some $b\in B$ and $c\in C$. Therefore $f\,a'\,g\inv = b\,(f\,a\,g\inv)\,c \in B\,(f\,a\,g\inv)\,C$.

\begin{exm}
\label{exm: map}
    When the group $G$ above is abelian, the description of the map $q_{f, g}^A$ can be greatly simplified since double cosets become cosets and cosets are simply elements of a quotient group. For instance, let $G\cong \Z$, let $A = i\Z, B = j\Z, C = k\Z$ and let $f = m, g = n$. Then the map $q_{f, g}^A$ can be described explicitly:
    \begin{align*}
        q_{m, n}^{i\Z}\colon i\Z/\gcd(\lcm(i, j), \lcm(i, k))&\to \Z/\gcd(j, k)\\
                            it + \gcd(\lcm(i, j), \lcm(i, k))\Z &\mapsto it + (m-n) + \gcd(j, k)\Z.
    \end{align*}
    Note that if $f = g$, then $q_{f, g}^A$ is actually a group homomorphism.
    
    Letting $i = 2, j = 3, k = 6, m = 7, n = 8$, the map becomes:
    \begin{align*}
        q_{7, 8}^{2\Z}\colon 2\Z/6\Z &\to \Z/3\Z\\
                                2t + 6\Z &\mapsto 2t - 1 + 3\Z.
    \end{align*}
    In this case, $q_{7, 8}^{2\Z}$ is a bijection.
\end{exm}

\begin{defn}\label{defn: finite coset interaction2}
Let $B, C$ be finitely generated subgroups of a group $G$ and let $\mathcal{A}$ be a collection of finitely generated subgroups of $G$. We say that the pair $(B, C)$ has \emph{finite coset interaction relative to $\mathcal{A}$} if, for each $A\in \mathcal{A}$ and any finite collections $\mathcal{F}_A$ and $\mathcal{G}_A$ of elements of $G$, lying in pairwise distinct $(B,A)$- and $(C,A)$-double cosets, respectively, the following holds:
\[
\sum_{B\,h\,C\ \in\ \dblcoset BGC}\max{\left\{0, \left(\sum_{A\in \mathcal{A}}\sum_{\substack{f\in \mathcal{F}_A\\ g\in \mathcal{G}_A}} \left|(q^A_{f, g})^{-1}(B\,h\,C)\right|\right) - 1\right\}}<\infty.
\]
In other words, there are finitely many $(B,C)$-double cosets which occur at least twice in the union of the ranges of the $q^A_{f,g}$ ($f\in \mathcal{F}_A$, $g\in \mathcal{G}_A$) --- either because they occur in the ranges of $q^A_{f,g}$ and $q^{A'}_{f',g'}$ for distinct triples $A, f,g$ and $A', f',g'$, or because they have at least two pre-images under some $q^A_{f,g}$.
\end{defn}

\begin{rem}\label{rem: translate fcip2}
The motivation for Definition~\ref{defn: finite coset interaction2} is the following. Let $\mu^B\colon \BB\to\AA$ and $\mu^C\colon\CC\to\AA$ be immersions of graphs of groups. Let $f\in E(\gr B)$ and $g \in E(\gr C)$ be edges with the same image $e\in E(\gr A)$, and let $v = o(f)$, $w = o(g)$ and suppose that $u\in V(\gr A)$ the common image of $v$ and $w$. Note that, by definition of immersions (see Definition~\ref{def: folded}), the double coset $\mu^B_v(B_v)\,f_\alpha\,\alpha_e(A_e)$ uniquely determines $f$ and the double coset $\mu^C_w(C_w)\,g_\alpha\,\alpha_e(A_e)$ uniquely determines $g$.
\smallskip
Finally, let $\DD = \BB\wtimes_\AA\CC$ and take $h \in E_{f,g}(\gr D)$, say, $h = \mu^B_f(B_f)\,a\,\mu^C_g(C_g)$ with $a\in A_e$. Then we have (recall from Section~\ref{sec: pullbacks})
$$o(h) = \mu^B_v(B_v)\,(f_\alpha\,\alpha_e(a)\,g_\alpha\inv)\,\mu^C_w(C_w).$$
Moreover, $\alpha_e(A_e) \cap \mu^B_v(B_v)^{f_\alpha} = \alpha_e(\mu^B_f(B_f))$ and $\alpha_e(A_e) \cap \mu^C_w(C_w)^{g_\alpha} = \alpha_e(\mu^C_g(C_g))$ since $\mu^B$ and $\mu^C$ are immersions. Therefore,
$$o(h) = q^{A_e}_{f_\alpha,g_\alpha}\left(\alpha_e\left(\mu^B_f(B_f)\,a\,\mu^C_g(C_g) \right)\right) = q_{f_\alpha,g_\alpha}(\alpha_e(h)).$$
In particular, if $G = A_u$, $B = \mu^B_v(B_v)$, $C = \mu^C_w(C_w)$, and if $(B,C)$ has finite coset interaction relative to $\{\alpha_e(A_e)\}$, letting $\mathcal F$ (resp. $\mathcal G$) be the set of all $f_\alpha$ (resp. $g_\alpha$), where $f \in E(\gr B)$, $\mu^B(f) = e$ and $o(f) = v$ (resp. $g\in E(\gr C)$,  $\mu^C(g) = e$ and $o(g) = w$), then every $(v,w)$-vertex is the origin of a finite number of edges which map to $e$, and only finitely many $(v,w)$-vertices are the origin of two or more such edges.
\end{rem}

\begin{exm}
\label{exm: abelian}
    Let $G$ be a finitely generated abelian group, let $B, C\leqslant G$ and let $\mathcal{A} = \{A\}$ be a single subgroup of $G$. Consider the homomorphism
    \[
    q_{0, 0}^A\colon A/((A\cap B) + (A\cap C))\to G/(B + C).
    \]
    If any one of the following holds, then $(B, C)$ does not have finite coset interaction relative to $A$:
    \begin{enumerate}[(1)]
        \item the kernel of $q_{0, 0}^A$ is infinite;
        \item the image of $q_{0, 0}^A$ is infinite and its kernel is non-trivial;
        \item the image of $q_{0, 0}^A$ is infinite and $B+A\neq G$, $C+A\neq G$. Indeed, there is $f\in G - ((B+A)\cup (C+A))$ (a group cannot be equal to the union of two proper subgroups) and so we see that each element in the image of $q_{0, 0}^A$ also has a pre-image in $q_{f, f}^A = q_{0, 0}^A$.
        \item the image of $q_{0, 0}^A$ is infinite and $B+A = G$ but $C+A\neq G$. Indeed, since $B+A = G = B+A - g + C$, each map $q_{0, g}^A$ is surjective.
    \end{enumerate}
    Thus $(B, C)$ has finite coset interaction with respect to $A$ if and only if either the image and kernel of $q_{0, 0}^A$ are finite (an explicit example of this situation can be found in Example \ref{exm: map}), or $B+A = C+A = G$ and the kernel of $q_{0, 0}^A$ is trivial.
\end{exm}

We will also use the following variants and extensions of Definition~\ref{defn: finite coset interaction2}.

\begin{defn}\label{defn: variants of fcip2}
Let $B, C$ be finitely generated subgroups of $G$, let $\mathcal{A}$ be a collection of finitely generated subgroups of $G$ and let $k \ge 0$ be an integer. We say that
\begin{itemize}
\item  the pair $(B, C)$ has \emph{$k$-finite coset interaction relative to $\mathcal{A}$} if, for each $A\in \mathcal{A}$ and any finite collection of elements $\mathcal{F}_A, \mathcal{G}_A\subset G$ lying in pairwise distinct $(B, A)$- and $(C, A)$-double cosets, respectively, the following holds:
\[
\sum_{B\,h\,C\ \in\ \dblcoset BGC}\max\left\{0, \left(\sum_{A\in \mathcal{A}}\sum_{\substack{f\in \mathcal{F}_A,\\ g\in \mathcal{G}_A}}\left|(q^A_{f, g})^{-1}(B\,h\,C)\right|\right) - k\right\}<\infty.
\]
(For $k=1$, this coincides with finite coset interaction as in Definition~\ref{defn: finite coset interaction2}.)

\item The pair $(B, C)$ has \emph{weak finite coset interaction relative to $\mathcal{A}$} if for any collection of elements $\{f_A, g_A\}_{A\in \mathcal{A}}$ of $G$, the following holds:
\[
\left|\bigcap_{A\in \mathcal{A}}\Ima(q^{A}_{f_A, g_A})\right|<\infty.
\]

\item The pair $(B, C)$ has \emph{local finite coset interaction relative to $\mathcal{A}$} if, for each $A\in \mathcal{A}$ and any $f, g, h\in G$ we have:
\[
\left|(q^A)_{f, g}^{-1}(B\,h\,C)\right|<\infty.
\]

\item
$G$ has the \emph{$k$-finite coset interaction property}, or \emph{$k$-\fcip}, \emph{relative to $\mathcal{A}$} if every pair of finitely generated subgroups of $G$ has $k$-finite coset interaction relative to $\mathcal{A}$; if $k=1$, we simply say that \emph{$G$ has the \fcip\ relative to $\mathcal{A}$}.

\item $G$ has the \emph{weak \fcip\ relative to $\mathcal{A}$} if every pair of finitely generated subgroups of $G$ have weak finite coset interaction relative to $\mathcal{A}$.

\item $G$ has the \emph{local \fcip\ relative to $\mathcal{A}$} if every pair of finitely generated subgroups of $G$ have local finite coset interaction relative to $\mathcal{A}$.
\end{itemize}
\end{defn}

\begin{rem}
It follows from Definition~\ref{defn: variants of fcip2} that, if $G$ has the $k$-\fcip\ relative to $A$, then it also has the $(k+1)$-\fcip\ and the local \fcip\ relative to $A$.
\end{rem}

In the spirit of Remark~\ref{rem: translate fcip2}, these definitions translate to the following statement.

\begin{prop}\label{prop: translate fcip2}
Let $\AA, \BB, \CC$ be graphs of groups with finite underlying graphs and finitely generated vertex and edge groups, equipped with immersions from $\BB$ and $\CC$ to  $\AA$. Let also $u \in V(\gr A)$, $E_u\subset \Star(u)$ and let $k\ge 0$.
\begin{itemize}

\item If $A_u$ has the $k$-\fcip\ relative to $\{\alpha_e(A_e)\}_{e\in E_u}$, then the $\AA$-product $\BB\wtimes_{\AA}\CC$ has finitely many vertices with strictly more than $k$ outgoing edges mapping edges
in $E_u$, and no vertex is the origin of infinitely many outgoing edges mapping into $E_u$.

\item If $A_u$ has the weak \fcip\ relative to $\{\alpha_e(A_e)\}_{e\in E_u}$, then the $\AA$-product $\BB\wtimes_{\AA}\CC$ has finitely many vertices $x$ such that the image of $\Star(x)$ contains $E_u$.

\item If $A_u$ has the local \fcip\ relative to $\{\alpha_e(A_e)\}_{e\in E_u}$, then every vertex of the $\AA$-product $\BB\wtimes_{\AA}\CC$ has finitely many outgoing edges mapping into $E_u$.
\end{itemize}
\end{prop}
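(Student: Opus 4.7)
The plan is to invoke the dictionary established in Remark~\ref{rem: translate fcip2}, which translates the incidence structure of the $\AA$-product directly into the coset interaction maps $q^A_{f,g}$ of Definition~\ref{defn: finite coset interaction2}. Fix $u\in V(\gr A)$ and, for each pair $(v,w)\in V(\gr B)\times V(\gr C)$ lying over $u$ (a finite collection, since $\gr B$ and $\gr C$ are finite), set $G = A_u$, $B = \mu^B_v(B_v)$, $C = \mu^C_w(C_w)$, and $\mathcal A = \{\alpha_e(A_e)\}_{e\in E_u}$. For each $e\in E_u$, let $\mathcal F_e$ (respectively $\mathcal G_e$) be the set of twisting elements $f_\alpha$ for $f\in E(\gr B)$ with $o(f)=v$ and $\mu^B(f)=e$ (resp.\ $g_\alpha$ for $g\in E(\gr C)$ with $o(g)=w$ and $\mu^C(g)=e$); by Definition~\ref{def: folded}~\eqref{item: immersion 1}, these are finite sets of elements lying in pairwise distinct $(B,\alpha_e(A_e))$- (resp.\ $(C,\alpha_e(A_e))$-) double cosets. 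The identity $o(h) = q^{\alpha_e(A_e)}_{f_\alpha,g_\alpha}(\alpha_e(h))$ from Remark~\ref{rem: translate fcip2} then puts the outgoing edges at a vertex $x\in V_{v,w}(\gr D)$ that map to $e$ into bijection with $\bigsqcup_{(f,g)}(q^{\alpha_e(A_e)}_{f_\alpha,g_\alpha})^{-1}(x)$, where the union ranges over the finite set of pairs $(f,g)$ with $\mu^B(f)=\mu^C(g)=e$, $o(f)=v$, $o(g)=w$.

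With this dictionary in place, each of the three assertions reduces to a direct unpacking of the corresponding variant of the \fcip. For the first, applying the defining inequality of the $k$-\fcip\ to $G$, $B$, $C$, $\mathcal A$ with the collections $\mathcal F_e$, $\mathcal G_e$ just described forces each inner sum appearing in Definition~\ref{defn: variants of fcip2} to be finite (giving local finiteness at every $x\in V_{v,w}(\gr D)$) and forces all but finitely many $x\in V_{v,w}(\gr D)$ to have at most $k$ outgoing edges mapping into $E_u$. Summing over the finite set of pairs $(v,w)$ above $u$ yields the global statement. For the third assertion, the local \fcip\ guarantees that each fibre $(q^{\alpha_e(A_e)}_{f_\alpha,g_\alpha})^{-1}(x)$ is finite, and the outgoing edges at a fixed $x$ mapping into $E_u$ form a finite union of such fibres, indexed by the finite set of admissible triples $(e,f,g)$.

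For the second assertion, if $x\in V_{v,w}(\gr D)$ satisfies $E_u\subseteq\mu(\Star(x))$, then for every $e\in E_u$ there exist $f_e\in\mathcal F_e$ and $g_e\in\mathcal G_e$ with $x\in\Ima(q^{\alpha_e(A_e)}_{(f_e)_\alpha,(g_e)_\alpha})$, so $x$ lies in $\bigcap_{e\in E_u}\Ima(q^{\alpha_e(A_e)}_{(f_e)_\alpha,(g_e)_\alpha})$ for some choice function $e\mapsto(f_e,g_e)$. The weak \fcip\ bounds each such intersection to be finite, and there are only finitely many choice functions since $E_u$ and each $\mathcal F_e\times\mathcal G_e$ are finite; summing over the finite collection of pairs $(v,w)$ above $u$ concludes the argument.

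The only real subtlety lies in correctly identifying the data $G$, $B$, $C$, $\mathcal A$, $\mathcal F_A$, $\mathcal G_A$ fed into the \fcip\ hypotheses, and this is already encapsulated in Remark~\ref{rem: translate fcip2}. Once the dictionary is set up, the proposition is a straightforward bookkeeping translation of the defining inequalities, so I do not anticipate any substantive obstacle beyond writing out the correspondence carefully.
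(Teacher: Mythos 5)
Your proposal is correct and takes essentially the same route as the paper, which states Proposition~\ref{prop: translate fcip2} as an immediate consequence of the dictionary in Remark~\ref{rem: translate fcip2} without writing out a separate proof; your identification of the data $G = A_u$, $B=\mu^B_v(B_v)$, $C=\mu^C_w(C_w)$, the sets $\mathcal F_e$, $\mathcal G_e$ of twisting elements (pairwise distinct double cosets via Definition~\ref{def: folded}~\eqref{item: immersion 1}), and the bijection between outgoing edges at $x$ and fibres of the maps $q^{\alpha_e(A_e)}_{f_\alpha,g_\alpha}$ is exactly the intended argument. The finitely-many-$(v,w)$ bookkeeping and the unpacking of each \fcip\ variant are all as the paper intends.
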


\begin{rem}\label{rem: translate fcip is an equivalence}
The implications in Proposition~\ref{prop: translate fcip2} are in fact logical equivalences in the following sense. With the notation of that proposition, if $A_u$ does not have the $k$-\fcip\ relative to $\alpha_e(A_e)$, then one can find graphs of groups $\BB$ and $\CC$ with finite underlying graphs, and immersions from $\BB$ and $\CC$ to $\AA$, such that $\BB\wtimes_{\AA}\CC$ has infinitely many vertices with at least $k+1$ outgoing edges mapping to $e$, or a vertex with infinitely many outgoing edges mapping to $e$.

A similar statement holds regarding the weak \fcip\ and the local \fcip\
\end{rem}

Proposition~\ref{prop: translate fcip2} immediately implies the following statements. Recall that a graph is \emph{locally finite} if each vertex is the extremity of a finite number of edges.

\begin{cor}\label{cor: k-fcip}
Let $k\geqslant 0$ and let $\AA$ be a finite graph of groups such that each vertex group has the $k$-\fcip\ relative to the collection of adjacent edge groups. Then for any pair of immersions $\mu^B\colon \BB\to \AA$, $\mu^C\colon \CC\to \AA$ of finite graphs of finitely generated groups, the $\AA$-product $\BB\wtimes_{\AA}\CC$ is locally finite and has finitely many vertices with strictly more than $k$ outgoing edges. In particular:
\begin{enumerate}[(1)]
    \item If $k = 0$ then $\BB\wtimes_{\AA}\CC$ has finitely many edges.
    \item If $k = 1$, then each component of $\BB\wtimes_{\AA}\CC$ has finitely many edges.
\end{enumerate} 
\end{cor}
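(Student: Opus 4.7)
The statement is essentially immediate from Proposition~\ref{prop: translate fcip2} applied locally at every vertex of $\gr{A}$, combined with a short combinatorial argument that exploits the edge involution.

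The first step is to set $\DD=\BB\wtimes_{\AA}\CC$ and note that every outgoing edge of a vertex $x\in V(\gr{D})$ maps to an element of $\Star(u)$, where $u$ is the image of $x$ in $\gr{A}$. Applying Proposition~\ref{prop: translate fcip2} at each $u\in V(\gr{A})$ with $E_u=\Star(u)$, the $k$-\fcip\ hypothesis on $A_u$ yields two facts about the $u$-fibre: no vertex is the origin of infinitely many outgoing edges, and only finitely many such vertices are the origin of more than $k$ outgoing edges. Since $V(\gr{A})$ is finite, summing over $u$ shows that every vertex of $\DD$ has finite out-degree and only finitely many vertices have out-degree strictly greater than $k$. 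Local finiteness of $\DD$ then follows from the observation that the involution $e\mapsto e^{-1}$ identifies the set of edges with origin $v$ with the set of edges with target $v$, so finite out-degree at $v$ implies finite total valence at $v$.

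For part~(1), the case $k=0$ means all but finitely many vertices have no outgoing edge; combined with local finiteness this immediately bounds the total number of edges. For part~(2), I would fix a component $\DD'$ and let $H$ be its (finite) set of vertices of out-degree strictly greater than $1$. When $H$ is empty, the involution forces $\DD'$ to be a single vertex or a single geometric edge. When $H$ is non-empty, the key step is to show that every $x\in V(\gr{D}')\setminus H$ is a leaf attached to $H$: such a vertex has a unique neighbour $y$, and if $y$ also had out-degree at most $1$ then $\{x,y\}$ would form an isolated component, contradicting connectedness with $H$. Thus $\DD'$ consists of the finite subgraph on $H$ together with at most $\sum_{y\in H}\lvert\Star(y)\rvert$ attached leaves, which is finite by local finiteness of $\DD$, and hence $\DD'$ has finitely many edges.

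The main obstacle, if any, is purely bookkeeping: one has to be careful with the involution convention, in particular with loops, which contribute $2$ to the out-degree, when translating ``out-degree at most $1$'' into a statement about the geometric shape of a component. No further technical tool is needed beyond Proposition~\ref{prop: translate fcip2}.
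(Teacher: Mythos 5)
Your proof is correct and follows the same route as the paper, which derives this corollary directly from Proposition~\ref{prop: translate fcip2} applied with $E_u=\Star(u)$ at each of the finitely many vertices of $\gr{A}$ (the paper in fact states it as an immediate consequence and omits the combinatorial bookkeeping you supply). Your handling of the involution, the $k=0$ count, and the leaf analysis for $k=1$ are all sound.
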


\begin{cor}
Let $\AA$ be a finite graph of groups such that each vertex group has the weak \fcip\ relative to the collection of adjacent edge groups. Then for any pair of immersions $\mu^B\colon \BB\to \AA$, $\mu^C\colon \CC\to \AA$ of finite graphs of finitely generated groups, the $\AA$-product $\BB\wtimes_{\AA}\CC$ has finitely many vertices at which the map $\BB\wtimes_{\AA}\CC\to\AA$ is not locally injective.
\end{cor}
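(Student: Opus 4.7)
The plan is a vertex-by-vertex reduction via Proposition~\ref{prop: translate fcip2}. Since $V(\gr A)$ is finite, it suffices to show that, for each $u \in V(\gr A)$, the set of vertices $x \in V(\gr D)$ lying over $u$ at which the underlying graph morphism $\gr D \to \gr A$ fails to be locally injective is finite, where $\gr D$ denotes the underlying graph of $\BB\wtimes_{\AA}\CC$.

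Fix such $u$ and set $E_u = \Star(u)$. Proposition~\ref{prop: translate fcip2}, applied to the weak \fcip\ of $A_u$ relative to $\{\alpha_e(A_e)\}_{e\in \Star(u)}$, yields only finitely many vertices $x$ above $u$ whose star-image covers all of $\Star(u)$. I would next exploit the incidence formulas recalled in Section~\ref{sec: pullbacks}: the edges of $\Star(x)$ mapping to a fixed $e \in \Star(u)$ correspond to pairs $(f, g) \in \Star(v)\times\Star(w)$ above $e$ (writing $x \in V_{v,w}(\gr D)$) together with preimages of $x$ under the maps $q^{A_e}_{f_\alpha, g_\alpha}$. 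Local injectivity at $x$ thus fails precisely when, for some $e$, the total $\sum_{(f,g)}|(q^{A_e}_{f_\alpha, g_\alpha})^{-1}(x)|$ exceeds one.

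The argument then runs by pigeonhole-and-contradiction. Were infinitely many non-locally-injective vertices to lie above $u$, the finiteness of the edge sets of $\gr B$ and $\gr C$ above $\gr A$ would produce an infinite subfamily sharing a common doubled profile --- either two distinct markings $(f_1, g_1) \ne (f_2, g_2)$ over a fixed $e$, or a fixed $(f, g)$ whose fiber under $q^{A_e}_{f_\alpha, g_\alpha}$ has size at least two. Choosing arbitrary completions $(f_{e'}, g_{e'})$ for the remaining $e' \in \Star(u)$ would force an infinite joint intersection of images of the $q$-maps, contradicting Definition~\ref{defn: variants of fcip2}. Summing over $u \in V(\gr A)$ finishes the argument. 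The main obstacle I anticipate is the degenerate case of a shared marking, since weak \fcip\ directly controls intersections of images across several $q$'s rather than individual fibers; here I would leverage the characterisation in Remark~\ref{rem: translate fcip is an equivalence} to repackage the failure combinatorially and conclude by the same joint-intersection contradiction.
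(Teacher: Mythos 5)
The paper's entire proof of this corollary is the sentence preceding it in the text: it is presented as an immediate application of the second item of Proposition~\ref{prop: translate fcip2}, taken with $E_u = \Star(u)$ for each of the finitely many vertices $u$ of $\gr A$. Your opening reduction and your appeal to that item therefore match the paper exactly; where you diverge is that you (rightly) observe that ``the image of $\Star(x)$ contains $\Star(u)$'' is not literally the same condition as ``the star map at $x$ is not injective'', and you try to close that gap with an additional pigeonhole argument. It is that additional argument which does not go through.

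The failure is in your final step. Non-injectivity of the star map at $x$ over an edge $e$ means either that $x$ lies in $\Ima(q^{A_e}_{f_{1,\alpha},g_{1,\alpha}}) \cap \Ima(q^{A_e}_{f_{2,\alpha},g_{2,\alpha}})$ for two distinct markings over the \emph{same} edge $e$, or that a single fibre $(q^{A_e}_{f_\alpha,g_\alpha})^{-1}(x)$ has at least two elements. Neither configuration can be fed into Definition~\ref{defn: variants of fcip2}: the weak \fcip\ quantifies over exactly one pair $(f_A, g_A)$ \emph{per subgroup} $A$ of the collection, so you may not intersect two $q$-maps attached to the same $\alpha_e(A_e)$, and the definition says nothing about the cardinality of individual fibres, since only images appear in it. The ``arbitrary completions'' device fails independently of this: an infinite family of vertices that are non-locally-injective over $e$ need not have \emph{any} outgoing edge over the remaining edges $e'\in\Star(u)$, hence need not lie in $\Ima(q^{A_{e'}}_{f_{e'},g_{e'}})$ for any choice of completion, so no infinite joint intersection over all of $\Star(u)$ is produced and no contradiction with the weak \fcip\ arises. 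Remark~\ref{rem: translate fcip is an equivalence} does not rescue this: it only records that the implications of Proposition~\ref{prop: translate fcip2} are sharp, not that star-map non-injectivity can be repackaged as a joint-image condition. To control same-edge coincidences one needs a different hypothesis (the local or $k$-\fcip, or a weak \fcip\ permitting a repeated subgroup in the collection); with the weak \fcip\ as defined, what Proposition~\ref{prop: translate fcip2} delivers --- and all that the paper's one-line derivation actually invokes --- is the finiteness of the set of vertices whose star maps \emph{onto} the whole of $\Star(u)$.
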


\begin{cor}\label{locally_finite_criterion}
Let $\AA$ be a locally finite graph of groups such that each vertex group has the local \fcip\ relative to each adjacent edge group. Then for any pair of immersions $\mu^B\colon \BB\to \AA$, $\mu^C\colon \CC\to \AA$ of locally finite graphs of finitely generated groups, the $\AA$-product $\BB\wtimes_{\AA}\CC$ is locally finite.
\end{cor}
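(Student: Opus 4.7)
The plan is to apply, vertex by vertex, the reasoning of the third bullet of Proposition \ref{prop: translate fcip2}. Although that proposition is stated under the assumption that the underlying graphs of $\AA$, $\BB$, $\CC$ are finite, its conclusion at a single vertex $x$ of the $\AA$-product only involves the stars of $x$ and of its projections into $\BB$ and $\CC$, each of which is finite in the locally finite setting.

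Pick $x\in V(\gr D)$, where $\gr D$ is the underlying graph of $\DD=\BB\wtimes_\AA\CC$, and suppose $x\in V_{v,w}(\gr D)$ with $\mu^B(v)=\mu^C(w)=u$. Every outgoing edge at $x$ lies in some $E_{f,g}(\gr D)$ with $f\in \Star(v)$, $g\in \Star(w)$ and $\mu^B(f)=\mu^C(g)\in \Star(u)$. By local finiteness of $\AA$, $\BB$ and $\CC$, each of $\Star(u)$, $\Star(v)$ and $\Star(w)$ is finite, hence only finitely many such pairs $(f,g)$ are relevant.

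Fix one such pair $(f,g)$ with common image $e$. Using the incidence formula \eqref{eq: incidence in D} together with property~\eqref{item: immersion 2} of the Definition \ref{def: folded} of immersions, which identifies $\alpha_e\mu_f^B(B_f)$ with $\alpha_e(A_e)\cap\mu_v^B(B_v)^{f_\alpha}$ (and symmetrically on the $\CC$ side), the outgoing edges at $x$ in $E_{f,g}(\gr D)$ are in bijection with the preimage of $x$ under the map $q^{\alpha_e(A_e)}_{f_\alpha,g_\alpha}$ built inside $A_u$ for $B=\mu_v^B(B_v)$ and $C=\mu_w^C(C_w)$: this is exactly the bookkeeping carried out in Remark~\ref{rem: translate fcip2}. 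The local \fcip\ of $A_u$ relative to $\alpha_e(A_e)$ then guarantees that this preimage is finite.

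Summing over the finitely many relevant pairs $(f,g)$ yields that $x$ is the origin of finitely many edges of $\DD$; applying the involution $h\mapsto h^{-1}$ to bound incoming edges, we conclude that $\DD$ is locally finite. The only point requiring care is the translation ``outgoing edges at $x$ in $E_{f,g}(\gr D)$'' $\leftrightarrow$ ``preimages under $q^{\alpha_e(A_e)}_{f_\alpha,g_\alpha}$'', but this is precisely the content of Remark~\ref{rem: translate fcip2}; once it is invoked, the corollary is a direct consequence of local finiteness and the hypothesis on the vertex groups.
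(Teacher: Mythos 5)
Your proof is correct and follows essentially the same route as the paper, which deduces the corollary directly from the third bullet of Proposition~\ref{prop: translate fcip2} (whose content is exactly the bookkeeping of Remark~\ref{rem: translate fcip2} that you reproduce). Your explicit observation that the argument is purely local at each vertex --- so the finiteness hypotheses on the underlying graphs in that proposition can be relaxed to local finiteness --- is precisely the point the paper leaves implicit when it calls the corollary immediate.
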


There are three more situations in which we may use the \fcip\ properties to constrain the structure of $\AA$-products.

\begin{prop}
\label{prop: 2-fcip}
    Let $\AA$ be a finite graph of groups such that each vertex group has the $2$-\fcip\ relative to the collection of adjacent edge groups. Then for any pair of immersions $\mu^B\colon \BB\to \AA$, $\mu^C\colon \CC\to \AA$ of finite graphs of finitely generated groups, there is a decomposition 
    \[
    \BB\wtimes_{\AA}\CC = \DD_1\cup\DD_2\cup \DD_3
    \]
    so that the following holds:
    \begin{enumerate}[(1)]
        \item $\gr{D}_1$ is a finite graph.
        \item $\gr{D}_2$ is a disjoint union of finitely many lines that each intersect $\gr{D}_1$ only at a single vertex of degree one in $\gr{D}_2$.
        \item $\gr{D}_3$ is a disjoint union of lines and cycles that do not intersect $\gr{D}_1\cup\gr{D}_2$.
    \end{enumerate}
\end{prop}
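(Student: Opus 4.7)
The starting point is Corollary \ref{cor: k-fcip} specialised to $k=2$: the hypothesis that each vertex group of $\AA$ has the $2$-\fcip\ relative to its collection of adjacent edge groups forces $\DD := \BB\wtimes_{\AA}\CC$ to be locally finite and to have only finitely many vertices with strictly more than two outgoing edges. Let $V^{>2} \subseteq V(\gr D)$ denote this finite set. The plan is to exploit the extremely constrained structure of the underlying graph $\gr D$ away from $V^{>2}$ in order to produce the decomposition.

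First, I would delete from $\gr D$ the vertices of $V^{>2}$ together with their incident edges, producing a subgraph $\gr{D}'$. Since every vertex of $\gr{D}'$ has degree at most $2$ in $\gr D$, each connected component of $\gr{D}'$ must be one of four things: a finite path, a one-way infinite ray, a two-way infinite line, or a finite cycle.

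Next I would distinguish the \emph{attached} components of $\gr{D}'$, namely those containing some vertex with a $\gr D$-neighbour in $V^{>2}$, from the \emph{detached} components. An elementary observation, using that any internal vertex of a component of $\gr{D}'$ already has full degree $2$ inside $\gr{D}'$ and therefore cannot accommodate any further edge to $V^{>2}$ in $\gr D$, forces every attachment to occur at an endpoint of its component; this rules out cycles and two-way infinite lines as attached components, so the attached ones are necessarily finite paths or one-way infinite rays. Moreover, the finiteness of $V^{>2}$ combined with the local finiteness of $\gr D$ bounds the total number of edges from $V^{>2}$ to its complement, and hence the number of attached components, by a finite constant. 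A finite attached path has finite length, being pinned down at both ends by attaching edges or by a degree-one leaf of $\gr D$.

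With this picture in place I would take $\gr{D}_1$ to be $V^{>2}$ together with the attached finite components and their attaching edges, $\gr{D}_2$ to be the union of the attached infinite rays together with their attaching edges and feet in $V^{>2}$, and $\gr{D}_3$ to be the union of the detached components. The stated properties then follow: $\gr{D}_1$ is finite by the previous paragraph; $\gr{D}_2$ is a finite union of rays each meeting $\gr{D}_1$ at a single foot in $V^{>2}$; and $\gr{D}_3$ inherits from $\gr{D}'$ the structure of a disjoint union of lines (finite, one-way or two-way infinite) and cycles. I expect the only point that requires any real care to be the case where several infinite rays of $\gr{D}_2$ emanate from a common foot $v \in V^{>2}$: the ``disjoint union'' description of $\gr{D}_2$ then has to be read edge-wise, with the rays sharing the vertex $v$ of $\gr{D}_1$ but otherwise disjoint, so that each individual ray meets $\gr{D}_1$ at a single vertex of degree one within that ray.
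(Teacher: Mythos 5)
Your proof is correct and follows essentially the same route as the paper's: finitely many vertices of degree greater than $2$ via Proposition \ref{prop: translate fcip2} (equivalently Corollary \ref{cor: k-fcip} with $k=2$), degree at most $2$ everywhere else, and a partition into a finite core, finitely many attached rays, and detached lines and cycles. The only divergence is the shared-foot caveat you raise at the end: rather than reading the disjoint union in $\gr{D}_2$ edge-wise, the cleaner fix --- the one the paper adopts --- is to place the full stars of the high-degree vertices (hence all attaching edges), together with the finite corridors joining pairs of such vertices, into $\gr{D}_1$; then each ray of $\gr{D}_2$ meets $\gr{D}_1$ in the degree-$2$ vertex just beyond $V^{>2}$, which automatically has degree one in $\gr{D}_2$, and distinct rays get distinct feet, so the literal statement holds without reinterpretation.
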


\begin{proof}
    Letting $\DD = \BB\wtimes_{\AA}\CC$, by Proposition \ref{prop: translate fcip2}, all but finitely many vertices $x\in V(\gr{D})$ have $\deg(x)\leqslant 2$. Let $\gr{D}_1\subseteq\gr{D}$ be the subgraph consisting of all the stars of all vertices of $\gr{D}$ of degree at least 3 and the union of all the images of reduced paths connecting pairs of such vertices. Since there are finitely many such vertices and each vertex has finite degree, $\gr{D}_1$ is a finite subgraph. Let $\gr{D}_2$ be the subgraph on the edges not in $\gr{D}_1$, but that lie in the same component in $\gr{D}$ as a component of $\gr{D}_1$. By definition of $\gr{D}_1$, every vertex of $\gr{D}_2$ must have degree at most 2, except possibly at the intersection points with $\gr{D}_1$, which must have degree one (in $\gr{D}_2$). A component of $\gr{D}_2$ cannot have two intersection points with $\gr{D}_1$ as otherwise the path in this component connecting the two intersection points would lie in $\gr{D}_1$ by definition. Finally, all other components of $\gr{D}$, which form $\gr{D}_3$, consists only of vertices of degree at most 2. Hence $\gr{D}_3$ is a disjoint union of lines and cycles as required.
\end{proof}

\begin{prop}
\label{prop: fcip on orientation}
    Let $\AA$ be a finite graph of groups and let $E^+(\gr{A})\subset E(\gr{A})$ be an orientation such that, for each vertex $u\in V(\gr A)$, $A_u$ has the \fcip\ relative to the collection $\{\alpha_e(A_e) \mid e\in E^+(\gr A),\ o(e) = u\}$. Then for any pair of immersions $\mu^B\colon \BB\to \AA$, $\mu^C\colon \CC\to \AA$ of finite graphs of finitely generated groups, there is a decomposition
    \[
    \BB\wtimes_{\AA}\CC =\DD_1\cup\DD_2
    \]
    such that the following holds:
    \begin{enumerate}[(1)]
        \item $\gr{D}_1$ is a finite graph containing the core of each of its components.
        \item $\gr{D}_2$ is a finite union of finitely many graphs that intersect $\gr{D}_1$ in at most one vertex.
        \item Each vertex in $\gr{D}_2$ has at most one outgoing edge mapping to an edge in $E^+(\gr{A})$.
    \end{enumerate}
\end{prop}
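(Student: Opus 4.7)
The plan is to leverage Proposition~\ref{prop: translate fcip2} to control positive branching in $\gr D = \gr B \times_{\gr A} \gr C$ and then extract a finite subgraph $\gr D_1$ absorbing all obstructions to the required structure. Applying Proposition~\ref{prop: translate fcip2} at each vertex $u \in V(\gr A)$ with the collection $E_u = \{e \in E^+(\gr A) : o(e) = u\}$ and $k = 1$, the hypothesis yields that the set $S \subseteq V(\gr D)$ of vertices with strictly more than one outgoing positive edge is finite, and every vertex of $\gr D$ has only finitely many such outgoing edges.

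First I would initialise $\gr D_1^{(0)}$ as the finite subgraph whose vertices are $S$ together with the targets of their outgoing positive edges, and whose edges are all the outgoing positive edges at $S$. To force the complement to attach to $\gr D_1$ at $\le 1$ vertex per component, I enlarge $\gr D_1^{(0)}$ iteratively: whenever two vertices of the current $\gr D_1$ remain joined in $\gr D$ by a reduced path that avoids the current edge set, I incorporate one such reduced path. The essential technical step is to establish that this absorption terminates after finitely many steps. The argument would decompose any such extra connecting path, concatenated with an existing path in $\gr D_1$ between the same endpoints, into a cyclically reduced closed loop in $\gr D$ written as an alternation of maximal positive and maximal negative runs; each ``turn vertex'' where the run type switches from positive to negative must carry at least two distinct incident positive edges, so these turn vertices all lie in $S$, and the finiteness of $S$ together with the finite positive out-degrees caps the total absorption budget.

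With $\gr D_1$ so constructed, I set $\gr D_2 = (V(\gr D), E(\gr D) \setminus E(\gr D_1))$ (retaining all vertices) and partition its connected components by their unique attaching vertex in $V(\gr D_1)$, together with an extra class collecting all components that attach nowhere. This yields a decomposition of $\gr D_2$ into finitely many subgraphs, each intersecting $\gr D_1$ in at most one vertex, witnessing~(2). Condition~(1) is immediate since $\gr D_1$ is finite, and each component of a subgraph contains its own core trivially. Condition~(3) holds because each non-$S$ vertex has at most one outgoing positive edge in $\gr D$ (hence in $\gr D_2$), and every outgoing positive edge at a vertex of $S$ has been placed in $\gr D_1$, leaving none in $\gr D_2$.

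The main obstacle I foresee is justifying the termination of the iterative absorption of connecting paths, which requires a careful analysis of how positive branching at $S$ restricts the possible cycle structures passing through $V(\gr D_1)$. The finiteness of $S$ and of the positive out-degrees must combine to cap the number of independent cycles we could ever need to absorb, and making this combinatorial argument rigorous—identifying the precise invariant that decreases with each absorption—is the only non-routine part of the proof.
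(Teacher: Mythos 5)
Your overall architecture matches the paper's: invoke Proposition~\ref{prop: translate fcip2} with $k=1$ on the positive stars to get that the set $S$ of vertices with two or more outgoing positive edges is finite and all positive out-degrees are finite, build $\gr D_1$ around $S$, and put everything else in $\gr D_2$. But the step you yourself flag as the crux --- termination of the absorption, i.e.\ finiteness of $\gr D_1$ --- is genuinely open in your write-up, and the sketch you offer to close it does not work as stated. First, the direction of the turn is wrong: at a vertex where a reduced loop switches from a positive run to a negative run, the two positive edges you exhibit are both \emph{incoming} (the arriving edge $e$ and $f^{-1}$ for the departing negative edge $f$), whereas $S$ is defined by outgoing positive edges; it is the negative-to-positive turns that land in $S$. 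This is repairable (a cyclically reduced loop with both run types has equally many turns of each kind), but second, and more seriously, a cyclically reduced loop consisting entirely of positive edges has no turns at all and need not meet $S$; such monotone cycles can pass through vertices of $\gr D_1$ and trigger absorptions, so ``turn vertices lie in $S$'' does not cap the budget. You would still need to identify the decreasing invariant, which you explicitly leave open.

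The paper closes exactly this gap with a one-line Euler characteristic computation: since each geometric edge of $\gr D$ is the outgoing positive edge of exactly one of its endpoints, any finite connected subgraph $\gr D_1'$ satisfies $\chi(\gr D_1') = \sum_{x\in V(\gr D_1')}(1-\deg_+(x)) \geqslant \sum_{x\in V}(1-\deg_+(x))$, a uniform lower bound depending only on the finite set $V=S$; this bounds the first Betti number of every component of $\gr D_1$, forces its core (and hence the component) to be finite, and finiteness of $S$ bounds the number of components. I recommend replacing your iterative absorption plus turn-vertex analysis with this count. Separately, your treatment of condition~(1) as ``each component contains its own core trivially'' misses the intended content: what is used downstream (Proposition~\ref{prop: finite_pullback_3}) is that $\gr D_1$ contains the cyclically reduced circuits of the corresponding components of $\BB\wtimes_{\AA}\CC$, which is a consequence of how $\gr D_1$ is defined (stars of $S$ together with all connecting reduced paths), not a tautology about subgraphs.
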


\begin{proof}
    Letting $\DD = \BB\wtimes_{\AA}\CC$, if $x\in V(\gr{D})$, denote by $\deg_+(x)$ the number of edges originating at $x$, mapping to an edge in $E^+(\gr{A})$. Let $\gr{D}_1\subseteq\gr{D}$ be the subgraph consisting of the stars of all vertices $x$ of $\gr{D}$ with $\deg_+(x)\geqslant 2$ and the union of all the images of reduced paths connecting pairs of such vertices (and that only traverse vertices of $\deg_+$ at most one). Denoting by $V\subset V(\gr{D})$ the vertices with outdegree at least 2, if $\gr{D}_1'$ is any connected finite subgraph of $\gr D_1$, we have
        \[
        \chi(\gr{D}_1') = \sum_{x\in V(\gr{D}_1')}1 - \deg_+(x)\geqslant \sum_{x\in V}1 - \deg_+(x).
        \]
    This implies that the core of each component of $\gr{D}_1$ is a finite graph and hence that each component of $\gr{D}_1$ itself is a finite graph. Since there are finitely many vertices $x$ with $\deg_+(x)\geqslant 2$, $\gr{D}_1$ has finitely many components and so $\gr{D}_1$ is a finite graph. Let $\gr{D}_2$ be the subgraph on the edges that do not lie in $\gr{D}_1$. By definition of $\gr{D}_1$, each vertex of $\gr{D}_2$ must have outdegree at most 1 and each component of $\gr{D}_2$ must intersect $\gr{D}_1$ in at most a single vertex (this is because every pair of points in a component of $\gr{D}_2$ can be connected by a reduced path which only traverses vertices of outdegree 1). The fact that each component of $\gr{D}_1$ contains the core of the corresponding component of $\gr{D}$ is also clear from the definition.
\end{proof}

\begin{prop}
\label{prop: combination_components}
    Let $\AA$ be a finite graph of groups, let $\{\AA_{\alpha}\}_{\alpha}$ be a collection of subgraphs of groups so that $\gr{A}_{\alpha}\cap\gr{A}_{\beta}\subset V(\gr{A})$ for each $\alpha\neq\beta$ and so that $\gr{A} = \bigcup_{\alpha}\gr{A}_{\alpha}$. Suppose also that for each vertex $u\in V(\gr{A})$ and each pair of edges $e, f\in \Star(u)$ that do not both lie in the same $\gr{A}_{\alpha}$, we have that $A_u$ has the weak \fcip\ relative to $\{\alpha_e(A_e), \alpha_f(A_f)\}$. Then for any pair of immersions $\mu^B\colon \BB\to \AA$, $\mu^C\colon \CC\to \AA$ of finite graphs of finitely generated groups, there is a decomposition
    \[
    \BB\wtimes_{\AA}\CC =\bigcup_{\alpha}\DD_{\alpha}
    \]
    such that the following holds:
    \begin{enumerate}[(1)]
        \item $\DD_{\alpha}$ is a union of components of $\BB_{\alpha}\wtimes_{\AA_{\alpha}}\CC_{\alpha}$, where $\BB_{\alpha} = (\mu^B)^{-1}(\AA_{\alpha})$ and $\CC_{\alpha} = (\mu^C)^{-1}(\AA_{\alpha})$.
        \item $\gr{D}_{\alpha}\cap \gr{D}_{\beta}$ consists of finitely many vertices for each $\alpha\neq\beta$.
    \end{enumerate}
\end{prop}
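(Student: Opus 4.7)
The plan is to define $\DD_\alpha$ by identifying $\BB_\alpha\wtimes_{\AA_\alpha}\CC_\alpha$ with a natural subgraph of $\DD=\BB\wtimes_\AA\CC$. Since the $\gr A_\alpha$ share only vertices, each edge of $\gr A$ lies in a unique $\gr A_\alpha$, and a direct inspection of the construction in Section~\ref{sec: pullbacks} shows that $\BB_\alpha\wtimes_{\AA_\alpha}\CC_\alpha$ is naturally a subgraph of groups of $\DD$, whose underlying graph consists of all edges of $\gr D$ projecting to $E(\gr A_\alpha)$ together with all vertices of $\gr D$ projecting to $V(\gr A_\alpha)$. I would take $\DD_\alpha$ to be the union of all connected components of $\BB_\alpha\wtimes_{\AA_\alpha}\CC_\alpha$ containing at least one edge, together with---after first fixing a partition of the isolated vertices of $\gr D$ among indices $\alpha$ for which $\mu^B(\rho^B(x))\in V(\gr A_\alpha)$---those singleton components corresponding to isolated vertices of $\gr D$ assigned to $\alpha$. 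By construction $\DD_\alpha$ is a union of components of $\BB_\alpha\wtimes_{\AA_\alpha}\CC_\alpha$, every edge of $\gr D$ lies in exactly one $\DD_\alpha$ and every vertex lies in at least one, so $\BB\wtimes_\AA\CC=\bigcup_\alpha\DD_\alpha$ and (1) is established.

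For (2), note first that edges of $\DD_\alpha$ and $\DD_\beta$ project to disjoint subsets of $E(\gr A)$, so $\gr D_\alpha\cap\gr D_\beta$ contains no edge. Let $x\in V(\gr D_\alpha)\cap V(\gr D_\beta)$. Since isolated vertices of $\gr D$ are assigned to only one index, $x$ is not isolated in $\gr D$, so it lies in a component of each of $\BB_\alpha\wtimes_{\AA_\alpha}\CC_\alpha$ and $\BB_\beta\wtimes_{\AA_\beta}\CC_\beta$ that contains an edge. By connectivity, there are edges in both subgraphs incident to $x$, and after possibly inverting them we may assume they both originate at $x$. Let $v=\rho^B(x)$, $w=\rho^C(x)$, $u=\mu^B(v)=\mu^C(w)$, and let $(f,g)\in E(\gr B_\alpha\times_{\gr A_\alpha}\gr C_\alpha)$ and $(f',g')\in E(\gr B_\beta\times_{\gr A_\beta}\gr C_\beta)$ be the edges underlying these, mapping to $e\in E(\gr A_\alpha)$ and $e'\in E(\gr A_\beta)$ respectively.

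By Remark~\ref{rem: translate fcip2}, $x$ lies in $\Ima(q^{A_e}_{f_\alpha,g_\alpha})\cap \Ima(q^{A_{e'}}_{f'_\alpha,g'_\alpha})$ inside $V_{v,w}(\gr D)=\dblcoset{\mu^B_v(B_v)}{A_u}{\mu^C_w(C_w)}$. Since $e,e'\in\Star(u)$ and no $\gr A_\gamma$ contains both (as $e$ and $e'$ belong to distinct $\gr A_\alpha$ and $\gr A_\beta$), the hypothesis ensures that $A_u$ has the weak \fcip\ relative to $\{\alpha_e(A_e),\alpha_{e'}(A_{e'})\}$; applied to the finitely generated subgroups $\mu^B_v(B_v)$ and $\mu^C_w(C_w)$, this yields that the intersection of images is finite. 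Since $\gr A$, $\gr B$ and $\gr C$ are finite, there are only finitely many pairs $(v,w)$ projecting into $V(\gr A_\alpha)\cap V(\gr A_\beta)$ and, at each, only finitely many quadruples $(f,g,f',g')$ to consider; summing the resulting finite contributions yields the desired finiteness of $\gr D_\alpha\cap\gr D_\beta$. I expect the main obstacle to be the bookkeeping required to match the edge-adjacency structure of $\DD$ with the coset-interaction maps $q^A_{f,g}$ through Remark~\ref{rem: translate fcip2}; once that identification is in place, the weak \fcip\ hypothesis delivers the finiteness at each $(v,w)$ almost immediately.
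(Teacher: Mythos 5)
Your proposal is correct and follows essentially the same route as the paper: the same decomposition of $\BB\wtimes_{\AA}\CC$ into the $\BB_{\alpha}\wtimes_{\AA_{\alpha}}\CC_{\alpha}$, the same choice of $\DD_{\alpha}$ as the edge-containing components plus an assignment of isolated vertices, and the same appeal to the weak \fcip\ to bound the overlaps. The only difference is that where the paper simply cites Proposition~\ref{prop: translate fcip2}, you unpack that citation by verifying the relevant instance directly through Remark~\ref{rem: translate fcip2} and the definition of weak finite coset interaction, which is a legitimate (if more verbose) way of supplying the same step.
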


\begin{proof}
    Firstly note that by definition of $\BB\wtimes_{\AA}\CC$ we have that 
    \[
    \BB\wtimes_{\AA}\CC = \bigcup_{\alpha}\BB_{\alpha}\wtimes_{\AA_{\alpha}}\CC_{\alpha}.
    \]
    For each $\alpha$, let $\DD_{\alpha}$ be the union of all components of $\BB_{\alpha}\wtimes_{A_{\alpha}}\CC_{\alpha}$ that contain at least one edge. Then the union of $\DD_{\alpha}$ is all of $\DD$ except for possibly some isolated vertices. By making some choices, we may add each remaining isolated vertex to some $\DD_{\alpha}$ so that $\BB\wtimes_{\AA}\CC = \bigcup_{\alpha}\DD_{\alpha}$. 
    
    By Proposition \ref{prop: translate fcip2} there are finitely many vertices in $\DD = \BB\wtimes_{\AA}\CC$ that have outgoing edges $e, f$ that do not both map into some $\AA_{\alpha}$. Thus, $\gr{D}_{\alpha}\cap \gr{D}_{\beta}$ consists of finitely many vertices for each $\alpha\neq\beta$ as required.
\end{proof}

\subsubsection*{Finite index subgroups and the $k$-\fcip\ property}

Proposition~\ref{index_k} below motivates the definition of the $k$-\fcip\ when $k > 1$. It allows us to provide more examples of groups with the $k$-finite coset interaction relative to a collection of subgroups and will be used in Section~\ref{sec: fcip for qcv subgroups}. Its proof requires the following group-theoretic lemma.

\begin{lem}
\label{double_coset_lemma}
    Let $G', H, K$ be subgroups of a group $G$. If $G'$ has index $k$ in $G$, then the map
    \[
    (H\cap G')\backslash G'/(K\cap G') \to H\backslash G/K
    \]
    is at most $k$ to one.
\end{lem}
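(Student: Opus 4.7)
The plan is to factor the map of interest as the composition of two simpler maps and bound the size of the fibers at each step.

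First, I would verify that the inclusion $G' \hookrightarrow G$ induces an injection
\[
\sigma\colon G'/(K \cap G') \hookrightarrow G/K, \quad g(K \cap G') \mapsto gK.
\]
Indeed, if $g_1, g_2 \in G'$ satisfy $g_1 K = g_2 K$, then $g_1^{-1}g_2 \in K \cap G'$, giving injectivity. Since $\sigma$ is clearly $(H \cap G')$-equivariant with respect to the left multiplication actions, it descends to an injection of orbit spaces
\[
\bar\sigma\colon (H \cap G') \backslash G'/(K \cap G') \hookrightarrow (H \cap G') \backslash G/K.
\]

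Next, I would consider the natural surjection $\pi\colon (H \cap G') \backslash G/K \twoheadrightarrow H \backslash G/K$ that merges orbits of the smaller group into orbits of the larger one. Its fiber over a double coset $HgK$ consists of the $(H \cap G')$-orbits contained in the single $H$-orbit $H \cdot gK \subseteq G/K$; since $H$ acts transitively on this orbit, that fiber has cardinality at most $[H : H \cap G']$.

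Finally, the classical injection $H/(H \cap G') \hookrightarrow G/G'$ given by $h(H \cap G') \mapsto hG'$ (well-defined and injective since $h_1^{-1}h_2 \in G'$ and $h_1^{-1}h_2 \in H$ together imply $h_1^{-1}h_2 \in H \cap G'$) shows that $[H : H \cap G'] \leqslant [G : G'] = k$. The map in the statement coincides with $\pi \circ \bar\sigma$, so combining the two fiber bounds yields the desired conclusion. There is no genuine obstacle here beyond assembling these standard facts carefully; the key insight is simply that one should route through the intermediate space $(H \cap G') \backslash G / K$ rather than trying to compare the source and target double coset spaces directly.
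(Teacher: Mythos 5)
Your argument is correct, and it takes a genuinely different route from the one in the paper. You factor the map through the intermediate double coset space $(H\cap G')\backslash G/K$: the first factor $\bar\sigma$ is injective because the injective, $(H\cap G')$-equivariant map $G'/(K\cap G')\hookrightarrow G/K$ induces an injection on orbit spaces, and the second factor $\pi$ is at most $[H:H\cap G']$-to-one because the number of $H'$-orbits on a transitive $H$-set is at most $[H:H']$ (precisely, it equals the number of $(H', \mathrm{Stab})$-double cosets in $H$, which is bounded by the index); finally $[H:H\cap G']\leqslant [G:G']=k$. The paper instead argues directly and combinatorially: it fixes a right transversal $T_H$ of $H\cap G'$ in $H$ and a left transversal $T_K$ of $K\cap G'$ in $K$, takes elements $g_1,\dots,g_n\in G'$ lying in pairwise distinct small double cosets but a common $(H,K)$-double coset, writes $g_i = h_i s_i g_1 t_i k_i$ with $s_i\in T_H$, $t_i\in T_K$, and shows the $s_i$ must be pairwise distinct, whence $n\leqslant |T_H|\leqslant k$. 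Both proofs ultimately hinge on the same quantity $[H:H\cap G']$; yours makes the structural reason visible (only the $H$-side index matters, and the $K$-side contributes nothing because $\bar\sigma$ is injective), while the paper's transversal computation is more self-contained and closer in style to the double-coset manipulations used elsewhere in the article. Either is acceptable; your version is arguably cleaner, though you might add the one-line justification that a subgroup of index $m$ has at most $m$ orbits on a transitive set.
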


\begin{proof}
The subgroup $H \cap G'$ has index at most $k$ in $H$, and the same holds for $K\cap G'$ in $K$. Let $T_H \subset H$ be a right transversal for $H \cap G'$ in $H$, and $T_K \subset K$ a left transversal for $K\cap G'$ in $K$, so that $|T_H|, |T_K| \le k$. We may assume that $1\in T_H, T_K$.

Suppose that $g_1,\dots, g_n \in G'$ sit in pairwise distinct $(H\cap G', K\cap G')$-double cosets, while they sit in the same $(H,K)$-double coset.

Then, for each $i\in [2,n]$, we have $g_i \in H\,g_1\,K$, so that $g_i = h_i s_i g_1 t_i k_i$ for some $h_i\in H\cap G'$, $s_i \in T_H$, $t_i \in T_K$ and $k_i \in K\cap G'$. This is extended to $i = 1$ by letting $h_1 = s_1 = t_1 = k_1 = 1$.
In particular, if $i, j \in [1,n]$, then
$$g_j = h_j\,s_j\,s_i\inv\,h_i\inv\,g_i\,k_i\inv\,t_i\inv\,t_j\,k_j.$$
Suppose that, for some $i\ne j$, we have $s_i = s_j$. Then $g_j = h_j\,h_i\inv\,g_i\,k_i\inv\,(t_i\inv\,t_j)\,k_j$. It follows that $t_i\inv\,t_j \in G'$, and hence $t_i\inv\,t_j \in K\cap G'$. By definition of $T_K$, it follows that $t_i  = t_j$, and this implies that $g_j = h_j\,h_i\inv\,g_i\,k_i\inv\,k_j \in (H\cap G')\,g_i\,(K\cap G')$, a contradiction.

Therefore the $s_i$ are pairwise distinct, and hence $n \le k$.
\end{proof}

\begin{prop}
\label{index_k}
    Let $G$ be a group and let $B, C$ be finitely generated subgroups of $G$ such that $(B, C)$ has finite coset interaction with respect to a collection of finitely generated subgroups $A_1, \ldots, A_n\leqslant G$. For each $i$, let $A_i'$ be a subgroup of $A_i$ of finite index $k_i$, and let $k = \max{\{k_i\}}$. Then $(B, C)$ has $k$-finite coset interaction with respect to $A_1', \ldots, A_n'$.
\end{prop}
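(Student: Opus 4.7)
The strategy is to deduce the $k$-\fcip\ conclusion for $\{A_1',\dots,A_n'\}$ from the \fcip\ hypothesis for $\{A_1,\dots,A_n\}$ by comparing, for each $i$ and for each pair $(f',g')\in\mathcal{F}_{A_i'}\times\mathcal{G}_{A_i'}$, the preimages of $q^{A_i'}_{f',g'}$ with those of $q^{A_i}_{f,g}$ for suitably chosen representatives. The only nontrivial input is Lemma~\ref{double_coset_lemma}, which bounds how many $(A_i'\cap B^{f'},A_i'\cap C^{g'})$-double cosets of $A_i'$ can sit inside a single $(A_i\cap B^{f'},A_i\cap C^{g'})$-double coset of $A_i$.

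I would first fix $i$ together with finite sets $\mathcal{F}_{A_i'}$ and $\mathcal{G}_{A_i'}$ in pairwise distinct $(B,A_i')$- and $(C,A_i')$-double cosets. The key combinatorial observation is that if $A_i=\bigsqcup_{j=1}^{k_i}t_jA_i'$ is a left coset decomposition, then $BfA_i=\bigcup_j B(ft_j)A_i'$, so each $(B,A_i)$-double coset splits into at most $k_i$ $(B,A_i')$-double cosets. This allows me to extract a maximal subset $\mathcal{F}_{A_i}\subseteq\mathcal{F}_{A_i'}$ of representatives of pairwise distinct $(B,A_i)$-double cosets, and define a map $f'\mapsto f(f')\in\mathcal{F}_{A_i}$ whose fibers have cardinality at most $k_i$; analogously I construct $\mathcal{G}_{A_i}\subseteq\mathcal{G}_{A_i'}$ together with $g'\mapsto g(g')$. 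For any $(f',g')$ the maps $q^{A_i}_{f',g'}$ and $q^{A_i}_{f(f'),g(g')}$ differ only by multiplication on the left and right by elements of $A_i$, so their domains are canonically identified and their fibers over any $BhC$ have equal size.

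Next I would apply Lemma~\ref{double_coset_lemma} to $A_i'\leqslant A_i$ with $H=A_i\cap B^{f'}$ and $K=A_i\cap C^{g'}$, producing the pointwise bound
\[
|(q^{A_i'}_{f',g'})^{-1}(BhC)|\;\leqslant\;k_i\cdot|(q^{A_i}_{f(f'),g(g')})^{-1}(BhC)|
\]
for every $(B,C)$-double coset $BhC$. Summing this inequality over all $(f',g')\in\mathcal{F}_{A_i'}\times\mathcal{G}_{A_i'}$ and grouping the contributions by the pair $(f,g)=(f(f'),g(g'))\in\mathcal{F}_{A_i}\times\mathcal{G}_{A_i}$, the $k_i$-to-one pigeonhole bounds on the fibers of $f'\mapsto f(f')$ and $g'\mapsto g(g')$ yield a bound of the form $X_i(BhC)\leqslant k_i^3\,Y_i(BhC)$, where $X_i$ and $Y_i$ denote the inner sums appearing in Definition~\ref{defn: finite coset interaction2} for $A_i'$ and $A_i$ respectively with the associated representative sets. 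Summing over $i$ gives a global bound $X(BhC)\leqslant k^3\,Y(BhC)$.

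Finally I would invoke the \fcip\ hypothesis for $(B,C)$ with respect to $\{A_1,\dots,A_n\}$, applied to the chosen $\mathcal{F}_{A_i}$ and $\mathcal{G}_{A_i}$: this gives $\sum_{BhC}\max\{0,Y(BhC)-1\}<\infty$, so $Y$ is finite everywhere and the set $\{BhC:Y(BhC)\geqslant2\}$ is finite. Combined with the bound $X\leqslant k^3 Y$, this forces $X$ to be finite everywhere and to exceed any fixed constant (in particular $k$ or $k^3$) on only finitely many double cosets, yielding the required finiteness of $\sum_{BhC}\max\{0,X(BhC)-k\}$. The main obstacle is really a constant-tightening one: the naive accounting above, in which the two pigeonhole refinements (in $f'$ and in $g'$) and Lemma~\ref{double_coset_lemma} are bounded independently, produces $k^3$ rather than the $k=\max_i k_i$ announced in the statement. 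Matching the stated constant requires reasoning on the level of individual triples $(f',g',\delta')$ of the disjoint union rather than decoupling the three refinement steps, and it is at this point that care is needed.
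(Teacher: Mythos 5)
Your argument has the same skeleton as the paper's proof: use the \fcip\ hypothesis to localise, outside a finite exceptional set of $(B,C)$-double cosets, all collisions to a single $(A_i\cap B^f,A_i\cap C^g)$-double coset of $A_i$, and then invoke Lemma~\ref{double_coset_lemma} to control how that double coset refines when $A_i$ is replaced by the finite-index subgroup $A_i'$. Each individual step you carry out is correct (the reduction of $\mathcal{F}_{A_i'}$ to representatives $\mathcal{F}_{A_i}$ with fibres of size at most $k_i$, the identification of $q^{A_i}_{f',g'}$ with $q^{A_i}_{f(f'),g(g')}$, and the pointwise bound coming from Lemma~\ref{double_coset_lemma}). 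But the gap you flag at the end is genuine and is not a matter of polishing constants: your accounting proves the $k^3$-\fcip, and the $k$-\fcip\ is a strictly stronger statement. The constant $k$ is sharp --- for $G=A_1=\ZZ$, $A_1'=k\ZZ$ and $B=C=1$, every $(B,C)$-double coset $\{m\}$ receives exactly $k$ preimages, one from each of the $k$ pairs $(f',g')\in\mathcal{F}_{A_1'}\times\mathcal{G}_{A_1'}$ with $f'-g'\equiv m\pmod k$ --- and it is exactly the quantity that matters, since via Proposition~\ref{prop: translate fcip2} the $k$-\fcip\ is what bounds the out-degree of almost every vertex of an $\AA$-product by $k$; a degree bound of $k^3$ is a qualitatively weaker structural conclusion (compare the line-and-cycle decomposition obtained from the $2$-\fcip\ in Proposition~\ref{prop: 2-fcip}). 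So as written the proposal does not prove the proposition.

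The paper reaches the constant $k$ by never decoupling the three refinements: it applies the \fcip\ hypothesis to the collections $\mathcal{F}_i$, $\mathcal{G}_i$ as a whole to produce a finite exceptional set outside of which a collision $B(fag^{-1})C=B(f'a'(g')^{-1})C$ forces $i=j$, $f=f'$, $g=g'$ and $(A_i\cap B^f)a(A_i\cap C^g)=(A_i\cap B^f)a'(A_i\cap C^g)$, so that only one pair and one $A_i$-level double coset survive and a single application of Lemma~\ref{double_coset_lemma} then gives at most $k$ preimages. Your instinct that this is the delicate point is well founded: the sets $\mathcal{F}_i$ are only required to lie in pairwise distinct $(B,A_i')$-double cosets, so several of their elements may share a $(B,A_i)$-double coset, and in the $\ZZ$ example above $k$ distinct pairs $(f',g')$ really do contribute to every single $(B,C)$-double coset. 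The count nevertheless stays at $k$ because each such pair then contributes only one $A_i'$-double coset rather than $k$ of them; the correct bound comes from a trade-off between the number of contributing pairs and the size of each fibre, which is invisible if, as in your proposal, the two quantities are bounded independently. Supplying an argument that captures this trade-off --- for instance by parametrising all contributing triples $(f',g',\delta')$ over a fixed generic $(B,C)$-double coset simultaneously by a single set of size at most $k$, rather than by $\mathcal{F}_{A_i}\times\mathcal{G}_{A_i}\times\{\text{fibres}\}$ --- is the missing content.
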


\begin{proof}
Let $\mathcal F_i$ (resp. $\mathcal G_i$) be a finite set of elements lying in pairwise distinct $(B,A_i')$-double cosets (resp. $(C,A_i')$-double cosets). Since $(B,C)$ has finite coset interaction relative to $A_1, \ldots, A_n$, there exists finite subsets $X_i$ of $A_i$ with the following property: if $a \in A_i - X_i$ and $a'\in A_j$ satisfy $B\,(f\,a\,g\inv)\,C = B\,(f'\,a'\,g'^{-1})\,C$, then $i = j$, $f = f'$, $g = g'$ and $(A_i\cap B^f)\,a\,(A_i\cap C^g) = (A_i\cap B^f)\,a'\,(A_i\cap C^g)$.
    
Lemma~\ref{double_coset_lemma} (applied to $G = A_i$, $G' = A_i'$, $H = A_i\cap B^f$ and $K = A_i \cap C^g$) shows that, for each $a\in A_i'$, the double coset $(A_i\cap B^f)\,a\,(A_i\cap C^g)$ contains at most $k$ double cosets of the form $(A_i'\cap B^f)\,a'\,(A_i'\cap C^g)$ with $a'\in A_i'$.

It follows that, if $a\in A_i' - (A_i' \cap X_i)$, there are at most $k$ double cosets of the form $(A_i'\cap B^f)\,a'\,(A_i'\cap C^g)$ (with $a'\in A_i'$) such that $B\,(fa'g\inv)\,C = B\,(fag\inv)\,C$, and this concludes the proof.
\end{proof}

\begin{exm}
    It is clear that a group $G$ has the \fcip\ with respect to itself. Proposition \ref{index_k} implies that $G$ has the $k$-\fcip\ with respect to any index $k$ subgroup of $G$. More explicitly, $\Z$ has the $k$-\fcip\ with respect to $\{k\Z\}$. One should contrast this with Example \ref{exm: abelian}.
\end{exm}

\subsection{General criteria for the \fgip\ and the \sfgip}\label{sec: fgip fcip interplay}

In this section we shall establish general criteria for the \fgip\  of graphs of groups, based on results on $\AA$-products from \cite{dllrw_pullback} and various \fcip\  from Section \ref{sec: fcip2}. We shall repeatedly use the proposition below, which follows from the description of the vertex and edge groups of the $\AA$-product provided in Section \ref{sec: pullbacks}.

\begin{prop}
    \label{prop: fgip_summary}
    Let $\mu^B\colon \BB\to \AA$ and $\mu^C\colon \CC\to \AA$ be immersions of graphs of groups and denote by $\DD = \BB\wtimes_{\AA}\CC$ the $\AA$-product. Suppose that each vertex group (respectively, edge group) of $\AA, \BB, \CC$ is finitely generated. For each vertex $u\in V(\gr{A})$ (respectively, edge $e\in E(\gr{A})$), let $\mathcal{G}_u$ (respectively, $\mathcal{G}_e$) be a collection of subgroups of $A_u$ (respectively, $A_e$). Then:
    \begin{itemize}
        \item If each vertex group (respectively, edge group) of $\AA$ has the \fgip, then each vertex group (respectively, edge group) of $\DD$ is finitely generated.
        \item If each vertex group (respectively, edge group) of $\AA$ has the \sfgip\ and $\gr{A}, \gr{B}, \gr{C}$ are finite, then $\DD$ has finitely many non-trivial vertex groups (respectively, edge groups).
        \item If for each $u\in V(\gr{A})$, the pair $(A_u, \mathcal{G}_u)$ (respectively, for each $e\in E(\gr{A})$, the pair $(A_e, \mathcal{G}_e)$) has the \sfgip\ and $\gr{A}, \gr{B}, \gr{C}$ are finite, then $\DD$ has finitely many vertex groups not contained in $\bigcup_{u\in V(\gr{A})}\mathcal{G}_u$ (respectively, edge groups not contained in $\bigcup_{e\in E(\gr{A})}\mathcal{G}_e$).
        \item If $e\in E(\gr{A})$ is such that $(A_{o(e)}, \alpha_{e}(A_e))$ has the $\alpha_e(A_e)$-\fgip, then all but finitely many edges in $\DD$ that map to $e$ are non-reduced.
    \end{itemize}
\end{prop}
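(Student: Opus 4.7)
First I would dispose of the first three bullets together, since they all follow from the same description of $\DD = \BB\wtimes_\AA\CC$ recalled in Section~\ref{sec: pullbacks}: every vertex $x \in V_{v,w}(\gr D)$ and every edge $h \in E_{f,g}(\gr D)$ satisfies $D_x \cong \mu^B_v(B_v)^{\tilde x}\cap \mu^C_w(C_w)$ inside $A_u$ and $D_h \cong \mu^B_f(B_f)^{\tilde h}\cap \mu^C_g(C_g)$ inside $A_e$, where $u$ is the common image of $v,w$ and $e$ is that of $f,g$. The first bullet then follows because $B_v, C_w, B_f, C_g$ are finitely generated by hypothesis. For the second bullet, $V_{v,w}(\gr D) = \dblcoset{\mu^B_v(B_v)}{A_u}{\mu^C_w(C_w)}$, so the \sfgip\ of $A_u$ applied to the finitely generated subgroups $\mu^B_v(B_v)$ and $\mu^C_w(C_w)$ forces all but finitely many of the $D_x$ to be trivial; the edge case is identical, and the finiteness of $\gr A, \gr B, \gr C$ controls the sum over pairs $(v,w)$ and $(f,g)$. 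The third bullet is the same calculation, now invoking Definition~\ref{defn: fgip}~(5) with $H = \mu^B_v(B_v)$ and $K = \mu^C_w(C_w)$.

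For the fourth bullet, the key step is to translate the condition that a non-loop edge $h \in E_{f,g}(\gr D)$ with $\mu^B(f) = \mu^C(g) = e$ has $\alpha_h \colon D_h \to D_{o(h)}$ an isomorphism into a containment statement in $A_{o(e)}$. Using the twisted commutation relation \eqref{eq: twisted commutation alpha} together with the immersion condition in Definition~\ref{def: folded}~(2), one first derives the identity $\mu^B_v(\alpha_f(B_f)) = \mu^B_v(B_v) \cap \alpha_e(A_e)^{f_\alpha^{-1}}$ and its analogue for $C$. Taking $\tilde x = f_\alpha\,\alpha_e(\tilde h)\,g_\alpha^{-1}$ as prescribed by \eqref{eq: incidence in D}, a direct computation then identifies the image of $\alpha_h$ inside $D_{o(h)} \cong \mu^B_v(B_v)^{\tilde x}\cap \mu^C_w(C_w)$ with the subgroup $D_{o(h)} \cap \alpha_e(A_e)^{g_\alpha^{-1}}$. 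Consequently $\alpha_h$ is surjective if and only if $\mu^B_v(B_v)^{f_\alpha\,\alpha_e(\tilde h)} \cap \mu^C_w(C_w)^{g_\alpha} \subseteq \alpha_e(A_e)$.

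Having made this translation, I would apply the $\alpha_e(A_e)$-\fgip\ hypothesis to the finitely generated subgroups $H = \mu^B_v(B_v)^{f_\alpha}$ and $K = \mu^C_w(C_w)^{g_\alpha}$ of $A_{o(e)}$, with the element $g' = \alpha_e(\tilde h) \in \alpha_e(A_e)$ attached to each edge. The hypothesis produces only finitely many exceptional $(H,K)$-double cosets meeting $\alpha_e(A_e)$; unwinding the bijection between $E_{f,g}(\gr D) = \dblcoset{\mu^B_f(B_f)}{A_e}{\mu^C_g(C_g)}$ and the $(H\cap\alpha_e(A_e),\,K\cap\alpha_e(A_e))$-double cosets of $\alpha_e(A_e)$ then shows that all but finitely many edges $h \in E_{f,g}(\gr D)$ have $\alpha_h$ an isomorphism. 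Loops in $\DD$ mapping to $e$ can only come from loops $f,g$ in $\gr B,\gr C$, and for such $h$ the constraint $o(h) = t(h)$ combined with $\alpha_h$ being an isomorphism (which forces $D_{o(h)}$ into a conjugate of $\alpha_e(A_e)$) pins $\tilde h$ down to finitely many classes, accounting for the residual reduced exceptions.

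The main obstacle will be the fourth bullet, specifically the bookkeeping required to cast the isomorphism of $\alpha_h$ as a clean subgroup containment of the form $H^{g'}\cap K \subseteq \alpha_e(A_e)$: one must carefully track the twisting elements $f_\alpha, g_\alpha$ through the explicit edge-map formulas of the $\AA$-product in \cite[Definition 3.4]{dllrw_pullback}. Once this translation is in place, the finiteness of reduced edges becomes a direct consequence of the $\alpha_e(A_e)$-\fgip, with the loop contribution handled by the same double coset bookkeeping.
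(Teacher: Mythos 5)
Your treatment of the first three bullets is correct and is exactly the argument the paper intends (the paper offers no written proof beyond the remark that the statement ``follows from the description of the vertex and edge groups of the $\AA$-product''): each $D_x$ is $\mu^B_v(B_v)^{\tilde x}\cap\mu^C_w(C_w)$ inside $A_u$ and each $D_h$ is $\mu^B_f(B_f)^{\tilde h}\cap\mu^C_g(C_g)$ inside $A_e$, the index sets $V_{v,w}(\gr D)$ and $E_{f,g}(\gr D)$ are exactly the relevant double coset spaces, and finiteness of $\gr A,\gr B,\gr C$ controls the number of pairs $(v,w)$ and $(f,g)$. Your reduction of the fourth bullet to the containment $\mu^B_v(B_v)^{f_\alpha\alpha_e(\tilde h)}\cap\mu^C_w(C_w)^{g_\alpha}\subseteq\alpha_e(A_e)$ is also the right translation and is the content of the paper's hint that this point ``has to do with the adjacency map''.

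The gap is in the final counting step of the fourth bullet. Distinct edges of $E_{f,g}(\gr D)$ correspond to distinct double cosets in $\dblcoset{(H\cap\alpha_e(A_e))}{\alpha_e(A_e)}{(K\cap\alpha_e(A_e))}$, where $H=\mu^B_v(B_v)^{f_\alpha}$ and $K=\mu^C_w(C_w)^{g_\alpha}$, whereas the $\alpha_e(A_e)$-\fgip\ hypothesis only bounds the number of \emph{$(H,K)$-double cosets of $A_{o(e)}$} containing an element $a\in\alpha_e(A_e)$ with $H^a\cap K\not\leq\alpha_e(A_e)$. The passage from fine to coarse double cosets is the map $q^{\alpha_e(A_e)}_{1,1}$ of Section~\ref{sec: fcip2}, which in general is not injective, nor even finite-to-one --- controlling its fibres is precisely what the \fcip\ is introduced for, and is not available here. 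Concretely, if $a'=hak$ with $h\in H$, $k\in K$ and $a,a'\in\alpha_e(A_e)$, then $H^{a'}\cap K=(H^a\cap K)^k$, and whether this lies in $\alpha_e(A_e)$ is not determined by whether $H^a\cap K$ does; so a single exceptional $(H,K)$-double coset could a priori contribute infinitely many reduced edges. You need either to prove that under the $\alpha_e(A_e)$-\fgip\ each of the finitely many exceptional coarse cosets contains only finitely many exceptional fine cosets, or to observe that the hypothesis must be read (as the appendix's Lemma~\ref{lem:Burns_A_fgip} in fact delivers for Burns subgroups, where the bad set is shown to lie in $(H\cap A)F(K\cap A)$ for a finite $F$) as a statement about $(H\cap\alpha_e(A_e),K\cap\alpha_e(A_e))$-double cosets. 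Separately, your closing discussion of loops is off target: by the paper's definition every loop is a reduced edge, and there can be infinitely many loops of $\DD$ over $e$ even under the hypotheses (take $\AA$ an HNN extension of $F_2=\langle x,y\rangle$ with both edge maps the inclusion of $\langle x\rangle$, and $\BB,\CC$ single loops with trivial groups), so loops cannot be ``pinned down to finitely many classes''; the statement actually used later in the paper, and the one you should prove, is only that all but finitely many edges over $e$ have $\alpha_h$ an isomorphism.
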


\begin{rem}\label{rem: wlog A is finite}
    Within this section we shall always assume that the graph
    $\gr A$ is finite. However, most of our results hold without this assumption for the following reason. If $(\BB, v)\to (\AA, u)$ and $(\CC, w)\to (\AA, u)$ are morphisms of graphs of groups with $\gr B$ and $\gr C$ finite, then there is a finite subgraph $\gr A' \subset \gr A$
    which supports the images of $\gr B$ and $\gr C$
    , and we let $\AA'$ be the subgraph of groups induced by $\gr A'$.
    Since all our results involve intersections of the images of $\pi_1(\BB, v), \pi_1(\CC, w)$ which lie in $\pi_1(\AA', u)\leqslant \pi_1(\AA, u)$, it will usually suffice to only consider the finite subgraph of groups $\AA'\subset \AA$.
    We will also always assume that the vertex groups of $\AA$ are finitely generated but, for the same reason, our results hold also without this hypothesis.
\end{rem}

A few particularly important collections of subgroups of a graph of groups will arise as in Proposition \ref{prop: fgip_summary}. One is the collection of \emph{finite subgroups} which we shall always denote by $\mathcal{F}$. When we want to specify the finite subgroups from a given vertex group $A_u$ (or edge group $A_e)$, then we shall write $\mathcal{F}_u$ (or $\mathcal{F}_e$). Other important collections of subgroups are $\mathcal{V}$ and $\mathcal{E}$, defined as follows when $\AA$ has a basepoint $u_0$:  $\mathcal{V}$ is the collection of all subgroups of the form $p^{-1}A_u\,p$ as $u$ ranges over vertices in $\gr{A}$ and $p$ ranges over $\AA$-paths connecting $u$ with the basepoint; and $\mathcal{E}$ is the collection of all edge subgroups of $\AA$.

\subsubsection{The $0$-\fcip\ and graphs of groups with finite edge groups}\label{sec: 0-fcip and finite edge groups}

Cohen proved in \cite[Theorem 7]{coh74} that a group which splits as a graph of groups where each vertex group has the \fgip\ and each edge group is finite, also has the \fgip\ We reprove this statement using our \fcip, and extend the result to the strong \fgip\ We first need a lemma.

\begin{lem}
\label{0-fcip_lemma}
If $G$ is a group and $A_1, \ldots, A_n$ is a collection of finitely generated subgroups of $G$, then $G$ has the $0$-\fcip\ relative to $A_1, \ldots, A_n$ if and only if each $A_i$ is finite.
\end{lem}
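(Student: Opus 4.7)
The plan is to unpack the $0$-\fcip\ definition carefully and observe that with $k=0$ the sum in Definition~\ref{defn: variants of fcip2} simplifies drastically, because $\max\{0, N - 0\} = N$. Concretely, $G$ has the $0$-\fcip\ relative to $A_1, \ldots, A_n$ if and only if, for every pair of finitely generated subgroups $B, C$ and every finite collections $\mathcal{F}_{A_i}, \mathcal{G}_{A_i}$, the total
\[
\sum_{BhC \in B\backslash G/C}\, \sum_{i}\, \sum_{\substack{f\in\mathcal{F}_{A_i}\\ g\in\mathcal{G}_{A_i}}} \left|(q^{A_i}_{f,g})^{-1}(BhC)\right|
\,=\,\sum_{i}\sum_{f,g} \left|\dblcoset{(A_i\cap B^f)}{A_i}{(A_i\cap C^g)}\right|
\]
is finite. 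Once this reformulation is in place, both directions are short.

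For the forward direction, I would make the cheapest possible choice: take $B = C = \{1\}$ (which is finitely generated) and $\mathcal{F}_{A_i} = \mathcal{G}_{A_i} = \{1\}$ for every $i$. Then $A_i \cap B^{1} = A_i \cap C^{1} = \{1\}$, so the domain of $q^{A_i}_{1,1}$ collapses to the full set $\{1\}\backslash A_i/\{1\} = A_i$. The finiteness condition above becomes $\sum_{i=1}^n |A_i| < \infty$, which forces each $A_i$ to be finite.

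For the converse, assume each $A_i$ is finite and let $B, C$ be any finitely generated subgroups of $G$ and $\mathcal{F}_{A_i}, \mathcal{G}_{A_i}$ any finite collections of elements in pairwise distinct double cosets. For each triple $(i,f,g)$ the domain $(A_i\cap B^f)\backslash A_i/(A_i\cap C^g)$ is a quotient of $A_i$, so it has cardinality at most $|A_i| < \infty$. Summing over the finite index set $\{(i,f,g) : 1\leqslant i\leqslant n,\ f\in\mathcal{F}_{A_i},\ g\in\mathcal{G}_{A_i}\}$ yields a finite total, giving the $0$-\fcip.

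I do not anticipate a genuine obstacle: the lemma is essentially the degenerate base case of the $k$-\fcip, and the only step that requires any thought is recognising that the $\max\{0,\cdot\}$ vanishes when $k=0$ and that taking $B = C = 1$ produces the extremal configuration where each $q^{A_i}_{1,1}$ has domain equal to all of $A_i$.
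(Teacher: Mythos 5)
Your proof is correct and follows essentially the same route as the paper: the converse direction observes that finiteness of each $A_i$ makes every $q^{A_i}_{f,g}$ have finite domain, and the forward direction specialises to $B = C = 1$ and $\mathcal F_{A_i} = \mathcal G_{A_i} = \{1\}$ so that the sum dominates $|A_i|$. The explicit reformulation of the $k=0$ sum as the total cardinality of the domains is a nice touch but does not change the argument.
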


\begin{proof}
If each $A_i$ is finite, then for any choices of $f, g\in G$ and $B, C\leqslant G$, the map $q^{A_i}_{f, g}$ has finite domain. This implies that the sum in the definition of $0$-finite coset interaction is always finite and so $G$ has the $0$-fcip relative to the collection $A_1, \ldots, A_n$.

Conversely, suppose that $G$ has the $0$-fcip relative to the collection $A_1, \ldots, A_n$. Letting $B = C = 1$ and $\mathcal{F} = \mathcal{G} = \{1\}$, we see that the sum in the definition of $0$-finite coset interaction is at least the cardinality of $A_i$. Thus, the $A_i$ must all be finite.
\end{proof}

\begin{thm}\label{thm: fgip when 0-fcip}
Let $(\AA,u_0)$ be a pointed graph of groups with finite underlying graph, in which every vertex group has the \fgip\  and every edge group is finite. Then
\begin{enumerate}[(1)]
\item\label{item: 0-fcip fgip} $\pi_1(\AA, u_0)$ has the \fgip
\item\label{item: 0-fcip relative sfgip} $\left(\pi_1(\AA, u_0), \mathcal{V}\right)$ has the \sfgip\
\item\label{item: 0-fcip sfgip} If each vertex group has the \sfgip, then $(\pi_1(\AA, u_0), \mathcal{F})$ has the \sfgip
\end{enumerate}
\end{thm}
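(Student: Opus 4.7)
My plan is to realise $B$ and $C$ as immersions from finite core graphs of finitely generated groups, form the $\AA$-product $\DD$, and extract all three conclusions from the resulting structure. First, since each edge group $A_e$ of $\AA$ is finite, Lemma~\ref{0-fcip_lemma} gives that every vertex group of $\AA$ has the $0$-\fcip\ relative to the collection of adjacent edge groups; hence, by Corollary~\ref{cor: k-fcip}(1), the $\AA$-product of any two immersions from finite graphs of finitely generated groups into $\AA$ has only finitely many edges. Given finitely generated $B,C \leqslant \pi_1(\AA,u_0)$, the intersections $B \cap (p\,\alpha_e(A_e)\,p\inv)$ are contained in finite groups and are therefore finitely generated, so Theorem~\ref{thm: bijection subgroups immersions covers} produces immersions $\mu^B\colon(\BB,v_0)\to(\AA,u_0)$ and $\mu^C\colon(\CC,w_0)\to(\AA,u_0)$ from finite core graphs of finitely generated groups realising $B$ and $C$. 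Setting $\DD = \BB\wtimes_\AA\CC$, we then have that $\DD$ has finitely many edges; by Proposition~\ref{prop: fgip_summary} its vertex groups are finitely generated (using the \fgip\ of vertex groups of $\AA$), and its edge groups are finite (as subgroups of the finite $A_e$).

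For part~(1), Theorem~\ref{thm: product of immersions}(2) identifies $B\cap C$ with $\pi_1(\DD_0,x_0)$ for the pointed component $\DD_0$, which has finite underlying graph together with finitely generated vertex and edge groups, so Proposition~\ref{prop: finite generation}(3) gives that $\pi_1(\DD_0,x_0)$ is finitely generated. For part~(2), part~(1) makes every conjugate intersection $B^g\cap C$ finitely generated, and a finitely generated locally elliptic subgroup is conjugate into a single vertex group by \cite[Corollary~7.3]{bas93}; thus $B^g\cap C$ lies in an element of $\mathcal{V}$ precisely when it is locally elliptic. By Theorem~\ref{thm: summary immersions}(2), the double cosets for which this fails are in bijection with $\pi_0(\core(\DD))$, a finite set because $\core(\DD)$ has finitely many edges and each of its components carries at least one edge (cyclically reduced closed $\AA$-paths have positive length).

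For part~(3), strengthening to the \sfgip\ hypothesis on vertex groups adds via Proposition~\ref{prop: fgip_summary} that $\DD$ has only finitely many non-trivial vertex groups. For any double coset $BgC$ with $B^g\cap C$ infinite, Theorem~\ref{thm: summary immersions}(3) rules out $BgC\notin\calC(V(\gr D))$ (otherwise $B^g\cap C$ would conjugate into a finite edge group), so by the injectivity in Theorem~\ref{thm: summary immersions}(1) there is a unique component $\DD'$ of $\DD$ with $\calC_0(\DD') = BgC$; Theorem~\ref{thm: summary immersions}(5) then identifies $\pi_1(\DD',x)$ with $B^g\cap C$, which is infinite. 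The components of $\DD$ with non-trivial fundamental group are finite in number, since each either contains an edge of $\DD$ (bounded by the total edge count) or is an isolated vertex carrying a non-trivial vertex group (bounded by the count of non-trivial vertex groups). The hard part I expect is this final argument: double cosets outside $\calC(V(\gr D))$ must be absorbed into $\mathcal{F}$ via Theorem~\ref{thm: summary immersions}(3) (finiteness of edge groups is what forces these $B^g\cap C$ to be finite), and the remaining components must be bounded simultaneously by the finite edge count from the $0$-\fcip\ and by the finite non-trivial vertex group count coming from the \sfgip\ of vertex groups of $\AA$.
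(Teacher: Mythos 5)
Your proposal is correct and follows essentially the same route as the paper: realise $B$ and $C$ by immersions of finite core graphs of finitely generated groups via Theorem~\ref{thm: bijection subgroups immersions covers}, use Lemma~\ref{0-fcip_lemma} and Corollary~\ref{cor: k-fcip} to bound the edges of $\BB\wtimes_\AA\CC$, then combine Proposition~\ref{prop: fgip_summary}, Theorems~\ref{thm: product of immersions} and~\ref{thm: summary immersions}, and Bass's \cite[Corollary 7.3]{bas93} exactly as the paper does for the three parts. The only cosmetic difference is that in part~(3) you count components with non-trivial fundamental group directly (edges plus non-trivial isolated vertex groups) rather than routing through Theorem~\ref{thm: summary immersions}~\eqref{item: components_bijection}, which amounts to the same bound.
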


\begin{proof}
Let $B$ and $C$ be finitely generated subgroups of $\pi_1(\AA,u_0)$. By Theorem~\ref{thm: bijection subgroups immersions covers} there are immersions of pointed graphs of groups $\mu^B\colon (\BB,v_0) \to (\AA,u_0)$ and $\mu^C\colon (\CC,w_0) \to (\AA,u_0)$, where $\BB$ and $\CC$ have finite underlying graphs, finite edge groups, finitely generated vertex groups and $\mu^B_*(\pi_1(\BB, v_0)) = B$, $\mu^C_*(\pi_1(\CC, w_0)) = C$.

We noted in Lemma~\ref{0-fcip_lemma} that a group always has the 0-\fcip\ relative to any finite collection of finite subgroups, hence each vertex group has the $0$-\fcip\ relative to the images of all of its adjacent edge groups. It follows, by Corollary \ref{cor: k-fcip}, that the $\AA$-product $\DD' = \BB\wtimes_\AA\CC$ has finitely many edges. In particular, the connected components of the underlying graph $\gr{D}'$ are all finite, and finitely many of them contain at least one edge. The other connected components of $\gr{D}'$, possibly infinitely many of them, consist of a single vertex and no edge. In particular, $\DD = \core(\DD')$ has finite underlying graph. By Proposition \ref{prop: fgip_summary}, every vertex and edge group of $\DD'$ is finitely generated and so $\pi_1(\DD, x_0)$ is finitely generated. Hence, by Theorem~\ref{thm: product of immersions}~\eqref{item: intersection of subgroups} $B\cap C$ is finitely generated and so $\pi_1(\AA, u_0)$ has the \fgip, establishing Statement~\eqref{item: 0-fcip fgip}.

By Theorem~\ref{thm: summary immersions}~\eqref{item: components_bijection}, only finitely many double cosets $B\,g\,C$ are such that $B^g\cap C$ contains at least one non-elliptic element. For all other double cosets $B\,g\,C$, the subgroup $B^g\cap C$ contains only elliptic elements. Hence, since $B^g\cap C$ is finitely generated, if it only contains elliptic elements then it is an elliptic subgroup by \cite[Corollary 7.3]{bas93}. In other words, it conjugates into a vertex group or, equivalently, it is contained in a group in $\mathcal V$. This implies that $\left(\pi_1(\AA), \mathcal{V}\right)$ has the \sfgip\ as claimed in Statement~\eqref{item: 0-fcip relative sfgip}.

Suppose now that the vertex groups of $\AA$ have the \sfgip\ Then by Proposition \ref{prop: fgip_summary} all but finitely many vertex groups of $\DD'$ are trivial. Thus the number of double cosets $B\, g\, C$ such that $B^g \cap C\neq 1$, where $B\,g\,C$ runs over the $\calC$-images of vertices of $\DD'$ (see Equation~\eqref{eq: C} in Section~\ref{sec: pullbacks} for the definition of the map $\C$) is finite by Theorem~\ref{thm: summary immersions}~\eqref{item: components_bijection}.  Theorem~\ref{thm: summary immersions}~\eqref{item: conjugate_into_edge_group} shows that all other double cosets $B\,g\,C$ are such that $B^g\cap C$ conjugates into an edge group of $\AA$ (that is, a subgroup in $\mathcal{E}$), and hence is finite. Thus $(\pi_1(\AA, u_0), \mathcal{F})$ has the \sfgip\ as claimed in Statement~\eqref{item: 0-fcip sfgip}.
\end{proof}

Theorem \ref{thm: fgip when 0-fcip}~\eqref{item: 0-fcip sfgip} cannot be upgraded to show that the vertex groups having the \sfgip\ suffices to show that $\pi_1(\AA,u_0)$ has the \sfgip, as Example~\ref{ex: no sfgip} shows. However, Theorem~\ref{thm: charact sfgip finite edge groups} below characterises the situation where this is the case.

\begin{exm}\label{ex: no sfgip}
The abelian group $\Z\times\Z/k\Z = \langle a, b\mid [a, b] = 1, b^k\rangle$ splits as an HNN extension with finite vertex group $\langle b\rangle$ and with edge group mapping isomorphically to $\langle b\rangle$. In particular, the vertex group satisfies the \sfgip\ However, we already saw in Example \ref{ex: sfgip example} that $\Z\times\Z/k\Z$ does not have the \sfgip.
\end{exm}

\begin{thm}\label{thm: charact sfgip finite edge groups}
    Let $\AA$ be a finite graph of groups with finite edge groups. The following are equivalent:
    \begin{enumerate}[(1)]
        \item\label{item: just sfgip} $\pi_1(\AA, u_0)$ has the \sfgip
        \item\label{item: no obstruction} Each vertex group of $\AA$ has the \sfgip\ and $\pi_1(\AA, u_0)$ does not contain any subgroup isomorphic to $\Z/k\Z\times\Z$ for some $k\geqslant 2$.
        \item\label{item: sfgip and acylindricity} Each vertex group of $\AA$ has the \sfgip\ and $\AA$ is acylindrical.
    \end{enumerate}
\end{thm}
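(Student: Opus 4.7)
The plan is to establish the cyclic chain $(1) \Rightarrow (2) \Rightarrow (3) \Rightarrow (1)$.

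\emph{For $(1) \Rightarrow (2)$}, I will observe that the \sfgip\ is subgroup-hereditary. Indeed, if $H \leqslant G$ and $B, C \leqslant H$ are finitely generated, distinct $(B,C)$-double cosets of $H$ inject into $(B,C)$-double cosets of $G$ (since $B, C \leqslant H$), and conjugate intersections are computed identically in $H$ and $G$; so each vertex group, as a subgroup of $\pi_1(\AA,u_0)$, inherits the \sfgip. Moreover Example~\ref{ex: no sfgip} shows that $\Z/k\Z\times\Z$ fails the \sfgip\ for $k\geqslant 2$, so it cannot embed in $\pi_1(\AA,u_0)$.

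\emph{For $(3) \Rightarrow (1)$}, the \fgip\ is already provided by Theorem~\ref{thm: fgip when 0-fcip}\eqref{item: 0-fcip fgip}. For the \sfgip, given finitely generated $B, C\leqslant \pi_1(\AA,u_0)$, I will realise them as immersions $\mu^B, \mu^C$ from finite core pointed graphs of finitely generated groups via Theorem~\ref{thm: bijection subgroups immersions covers}\eqref{item: fg vertex and edge groups} and form $\DD = \BB\wtimes_{\AA}\CC$. Acylindricity of $\AA$ passes to the cover $\BB$, edge groups of $\BB$ are finite (as subgroups of finite edge groups of $\AA$), and the \sfgip\ of vertex groups of $\AA$ implies \sfgip\ relative to any subgroup; so Theorem~\ref{thm: acylindrical} applies to give a bijection between the components of $\DD$ with non-trivial fundamental group and the $(B,C)$-double cosets $BgC$ with $B^g\cap C\neq 1$. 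By Lemma~\ref{0-fcip_lemma}, every group has the $0$-\fcip\ relative to a finite collection of finite subgroups, so Corollary~\ref{cor: k-fcip}(1) forces $\DD$ to have finitely many edges; and by Proposition~\ref{prop: fgip_summary}, the \sfgip\ of vertex groups of $\AA$ forces only finitely many vertex groups of $\DD$ to be non-trivial. Combining these, only finitely many components of $\DD$ can have non-trivial fundamental group, so the set of double cosets with non-trivial intersection is finite.

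\emph{For $(2) \Rightarrow (3)$}, I will argue the contrapositive: if $\AA$ is not acylindrical, then $G:=\pi_1(\AA,u_0)$ contains a subgroup isomorphic to $\Z/n\Z\times\Z$ for some $n\geqslant 2$. Let $T$ be the Bass--Serre tree. Non-acylindricity means that for every $k$ some non-trivial element of $G$ fixes a path of length $k$ in $T$; such elements lie in edge stabilisers and hence have finite order. Since $\gr{A}$ is finite and each edge stabiliser is finite, there are only finitely many $G$-conjugacy classes of non-trivial elements with non-empty fixed-point set, so a pigeonhole produces a single non-trivial finite-order $a\in G$ whose fixed-point subtree $T^a$ is unbounded. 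The subtree $T^a$ is locally finite (at each $v \in T^a$ the $T^a$-edges correspond to conjugates of $a$ lying in the finite adjacent edge groups, for each of finitely many edge types), so it contains an infinite ray $v_0, v_1, v_2, \ldots$ with directed edges $e_i = (v_{i-1}, v_i)$. A pigeonhole on the finitely many $G$-orbits of directed edges yields a subsequence $i_1<i_2<\cdots$ and $g_k\in G$ with $g_k\,e_{i_1}=e_{i_k}$. Each $g_k^{-1}ag_k$ lies in the finite group $G_{e_{i_1}}$, so a second pigeonhole produces $k<k'$ with $g_k^{-1}ag_k = g_{k'}^{-1}ag_{k'}$; setting $h := g_{k'}g_k^{-1}$ gives $h\in C_G(a)$ and $h\,e_{i_k}=e_{i_{k'}}$ as directed edges.

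It remains to show that $h$ is hyperbolic on $T$; then $h$ has infinite order and, since $a$ has finite order $n\geqslant 2$, the subgroup $\langle a\rangle\cap\langle h\rangle$ is a finite subgroup of the infinite cyclic $\langle h\rangle$ and hence trivial, yielding $\langle a, h\rangle\cong \Z/n\Z\times\Z$. If $i_{k'}-i_k$ is odd, no vertex of $T$ is equidistant from $v_{i_k}$ and $v_{i_{k'}}$, so $h$ cannot fix any vertex and must be hyperbolic. If $i_{k'}-i_k$ is even, set $M=(i_k+i_{k'})/2$; any fixed vertex $w$ of $h$ would have to project onto $[v_{i_k}, v_{i_{k'}}]$ at the midpoint $v_M$ (forcing $d(w, v_{M-1}) = d(w, v_M)+1$) and simultaneously onto $[v_{i_k-1}, v_{i_{k'}-1}]$ at the midpoint $v_{M-1}$ (forcing $d(w, v_M) = d(w, v_{M-1})+1$), which is impossible. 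The main obstacle is this last direction, specifically the two-stage pigeonhole argument producing an element $h$ in the centraliser of $a$ that displaces an edge of the ray, together with the tree-geometric verification that such an $h$ cannot be elliptic.
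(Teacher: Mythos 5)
Your implications $(1)\Rightarrow(2)$ and $(3)\Rightarrow(1)$ are correct and essentially the paper's own arguments. For $(2)\Rightarrow(3)$ you take a genuinely different, Bass--Serre tree route (the paper argues combinatorially with a single long reduced $\AA$-path and a pigeonhole on repeated circuits), and most of it is sound: the two pigeonholes producing $h\in C_G(a)$ with $h\,e_{i_k}=e_{i_{k'}}$, the midpoint argument showing $h$ is hyperbolic, and the identification $\langle a,h\rangle\cong\Z/n\Z\times\Z$ are all correct. The gap is the assertion that $T^a$ is locally finite \emph{because} the $T^a$-edges at a vertex correspond to conjugates of $a$ lying in the finite adjacent edge groups. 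That is not what they correspond to: the edges at $v$ of a fixed type $e_0$ that are fixed by $a$ are the cosets $gG_{e_0}$ with $g^{-1}ag\in G_{e_0}$, and for a fixed value $b=g_0^{-1}ag_0\in G_{e_0}$ the solution set is the entire coset $C_{G_v}(a)\,g_0$, which can meet infinitely many cosets of $G_{e_0}$. Concretely, if $G_v=\Z/2\Z\times\Z$ and $G_{e_0}=\Z/2\Z\times 1=\langle a\rangle$ (the situation of Example~\ref{ex: no sfgip}), every one of the infinitely many edges at $v$ in that orbit is fixed by $a$. Finiteness of the edge groups alone therefore does not give local finiteness of $T^a$; and without local finiteness an unbounded tree need not contain an infinite ray (an infinite star of finite paths of unbounded length has none), so the argument stalls exactly where you appeal to the ray.

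The step is repairable, but only by invoking the \sfgip\ of the vertex groups, which you never use in this direction: applying the \sfgip\ of $G_v$ to $H=K=\langle a\rangle$ shows that $C_{G_v}(a)$ is contained in finitely many $(\langle a\rangle,\langle a\rangle)$-double cosets, each of which is finite, so $C_{G_v}(a)$ is finite and $T^a$ is then locally finite at every vertex. (Note that the paper proves the stronger statement that non-acylindricity \emph{alone} forces a $\Z/k\Z\times\Z$ subgroup, with no hypothesis on the vertex groups; your route as written cannot deliver that.) Alternatively you can sidestep the issue entirely, in the spirit of the paper's proof: instead of assembling an infinite ray in $T^a$, take a single path fixed pointwise by a single non-trivial $a$ whose length exceeds the number of pairs (orbit of directed edge, element of the corresponding finite edge stabiliser), and run both of your pigeonholes along the directed edges of that one path, matching both the $G$-orbit and the conjugate $g_i^{-1}ag_i$. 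Your midpoint argument for the hyperbolicity of $h$ only uses the finite segment between the two matched edges, so nothing else in your proof changes.
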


\begin{proof}
    It is immediate that if a group has the \sfgip, so do its subgroups. In view of Example~\ref{ex: no sfgip}, this establishes that \eqref{item: just sfgip} implies \eqref{item: no obstruction}.

    Suppose now that $\AA$ is acylindrical. Let $B$, $C$, $\BB$, $\CC$, $\mu^B$ and $\mu^C$ be as in the proof of Theorem~\ref{thm: fgip when 0-fcip}. In view of the definition of acylindricity (see the end of Section~\ref{sec: pullbacks}) and since $\mu^B$ is an immersion (and therefore preserves reducedness), $\BB$ is acylindrical as well. Theorem~\ref{thm: acylindrical} shows that the map $\calC$ is surjective onto the set of double cosets $B\,g\,C$ such that $B^g\cap C \ne 1$. The reasoning applied to establish Theorem~\ref{thm: fgip when 0-fcip}~\eqref{item: 0-fcip sfgip} now does not need to involve the finite subgroups of $A$ and it follows that $\pi_1(\AA,u_0)$ has the \sfgip\ Thus \eqref{item: sfgip and acylindricity} implies \eqref{item: just sfgip}.
    
    Before we prove that \eqref{item: no obstruction} implies \eqref{item: sfgip and acylindricity}, we observe the following. If $p = (a',e,a)$ is an $\AA$-path of length 1, $x\in A_{o(e)}$ and $p\inv\,x\,p \in A_{t(e)}$, then $a'^{-1} \, x\, a' \in \alpha_e(A_e)$ and, in that case, $p\inv\,x\,p = a\inv\,\omega_e\alpha_e\inv(a'^{-1} \, x\, a')\,a$.

    Suppose now that $p$ is a longer $\AA$-path with last edge $e$, say, $p = p'\, (1, e, a)$. Then, similarly, if $x\in A_{o(e)}$ and $p\inv\,x\,p \in A_{t(e)}$, then $p'^{-1} \, x\, p' \in \alpha_e(A_e)$ and, in that case, $p\inv\,x\,p = a\inv\,\omega_e\alpha_e\inv(p'^{-1} \, x\, p')\,a$. It follows in particular that
    \begin{enumerate}[(a)]
        \item if the last vertex group element of $p$ is trivial, then $p\inv\, A_{o(p)}\,p \cap A_{t(p)}$ is a subgroup of $\omega_e(A_e)$, and
        \item if $p = p_1\,p_2$, where $p_1$ and $p_2$ are non-trivial $\AA$-paths, and if $x\in A_{o(p)}$ is such that $p\inv\, x\, p$ has length 0, then so does $p_1\inv\,x\,p_1$.
    \end{enumerate}

    Now suppose that $\AA$ is not acylindrical. Let $n$ be an integer larger than $(K+1)\,|E(\gr A)|$, where $K$ is greater than the total number of subgroups of the (finitely many) edge groups of $\AA$. Since $\AA$ is not acylindrical, there exists a reduced $\AA$-path \J{$p$} of length $n$ such that $p\inv A_{t(p)} p \cap A_{o(p)}$ is non-trivial. By the pigeonhole principle, $p$ visits some edge $e$ at least $K+1$ times, and $p$ factors as $p = p_0 p_1 \dots p_{K+1}$ such that $p_0, p_1, \dots, p_K$ all end with $e$ and all have trivial last vertex group element. In particular $p_1, \dots, p_K$ are circuits at $u = t(e)$. Let $q_i = p_1\dots p_i$.
    
    As observed above, for each $1\le i\le K$, the intersection $q_i\inv A_u q_i \cap A_u$ is a non-trivial subgroup of $\omega_e(A_e)$. As a result, there exist $1 \le i < j \le K$ such that $q_i\inv A_u q_i \cap A_u$ and $q_j\inv A_u q_j \cap A_u$ are equal.

    Let $F = q_i\inv A_u q_i \cap A_u = q_j\inv A_u q_j \cap A_u \le \omega_e(A_e)$ and let $t = p_{i+1}\dots p_j \in \pi_1(\AA,u)$. Let $x\in F$. By definition of $F$, there exists $y\in A_u$ such that $x = q_j\inv\, y\, q_j = t\inv\,q_i\inv\,y\,q_i\,t$. It follows that $t\,x\,t\inv = q_i\inv\,y\,q_i \in A_u$ (by Item (b) above), and hence $t\,x\,t\inv \in q_i\inv\,A_u\,q_i \cap A_u = F$. In particular, $t\,F\,t\inv = F$ and $t\inv\, F\, t = F$.

    The $\AA$-circuit $t$ is certainly reduced, since it is a factor of the reduced $\AA$-path $p$. We observe that $t$ is cyclically reduced: this is clearly the case if the first edge of $t$ (of $p_{i+1}$) is not $e\inv$. If instead the first edge of $p_{i+1}$ is $e\inv$, we let $a \in A_u$ be the first vertex group element of $p_{i+1}$. Then $a$ is also a vertex group element in $p$, and since $p$ is reduced, $a\not\in \omega_e(A_e)$ and hence $t$ is cyclically reduced. This implies, in turn, that every power of $t$ has positive length and, in particular, that $t$ has infinite order in $\pi_1(\AA,u)$.
    
    Now $t$ acts on $F$ by conjugation, and hence the subgroup $\langle F, t\rangle$ of $A_u$ is isomorphic to $F\rtimes \Z$. Let $\ell$ be the order of the group of automorphisms of the finite group $F$. Then $\langle F, t^\ell\rangle$ is isomorphic to $F\times \Z$, and hence it contains a subgroup isomorphic to $\Z \times \Z/k\Z$, where $k$ is the order of any non-trivial element of $F$.
    \end{proof}

\subsubsection{The $1$-\fcip}\label{sec: the 1-fcip}

Our second criterion for the \fgip\ uses the 1-\fcip\ (or \fcip) instead.

\begin{prop}
\label{prop: finite_pullback_1}
Let $(\AA,u_0)$ be a pointed graph of groups. Suppose that for each vertex $u\in V(\gr A)$, $A_u$ has the \fcip\ relative to the collection $\{\alpha_e(A_e)\}_{e\in \Star(u)}$. Let $\mu^B\colon (\BB, v_0)\to (\AA, u_0)$ and $\mu^C\colon (\CC, w_0)\to (\AA, u_0)$ be immersions of graphs of groups, where $\AA$, $\BB$ and $\CC$ have finite underlying graphs and finitely generated vertex and edge groups. Let $B = \mu^B_*(\pi_1(\BB, v_0))$ and $C = \mu^C_*(\pi_1(\CC, w_0))$. Then
\begin{enumerate}[(1)]
    \item\label{1st of finite_pullback_1} If each vertex group of $\AA$ has the \fgip, then for any choice of vertex $x$, $\core(\BB\wtimes_{\AA}\CC, x)$ has finite underlying graph and has finitely generated vertex and edge groups. In particular, $B^g\cap C$ is finitely generated for all $g\in \pi_1(\AA, u_0)$.
    \item\label{2nd of finite_pullback_1} If each vertex group of $\AA$ has the \sfgip, then $\core(\BB\wtimes_{\AA}\CC)$ has finite underlying graph and finitely generated vertex and edge groups. In particular, there are finitely many double cosets $B\, g\, C$ such that $B^g\cap C$ is not elliptic.
    \item\label{3rd of finite_pullback_1} If each vertex group of $\AA$ has the \sfgip\ and $\BB$ is acylindrical, then there are finitely many double cosets $B\, g\, C$ such that $B^g\cap C\neq 1$. 
\end{enumerate}
\end{prop}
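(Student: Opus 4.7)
The plan is to deduce all three parts from the structural consequences of the \fcip\ hypothesis via Corollary~\ref{cor: k-fcip} with $k=1$, combined with Proposition~\ref{prop: fgip_summary} to control the vertex and edge groups of the $\AA$-product $\DD=\BB\wtimes_{\AA}\CC$, and then to translate back to intersections using Theorems~\ref{thm: product of immersions} and~\ref{thm: summary immersions}. For part~\eqref{1st of finite_pullback_1}, Corollary~\ref{cor: k-fcip} tells us that each connected component of $\DD$ has finitely many edges, so in particular $\core(\DD,x)$ has finite underlying graph. Vertex groups of $\DD$ are finitely generated by the first bullet of Proposition~\ref{prop: fgip_summary}. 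For the edge groups, I would use that $\mu^B\circ\rho^B\colon\DD\to\AA$ is an immersion, so that $\alpha_h(D_h)=D_{o(h)}^{h_\alpha}\cap\alpha_{[h]}(A_{[h]})$ realises $D_h$ inside the vertex group $A_{\mu(o(h))}$ as the intersection of two finitely generated subgroups, which is finitely generated by the \fgip; hence $D_h$ is. Finite generation of $\pi_1(\core(\DD,x),x)$ then follows from Proposition~\ref{prop: finite generation}~\eqref{item: finite generation}, and the ``in particular'' statement is obtained by applying the same argument to the pullback $\BB\wtimes_{\AA}\CC_g$ with $\CC_g$ a core graph of groups representing $C^{g^{-1}}$ (which exists by Lemma~\ref{lem: conjugate subgroup}), whose basepoint component has fundamental group conjugate to $B^g\cap C$.

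The main technical point, and the step I expect to be the hardest, is part~\eqref{2nd of finite_pullback_1}: beyond the argument above, one must bound the number of components of $\DD$ with non-empty core. My strategy rests on the following dichotomy for such a component $\DD'$: either (a) $\gr{D}'$ contains a loop or a non-trivial cycle, in which case at least one vertex of $\DD'$ has at least two outgoing edges, or (b) $\gr{D}'$ is a tree, and then any cyclically reduced closed $\DD'$-circuit of positive length must turn around at some vertex, which therefore carries a vertex group element lying outside the image of the adjacent edge group; in particular, $\DD'$ contains a non-trivial vertex group. Corollary~\ref{cor: k-fcip} bounds the vertices in case (a), while the second bullet of Proposition~\ref{prop: fgip_summary}, under the \sfgip\ hypothesis, bounds the non-trivial vertex groups in case (b). Hence $\core(\DD)$ has finite underlying graph and, as in part~\eqref{1st of finite_pullback_1}, finitely generated vertex and edge groups. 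The double-coset statement then follows from Theorem~\ref{thm: summary immersions}~\eqref{item: components_bijection}, invoking part~\eqref{1st of finite_pullback_1} and \cite[Corollary 7.3]{bas93} to identify ``not locally elliptic'' with ``not elliptic'' for the finitely generated subgroups $B^g\cap C$.

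For part~\eqref{3rd of finite_pullback_1}, the plan is to invoke Theorem~\ref{thm: acylindrical}: its hypotheses hold because $\BB$ is acylindrical with finitely generated edge groups and because the \sfgip\ of $A_{o(e)}$ implies its \sfgip\ relative to the finitely generated subgroup $\alpha_e(A_e)$. This provides a replacement $(\BB',v_0')$ with finite underlying graph and finitely generated vertex and edge groups, together with a bijection via $\C_0$ between the components of $\BB'\wtimes_{\AA}\CC$ with non-trivial fundamental group and the double cosets $BgC$ with $B^g\cap C\ne 1$. The dichotomy from part~\eqref{2nd of finite_pullback_1} applies verbatim to $\BB'\wtimes_\AA\CC$ --- a component with non-trivial fundamental group must contain either a loop or cycle in its underlying graph, or a non-trivial vertex group --- and thus bounds the number of such components, hence the number of such double cosets.
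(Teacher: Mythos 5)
Your proof is correct and follows essentially the same route as the paper's, which simply cites Corollary~\ref{cor: k-fcip}, Proposition~\ref{prop: fgip_summary}, Lemma~\ref{lem: conjugate subgroup}, Theorems~\ref{thm: product of immersions}, \ref{thm: summary immersions} and \ref{thm: acylindrical}, and \cite[Corollary 7.3]{bas93} without further elaboration. The details you supply are all sound; in particular, your composite-immersion argument for the finite generation of the edge groups of the $\AA$-product is exactly the supplementary step needed, since the first bullet of Proposition~\ref{prop: fgip_summary} would otherwise require the edge groups of $\AA$ to have the \fgip, which is not among the hypotheses.
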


\begin{proof}
Statement~\eqref{1st of finite_pullback_1} follows from Lemma~\ref{lem: conjugate subgroup}, Theorem \ref{thm: product of immersions}, Corollary \ref{cor: k-fcip} and Proposition \ref{prop: fgip_summary}. 

To verify Statement~\eqref{2nd of finite_pullback_1}, we may use Corollary \ref{cor: k-fcip} and Proposition \ref{prop: fgip_summary} to obtain that $\core(\BB\wtimes_{\AA}\CC)$ has finite underlying graph and has finitely generated vertex and edge groups. Then Theorem \ref{thm: summary immersions} implies that there are finitely many double cosets $B\,g\,C$ such that $B^g\cap C$ is not locally elliptic. Finally, since $B^g\cap C$ is finitely generated (by Statement~\eqref{1st of finite_pullback_1}), if it is locally elliptic, then it is elliptic by \cite[Corollary 7.3]{bas93}. 

Statement~\eqref{3rd of finite_pullback_1} follows in much the same way, using Theorem \ref{thm: acylindrical} instead of Theorem \ref{thm: summary immersions}.
\end{proof}

Theorem \ref{fgip_criterion_1} follows by combining Theorem~\ref{thm: bijection subgroups immersions covers} with Proposition \ref{prop: finite_pullback_1}.

\begin{thm}
\label{fgip_criterion_1}
Let $(\AA,u_0)$ be a core pointed graph of groups with finite underlying graph and finitely generated vertex and edge groups. Suppose that for each vertex $u\in V(\gr A)$, $A_u$ has the \fcip\ relative to the collection $\{\alpha_e(A_e)\}_{e\in \Star(u)}$ and that $\pi_1(\AA,u_0)$ has the \fgip\ relative to each subgroup in $\mathcal{E}$. Then:
\begin{enumerate}[(1)]
    \item If each vertex group of $\AA$ has the \fgip, then $\pi_1(\AA, u_0)$ has the \fgip\ 
    \item If each vertex group of $\AA$ has the \sfgip, then the pair $(\pi_1(\AA, u_0), \mathcal{E})$ has the \sfgip\
    \item If each vertex group of $\AA$ has the \sfgip\ and $\AA$ is acylindrical, then $\pi_1(\AA, u_0)$ has the \sfgip\
\end{enumerate}
\end{thm}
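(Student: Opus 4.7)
The plan is to realise an arbitrary pair of finitely generated subgroups $B,C\leqslant\pi_1(\AA,u_0)$ as immersions of core pointed graphs of finitely generated groups and then to read off each of the three conclusions from the corresponding part of Proposition~\ref{prop: finite_pullback_1} applied to the $\AA$-product $\DD=\BB\wtimes_{\AA}\CC$. Concretely, I first invoke Theorem~\ref{thm: bijection subgroups immersions covers}~\eqref{item: realisation} to produce immersions $\mu^B\colon(\BB,v_0)\to(\AA,u_0)$ and $\mu^C\colon(\CC,w_0)\to(\AA,u_0)$, from core pointed graphs of groups, with $\mu^B_*(\pi_1(\BB,v_0))=B$ and $\mu^C_*(\pi_1(\CC,w_0))=C$. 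The hypothesis that $\pi_1(\AA,u_0)$ has the \fgip\ relative to every subgroup in $\mathcal{E}$ is exactly what is needed to invoke Theorem~\ref{thm: bijection subgroups immersions covers}~\eqref{item: fg vertex and edge groups} and conclude that $\BB$ and $\CC$ can be chosen with finite underlying graphs and finitely generated vertex and edge groups. This places $\mu^B,\mu^C$ within the hypotheses of Proposition~\ref{prop: finite_pullback_1}.

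Parts~(1) and~(3) then follow essentially directly. For~(1), Proposition~\ref{prop: finite_pullback_1}~\eqref{1st of finite_pullback_1} together with Theorem~\ref{thm: product of immersions}~\eqref{item: intersection of subgroups} identifies $B\cap C$ with the (finitely generated) fundamental group of the basepoint component of $\DD$. For~(3), the hypothesis that $\AA$ is acylindrical transfers to $\BB$ because the immersion $\mu^B$ sends reduced $\BB$-paths to reduced $\AA$-paths (Proposition~\ref{prop: folded morphisms}); Proposition~\ref{prop: finite_pullback_1}~\eqref{3rd of finite_pullback_1} then yields only finitely many double cosets $BgC$ with $B^g\cap C\neq 1$, which combined with~(1) is the \sfgip\ for $\pi_1(\AA,u_0)$.

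Part~(2) requires a more delicate case analysis. Proposition~\ref{prop: finite_pullback_1}~\eqref{2nd of finite_pullback_1} immediately accounts for the finitely many double cosets $BgC$ for which $B^g\cap C$ is not elliptic. For the remaining double cosets I stratify via the map $\mathcal{C}$ from Section~\ref{sec: pullbacks}: if $BgC$ is not in the image of $\mathcal{C}$, then Theorem~\ref{thm: summary immersions}~\eqref{item: conjugate_into_edge_group} ensures that $B^g\cap C$ conjugates into an edge group of $\AA$ and hence lies in a subgroup in $\mathcal{E}$; if $BgC$ comes from a vertex of a component $\DD'$ of $\DD$ with non-empty core, then $BgC$ was already accounted for among the non-elliptic cases. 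The only remaining possibility is that $BgC=\mathcal{C}(x)$ for a vertex $x$ lying in a component $\DD'$ with empty core. In that case I will argue that if all vertex groups of $\DD'$ are trivial then $\gr D'$ must be a tree (any graph-theoretic cycle in $\gr D'$ would furnish a cyclically reduced closed $\DD'$-path, contradicting empty core), whence $B^g\cap C\cong\pi_1(\DD',x)=1$ is trivially contained in every edge subgroup; otherwise $\DD'$ carries at least one non-trivial vertex group, and by Proposition~\ref{prop: fgip_summary} the \sfgip\ of the vertex groups of $\AA$ forces $\DD$ to have only finitely many non-trivial vertex groups, so only finitely many such components occur, each contributing a single double coset by Theorem~\ref{thm: summary immersions}~\eqref{item: same_double_coset}. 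The main obstacle is organising this final case split cleanly — in particular justifying that a component with empty core and only trivial vertex groups has a tree as underlying graph — and verifying that every exceptional $BgC$ with $B^g\cap C$ not in a member of $\mathcal{E}$ falls into one of the finite families identified above.
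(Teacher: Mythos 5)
Your proposal is correct and follows the same route as the paper, whose entire proof of this theorem is the single sentence that it ``follows by combining Theorem~\ref{thm: bijection subgroups immersions covers} with Proposition~\ref{prop: finite_pullback_1}.'' Your more detailed treatment of part (2) --- splitting the double cosets into those outside the image of $\mathcal{C}$ (which conjugate into edge groups by Theorem~\ref{thm: summary immersions}~\eqref{item: conjugate_into_edge_group}), those coming from the finitely many components with non-empty core, and those coming from empty-core components (whose underlying graphs are trees, so that once the finitely many components carrying a non-trivial vertex group are excluded the intersection is trivial) --- correctly supplies the details that the paper's one-line proof leaves implicit, and the transfer of acylindricity from $\AA$ to $\BB$ via Proposition~\ref{prop: folded morphisms} in part (3) matches the argument the paper uses elsewhere (in the proof of Theorem~\ref{thm: charact sfgip finite edge groups}).
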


\subsubsection{The $2$-\fcip}

Our third criterion for the \fgip\ uses the 2-\fcip\

\begin{prop}
\label{prop: finite_pullback_2}
Let $(\AA,u_0)$ be a pointed graph of finitely generated groups such that for each $u\in V(\gr{A})$ and $e\in \Star(u)$ we have:
\begin{itemize}
    \item $A_u$ has the $2$-\fcip\ relative to the collection $\{\alpha_e(A_e)\}_{e\in \Star(u)}$.
    \item $(A_{u}, \alpha_e(A_e))$ has the $\alpha_e(A_e)$-\fgip
\end{itemize}

Let $\mu^B\colon (\BB, v_0)\to (\AA, u_0)$ and $\mu^C\colon (\CC, w_0)\to (\AA, u_0)$ be immersions of graphs of groups with finite underlying graphs and finitely generated vertex and edge groups. Denote by $B = \mu^B_*(\pi_1(\BB, v_0))$ and $C = \mu^C_*(\pi_1(\CC, w_0))$. Then
\begin{enumerate}
    \item If each vertex group of $\AA$ has the \fgip, then for any choice of vertex $x$, $\core(\BB\wtimes_{\AA}\CC, x)$ is a finite graph of groups with finitely generated vertex and edge groups. In particular, $B^g\cap C$ is finitely generated for all $g\in \pi_1(\AA, u_0)$.
    \item If each vertex group of $\AA$ has the \sfgip, then $\core(\BB\wtimes_{\AA}\CC) = \DD_1\sqcup \DD_2$ where $\DD_1$ has finite underlying graph and finitely generated vertex and edge groups and where $\DD_2$ has underlying graph a union of cycles with trivial vertex groups. In particular, there are finitely many double cosets $B\, g\, C$ such that $B^g\cap C$ is not elliptic or infinite cyclic.
    \item If each vertex group of $\AA$ has the \sfgip\ and if $\BB$ is also acylindrical, then there are finitely many double cosets $B\, g\, C$ such that $B^g\cap C$ is not infinite cyclic. 
\end{enumerate}
\end{prop}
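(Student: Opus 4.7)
The plan is to parallel the proof of Proposition~\ref{prop: finite_pullback_1}, replacing the use of Corollary~\ref{cor: k-fcip} by Proposition~\ref{prop: 2-fcip}, and compensating for the weaker control on the $\AA$-product by exploiting the additional $\alpha_e(A_e)$-\fgip\ hypothesis. Setting $\DD = \BB \wtimes_\AA \CC$, Proposition~\ref{prop: 2-fcip} yields a decomposition of the underlying graph $\gr D = \gr E_1 \cup \gr E_2 \cup \gr E_3$, with $\gr E_1$ finite, $\gr E_2$ a finite union of lines attached to $\gr E_1$ at single endpoints, and $\gr E_3$ a disjoint union of lines and cycles disjoint from $\gr E_1 \cup \gr E_2$. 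Meanwhile, the third bullet of Proposition~\ref{prop: fgip_summary}, applied using the $\alpha_e(A_e)$-\fgip\ assumption, guarantees that all but finitely many edges of $\DD$ are non-reduced. These two structural facts will be the common input for all three parts.

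For part~(1), the \fgip\ of the vertex groups of $\AA$ implies via Proposition~\ref{prop: fgip_summary} that every vertex and edge group of $\DD$ is finitely generated, and it remains to show $\core(\DD, x)$ has finite underlying graph. Along any tail of $\gr E_2$ or any line/cycle of $\gr E_3$, once all edges in a segment are non-reduced we may collapse them via the reduction procedures of Remarks~\ref{rk: reduction is computable} and~\ref{rk: reduced more general} without changing $\pi_1$, so no reduced $x$-circuit can extend arbitrarily far along the segment. The finite generation of $B^g \cap C$ for any $g$ then follows from Theorem~\ref{thm: product of immersions}~\eqref{item: localy_elliptic_double_cosets}, with Lemma~\ref{lem: conjugate subgroup} ensuring that the outcome is uniform across conjugate intersections. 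For part~(2), upgrading to the \sfgip\ of vertex groups gives, again via Proposition~\ref{prop: fgip_summary}, that all but finitely many vertex and edge groups of $\DD$ are trivial. Cycles in $\gr E_3$ with all-trivial vertex groups survive in $\core(\DD)$ as plain graph-cycles with $\pi_1 \cong \Z$; these form the $\DD_2$ of the statement. The finitely many cycles with some non-trivial vertex group, together with $\gr E_1$, the finite portions of the tails in $\gr E_2$, and the finite non-trivial supports of lines in $\gr E_3$, assemble into the finite $\DD_1$ with finitely generated groups. Theorem~\ref{thm: summary immersions}~\eqref{item: components_bijection} and~\eqref{item: conjugate_into_edge_group} then convert this decomposition into the stated count of double cosets.

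For part~(3), acylindricity of $\BB$ allows one to invoke Theorem~\ref{thm: acylindrical}: after possibly replacing $\mu^B$ by the improved immersion representing $B$, the map $\C_0$ becomes a bijection onto $\{B g C \st B^g \cap C \ne 1\}$, so every non-trivial intersection corresponds to a component of $\DD$ with non-trivial $\pi_1$. The decomposition from part~(2) then confines the non-infinite-cyclic intersections to $\DD_1$, which contributes only finitely many components. The main obstacle will be the finiteness claim for the pointed core in part~(1): rigorously translating ``most edges of $\DD$ are non-reduced'' into a sharp bound on how far reduced $x$-circuits can penetrate infinite tails or wind around infinite segments in $\gr E_3$ requires a careful analysis of the turn-around vertex-group elements $a \in D_{t(f_k)} \setminus \omega_{f_k}(D_{f_k})$ that could allow a circuit to reflect back, and using the $\alpha_e(A_e)$-\fgip\ hypothesis to rule out the existence of an infinite sequence of such turn-arounds along a single ray.
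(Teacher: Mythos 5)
Your proposal is correct and follows essentially the same route as the paper, whose proof of this proposition is literally the single line ``identical to Proposition~\ref{prop: finite_pullback_1}, using Proposition~\ref{prop: 2-fcip} in place of Corollary~\ref{cor: k-fcip}''. You go further than the paper by explicitly supplying the step it leaves implicit --- namely that the $\alpha_e(A_e)$-\fgip\ hypothesis, via the last bullet of Proposition~\ref{prop: fgip_summary}, makes all but finitely many edges of the $\AA$-product non-reduced, so that no turn-around element $a \in D_{t(f_k)} \setminus \omega_{f_k}(D_{f_k})$ exists far out along the lines produced by Proposition~\ref{prop: 2-fcip} and reduced circuits cannot penetrate them --- which is exactly the argument the paper itself spells out only later, in the proof of Proposition~\ref{prop: finite_pullback_3}.
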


\begin{proof}
The proof is identical to that of Proposition \ref{prop: finite_pullback_1}, using Proposition \ref{prop: 2-fcip} in place of Corollary \ref{cor: k-fcip}.
\end{proof}

Theorem \ref{fgip_criterion_2} follows in much the same way as Theorem \ref{fgip_criterion_1}, by combining Theorem~\ref{thm: bijection subgroups immersions covers} with Proposition \ref{prop: finite_pullback_2}.

\begin{thm}
\label{fgip_criterion_2}
Let $(\AA,u_0)$ be a pointed graph of finitely generated groups such that for each $u\in V(\gr{A})$ and $e\in \Star(u)$ we have:
\begin{itemize}
    \item $A_u$ has the $2$-\fcip\ relative to the collection $\{\alpha_e(A_e)\}_{e\in \Star(u)}$.
    \item $(A_{o(e)}, \alpha_e(A_e))$ has the $\alpha_e(A_e)$-\fgip\
\end{itemize}
If each vertex group of $\AA$ has the \fgip, then $\pi_1(\AA,u_0)$ has the \fgip\ if and only if it has the \fgip\ relative to each subgroup in $\mathcal{E}$.
\end{thm}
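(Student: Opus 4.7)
The forward direction is immediate: since each edge group $A_e$ is finitely generated by hypothesis and each $\alpha_e$ is a monomorphism, every subgroup in $\mathcal{E}$ is a conjugate of the finitely generated subgroup $\alpha_e(A_e)$ and is therefore itself finitely generated; hence the \fgip\ at once implies the \fgip\ relative to any such subgroup.

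For the non-trivial direction, suppose $\pi_1(\AA, u_0)$ has the \fgip\ relative to every subgroup in $\mathcal{E}$, and let $B, C \leqslant \pi_1(\AA, u_0)$ be finitely generated. The plan is to realise $B$ and $C$ by immersions with controlled finiteness properties, and then exhibit $B \cap C$ as the fundamental group of their $\AA$-product, to which Proposition~\ref{prop: finite_pullback_2} applies. Concretely, I would first invoke Theorem~\ref{thm: bijection subgroups immersions covers}(2) to obtain core immersions $\mu^B\colon (\BB, v_0) \to (\AA, u_0)$ and $\mu^C\colon (\CC, w_0) \to (\AA, u_0)$ whose fundamental group images are $B$ and $C$. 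Part (5) of the same theorem is the critical input: since every subgroup of the form $p\,\alpha_e(A_e)\,p^{-1}$ lies in $\mathcal{E}$, the \fgip\ of $\pi_1(\AA, u_0)$ relative to these subgroups is precisely what ensures that $\BB$ and $\CC$ have finite underlying graphs and finitely generated vertex and edge groups.

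With $\BB$ and $\CC$ in hand, the hypotheses of Proposition~\ref{prop: finite_pullback_2} (namely $2$-\fcip\ of vertex groups relative to adjacent edge groups, the $\alpha_e(A_e)$-\fgip\ of each $(A_{o(e)}, \alpha_e(A_e))$, the \fgip\ of vertex groups, and the finite, finitely generated character of $\BB, \CC$) are all satisfied. Applying part (1) of that proposition at the basepoint $x_0$, $\core(\BB \wtimes_\AA \CC, x_0)$ is a finite graph of groups with finitely generated vertex and edge groups, so its fundamental group is finitely generated by Proposition~\ref{prop: finite generation}(3). Since this fundamental group is isomorphic to $B \cap C$ via Theorem~\ref{thm: product of immersions}(2), the intersection $B \cap C$ is finitely generated, as required. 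I do not anticipate any real obstacle: the entire argument is a bookkeeping exercise that feeds the hypotheses of the theorem into the two main black boxes (Theorem~\ref{thm: bijection subgroups immersions covers} and Proposition~\ref{prop: finite_pullback_2}); the only delicate point is recognising that the relative \fgip\ with respect to $\mathcal{E}$ is the exact abstract shadow of the finite generation of the edge groups in the realising immersions.
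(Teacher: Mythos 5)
Your proposal is correct and follows essentially the same route as the paper, which derives Theorem~\ref{fgip_criterion_2} exactly by combining Theorem~\ref{thm: bijection subgroups immersions covers} (to realise $B$ and $C$ as immersions of finite core graphs of finitely generated groups, using the relative \fgip\ with respect to $\mathcal{E}$) with Proposition~\ref{prop: finite_pullback_2}(1). The forward direction and the concluding finite-generation step are handled as the paper intends, so there is nothing to add.
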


\begin{rem}\label{rem: no sfgip upgrade} 
A version of Theorem \ref{fgip_criterion_1} unfortunately does not hold when the \fcip\ condition is replaced with the 2-\fcip\ condition as the following example demonstrates. Consider the Klein bottle group $K = \langle a, b\mid a^2 = b^2\rangle$. This splits as an amalgam with infinite cyclic vertex groups $\langle a\rangle$ and $\langle b\rangle$ and with edge group mapping to index two subgroups on either side. This graph of groups certainly satisfies the hypotheses of Theorem \ref{fgip_criterion_2}. Moreover, each vertex group has the \sfgip\ However, the infinitely many double cosets $\langle [a, b]\rangle a^{2i}\langle [a, b]\rangle$ (as $i$ varies over the integers) all yield infinite intersections $\langle [a, b]\rangle^{a^{2i}}\cap \langle [a, b]\rangle = \langle [a, b]\rangle$, none of which are elliptic.
\end{rem}

The following corollary follows from Theorem \ref{fgip_criterion_2} combined with the following observation: if $C_1, C_2\leqslant A$ are subgroups so that $A$ has the \fcip\ relative to $C_1$ and relative to $C_2$, then $A$ has the 2-\fcip\ relative to $\{C_1, C_2\}$.

\begin{cor}
\label{cor: single_edge_2}
Let $G = A*_{\varphi}$ be a HNN extension, where $\varphi\colon C_1\to C_2$ identifies two subgroups of $A$. Suppose $A$ has the \fgip, $A$ has the \fcip\ relative to $C_i$ and $(A, C_i)$ has the $C_i$-\fgip\ for $i = 1, 2$. Then $G$ has the \fgip\ if and only if $G$ has the \fgip\ relative to $C$.
\end{cor}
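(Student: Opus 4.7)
The plan is to realise $G = A \ast_\varphi$ as the fundamental group of a pointed graph of groups $(\AA, u_0)$ with a single vertex $u_0$ carrying $A_{u_0} = A$, and a single pair of edges $e, e^{-1}$ at $u_0$, where the common edge group is identified with $C_1$ via $\alpha_e$ and with $C_2$ via $\omega_e$. Under this model, the collection of edge-group images adjacent to $u_0$ is precisely $\{C_1, C_2\}$, and every subgroup in the collection $\mathcal{E}$ of edge subgroups of $G$ (in the sense defined before Proposition~\ref{main_prop_A}) is a $G$-conjugate of $C_1$. Since the \fgip\ relative to a subgroup is stable under conjugation, $G$ has the \fgip\ relative to each subgroup in $\mathcal{E}$ if and only if $G$ has the \fgip\ relative to $C_1$ (read as $C$ in the statement).

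To conclude via Theorem~\ref{fgip_criterion_2}, I would verify its two standing hypotheses. The $\alpha_e(A_e)$-\fgip\ of $A$ translates to the $C_i$-\fgip\ of $(A, C_i)$ for $i = 1, 2$, which is given. The remaining hypothesis is the \textbf{$2$-\fcip\ of $A$ relative to $\{C_1, C_2\}$}, which is the observation highlighted just before the corollary and is where the work of the proof goes.

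To establish this observation, fix finitely generated subgroups $B, D \leqslant A$ and finite collections $\mathcal{F}_i, \mathcal{G}_i$ of representatives of pairwise distinct $(B, C_i)$- and $(D, C_i)$-double cosets for $i = 1, 2$. Writing
\[
S_i(h) \;=\; \sum_{f \in \mathcal{F}_i,\, g \in \mathcal{G}_i} \bigl|(q^{C_i}_{f,g})^{-1}(B\,h\,D)\bigr|,
\]
the hypothesis that $A$ has the \fcip\ relative to $C_i$ yields $\sum_h \max\{0, S_i(h) - 1\} < \infty$ for each $i$, where $h$ ranges over $(B,D)$-double cosets. Combining these via the elementary inequality
\[
\max\{0,\, a + b - 2\} \;\leqslant\; \max\{0,\, a - 1\} + \max\{0,\, b - 1\} \qquad (a, b \geqslant 0),
\]
one obtains $\sum_h \max\{0, S_1(h) + S_2(h) - 2\} < \infty$, which is exactly the $2$-\fcip\ of $A$ relative to $\{C_1, C_2\}$.

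With all hypotheses of Theorem~\ref{fgip_criterion_2} in place, that theorem delivers the equivalence between the \fgip\ of $G = \pi_1(\AA, u_0)$ and the \fgip\ of $G$ relative to each subgroup in $\mathcal{E}$, which as noted above is the same as the \fgip\ of $G$ relative to $C$. The converse direction is tautological since the \fgip\ is inherited by arbitrary intersections with a fixed subgroup. The only genuinely non-formal step is the elementary max-inequality above, which I would expect to be the main (minor) obstacle, everything else being a direct specialisation of Theorem~\ref{fgip_criterion_2} to the single-edge HNN setting.
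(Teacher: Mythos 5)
Your proposal is correct and follows essentially the same route as the paper: the paper derives the corollary from Theorem~\ref{fgip_criterion_2} together with the observation that the \fcip\ relative to $C_1$ and relative to $C_2$ separately implies the $2$-\fcip\ relative to $\{C_1, C_2\}$, which is exactly what you prove via the subadditivity inequality $\max\{0, a+b-2\} \leqslant \max\{0, a-1\} + \max\{0, b-1\}$. Your verification that the edge subgroups in $\mathcal{E}$ are all $G$-conjugates of $C_1$ (including those arising from $\alpha_{e^{-1}} = \omega_e$, since $C_2 = C_1^t$ in $G$) is also the right bookkeeping.
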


\begin{rem}
\label{rem: recover_Burns_HNN}
    In 1973, Burns proved a sufficient condition for an HNN extension to have the \fgip\ \cite[Theorem 1.2]{bur73}. This condition was expressed in terms of so-called \emph{Burns subgroups} (there called AMFI subgroups): if $H*_{\phi}$ is a HNN extensions, where $\phi\colon A\to B$ is the identifying isomorphism, so that $H$ has the \fgip\ and $A, B$ are Burns subgroups of $H$, then $H*_{\phi}$ has the \fgip\ when it has the \fgip\ relative to $A$ and $B$. The definition of Burns subgroups is somewhat technical and is not given here, we refer the reader to the appendix. However, we remark that by Proposition \ref{prop: Burns collection} and Lemma \ref{lem:Burns_A_fgip}, if $A\leqslant H$ is Burns, then $H$ has the \fcip\ relative to $A$ and $(H, A)$ has the $A$-\fcip\ Hence, Corollary \ref{cor: single_edge_2} implies Burns' result. One may also use Proposition \ref{prop: finite_pullback_2} to derive more technical generalisations.
\end{rem}

\subsubsection{The $1$-\fcip\ on an orientation}
\label{sec: 1-fcip orientation}

Our fourth criterion for the \fgip\ uses the 1-\fcip, but only on certain edge groups induced by an orientation on the edges. The assumptions that are needed, as well as the proofs, are much more involved than 
for our previous criteria. We say that a group $A$ has the \emph{limited coset intersection property} (\lcip) \emph{relative to a collection of subgroups $\{A_i\}_i$} if $A$ has the \fcip\ relative to the collection $\{A_i\}_i$ and if, for every $i$, $(A,A_i)$ has the $A_i$-\fgip

\begin{prop}
\label{prop: finite_pullback_3}
Let $(\AA,u_0)$ be a pointed graph of groups in which each vertex group has the \fgip\ Suppose that there exists an orientation $E^+\subset E(\gr{A})$ on the edges such that
\begin{itemize}
    \item for each vertex $u\in V(\gr A)$, $A_u$ has the \lcip\ relative to the collection $\{\alpha_e(A_e) \mid e\in E^+,\ o(e) = u\}$;
    \item for each edge $e \in E^+(\gr A)$, either $A_e$ has the \sfgip\ or $(A_{e}, \mathcal{F}_{e})$ has the \sfgip\ (where $\mathcal F_e$ is the collection of finite subgroups of $A_e$) and there is a bound on the orders of the elements of $\mathcal{F}_e$.
\end{itemize}
Let $\mu^B\colon (\BB, v_0)\to (\AA, u_0)$ and $\mu^C\colon (\CC, w_0)\to (\AA, u_0)$ be immersions of graphs of groups. Suppose that $\AA$, $\BB$ and $\CC$ have finite underlying graphs and finitely generated vertex and edge groups, and denote by $B = \mu^B_*(\pi_1(\BB, v_0))$ and $C = \mu^C_*(\pi_1(\CC, w_0))$. Then
\begin{enumerate}[(1)]
    \item For each vertex $x$, $\core(\BB\wtimes_{\AA}\CC, x)$ is a finite graph of groups with finitely generated vertex and edge groups. In particular, $B^g\cap C$ is finitely generated for all $g\in \pi_1(\AA, u)$.
    \item $\core(\BB\wtimes_{\AA}\CC) = \DD_1\sqcup \DD_2$ where $\DD_1$ has finite underlying graph and finitely generated vertex and edge groups and where $\DD_2$ has underlying graph a union of cycles with equal finite vertex groups. In particular, there are finitely many double cosets $B\, g\, C$ such that $B^g\cap C$ is not elliptic or virtually infinite cyclic.
\end{enumerate}
\end{prop}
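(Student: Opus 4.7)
The plan is to apply Proposition~\ref{prop: fcip on orientation} with the orientation $E^+$ and the \fcip\ part of the \lcip\ hypothesis. This yields a decomposition $\DD := \BB\wtimes_\AA\CC = \DD_1' \cup \DD_2'$, where $\gr{D}_1'$ is a finite graph containing the core of each of its components and $\gr{D}_2'$ is a disjoint union of subgraphs each meeting $\gr{D}_1'$ in at most one vertex and in which every vertex has at most one outgoing $E^+$-edge. I would then enlarge $\gr{D}_1'$ to a finite subgraph $\tilde{\DD}_1$ by absorbing (i) all $E^+$-edges $h$ of $\DD$ for which $\alpha_h$ is not an isomorphism, which are finitely many by Proposition~\ref{prop: fgip_summary} applied with the $\alpha_e(A_e)$-\fgip\ part of \lcip, and (ii) all $E^+$-edges $h$ with $D_h$ infinite or outside $\mathcal{F}_e$, also finitely many by Proposition~\ref{prop: fgip_summary} applied with the \sfgip\ or $\mathcal{F}_e$-\sfgip\ hypothesis on each $A_e$. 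Since vertex groups of $\AA$ have the \fgip, all vertex and edge groups of $\tilde{\DD}_1$ are finitely generated.

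In the complement $\DD\setminus\tilde{\DD}_1$, every $E^+$-edge $h$ satisfies $\alpha_h$ iso and $D_h$ finite of uniformly bounded order. Since each vertex in $\gr{D}_2'$ has at most one outgoing $E^+$-edge, each connected component of $\DD\setminus\tilde{\DD}_1$ contains at most one $E^+$-cycle, possibly with tree branches attached. On such a cycle $(h_1,\ldots,h_k)$ with consecutive vertices $v_0,\ldots,v_{k-1},v_k=v_0$, the identities $|D_{h_i}|=|\alpha_{h_i}(D_{h_i})|=|D_{v_{i-1}}|$ together with $|\omega_{h_i}(D_{h_i})|=|D_{h_i}|\le|D_{v_i}|$ close up around the cycle, forcing all sizes to coincide and all $\omega_{h_i}$ to be isomorphisms. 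Hence all vertex and edge groups along the cycle are canonically identified with a single finite group, and I obtain the decomposition $\core(\DD)=\DD_1\sqcup\DD_2$, where $\DD_1$ consists of $\tilde{\DD}_1$ together with any finite residual contribution from tree edges still appearing in the core, and $\DD_2$ is the disjoint union of the pure $E^+$-cycles, now carrying equal finite vertex groups.

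Statement~(1) then follows because, at any vertex $x$, $\core(\DD,x)$ is supported on $\tilde{\DD}_1$ together with only finitely many additional tree segments near $x$: the $\alpha_h$-iso condition forbids the backtracking pattern $h^{-1}\cdot h$ at all outgoing $E^+$-edges in $\gr{D}_2'$, trimming most of the trees out of the core and producing a finite graph of groups whose vertex and edge groups are finitely generated, so that $B^g\cap C$ is finitely generated by Proposition~\ref{prop: finite generation}\eqref{item: finite generation}. Statement~(2) follows by combining the structure of $\DD_1$ and $\DD_2$ with Theorem~\ref{thm: summary immersions} (items \eqref{item: same_double_coset}, \eqref{item: conjugate_into_edge_group} and the last item): each double coset $BgC$ for which $B^g\cap C$ is neither elliptic nor virtually infinite cyclic must correspond to a component of $\core(\DD)$ meeting $\DD_1$, of which there are only finitely many, whereas each cycle component of $\DD_2$ contributes a $B^g\cap C$ isomorphic to an HNN extension of a finite group by an isomorphism, hence virtually infinite cyclic. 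The main technical obstacle is controlling the tree branches in $\gr{D}_2'$ that can remain in the core through attaching edges $h$ with $\omega_h$ non-iso, and showing that only finitely many such branches are present, so that $\DD_2$ really reduces to pure cycles with equal finite vertex groups.
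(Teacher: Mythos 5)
Your strategy is the same as the paper's --- apply Proposition~\ref{prop: fcip on orientation}, enlarge the finite part using Proposition~\ref{prop: fgip_summary} so that every remaining $E^+$-edge $h$ has $\alpha_h$ an isomorphism and $D_h$ finite of uniformly bounded order, and then read off the structure of the cores --- and your treatment of the cycle components in statement~(2) is correct. But there is a genuine gap in statement~(1), and it is exactly the one you flag at the end without resolving. The condition ``$\alpha_h$ is an isomorphism'' only forbids the turn-around pattern $(h^{-1},a,h)$, equivalently a negative edge followed by a positive one; it does \emph{not} forbid the pattern $(h,a,h^{-1})$, which is a reduced $\DD$-path whenever $a\notin\omega_h(D_h)$, i.e.\ whenever $\omega_h$ fails to be surjective. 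So a reduced loop at $x$ can a priori climb arbitrarily far up a branch of $\gr{D}_2'$ before turning around, and nothing in your argument bounds how far; ``trimming most of the trees out of the core'' is therefore not justified as written.

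The paper closes this as follows. From each of the finitely many attachment vertices $y\in\gr{D}_1\cap\gr{D}_2$ there is a unique positively oriented ray $S_y$ (each vertex of $\gr{D}_2$ has at most one outgoing $E^+$-edge), and the no-negative-then-positive rule shows that a reduced loop at $x$ can only reach vertices of $\gr{D}_2$ lying on these rays. Along such a ray, $\alpha_h$ being an isomorphism and $\omega_h$ being injective give $|D_{v_0}|\le|D_{v_1}|\le\cdots$, and these orders are uniformly bounded, hence eventually constant; from that point on every $\omega_h$ is an isomorphism and the turn-around $(h,a,h^{-1})$ becomes impossible, so reduced loops meet only a finite initial segment of each ray. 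This is the missing step that makes $\core(\DD,x)$ finite. Two smaller points: for $B^g\cap C$ with $g$ arbitrary you still need Lemma~\ref{lem: conjugate subgroup} to change the basepoint; and for statement~(2) your closing worry about branches attached to cycles is unfounded, since an excursion from a cycle into an attached branch and back would have to contain a negative edge followed by a positive one, which is already forbidden.
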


\begin{proof}
    Let $\DD$ be the component of $\BB\wtimes_{\AA}\CC$ containing $x$. By Proposition \ref{prop: fcip on orientation},
    $\DD$ is a union $\DD = \DD_1\cup \DD_2$ such that 
    \begin{itemize}
        \item $\gr D_1$ is finite, connected and contains $x$,
        \item $\gr D_2$ is a disjoint union of finitely many trees, each of which intersects $\gr D_1$ in a single vertex,
        \item  each vertex of $\gr D_2$ has at most one outgoing edge mapping to $E^+$.
    \end{itemize}
For each $e\in E^+$, $(A_e, \mathcal{F}_e)$ has the \sfgip\ or $A_e$ has the \sfgip\ Proposition \ref{prop: fgip_summary} then implies that every vertex and edge group of $\DD$ is finitely generated and all but finitely many of the edge groups are finite (or trivial for those mapping to an edge $e$ so that $A_e$ has the \sfgip). Moreover, since $(A_u, \alpha_e(A_e))$ has the $\alpha_e(A_e)$-\fgip\ for each $e\in E^+$ with $o(e) = u$, Proposition \ref{prop: fgip_summary} also implies that, for all but finitely many edges $h\in E(\gr{D})$ that map to $E^+$, the morphism $\alpha_h$ is an isomorphism. Thus, possibly after enlarging $\DD_1$, we may assume that every vertex group of $\DD_2$ is finite and that for every edge $h\in E(\gr{D}_2)$ that maps to $E^+$, $\alpha_h$ is an isomorphism. We now analyse the (possibly infinite) subgraph of groups $\DD_2$.

    If $y\in V(\gr{D}_2)$ is a vertex, denote by $S_y\subset \gr{D}_2$ the image of the unique path leading out of $y$ following the orientation of the edges induced by $E^+$. Note that $S_y$ is either finite or an infinite ray. Furthermore, we claim that for any pair of vertices $y, y'$ in the same component, we have $S_{y}\cap S_{y'}\neq\emptyset$. Indeed, we note that a reduced path cannot have a negative edge followed by a positive edge, since each vertex has a unique positive outgoing edge. As a result, any reduced path, and in particular a geodesic path $p$ in $\gr{D}_2$ from $y$ to $y'$, decomposes as $p = p_1p_2$ where $p_1, p_2^{-1}$ follow the orientation on the edges. As a result    
    $p_1\subset S_y$, $p_2\subset S_{y'}$ and $t(p_1) = o(p_2)\in S_y\cap S_{y'}$. Since $\alpha_h$ is an isomorphism for every $h\in E(\gr{D}_2)$ mapping to an edge in $E^+$, it follows that all reduced $\DD$-loops at $x$ do not reach any vertex in $\DD_2$ that does not lie in some $S_y$ for some $y\in \gr{D}_1\cap \gr{D}_2$.
    Finally, since each vertex group in $\DD_2$ is finite of bounded order (being contained in an adjacent edge group with associated edge mapping to $E^+$), each subgraph of groups of $\DD_2$ on $S_y$ for any $y\in V(\DD_2)$ is a directed ray in which the edge inclusions $\omega_h$ eventually become isomorphisms once one goes sufficiently far towards infinity.
    But this means that every reduced $\DD$-loop at $x$ can only visit finitely many vertices from $\DD_2$ and hence also $\DD$.
    In particular, $\core(\DD, x)$ is a finite graph of groups with finitely generated vertex and edge groups. 
    This implies that $B\cap C$ is finitely generated by Theorem~\ref{thm: product of immersions}~\eqref{item: intersection of subgroups}. To see that $B^g\cap C$ is finitely generated for any $g$, we may use Lemma~\ref{lem: conjugate subgroup} to see that there is a pointed immersion of graphs of groups $\mu^{B'}\colon(\BB', v_0')\to(\AA, u_0)$ such that $\BB'$ has finite underlying graph of groups, finite vertex and edge groups and $(\mu^{B'})_*(\pi_1(\BB', v_0')) = B^g$. This establishes the first fact.

    The above implies that every locally elliptic intersection $B^g\cap C$ must actually be elliptic since it is finitely generated (as before, using \cite[Corollary 7.2]{bas93}). Now Proposition \ref{prop: fcip on orientation} together with the observations above implies that $\core(\BB\wtimes_{\AA}\CC)$ has the property that all but finitely many components have underlying graph a circle, each vertex group is finite and each edge group inclusion an isomorphism. The fundamental group of such a graph of groups is a semidirect product $N\rtimes \Z$ with $N$ a finite group (isomorphic to each of the vertex groups). Thus, for all but finitely many double cosets $B\, g\, C$, $B^g\cap C$ is either elliptic or virtually $\Z$ as claimed.
\end{proof}

Theorem \ref{fgip_criterion_3} follows in much the same way as Theorem \ref{fgip_criterion_2}, by combining Theorem~\ref{thm: bijection subgroups immersions covers} with Proposition \ref{prop: finite_pullback_3}.

\begin{thm}
\label{fgip_criterion_3}
Let $(\AA,u_0)$ be a pointed graph of groups in which each vertex group has the \fgip, and let $E^+\subset E(\gr{A})$ be an orientation on the edges satisfying the same properties as in Proposition~\ref{prop: finite_pullback_3}.
Then $\pi_1(\AA,u_0)$ has the \fgip\ if and only if it has the \fgip\ relative to each subgroup in $\mathcal{E}$.
\end{thm}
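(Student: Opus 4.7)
The forward implication is immediate: the hypotheses of Proposition~\ref{prop: finite_pullback_3} require the edge groups of $\AA$ to be finitely generated, so every subgroup in $\mathcal{E}$ is finitely generated, and the ordinary \fgip\ of $\pi_1(\AA, u_0)$ at once implies the \fgip\ relative to each such subgroup.

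For the converse, the plan is to realise an arbitrary pair of finitely generated subgroups $B, C \leqslant \pi_1(\AA, u_0)$ as images of immersions of finite core pointed graphs of finitely generated groups into $(\AA, u_0)$, and then invoke Proposition~\ref{prop: finite_pullback_3}~(1) to conclude that $B \cap C$ is finitely generated. This mirrors the proofs of Theorems~\ref{fgip_criterion_1} and~\ref{fgip_criterion_2}. Concretely, I would first apply Theorem~\ref{thm: bijection subgroups immersions covers}, parts~\eqref{item: realisation} and~\eqref{item: fg vertex and edge groups}: since $B$ is finitely generated and $\pi_1(\AA, u_0)$ has the \fgip\ relative to each subgroup in $\mathcal{E}$, there is an immersion $\mu^B\colon (\BB, v_0) \to (\AA, u_0)$ with $\mu^B_*(\pi_1(\BB, v_0)) = B$ such that $(\BB, v_0)$ is core, $\gr B$ is finite, and every vertex and edge group of $\BB$ is finitely generated; an analogous immersion $\mu^C$ realises $C$. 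I would then apply Proposition~\ref{prop: finite_pullback_3}~(1) at the base vertex $x_0$ of $\BB \wtimes_\AA \CC$: since each vertex group of $\AA$ has the \fgip\ and the orientation $E^+$ satisfies the stated \lcip\ and \sfgip\ conditions, $\core(\BB \wtimes_\AA \CC, x_0)$ is a finite graph of finitely generated groups. Hence its fundamental group is finitely generated by Proposition~\ref{prop: finite generation}~\eqref{item: finite generation}, and Theorem~\ref{thm: product of immersions}~\eqref{item: intersection of subgroups} identifies this fundamental group with $B \cap C$, as required.

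I do not expect a genuine obstacle: the argument is a routine combination of Theorem~\ref{thm: bijection subgroups immersions covers} with Proposition~\ref{prop: finite_pullback_3}. The only mild point to verify is that the relative \fgip\ assumption of the theorem matches the intersection-finiteness condition ``$B \cap (p\,\alpha_e(A_e)\,p^{-1})$ finitely generated'' used in Theorem~\ref{thm: bijection subgroups immersions covers}~\eqref{item: fg vertex and edge groups}, which is clear since the subgroups $p\,\alpha_e(A_e)\,p^{-1}$ for reduced $\AA$-paths $p$ from $u_0$ to $o(e)$ are by definition precisely the elements of $\mathcal{E}$.
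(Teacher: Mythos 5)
Your proposal is correct and follows exactly the route the paper takes: the paper states that Theorem~\ref{fgip_criterion_3} ``follows in much the same way as Theorem~\ref{fgip_criterion_2}, by combining Theorem~\ref{thm: bijection subgroups immersions covers} with Proposition~\ref{prop: finite_pullback_3}'', which is precisely the combination you spell out. Your closing remark correctly identifies the only point needing verification, namely that the relative \fgip\ hypothesis for subgroups in $\mathcal{E}$ is exactly the condition required in Theorem~\ref{thm: bijection subgroups immersions covers}~\eqref{item: fg vertex and edge groups} to get finitely generated vertex and edge groups in the realising immersions.
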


In the case that $\AA$ is a single edge graph of groups, we may state Theorem \ref{fgip_criterion_3} in a simplified form. 

\begin{cor}
\label{cor: single_edge_3}
Let $G = A*_{\varphi}$ be a HNN extension, where $\varphi\colon C\to \varphi(C)$ identifies two subgroups of $A$. Suppose $A$ has the \fcip\ relative to $C$, $(A, C)$ has the $C$-\fgip\ and $C$ has the \sfgip\ Then $G$ has the \fgip\ if and only if $G$ has the \fgip\ relative to $C$.
\end{cor}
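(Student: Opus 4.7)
The plan is to apply Theorem~\ref{fgip_criterion_3} to the one-vertex, one-edge-pair pointed graph of groups $(\AA, u_0)$ realising $G$ as an HNN extension. Concretely, I would take $\gr A$ to have a single vertex $u_0$ and a single edge loop $\{e, e^{-1}\}$, with vertex group $A_{u_0} = A$, edge group $A_e = C$, and edge maps $\alpha_e\colon C\hookrightarrow A$ (the inclusion) and $\omega_e = \varphi\colon C \to A$, so that $\pi_1(\AA, u_0)\cong G$.

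The critical design choice is the orientation $E^+ = \{e\}$, rather than $\{e^{-1}\}$. With this choice, the only edge in $E^+$ originating at $u_0$ contributes the subgroup $\alpha_e(A_e) = C$, so the orientation hypotheses of Theorem~\ref{fgip_criterion_3} involve only $C$ and never $\varphi(C)$; this is exactly the asymmetric setting the corollary is built around. The three hypotheses of Theorem~\ref{fgip_criterion_3} then verify directly: the vertex group $A$ has the \fgip\ (which is subsumed by $(A, C)$ having the $C$-\fgip); the \lcip\ of $A$ relative to $\{C\}$ unpacks by definition as $A$ having the \fcip\ relative to $C$ together with $(A, C)$ having the $C$-\fgip, both assumed; and the edge group $C$ of the unique edge in $E^+$ has the \sfgip\ by hypothesis, satisfying the edge condition.

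Applying Theorem~\ref{fgip_criterion_3} yields that $G$ has the \fgip\ if and only if $G$ has the \fgip\ relative to every subgroup in $\mathcal{E}$. To finish, I would observe that every edge subgroup of $\AA$ is a $G$-conjugate of $C$: the edge subgroups arising from $e$ are of the form $p\,C\,p^{-1}$ for an $\AA$-circuit $p$ at $u_0$ (identified with an element of $G$), while those arising from $e^{-1}$ have the form $p\,\varphi(C)\,p^{-1}$, and $\varphi(C) = t^{-1}Ct$ where $t$ is the stable letter of the HNN extension. Since having the \fgip\ relative to a subgroup $K$ depends only on the $G$-conjugacy class of $K$ --- via the identity $H \cap gKg^{-1} = g(g^{-1}Hg \cap K)g^{-1}$ and the fact that conjugation preserves finite generation --- the \fgip\ relative to $\mathcal{E}$ is equivalent to the \fgip\ relative to $C$, completing the proof. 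I do not anticipate any genuine obstacle; the only conceptually delicate point is recognising that the orientation must point in the direction in which the hypotheses hold.
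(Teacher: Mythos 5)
Your proposal is correct and is essentially the paper's own argument: Corollary~\ref{cor: single_edge_3} is obtained precisely by specialising Theorem~\ref{fgip_criterion_3} to the one-vertex, one-loop graph of groups with the orientation $E^+=\{e\}$ chosen so that $\alpha_e(A_e)=C$, whereupon the \lcip\ hypothesis unpacks to the assumed \fcip\ and $C$-\fgip, the edge condition is the assumed \sfgip\ of $C$, and the \fgip\ relative to $\mathcal{E}$ reduces to the \fgip\ relative to $C$ by conjugacy. No gaps.
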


\begin{rem}
    The inspiration for Corollary \ref{cor: single_edge_3}, and hence for Theorem \ref{fgip_criterion_3} is the case of solvable Baumslag--Solitar groups $\bs(1, n)$ handled by Moldavanskiĭ in \cite{mol68}. Indeed, Corollary \ref{cor: single_edge_3} applies directly to the usual single vertex and single edge HNN extension decomposition of $\bs(1, n)$, recovering Moldavanskiĭ's result. More generally, we will use Theorem \ref{fgip_criterion_3} in Section \ref{sec: vcyclic} when we characterise graphs of virtually (infinite) cyclic groups with the \fgip
\end{rem}

\begin{rem}
\label{rem: no_fgip}
    The \sfgip\ hypothesis in Corollary \ref{cor: single_edge_3} is needed as the following example demonstrates. Consider the graph of groups $\AA$ with a single vertex $u$ with $A_{u} = \langle a_1, a_2\rangle \cong \Z^2$ and a single edge $e$ with $A_{e} = \langle b_1, b_2\rangle\cong \Z^2$ so that $\alpha_e(b_i) = a_i$ and $\omega_e(b_i) = a_i^2$ for $i = 1, 2$. Let $\hat{e} = (1, e, 1)$. Consider the finitely generated subgroups $B = \langle a_1, \hat{e}a_2\rangle\cong \bs(1, 2)$ and $C = \langle a_1, \hat{e}\rangle \cong \bs(1, 2)$ and let $(\BB, v), (\CC, w)\to (\AA, u)$ be the corresponding immersions of graphs of groups. It can be verified that $\DD = \core(\BB\wtimes_{\AA}\CC, B_vC_w)$ has the following underlying graph
    \[
    \begin{tikzcd}
    B_{v}C_{w} \arrow[r, "B_fC_g"] &[4em] B_{v}a_2C_{w} \arrow[r, "B_fb_2C_g"] &[4em] B_{v}a_2^3C_{w} \arrow[r, "B_fb_2^3C_g"] &[4em] B_{v}a_2^7C_{w} \arrow[r, "B_fb_2^7C_w"] &[4em] \ldots
    \end{tikzcd}
    \]
    and each vertex group is $\langle a_1\rangle$ and each edge group is $\langle b_1\rangle$. More explicitly, we have
    \[
    B\cap C = \langle \hat{e}^ia_1\hat{e}^{-i} \mid i\geqslant 0\rangle \cong \Z\left[\frac{1}{2}\right]
    \]
    which is not finitely generated. The reader should compare this ascending union of groups with the ascending union of groups obtained in the second paragraph of the proof of Proposition \ref{prop: finite_pullback_3}.
\end{rem}

\begin{rem}
    In \cite{bur73}, Burns mentioned that it was unknown whether, in his main theorem on HNN extensions (see Remark \ref{rem: recover_Burns_HNN}), the assumption that the associated group is a Burns subgroup (called AMFI subgroups there) with respect to both inclusions can be replaced with the assumption that it is Burns with respect to only one of the inclusions. The example in Remark \ref{rem: no_fgip} shows that this is not possible. However, Corollary \ref{cor: single_edge_3} shows that the assumption can be dropped if one adds the assumption that the associated group has the \sfgip
\end{rem}

\subsubsection{The 1-\fcip\ on an acyclic orientation}\label{sec: 1-fcip acyclic orientation}

We may improve the result from Section \ref{sec: 1-fcip orientation} in the case the orientation gives rise to an acyclic graph. The reader should have the amalgamated free product case in mind.

\begin{prop}
\label{prop: finite_pullback_4}
Let $(\AA,u_0)$ be a pointed graph of groups in which each vertex group has the \fgip\ Let $E^+\subset E(\gr{A})$ be an orientation on the edges and let $\gr A^+$ be the resulting directed graph. Suppose that $\gr A^+$ is acyclic and that, for each vertex $u\in V(\gr A)$, $A_u$ has the \lcip\ relative to the collection $\{\alpha_e(A_e) \mid e\in E^+,\ o(e) = u\}$.

Let $\mu^B\colon (\BB, v_0)\to (\AA, u_0)$ and $\mu^C\colon (\CC, w_0)\to (\AA, u_0)$ be immersions of graphs of groups, where $\AA$, $\BB$ and $\CC$ have finite underlying graphs and finitely generated vertex and edge groups. Denote by $B = \mu^B_*(\pi_1(\BB, v_0))$ and $C = \mu^C_*(\pi_1(\CC, w_0))$. Then:
\begin{enumerate}
    \item For any choice of vertex $x$, $\core(\BB\wtimes_{\AA}\CC, x)$ is a finite graph of groups with finitely generated vertex and edge groups. In particular, $B^g\cap C$ is finitely generated for all $g\in \pi_1(\AA, u_0)$.
    \item $\core(\BB\wtimes_{\AA}\CC)$ is a finite graph of finitely generated groups. In particular, there are finitely many double cosets $B\, g\, C$ such that $B^g\cap C$ is not elliptic.
    \item If each vertex group of $\AA$ has the \sfgip\ and $\BB$ is also acylindrical, then there are finitely many double cosets $B\, g\, C$ so that $B^g\cap C\neq 1$.
\end{enumerate}
\end{prop}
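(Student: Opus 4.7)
The plan is to combine the decomposition $\DD = \DD_1 \cup \DD_2$ from Proposition~\ref{prop: fcip on orientation} with the acyclicity hypothesis. First I would enlarge $\gr D_1$ by finitely many edges, using Proposition~\ref{prop: fgip_summary} and the $\alpha_e(A_e)$-\fgip\ part of \lcip, to arrange that every edge of $\gr D_2$ mapping to an edge in $E^+$ is non-reduced (i.e.\ has $\alpha$ iso). Acyclicity of $\gr A^+$ then forces each component of $\gr D_2$ to be an undirected tree: the orientation $E^+$ gives each vertex of $\gr D_2$ out-degree at most 1, and a short cyclic propagation argument shows that any undirected cycle in $\gr D_2$ would have to project to a directed cycle in $\gr A^+$ (either all labels in $E^+$ or all in $E^-$, by the out-degree constraint), contradicting acyclicity. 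Moreover, any directed path in $\gr D_2$ projects to a directed path in the finite DAG $\gr A^+$, hence has length bounded by the height $N$ of $\gr A^+$.

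For Statement~(1), I fix an attached tree component $T_j$ of $\gr D_2$, attaching to $\gr D_1$ at $y_j$, and analyze $\core(\DD, x) \cap T_j$. Let $\sigma$ denote the partial function on $V(\gr D_2)$ sending $v$ to the target of its unique positive outgoing edge (when defined). A direct analysis of the turnaround conditions shows that a reduced walk can turn around at a vertex $z$ only via an edge $e$ with $[e]\in E^+$ and $\omega_e$ not iso, since otherwise $\omega_e$ is iso and the backtrack condition fails. In $T_j$ rooted at $y_j$, this forces every vertex at which a reduced walk may turn around to satisfy $\sigma(v) = z$ with $v$ the parent of $z$ in the rooted tree; an inductive argument (every such vertex $v$ is itself the positive successor of its own parent) confines these to the finite positive chain $y_j, \sigma(y_j), \sigma^2(y_j), \ldots$ of length at most $N$. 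Since the support of any reduced walk based at $x$ intersected with $T_j$ consists of simple paths from $y_j$ to such turnaround vertices, $\core(\DD, x) \cap T_j$ is finite. Combined with finiteness of $\gr D_1$ and Proposition~\ref{prop: fgip_summary}, this yields Statement~(1); the case of general $g$ follows from Lemma~\ref{lem: conjugate subgroup}, which provides an immersion realising $B^g$ to which the same analysis applies.

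The main obstacle, and the key to Statement~(2), is to show that unattached tree components $T$ of $\gr D_2$ contribute nothing to $\core(\DD)$, i.e.\ admit no cyclically reduced closed $\AA$-path. Suppose for contradiction that $p = (a_0, e_1, \ldots, e_n, a_n)$ is such a path at $v_0 \in T$. The turnaround analysis above shows that the only branch at $v_0$ accessible to a reduced closed walk is the unique positive outgoing branch: a walk entering a negative outgoing branch must continue forward indefinitely (at each new vertex, the unique positive outgoing edge points back toward the previous one, so any continuation is negative, and turnarounds via negative edges are blocked since $\omega$ is then iso) and so cannot close up. Consequently $v_0$ is a leaf of the support of $p$, forcing $e_1 = e_n^{-1}$ with $e_1 \in E^+$. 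The junction backtrack in $p^2$ then has the form $e_1^{-1}, e_1$, whose reducedness would require $a_n a_0 \notin \alpha_{e_1}(D_{e_1}) = D_{v_0}$ (using that $\alpha_{e_1}$ is iso), which is impossible. Thus $\core(T) = \emptyset$, so $\core(\BB\wtimes_\AA\CC)$ is supported on $\gr D_1$ together with the finitely many attached chains, yielding Statement~(2); the double coset consequence then follows from Theorem~\ref{thm: summary immersions}~(2).

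For Statement~(3), I would replace $\BB$ by the $B$-cover (which is also acylindrical, since acylindricity is a property of the $B$-action on the Bass--Serre tree of $\AA$) and apply Theorem~\ref{thm: acylindrical} to obtain a replacement $(\BB', v_0')$ such that the components of $\BB' \wtimes_\AA \CC$ with non-trivial fundamental group are in bijection with the double cosets $BgC$ satisfying $B^g \cap C \ne 1$. The structural analysis of Statements~(1) and~(2) applies to $\BB' \wtimes_\AA \CC$: components with non-empty core are finitely many by Statement~(2), while components with empty core have elliptic fundamental group and are therefore non-trivial only if they contain a non-trivial vertex group. The \sfgip\ hypothesis on the vertex groups of $\AA$, via Proposition~\ref{prop: fgip_summary}, ensures that only finitely many vertex groups of $\BB' \wtimes_\AA \CC$ are non-trivial, completing the proof.
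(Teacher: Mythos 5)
Your proof is correct, and it reaches the conclusion by a genuinely different route from the paper's, although it consumes the same three structural inputs (bounded directed paths from acyclicity of $\gr A^+$, finitely many vertices of positive out-degree at least two from the \fcip, and finitely many reduced positive edges from the $\alpha_e(A_e)$-\fgip). The paper's proof is a short deletion argument: the induced orientation on $\BB\wtimes_{\AA}\CC$ is acyclic, so directed paths have length at most $\kappa = |E(\gr B)|\cdot|E(\gr C)|$, and repeatedly deleting every non-basepoint vertex whose star is empty or a single non-reduced edge for $\kappa$ rounds leaves a finite graph of finitely generated groups into which the core includes $\pi_1$-isomorphically. You instead route through the decomposition $\DD = \DD_1\cup\DD_2$ of Proposition~\ref{prop: fcip on orientation}, observe that the components of $\gr D_2$ are trees, and analyse reduced closed walks directly: since a backtrack $(e,a,e^{-1})$ is blocked whenever $\omega_e$ is onto, arranging $\alpha_h$ to be an isomorphism for every positive edge of $\gr D_2$ forces turnarounds to occur only at the top of the unique positive chain and forces a walk that ever traverses a negative edge to keep doing so. Your treatment of the unattached components --- no cyclically reduced circuit exists because the first and last edges of any reduced circuit must both be the unique positive outgoing edge at its basepoint, so the junction of $p^2$ cannot be reduced --- is an explicit substitute for the paper's observation that such components are entirely consumed by the deletion process; both arguments then finish identically (Lemma~\ref{lem: conjugate subgroup} for general $g$, Theorems~\ref{thm: summary immersions} and~\ref{thm: acylindrical} for the double-coset counts). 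The one step to tighten is the opening enlargement: adding finitely many edges to $\gr D_1$ does not by itself preserve the property that each component of $\gr D_2$ meets $\gr D_1$ in a single vertex, which your rooted-chain argument uses; you should instead re-run the construction of Proposition~\ref{prop: fcip on orientation} with the finitely many endpoints of reduced positive edges adjoined to the distinguished vertex set, exactly as the paper does in the proof of Proposition~\ref{prop: finite_pullback_3}.
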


\begin{proof}
    Since $\gr{A}^+$ is acyclic, the graph of groups $\BB\wtimes_{\AA}\CC$ is also acyclic with the induced orientation. Since $\BB$, $\CC$ have finite underlying graphs, this implies that there is a constant $\kappa$ (we may take $\kappa = |E\gr(B)|\cdot |E(\gr{C})|$) so that each directed path in $\BB\wtimes_{\AA}\CC$ has length at most $\kappa$. The \fcip\ hypothesis implies that there are finitely many vertices $x$ with distinct edges $h_1, h_2\in \Star(x)$ that map to edges in $E^+$. The fact that $(A_{o(e)}, \alpha_e(A_e))$ has the $\alpha_e(A_e)$-\fgip\ for each $e\in E^+$ implies that all but finitely many edges $h$ in $\BB\wtimes_{\AA}\CC$ that map to an edge in $E^+$ have $\alpha_h$ an isomorphism. Thus, if we remove each (non basepoint) vertex $x$ so that $\Star(x) = \emptyset$ or $\Star(x) = \{h\}$ and $\alpha_h$ is an isomorphism and repeat this $\kappa$ times, then the above implies that we will be left with a finite graph of finitely generated groups. Moreover, the inclusion of $\core(\BB\wtimes_{\AA}\CC)$ (or $\core(\BB\wtimes_{\AA}\CC, x)$) into this new graph of groups necessarily induces an isomorphism on $\pi_1$ on each component. Thus, $\core(\BB\wtimes_{\AA}\CC)$ and $\core(\BB\wtimes_{\AA}\CC, x)$ are finite graphs of finitely generated groups. Finally, the three statements now follow in the same way as in Proposition \ref{prop: finite_pullback_1}.
\end{proof}

Again, Theorem \ref{fgip_criterion_4} follows in much the same way as Theorem \ref{fgip_criterion_2}, by combining Theorem~\ref{thm: bijection subgroups immersions covers} with Proposition \ref{prop: finite_pullback_4}.

\begin{thm}
\label{fgip_criterion_4}
Let $(\AA,u_0)$, $E^+$ and $\gr A^+$ satisfy the same conditions as in Proposition~\ref{prop: finite_pullback_4}. Then $\pi_1(\AA,u_0)$ has the \fgip\ if and only if it has the \fgip\ relative to each subgroup in $\mathcal{E}$. 

If, additionally, each vertex group has the \sfgip, $\AA$ is acylindrical and $\pi_1(\AA, u_0)$ has the \fgip, then $\pi_1(\AA, u_0)$ also has the \sfgip\
\end{thm}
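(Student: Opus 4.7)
The plan is to combine the realisation of finitely generated subgroups as immersions from Theorem~\ref{thm: bijection subgroups immersions covers} with the structural result for $\AA$-products in Proposition~\ref{prop: finite_pullback_4}.

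For the forward direction of the first equivalence, I would take arbitrary finitely generated subgroups $B, C \leqslant \pi_1(\AA, u_0)$ and apply Theorem~\ref{thm: bijection subgroups immersions covers} (parts~\eqref{item: realisation} and~\eqref{item: fg vertex and edge groups}): the hypothesis that $\pi_1(\AA, u_0)$ has the \fgip\ relative to every subgroup in $\mathcal{E}$ says precisely that, for each edge $e \in E(\gr A)$ and each reduced $\AA$-path $p$ from $u_0$ to $o(e)$, the intersection $B \cap (p\,\alpha_e(A_e)\,p^{-1})$ is finitely generated, so both $B$ and $C$ are realised by immersions $\mu^B\colon (\BB, v_0) \to (\AA, u_0)$ and $\mu^C\colon (\CC, w_0) \to (\AA, u_0)$ from finite core pointed graphs of finitely generated groups. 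The first assertion of Proposition~\ref{prop: finite_pullback_4} then yields that $\core(\BB\wtimes_{\AA}\CC, x_0)$ is a finite graph of finitely generated groups, whose fundamental group is isomorphic to $B \cap C$ by Theorem~\ref{thm: product of immersions}~\eqref{item: intersection of subgroups}. Hence $B \cap C$ is finitely generated, establishing the \fgip. The converse direction is immediate, since the \fgip\ trivially implies the \fgip\ relative to any family of subgroups.

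For the \sfgip\ addendum, the additional hypotheses are that every vertex group has the \sfgip, that $\AA$ is acylindrical, and that $\pi_1(\AA, u_0)$ has the \fgip\ (hence in particular the \fgip\ relative to every edge subgroup). I would again realise finitely generated $B, C$ by immersions $\mu^B, \mu^C$ as above and invoke the third assertion of Proposition~\ref{prop: finite_pullback_4} to conclude that only finitely many $(B, C)$-double cosets $BgC$ satisfy $B^g \cap C \neq 1$; combined with the \fgip\ this is precisely the \sfgip.

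The only observation that is not purely mechanical is that $\BB$ inherits acylindricity from $\AA$: if $\AA$ is $k$-acylindrical and $q$ is a reduced $\BB$-path of length $k$ together with a non-trivial $b \in B_{t(q)}$ such that $qbq^{-1}$ reduces to an element of $B_{o(q)}$, then applying $\mu^B$ and using Proposition~\ref{prop: folded morphisms} (immersions send reduced $\BB$-paths to reduced $\AA$-paths of the same length), together with the injectivity of the vertex-group morphisms $\mu^B_v$, produces a reduced $\AA$-path $\mu^B(q)$ of length $k$ and a non-trivial element $\mu^B_{t(q)}(b)$ witnessing the failure of $k$-acylindricity in $\AA$, a contradiction. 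With this observation in hand, I expect no real obstacle: the substantive work is carried out in Proposition~\ref{prop: finite_pullback_4}, and the theorem reduces to aligning its hypotheses with those furnished by Theorem~\ref{thm: bijection subgroups immersions covers}.
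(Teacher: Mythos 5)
Your proposal is correct and matches the paper's own (one-line) proof, which likewise obtains the theorem by combining Theorem~\ref{thm: bijection subgroups immersions covers} with Proposition~\ref{prop: finite_pullback_4}; your verification that $\BB$ inherits acylindricity through the immersion is exactly the observation the paper makes in the analogous step of Theorem~\ref{thm: charact sfgip finite edge groups}.
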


\begin{cor}
\label{cor: single_edge_4}
Let $G = A*_CB$, where $A$ has the \fgip\ and the \lcip\ relative to $C$ and $B$ has the \fgip\ Then $G$ has the \fgip\ if and only if $G$ has the \fgip\ relative to $C$.
\end{cor}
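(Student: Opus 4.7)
The plan is to apply Theorem~\ref{fgip_criterion_4} to the obvious single-edge pointed graph of groups whose fundamental group is $G$. Concretely, I would take $(\AA, u_0)$ to be the graph of groups with two vertices $u, v$ and one pair of mutually inverse edges $e, e^{-1}$ between them, with vertex groups $A_u = A$, $A_v = B$, edge group $A_e = C$, and the given inclusions $\alpha_e \colon C \hookrightarrow A$, $\omega_e \colon C \hookrightarrow B$; the basepoint is $u_0 = u$.

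Next, I would orient the edge pair by $E^+ = \{e\}$, so that the directed graph $\gr A^+$ consists of a single directed edge from $u$ to $v$ and is therefore trivially acyclic. Verifying the remaining hypotheses of Theorem~\ref{fgip_criterion_4} is routine. Both vertex groups have the \fgip\ by assumption. The collection $\{\alpha_f(A_f) \mid f \in E^+,\ o(f) = u\}$ reduces to $\{C\} \subseteq A$, and the \lcip\ of $A$ relative to $C$ is part of the hypothesis. At the vertex $v$, no edge of $E^+$ originates, so the collection is empty and the \lcip\ condition there is vacuous. Theorem~\ref{fgip_criterion_4} then yields that $G$ has the \fgip\ if and only if $G$ has the \fgip\ relative to every subgroup in $\mathcal{E}$.

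The only remaining step is to upgrade \emph{\fgip\ relative to every subgroup of $\mathcal{E}$} to \emph{\fgip\ relative to $C$}. By the definition recalled before Proposition~\ref{main_prop_A}, each element of $\mathcal{E}$ has the form $p\,\alpha_{e'}(A_{e'})\,p^{-1}$ for some $e' \in \{e, e^{-1}\}$ and some $\AA$-path $p$ from $u_0$ to $o(e')$. Passing to $G$, the images $\alpha_e(C)$ and $\omega_e(C)$ are identified, and the paths $p$ descend to elements of $G$, so $\mathcal{E}$ is precisely the conjugacy class of $C$ in $G$. Since the \fgip\ relative to a subgroup is conjugation-invariant (if $K \leqslant G$ is finitely generated and $g \in G$, then $K \cap gCg^{-1} = g\,(g^{-1}Kg \cap C)\,g^{-1}$, and $g^{-1}Kg$ is finitely generated), the two conditions coincide.

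I do not expect any genuine obstacle in this argument: the entire difficulty of the corollary is packaged inside Theorem~\ref{fgip_criterion_4}, and the above amounts to checking that a single-edge graph of groups fits that theorem's framework plus a routine conjugation-invariance observation about the \fgip\ relative to a subgroup.
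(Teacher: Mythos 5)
Your proposal is correct and matches the paper's (implicit) derivation: Corollary~\ref{cor: single_edge_4} is obtained exactly by specialising Theorem~\ref{fgip_criterion_4} to the two-vertex, single-edge graph of groups with the orientation $E^+ = \{e\}$, noting that the \lcip\ hypothesis is vacuous at the $B$-vertex and that $\mathcal{E}$ consists of the conjugates of $C$, so the relative \fgip\ conditions coincide by conjugation-invariance.
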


\begin{rem}
    By Proposition \ref{prop: Burns collection} and Lemma \ref{lem:Burns_A_fgip} combined with Corollary \ref{cor: single_edge_4}, we have shown that, if $G$ is an amalgamated product $G=A*_CB$ such that $A$ and $B$ have the \fgip\ and $C$ is a Burns subgroup of $A$, then $G$ has the \fgip\ Thus, we have recovered Cohen's result \cite[Theorem 2]{coh76} (see also the slightly weaker result of Burns \cite[Theorem 2.3]{bur72}).

    Kapovich generalised Cohen's amalgam result in \cite[Proposition 5.4]{kap02} to star graphs of groups in which the edge groups at the central vertex form a Burns collection. This is also recovered by Theorem \ref{fgip_criterion_4} using Proposition \ref{prop: Burns collection} and Lemma \ref{lem:Burns_A_fgip} as above.
\end{rem}

\subsubsection{Combining along subgraphs of groups}

Our final criterion allows us to prove the \fgip\ for graphs of groups, provided we know the \fgip\ holds for subgraphs of groups which do not interact with each other too much.

\begin{prop}
\label{prop: combine_subgraphs}
    Let $(\AA, u_0)$ be a pointed graph of groups, let $\{\AA_{\alpha}\}_{\alpha}$ be a finite collection of subgraphs of groups such that $\gr{A} = \bigcup_{\alpha}\gr{A}_{\alpha}$ and $\gr{A}_{\alpha}\cap\gr{A}_{\beta}\subset V(\gr{A})$ for each $\alpha\neq\beta$. Suppose also that for each vertex $u\in V(\gr{A})$ and each pair of edges $e, f\in \Star(u)$ that do not both lie in the same $\gr{A}_{\alpha}$, we have that $A_u$ has the weak \fcip\ relative to $\{\alpha_e(A_e), \alpha_f(A_f)\}$.
    
    Let $\mu^B\colon \BB\to \AA$ and $\mu^C\colon \CC\to \AA$ be immersions of finite graphs of groups such that $\BB$ and $\CC$ have finitely generated vertex and edge groups and denote by $\DD = \BB\wtimes_{\AA}\CC$ their $\AA$-product. For each $\alpha$, let also $\BB_{\alpha} = (\mu^B)^{-1}(\AA_{\alpha})$ and $\CC_{\alpha} = (\mu^C)^{-1}(\AA_{\alpha})$. If each component of $\BB_{\alpha}\wtimes_{\AA_{\alpha}}\CC_{\alpha}$ has finitely generated fundamental group, then each component of $\DD$ has finitely generated fundamental group.
\end{prop}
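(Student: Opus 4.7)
The plan is to leverage the geometric decomposition provided by Proposition~\ref{prop: combination_components}. Since $\{\AA_\alpha\}$ is finite, the weak \fcip\ hypothesis is satisfied for the relevant pairs of edge groups, and $\BB, \CC$ are finite graphs of finitely generated groups, that proposition yields a decomposition $\DD = \bigcup_\alpha \DD_\alpha$ in which each $\DD_\alpha$ is a union of components of $\BB_\alpha \wtimes_{\AA_\alpha} \CC_\alpha$ and each intersection $\gr{D}_\alpha \cap \gr{D}_\beta$ (for $\alpha \neq \beta$) consists of finitely many vertices.

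Next I would fix a component $\DD'$ of $\DD$. If $\DD'$ is contained in some $\DD_\alpha$, then $\DD'$ is a component of $\BB_\alpha \wtimes_{\AA_\alpha} \CC_\alpha$ and has finitely generated fundamental group by hypothesis. Otherwise, I would argue that any component $K$ of some $\BB_\alpha \wtimes_{\AA_\alpha} \CC_\alpha$ contained in $\DD'$ must contain a vertex of some $\gr{D}_\alpha \cap \gr{D}_\beta$ with $\beta \neq \alpha$: for if not, any edge $h$ of $\DD$ adjacent to $K$ would have to lie in $\DD_\alpha$ (since an edge of $\DD_\gamma$ with $\gamma \neq \alpha$ would force both of its endpoints into $\gr{D}_\alpha \cap \gr{D}_\gamma$), making $K$ a full connected component of $\DD$, contradicting the connectedness of $\DD' \neq K$. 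Since $\{\AA_\alpha\}$ is finite and each $\gr{D}_\alpha \cap \gr{D}_\beta$ is finite, only finitely many components $K_1, \ldots, K_m$ of the various $\BB_\alpha \wtimes_{\AA_\alpha} \CC_\alpha$ lie inside $\DD'$, and each of them has finitely generated $\pi_1$ by hypothesis.

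To finish, I would construct an auxiliary finite graph of groups $\mathbb{G}$ whose vertices are $K_1, \ldots, K_m$, with one edge between $K_i$ and $K_j$ for each vertex of $\DD$ shared by both, whose vertex groups are $\pi_1(K_i)$, and whose edge groups are the corresponding shared vertex groups of $\DD$. The inclusion of a subgraph of groups is an immersion by Proposition~\ref{prop: folded morphisms}, so each shared vertex group embeds injectively into each $\pi_1(K_i)$ containing it, ensuring that the edge-group inclusions in $\mathbb{G}$ are well-defined monomorphisms. A Seifert--van Kampen-type identification then yields $\pi_1(\DD', x_0) \cong \pi_1(\mathbb{G})$ at a suitable basepoint. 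Since $\mathbb{G}$ has finite underlying graph and finitely generated vertex groups, Proposition~\ref{prop: finite generation}~\eqref{item: generators of pi1} ensures that $\pi_1(\mathbb{G})$, and therefore $\pi_1(\DD')$, is finitely generated.

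The main obstacle is justifying the identification $\pi_1(\DD') \cong \pi_1(\mathbb{G})$. This is a Seifert--van Kampen-type statement for a graph of groups decomposed into subgraphs of groups meeting only at common vertices, and it is not stated explicitly in the cited references. The cleanest route is via Bass--Serre theory, exhibiting both groups as acting on a common tree with matching vertex and edge stabilisers; alternatively, one can proceed inductively by repeated free products with amalgamation as the auxiliary graph $\mathbb{G}$ is built up one edge at a time from the individual $K_i$'s.
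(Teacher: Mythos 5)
Your first half matches the paper's proof: both use Proposition~\ref{prop: combination_components} to get the decomposition $\DD=\bigcup_\alpha\DD_\alpha$ with finite pairwise intersections, and both then argue that a component $\DD'$ not contained in a single piece meets only finitely many components $K_1,\dots,K_m$ of the various $\BB_\alpha\wtimes_{\AA_\alpha}\CC_\alpha$, each with finitely generated $\pi_1$. The divergence, and the gap, is in the final step. Your auxiliary graph of groups $\mathbb{G}$ is not correctly constructed: a vertex $y$ of $\DD$ can be shared by three or more of the pieces $K_i$ (the subgraphs $\gr{A}_\alpha$ may all meet at a common vertex of $\gr{A}$, so a single vertex of $\DD$ can carry outgoing edges into several products). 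Putting ``one edge between $K_i$ and $K_j$ for each shared vertex'' then produces a complete graph on the pieces through $y$ rather than a star with central vertex group $D_y$; the resulting fundamental group acquires spurious free factors and is \emph{not} isomorphic to $\pi_1(\DD')$. On top of this, the identification $\pi_1(\DD')\cong\pi_1(\mathbb{G})$ — which you correctly flag as the main obstacle — is the entire content of the step and is left unproved; neither Bass--Serre route is carried out.

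The gap is repairable, because finite generation only requires a \emph{surjection} from a finitely generated group onto $\pi_1(\DD',x)$, and the extra free factors coming from the triangle-versus-tripod error only add generators. This is exactly how the paper avoids the issue: instead of any isomorphism, it proves the generation statement
\[
\pi_1(\DD', x) = \left\langle \bigcup_{\gamma}d_{\gamma}\,\pi_1(K_{\gamma}, x_{\gamma})\,d_{\gamma}^{-1},\ \pi_1(\Gamma, x)\right\rangle,
\]
where $\Gamma$ is a finite subgraph containing $x$ and the chosen basepoints $x_\gamma$, and the $d_\gamma$ are connecting $\DD$-paths. This is weaker than a Seifert--van Kampen decomposition, needs no auxiliary graph of groups, and immediately yields finite generation. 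If you want to keep your structure, you should either (a) replace $\mathbb{G}$ by the correct ``star'' refinement (a bipartite graph of groups with one vertex per $K_i$ and one vertex per shared vertex $y$ of $\DD$, carrying $D_y$) and then still prove the identification, or (b) downgrade your claim to a generation statement as the paper does.
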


\begin{proof}
    By definition, each $\BB_{\alpha}\wtimes_{\AA_{\alpha}}\CC_{\alpha}$ can be naturally considered as a subgraph of groups of $\BB\wtimes_{\AA}\CC$. Moreover, each edge in $\BB\wtimes_{\AA}\CC$ lies in $\BB_{\alpha}\wtimes_{\AA_{\alpha}}\CC_{\alpha}$ for a unique $\alpha$. By Proposition \ref{prop: combination_components}, only finitely many vertices in $\BB\wtimes_{\AA}\CC$ have outgoing edges lying in $\BB_{\alpha}\wtimes_{\AA_{\alpha}}\CC_{\alpha}$ for two distinct values of $\alpha$. Let us now consider an arbitrary component $\DD'\subset \DD$. If $\DD'$ is a
    component of some $\BB_{\alpha}\wtimes_{\AA_{\alpha}}\CC_{\alpha}$, then $\pi_1(\DD', x)$ is finitely generated for any basepoint $x$ by assumption. Suppose now that $\DD'$ contains edges from $\BB_{\alpha}\wtimes_{\AA_{\alpha}}\CC_{\alpha}$ for at least two distinct values of $\alpha$. Let $Y$ be an indexing set with a value for each component of $\sqcup_{\alpha}\BB_{\alpha}\wtimes_{\AA_{\alpha}}\CC_{\alpha}$ that lies in $\DD'$ and denote by $\DD_{\gamma}$ for $\gamma\in Y$ the corresponding component. For each $\gamma\in Y$ choose a vertex $x_{\gamma}\in \DD_{\gamma}$. Let $\Gamma\subset \gr{D}$ be any finite subgraph containing the basepoint $x$ and all of the finitely many vertices $x_{\gamma}$ for $\gamma\in Y$. Choose reduced $\DD$-paths $d_{\gamma}$ connecting $x$ with $x_{\gamma}$ for all $\gamma\in Y$. Now we have
    \[
    \pi_1(\DD', x) = \left\langle \bigcup_{\gamma\in Y}d_{\gamma}\pi_1(\DD_{\gamma}, x_{\gamma})d_{\gamma}^{-1}, \pi_1(\Gamma, x)\right\rangle.
    \]
    Thus, each component of $\DD$ has fundamental group generated by finitely many groups isomorphic to fundamental groups of components of $\BB_{\alpha}\wtimes_{\AA_{\alpha}}\CC_{\alpha}$ for various values of $\alpha$, which are finitely generated, and $\pi_1(\Gamma, x)$ which is also finitely generated. Hence, each component of $\DD$ has finitely generated fundamental group as claimed.
\end{proof}

Once more, we may use Theorem~\ref{thm: bijection subgroups immersions covers} and Proposition \ref{prop: combine_subgraphs} to obtain Theorem \ref{thm: combine_subgraphs}.

\begin{thm}
\label{thm: combine_subgraphs}
     Let $(\AA, u_0)$ be a pointed graph of groups, let $\{\AA_{\alpha}\}_{\alpha}$ be a finite collection of subgraphs of groups so that $\gr{A}_{\alpha}\cap\gr{A}_{\beta}\subset V(\gr{A})$ for each $\alpha\neq\beta$ and so that $\gr{A} = \bigcup_{\alpha}\gr{A}_{\alpha}$. Suppose also that for each vertex $u\in V(\gr{A})$ and each pair of edges $e, f\in \Star(u)$ that do not both lie in the same $\gr{A}_{\alpha}$, we have that $A_u$ has the weak \fcip\ relative to $\{\alpha_e(A_e), \alpha_f(A_f)\}$.
     
     If $\pi_1(\AA_{\alpha}, u_{\alpha})$ has the \fgip\ for each $\alpha$, then $\pi_1(\AA, u_0)$ has the \fgip\ if and only if it has the \fgip\ relative to each subgroup in $\mathcal{E}$.
\end{thm}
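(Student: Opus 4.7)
The forward direction is immediate, since each subgroup in $\mathcal{E}$ is itself a subgroup of $\pi_1(\AA,u_0)$. For the converse, assume $\pi_1(\AA,u_0)$ has the \fgip\ relative to every subgroup in $\mathcal{E}$, and let $B,C\leqslant\pi_1(\AA,u_0)$ be arbitrary finitely generated subgroups. The plan is to realise $B$ and $C$ by immersions from finite graphs of finitely generated groups, form the associated $\AA$-product, and then apply Proposition~\ref{prop: combine_subgraphs} after verifying its hypothesis on each $\AA_\alpha$-product of the preimages.

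After replacing $(\AA,u_0)$ by $\core(\AA,u_0)$ if necessary, Theorem~\ref{thm: bijection subgroups immersions covers}~\eqref{item: realisation} together with~\eqref{item: fg vertex and edge groups} yields pointed immersions $\mu^B\colon(\BB,v_0)\to(\AA,u_0)$ and $\mu^C\colon(\CC,w_0)\to(\AA,u_0)$ such that $\BB$ and $\CC$ are core pointed graphs of groups with finite underlying graphs and finitely generated vertex and edge groups, and such that $\mu^B_*(\pi_1(\BB,v_0))=B$ and $\mu^C_*(\pi_1(\CC,w_0))=C$; here the hypothesis that $\pi_1(\AA,u_0)$ has the \fgip\ relative to edge subgroups is exactly what is needed to guarantee finite generation of the edge and vertex groups of $\BB$ and $\CC$. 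Setting $\DD=\BB\wtimes_\AA\CC$ and $\BB_\alpha=(\mu^B)^{-1}(\AA_\alpha)$, $\CC_\alpha=(\mu^C)^{-1}(\AA_\alpha)$, each $\BB_\alpha$, $\CC_\alpha$ is still a finite graph of finitely generated groups, and the restrictions $\BB_\alpha\to\AA_\alpha$ and $\CC_\alpha\to\AA_\alpha$ remain immersions because the immersion conditions of Definition~\ref{def: folded} are local at vertices and edges.

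Fix $\alpha$. Each connected component of $\BB_\alpha$ (respectively $\CC_\alpha$) is a finite graph of finitely generated groups, so by Proposition~\ref{prop: finite generation}~\eqref{item: generators of pi1} has finitely generated fundamental group; hence its image under $\mu^B$ (respectively $\mu^C$) is a finitely generated subgroup of $\pi_1(\AA_\alpha,u_\alpha)$ up to conjugation. Since $\pi_1(\AA_\alpha,u_\alpha)$ has the \fgip, any intersection of conjugates of such subgroups is finitely generated, and by Theorem~\ref{thm: product of immersions}~\eqref{item: intersection of subgroups} applied componentwise we conclude that every connected component of $\BB_\alpha\wtimes_{\AA_\alpha}\CC_\alpha$ has finitely generated fundamental group. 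Now Proposition~\ref{prop: combine_subgraphs} applies and yields that every component of $\DD$ has finitely generated fundamental group; in particular $B\cap C\cong\pi_1(\DD,x_0)$ (by Theorem~\ref{thm: product of immersions}~\eqref{item: intersection of subgroups}) is finitely generated, completing the argument.

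The main subtlety, rather than an obstacle, is the bookkeeping between \enquote{the} fundamental group of $\AA_\alpha$ and the various subgroups arising from components of $\BB_\alpha$ and $\CC_\alpha$ that may require different basepoint choices in $\AA_\alpha$; this is handled by the standard observation that the \fgip\ is invariant under passing to conjugates and under change of basepoint. Note that the weak \fcip\ hypothesis is used only implicitly, through its role in the proof of Proposition~\ref{prop: combine_subgraphs}; it does not intervene in the component-by-component finite generation argument above.
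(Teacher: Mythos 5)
Your proposal is correct and follows essentially the same route as the paper's own proof: realise $B$ and $C$ via immersions of finite graphs of finitely generated groups using Theorem~\ref{thm: bijection subgroups immersions covers} (with the relative \fgip\ hypothesis supplying finite generation of vertex and edge groups), observe that each component of $\BB_\alpha\wtimes_{\AA_\alpha}\CC_\alpha$ has finitely generated fundamental group because $\pi_1(\AA_\alpha,u_\alpha)$ has the \fgip, and then invoke Proposition~\ref{prop: combine_subgraphs} and Theorem~\ref{thm: product of immersions}. The only cosmetic difference is that the paper cites Theorem~\ref{thm: product of immersions}~\eqref{item: localy_elliptic_double_cosets} to identify component fundamental groups with intersections of conjugates, where you invoke~\eqref{item: intersection of subgroups} componentwise with an explicit basepoint/conjugation remark; both amount to the same thing.
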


\begin{proof}
    Let $B$ and $C$ be finitely generated subgroups of $\pi_1(\AA,u_0)$. Just as in the proof of the other theorems in this subsection, we may use Theorem~\ref{thm: bijection subgroups immersions covers} to obtain immersions of pointed graphs of groups $\mu^B\colon (\BB,v_0) \to (\AA,u_0)$ and $\mu^C\colon (\CC,w_0) \to (\AA,u_0)$, where $\BB$ and $\CC$ have finite underlying graphs, finite edge groups, finitely generated vertex groups and $\mu^B_*(\pi_1(\BB, v_0)) = B$, $\mu^C_*(\pi_1(\CC, w_0)) = C$.

    Since we assumed that each $\pi_1(\AA_{\alpha}, u_{\alpha})$ has the \fgip, using the notation of Proposition \ref{prop: combine_subgraphs}, we see that each component of $\BB_{\alpha}\wtimes_{\AA_{\alpha}}\CC_{\alpha}$ has finitely generated fundamental group by Theorem~\ref{thm: product of immersions}~\eqref{item: localy_elliptic_double_cosets}. Thus, we may apply Proposition \ref{prop: combine_subgraphs} to conclude that each component of $\DD = \BB\wtimes_{\AA}\CC$ has finitely generated fundamental group. Then by Theorem~\ref{thm: product of immersions}~\eqref{item: intersection of subgroups} we see that $B\cap C$ is finitely generated and so $\pi_1(\AA, u_0)$ has the \fgip\ as claimed. 
\end{proof}

\section{The \fgip\ for graphs of virtually infinite cyclic groups}
\label{sec: vcyclic}

The aim of this section is to characterise precisely when a fundamental group of graph of virtually infinite cyclic groups has the \fgip\ We first record the following statement, which follows directly from the definition.

\begin{fact}\label{fact: elementary virtually Z}
Let $G$ be a virtually $\Z$ group. If $H, K$ are infinite subgroups of $G$, then $H$, $K$ and $H\cap K$ are infinite, virtually $\Z$, and they have finite index in $G$.
\end{fact}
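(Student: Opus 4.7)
The plan is to reduce everything to an infinite cyclic finite-index subgroup and use the fact that every non-trivial subgroup of $\Z$ has finite index. Let $N \leqslant G$ be a finite-index subgroup with $N \cong \Z$.

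First I would show that for any infinite subgroup $H \leqslant G$, the group $H \cap N$ is infinite. This is because $[H : H\cap N] \leqslant [G : N] < \infty$, so $H \cap N$ has finite index in $H$; an infinite group cannot have a finite-index subgroup which is finite. Since $H \cap N \leqslant N \cong \Z$ is a non-trivial subgroup, it is itself infinite cyclic and has finite index in $N$. Combined with $[G : N] < \infty$, this gives $[G : H\cap N] < \infty$, and hence $[G : H] < \infty$ and $H$ is virtually $\Z$ (it has $H\cap N \cong \Z$ as a finite-index subgroup). This handles $H$ and $K$.

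For $H \cap K$, the previous step shows both $H$ and $K$ have finite index in $G$, so their intersection $H\cap K$ also has finite index in $G$, and in particular is infinite. Applying the first step to $H \cap K$ yields that it is virtually $\Z$ and of finite index in $G$.

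There is no serious obstacle here; the argument is essentially the observation that in a virtually $\Z$ group, ``infinite'' and ``finite index'' coincide for subgroups, which follows immediately from the corresponding property of $\Z$ itself.
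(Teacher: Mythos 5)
Your argument is correct and is exactly the routine verification the authors have in mind: the paper states this as a Fact that ``follows directly from the definition'' and gives no proof, and your reduction to a finite-index infinite cyclic subgroup $N$, using $[H:H\cap N]\leqslant [G:N]$ and the fact that every non-trivial subgroup of $\Z$ has finite index, supplies precisely the missing details. Nothing to change.
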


We prove the following theorem.

\begin{thm}
\label{thm: vcyclic_fgip}
    Let $\AA$ be a graph of groups with virtually $\Z$ vertex and edge groups and let $u\in V(\gr A)$. The group $\pi_1(\AA, u)$ has the \fgip\ if and only if it does not contain $F_2\times\Z$ as a subgroup. Moreover, if $\AA$ is reduced, this is equivalent to $\AA$ being of one of the following forms:
    \[
    \begin{tikzcd}
A & \textrm{or} & A \arrow[r, "2" near start, "2" near end] & B & \textrm{or} & A  \arrow["1"' near start, "k"' near end, loop, distance=2em, in=35, out=325]
    \end{tikzcd}
    \]
where the integers $1$, $2$ and $k > 0$ on the edges indicate the indices of the edge groups in the adjacent vertex groups.
\end{thm}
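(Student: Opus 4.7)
My plan is to prove both equivalences in the theorem by case analysis on the reduced form of $\AA$. First, by Proposition~\ref{prop: reduced gog} and Remark~\ref{rk: reduced more general}, I may assume $\AA$ is reduced (the reduction preserves $\pi_1$ and the virtually-$\Z$ nature of the vertex and edge groups). Since $F_2\times \Z$ fails the \fgip\ by Moldavanskii~\cite{mol68}, one direction of the first equivalence is immediate. It then suffices to establish (a) each of the three listed reduced forms has the \fgip\ and contains no $F_2\times \Z$, and (b) any other reduced form contains $F_2\times \Z$.

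For (a), the single-vertex case gives $\pi_1(\AA,u) = A$ virtually $\Z$, hence Noetherian, hence \fgip\ and devoid of $F_2$. In the two-vertex case with both indices equal to $2$, the edge group $C$ is normal of index $2$ in both $A$ and $B$, thus normal in $G:=\pi_1(\AA,u)$ with quotient $\Z/2\ast \Z/2 \cong D_\infty$; $G$ is then an extension of two virtually-$\Z$ (and hence Noetherian) groups, so Noetherian, giving \fgip\ and forbidding $F_2$. For the HNN case $G = A\ast_\varphi$ with $\varphi\colon A\to A$ injective of finite index, the key step is to produce a $\varphi$-invariant finite-index infinite cyclic subgroup $A^*\leq A$: starting from any finite-index infinite cyclic $A_0\leq A$, the subgroups $\varphi^{-n}(A_0)$ all have uniformly bounded index in $A$, so there are only finitely many distinct ones, and the intersection $A^* = \bigcap_{n\geq 0}\varphi^{-n}(A_0)$ is $\varphi$-invariant, infinite cyclic, and finite-index. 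Via Britton's normal form, $\langle A^*, t\rangle\leq G$ is isomorphic to $\bs(1, k')$ for $k' = [A^*:\varphi(A^*)]$; a direct-limit computation of the ascending union $N = \bigcup_n t^{-n}At^n$ shows $[G:\langle A^*, t\rangle]\leq [A:A^*]<\infty$. Since $\bs(1, k')$ is solvable and has the \fgip\ (Moldavanskii), and both properties ascend to finite-index overgroups, $G$ inherits them.

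For (b), the remaining reduced forms are: (i) a single non-loop edge with indices $(p, q)\neq (2,2)$ and $p, q\geq 2$; (ii) a single loop with both indices $\geq 2$; (iii) at least two unoriented edges in $\gr A$. In each case I would pick an edge group $C$ of $\AA$ and an infinite-order element $z\in C$; since any infinite-order element of a virtually-$\Z$ group generates a finite-index cyclic subgroup, $\langle z\rangle$ is finite-index in $C$ and in each adjacent vertex group. Passing to the finite-index subgroup $G^+\leq G$ centralising $z$ (and to its induced graph of groups via Bass--Serre), one has $\langle z\rangle$ central in $G^+$, and the quotient $G^+/\langle z\rangle$ is a graph of finite groups, hence virtually free. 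The hypothesis in each subcase prevents $G^+/\langle z\rangle$ from being virtually $\Z$ (its Bass--Serre tree has vertices of degree at least $3$), so it contains $F_2$; lifting this through the central extension $1\to\langle z\rangle\to G^+\to G^+/\langle z\rangle\to 1$, which splits over the free subgroup $F_2$, yields $F_2\times \Z\leq G^+\leq G$.

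The main obstacle is the index bound $[G:\langle A^*, t\rangle]\leq [A:A^*]$ in (a)'s HNN subcase, which requires identifying $G/\langle A^*, t\rangle$ with the direct limit of $A/A^*$ under the induced map $\bar\varphi\colon A/A^*\to A/A^*$ (whose stabilised image has size at most $[A:A^*]$) via Britton's normal form. A secondary technical point arises in (b) when verifying that $G^+/\langle z\rangle$ retains the requisite combinatorial complexity after passing to the centralising subgroup; here one must check that the hypothesis $(p,q)\neq(2,2)$ (and the analogous conditions in (ii) and (iii)) survives the reduction, i.e., that the induced graph of groups for $G^+$ is not everywhere of degree $2$.
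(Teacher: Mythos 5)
Your part (a) is essentially sound. The single-vertex and index-$(2,2)$ amalgam cases match the paper's arguments (the paper likewise shows the amalgam is built from Noetherian pieces), and your ascending HNN case is a correct but genuinely different route: the paper deduces it from the general criterion of Theorem~\ref{fgip_criterion_3} (the 1-\fcip\ on an orientation), whereas you exhibit a finite-index $\bs(1,k')$ subgroup directly. Your construction of the $\varphi$-invariant $A^*$ and the index bound $[G:\langle A^*,t\rangle]=[N:N\cap\langle A^*,t\rangle]\leq [A:A^*]$ both check out, and the \fgip\ does pass to finite-index overgroups, so this is a legitimate, more self-contained proof of that sub-case.

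Part (b), however, has a genuine gap: the finite-index subgroup $G^+\leq G$ centralising $z$ does not exist in general. Your implicit justification is that $\langle z\rangle$ has finite index in $C$ and in the adjacent vertex groups, but finite index in the vertex groups does not give normality in $G$ once loops are present, because the stable letter need not carry $\langle z\rangle$ back into itself. The basic counterexample is already $\bs(2,3)$, which falls under your case (ii): for any nontrivial power $a^k$ one has $t^{-n}a^{k}t^{n}\notin\langle a\rangle$ for $n\geq 1$ by Britton's lemma, so $a^k$ has infinitely many conjugates and its centraliser has infinite index; no finite-index subgroup of $\bs(2,3)$ has an infinite cyclic central subgroup. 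The same failure occurs for double HNN extensions such as $\langle a,s,t\mid s^{-1}as=a^{2},\,t^{-1}at=a^{2}\rangle$ in your case (iii), and a milder version of it (the two edge maps pushing $\langle z\rangle$ to two \emph{different} normal cyclic subgroups of the target vertex group) already obstructs normality for two parallel edges between two vertices. So the reduction to a central extension of a virtually free group is simply unavailable exactly in the cases that require the most work. The paper circumvents this by never centralising inside a finite-index subgroup for loops: it passes to the infinite-index subgroups $\langle A, A^{t}\rangle\cong A\ast_{D}A^{t}$ (respectively $\langle A,A^{t},A^{t^{-1}}\rangle$, or unfolded multi-vertex covers as in Propositions~\ref{prop: vZ_HNN2}--\ref{prop: vZ_amalgam_HNN}), thereby reducing every loop configuration to amalgam and star configurations in which a cyclic subgroup normal in the \emph{whole} group genuinely exists, and then builds the $F_2$ commuting with it by hand. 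Your case (i) (a single non-loop edge) does go through, since there the injectively characteristic cyclic subgroup of the edge group is normal in both vertex groups and hence in $G$; but cases (ii) and (iii) need the reduction-to-amalgams idea, which your proposal is missing.
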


\begin{rem}\label{rk: general edge groups}
Theorem \ref{thm: vcyclic_fgip} also holds if we relax the assumptions on the vertex groups. We shall prove this later with Corollary~\ref{cor: main locally qc characterization}, using Theorem \ref{thm: vcyclic_fgip}.
\end{rem}

\begin{rem}\label{rk: decide fgip for virtually Z}
    Let $\AA$ be a graph of groups with virtually $\Z$ vertex and edge groups and let $u\in V(\gr A)$. Suppose that the graph $\gr A$ is finite, the vertex and edge groups and edge morphism between them are given explicitly, and one can decide, given a morphism from an edge group to a vertex group, whether it is an isomorphism and, in that case, compute its inverse. In view of Remark~\ref{rk: reduction is computable}, deciding whether the group $\pi_1(\AA, u)$ has the \fgip\ is decidable.
\end{rem}

We immediately record the following corollary, which follows directly from Theorem~\ref{thm: vcyclic_fgip}. Recall that a \emph{generalised Baumslag--Solitar group} is one that splits as a graph of groups, with infinite cyclic vertex and edge groups. Since an amalgam of two infinite cyclic groups over index two subgroups is isomorphic to $\bs(1, -1)$, we may reduce the three cases appearing in Theorem \ref{thm: vcyclic_fgip} to only two.
\begin{cor}\label{cor: fgip for GBS}
A generalised Baumslag--Solitar group has the \fgip\ if and only if it does not contain $F_2\times \Z$ as a subgroup, if and only if it is isomorphic to $\ZZ$ or $\bs(1,n)$.
\end{cor}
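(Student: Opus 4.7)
The plan is to establish the three-form classification of reduced $\AA$ and derive the $F_2 \times \Z$ characterisation from it. By Proposition~\ref{prop: reduced gog} we may assume $\AA$ is reduced; the implication ``$\pi_1(\AA, u) \supseteq F_2 \times \Z \Rightarrow$ no \fgip'' follows from the classical fact~\cite{mol68} (recalled in the introduction) that $F_2 \times \Z$ lacks \fgip, together with the subgroup-heredity of \fgip.

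For the sufficiency, I verify \fgip\ in each of the three listed forms. Forms (a) and (b) give virtually polycyclic, hence Noetherian, fundamental groups: (a) trivially, and (b) because the index-$2$ normality of $C$ in both $A$ and $B$ yields a short exact sequence $1 \to C \to \pi_1(\AA, u) \to D_\infty \to 1$ with $C$ virtually $\Z$. For form (c), I would invoke Theorem~\ref{fgip_criterion_3} with the single loop as the only edge in $E^+$: the \lcip\ of $A$ relative to $\alpha_e(A_e) = A$ is trivial (a single double coset in each case), the $(A, \mathcal{F})$-\sfgip\ with bounded torsion orders follows from Proposition~\ref{prop: hyperbolic_sfgip} since virtually $\Z$ groups are locally quasi-convex hyperbolic with finite torsion subgroup, and \fgip\ relative to the edge subgroups of $\AA$ holds because these are conjugates of $A$, which is Noetherian.

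The heart of the proof is the necessity: if a reduced $\AA$ is not of forms (a)--(c), then $\pi_1(\AA, u) \supseteq F_2 \times \Z$. My strategy is to identify a characteristic infinite cyclic subgroup $Z$ sitting in the relevant vertex and edge groups, arrange (by passing to a finite-index subgroup $G' \leq \pi_1(\AA, u)$, with index controlled by $|\mathrm{Aut}(\Z)| = 2$) that $Z$ is central in $G'$, and show that $G'/Z$ is the fundamental group of a graph of \emph{finite} groups that is virtually free of rank $\geq 2$. Lifting $F_2 \leq G'/Z$ back to $G'$ yields a central extension of $F_2$ by $\Z$, which splits since $H^2(F_2, \Z) = 0$, producing $F_2 \times \Z \leq \pi_1(\AA, u)$.

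The main obstacle is constructing $Z$ and verifying that $G'/Z$ has the required virtual-free rank in each sub-case. I would proceed by case analysis: (1) a single non-loop edge with $(m, n) \ne (2, 2)$, where $Z$ is the common intersection of the characteristic $\Z$'s of $A$, $B$, $C$, and $G'/Z = \bar A *_{\bar C} \bar B$ is a finite amalgam which is virtually free of rank $\geq 2$ precisely because $(m, n) \ne (2, 2)$; (2) a single loop with both indices $\geq 2$, where $Z$ must additionally be chosen $\varphi$-compatible and one restricts to a sub-HNN extension of Baumslag--Solitar type $\bs(a, b)$ with $|a|, |b| \geq 2$, which contains $F_2 \times \Z$ via its embedded torus-knot subgroup $\langle x, y \mid x^a = y^b\rangle$ (a central $\Z$-extension of $\Z/a * \Z/b$); and (3) $\AA$ with $\geq 2$ edges, where the analogous common-characteristic-$\Z$ construction produces a quotient graph of finite groups with $\geq 2$ edges, automatically virtually free of rank $\geq 2$ by a direct Bass--Serre Euler characteristic count.
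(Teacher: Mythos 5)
Your necessity argument takes a genuinely different route from the paper's: rather than exhibiting explicit reduced $\AA$-paths $p,q$ that generate a free group and commute with an explicit infinite-order element (which is what Propositions~\ref{prop: vZ_amalgam2}--\ref{prop: vZ_amalgam_HNN} do, and which is all the paper needs since the corollary is then immediate from Theorem~\ref{thm: vcyclic_fgip} together with the observation that $\Z\ast_{2\Z}\Z\cong\bs(1,-1)$), you quotient by a normal infinite cyclic $Z$, detect $F_2$ in the resulting virtually free group by an Euler characteristic count, and lift through the split central extension using $H^2(F_2,\Z)=0$. For the non-loop (amalgam) configuration this works and is clean: a common characteristic infinite cyclic subgroup of the two vertex groups lies in the edge group, is normal in both vertex groups and hence in $G$, and $1/m+1/n-1<0$ exactly when $(m,n)\neq(2,2)$. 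Your sufficiency argument also matches the paper's (the $D_\infty$ quotient for form (b), Theorem~\ref{fgip_criterion_3} for form (c)).

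There are, however, two genuine gaps in the necessity direction. First, in your case (2) the claim that the torus-knot subgroup $\langle x,y\mid x^a=y^b\rangle$ always contains $F_2\times\Z$ fails when $a=b=2$: there the central quotient is $\Z/2\ast\Z/2\cong D_\infty$, which contains no $F_2$ (indeed $\langle x,y\mid x^2=y^2\rangle$ is the Klein bottle group). The loop with both indices equal to $2$ therefore needs a separate argument --- e.g.\ for $\bs(\pm2,\pm2)$ note that $\langle a^2\rangle$ is already normal with quotient $\Z/2\ast\Z\supseteq F_2$, or pass to $\langle A,A^t,A^{t^{-1}}\rangle$ and invoke the three-vertex amalgam of Proposition~\ref{prop: vZ_amalgam2}, as the paper does in the proof of Proposition~\ref{prop: vZ_HNN}. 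Second, and more seriously, your case (3) asserts that any reduced graph with at least two edges admits a ``common characteristic $\Z$'' whose quotient is a graph of finite groups. This is false whenever the graph contains a strictly ascending loop, i.e.\ a loop $e$ with $\alpha_e$ an isomorphism and $\omega_e(A_e)$ of index $k\geqslant 2$: the stable letter conjugates every infinite cyclic subgroup of the vertex group onto a proper subgroup of itself, so no infinite cyclic subgroup is normal (its normal closure is $\Z[1/k]$), and the quotient graph of finite groups on which you want to run the Euler characteristic count does not exist. These are precisely the configurations of Propositions~\ref{prop: double_HNN} and~\ref{prop: vZ_amalgam_HNN}, which the paper must treat by locating suitable subgroups (reducing to a loop with both indices $\geqslant 2$, or producing explicit elements $x,y$ with $a^{x^2}=a^{y^2}=a$); your proposal offers no substitute for that step.
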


The case of Baumslag--Solitar groups $\bs(m, n)$ (single vertex and single edge case from Corollary \ref{cor: fgip for GBS}) is due to Moldavanski\u\i\ \cite{mol68} if $m = 1$ and to Paramantzoglou \cite{pa12} in the general case.

The proof of Theorem~\ref{thm: vcyclic_fgip}, which will be given at the end of this section, relies on the consideration of ``small'' graphs of groups $\AA$, see Propositions~\ref{prop: vZ_amalgam2} to~\ref{prop: vZ_amalgam_HNN}. Before we consider these special cases, we gather a few technical lemmas that will be used repeatedly (Lemmas~\ref{lm: semidirect product}, \ref{lm: general characteristic} and~\ref{lm: elementary virtually Z}).

\begin{lem}\label{lm: semidirect product}
Let $G$ be a group, let $H,K$ be subgroups of $G$ such that $H$ has a trivial center and $K$ is contained in the centraliser of $H$. Then the subgroup $\langle H,K\rangle$ is isomorphic to $H \times K$.
\end{lem}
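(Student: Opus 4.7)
The natural plan is to exhibit the standard ``commuting product'' isomorphism. Since $K \subseteq C_G(H)$, every element of $K$ commutes with every element of $H$, so the multiplication map
\[
\varphi\colon H \times K \;\longrightarrow\; \langle H, K\rangle,\qquad (h,k) \;\longmapsto\; hk,
\]
is a well-defined group homomorphism: the identity
\[
\varphi(h_1,k_1)\,\varphi(h_2,k_2) \;=\; h_1 k_1 h_2 k_2 \;=\; h_1 h_2 k_1 k_2 \;=\; \varphi\bigl((h_1,k_1)(h_2,k_2)\bigr)
\]
uses precisely the hypothesis that $k_1$ and $h_2$ commute. Surjectivity then follows from the same commutation property, since any word in $H \cup K$ can be rearranged so that all letters from $H$ precede all letters from $K$, realising it as a single product $hk$.

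The one remaining point, and what I would call the ``main'' step (though it is quite short), is injectivity of $\varphi$, which reduces to checking $H \cap K = \{1\}$. Given $x \in H \cap K$: on the one hand $x \in H$, and on the other hand $x \in K \subseteq C_G(H)$, so $x$ commutes with every element of $H$. Thus $x$ lies in the center $Z(H)$, which is trivial by hypothesis, so $x = 1$. If $\varphi(h,k) = hk = 1$, then $h = k^{-1} \in H \cap K = \{1\}$, hence $(h,k) = (1,1)$, proving injectivity.

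Combining surjectivity and injectivity yields the desired isomorphism $\langle H, K\rangle \cong H \times K$. I do not foresee any obstacle: the entire argument is a direct unwinding of the two hypotheses (commutation of $K$ with $H$, triviality of $Z(H)$) via the universal map from the external direct product.
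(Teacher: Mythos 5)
Your proof is correct and follows essentially the same route as the paper: the multiplication map $(h,k)\mapsto hk$ is a homomorphism by the commutation hypothesis, and injectivity comes from noting that an element of $H\cap K$ centralises $H$ and hence lies in the trivial center $Z(H)$. The only cosmetic difference is that you spell out surjectivity onto $\langle H,K\rangle$, which the paper leaves implicit.
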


\begin{proof}
The map $\phi\colon H\times K \to G$ which sends $(h,k)$ to $hk$ is a morphism since every element of $H$ commutes with every element of $K$. It is injective since $\phi(h,k) = 1$ implies $h\in K$: then $h$ is central in $H$ and hence $h = k = 1$.
\end{proof}

Say that a subgroup $H$ of $G$ is \emph{injectively characteristic} if every injective endomorphism of $G$ maps $H$ into itself. We record the following elementary facts regarding these subgroups.

\begin{lem}\label{lm: general characteristic}
Every subgroup of $\Z$ is injectively characteristic.

If $H$ is injectively characteristic in $G$ and $\phi$ is an automorphism of $G$, then $\phi(H) = H$.

If $K$ is injectively characteristic in $H$ and $H$ is injectively characteristic (resp. normal) in $G$, then $K$ is injectively characteristic (resp. normal) in $G$.
\end{lem}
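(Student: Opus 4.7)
The plan is to handle the three assertions in sequence, each reducing to a short observation once the definition is unpacked. Throughout, I will use the fact that an \emph{injective endomorphism} of a group is the same as a monomorphism into itself, and that the property of being injectively characteristic is stable under restriction to overgroups.

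For the first assertion, I would note that any injective endomorphism $\phi\colon\Z\to\Z$ is multiplication by some nonzero integer $m$. Given a subgroup $n\Z\leqslant\Z$ (with $n\geqslant 0$), one has $\phi(n\Z)=mn\Z\subseteq n\Z$, so $n\Z$ is injectively characteristic. This covers every subgroup of $\Z$.

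For the second assertion, given an automorphism $\phi$ of $G$ and an injectively characteristic subgroup $H$, both $\phi$ and $\phi^{-1}$ are injective endomorphisms of $G$. Applying the defining property twice gives $\phi(H)\subseteq H$ and $\phi^{-1}(H)\subseteq H$, the latter being equivalent to $H\subseteq\phi(H)$. Hence $\phi(H)=H$.

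For the third assertion, suppose first that $H$ is injectively characteristic in $G$ and $K$ is injectively characteristic in $H$. Given any injective endomorphism $\phi\colon G\to G$, the hypothesis on $H$ yields $\phi(H)\subseteq H$, so the restriction $\phi|_H\colon H\to H$ is a well-defined injective endomorphism of $H$. Applying the hypothesis on $K$ to this restriction gives $\phi(K)=\phi|_H(K)\subseteq K$, as required. Finally, suppose $H$ is only assumed normal in $G$ and $K$ is injectively characteristic in $H$. For each $g\in G$, the inner automorphism $\gamma_g$ of $G$ preserves $H$ by normality, so its restriction $\gamma_g|_H$ is an automorphism of $H$. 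By the second assertion applied to $H$, $\gamma_g|_H(K)=K$, i.e.\ $gKg^{-1}=K$, so $K$ is normal in $G$. I do not anticipate any obstacle here: each clause is essentially a one-line verification from the definitions, and the only mild subtlety is making sure the restriction $\phi|_H$ (respectively $\gamma_g|_H$) lands in $H$ before invoking the property of $K$.
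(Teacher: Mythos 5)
Your proof is correct. The paper states this lemma without proof (it is introduced as a record of "elementary facts"), and your three verifications — classifying injective endomorphisms of $\Z$ as multiplication by a nonzero integer, applying the defining property to both $\phi$ and $\phi^{-1}$ for the automorphism case, and restricting an injective endomorphism (resp.\ an inner automorphism) of $G$ to $H$ before invoking the hypothesis on $K$ — are exactly the intended arguments.
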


\begin{lem}\label{lm: elementary virtually Z}\label{fully_char}
Every virtually $\Z$ group admits an infinite cyclic subgroup which is injectively characteristic.
\end{lem}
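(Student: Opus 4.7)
The plan is to exhibit a subgroup satisfying a property strictly stronger than injective characteristicity, namely full characteristicity (invariance under every endomorphism), by taking the subgroup generated by uniform high powers of all elements of $G$. First I would extract a normal infinite cyclic subgroup of finite index from the hypothesis: starting from any infinite cyclic subgroup $N_{0}$ of $G$ of finite index, its normal core $N = \bigcap_{g \in G} g N_{0} g^{-1}$ is a normal subgroup of $G$ of finite index (as an intersection of finitely many subgroups of finite index) and is itself infinite cyclic, being a non-trivial subgroup of $N_{0} \cong \Z$. Set $m = [G : N]$, so that $G/N$ has order $m$.

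The next step would be to define
\[
H \,=\, \langle g^{m} : g \in G \rangle.
\]
Since every coset in $G/N$ has order dividing $m$, every generator $g^{m}$ lies in $N$, so $H \leqslant N$. Moreover, if $a$ is a generator of $N$, then $a^{m} \in H$ is non-trivial, so $H$ is a non-trivial subgroup of $N \cong \Z$ and is therefore itself infinite cyclic.

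Finally I would verify that $H$ is fully characteristic --- and in particular injectively characteristic. For any endomorphism $\phi$ of $G$, applying $\phi$ to a generator $g^{m}$ of $H$ gives $\phi(g)^{m}$, which is itself an $m$-th power of an element of $G$ and so belongs to $H$; thus $\phi(H) \subseteq H$. I do not expect any genuine obstacle in this argument. The only step that is not entirely automatic is the decision to aim for the stronger property of full characteristicity via the universal $m$-th power construction; once this is done, the verification is elementary, and injectivity of the endomorphisms is in fact not needed.
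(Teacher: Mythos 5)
Your proof is correct, and it takes a genuinely different (and more elementary) route than the paper's. The paper invokes the structure theorem that an infinite virtually cyclic group is $N\rtimes H$ with $N$ finite and $H\in\{\Z, D_\infty\}$, then carefully constructs an infinite-order element $z$ whose image in $H$ is sent to a power of itself by every \emph{injective} endomorphism (the $D_\infty$ case needs injectivity to rule out torsion images), and finally passes to $z^{k|N|}$ to kill the interference from $N$; this involves a case split and genuinely uses injectivity. You instead take the normal core $N$ of an infinite cyclic finite-index subgroup, set $m=[G:N]$, and observe that the verbal subgroup $G^m=\langle g^m : g\in G\rangle$ is a non-trivial subgroup of $N\cong\Z$ (hence infinite cyclic) and is fully characteristic, since any endomorphism sends $m$-th powers to $m$-th powers. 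Every step checks out: the core is a finite intersection of finite-index conjugates, it is non-trivial because it has finite index in an infinite group, Lagrange gives $g^m\in N$, and $a^m\ne 1$ for a generator $a$ of $N$. Your argument is shorter, avoids the case analysis on $\Z$ versus $D_\infty$, and yields the strictly stronger conclusion of full characteristicity (invariance under all endomorphisms, not only injective ones), which of course suffices for every use of the lemma in the paper. The only cosmetic point is that you might state explicitly that $N\ne 1$ because it has finite index in the infinite group $G$.
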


\begin{proof}
We use the well-known fact (see \cite[Lemma 11.4]{hem76}) that if $G$ is virtually $\Z$, then $G$ is isomorphic to a semidirect product $N\rtimes H$ where $N$ is finite and $H$ is either $\Z$ or the infinite dihedral group $D_{\infty}$. In particular, there is an exact sequence of the form
 \[
\begin{tikzcd} 1 \arrow[r] & N \arrow[r,"\beta"] & G \arrow[r,"\alpha"] & H \arrow[r]  & 1, \end{tikzcd}
 \]
Note that, for every $g, g'\in G$, if $\alpha(g) = \alpha(g')$, then there exists $n\in N$ such that $g = \beta(n)\ g'$.

Since $\beta(N) = \ker\alpha$, $\beta(N)$ is normal in $G$ and $G$ acts on the finite group $\beta(N)$ by conjugation. Let $k = |\aut(N)|$. Then conjugation by any element of the form $g^k$ ($g\in G$) is the identity on $\beta(N)$. In particular, every $k$-th power commutes with $\beta(N)$.

Let $\psi$ be an injective endomorphism of $G$ and suppose that $z$ is an infinite order element of $G$ such that $\alpha(\psi(z))$ is a proper power of $\alpha(z)$, say $\alpha(\psi(z)) = \alpha(z)^\ell$ ($\ell \ne 0$). We claim that $g = z^{k|N|}$ has the announced property. Indeed, $\alpha(\psi(z^k)) = \alpha(z^{k\ell})$, so there exists $n\in N$ such that $\psi(z^k) = \beta(n)\ z^{k\ell}$. It follows that, for any integer $j$, $\psi(z^{kj}) = \beta(n)^j\ z^{k\ell j}$. In particular, for $j = |N|$, $\psi(g) = \psi(z^{k|N|}) = z^{k|N|\ell} = g^\ell$.

Now we only need to establish the existence of such an element $z$ in $G$. If $H = \Z$, we can take $z$ to be a pre-image under $\alpha$ of a generator of $H$, since $\alpha(G) = \langle \alpha(z)\rangle$.

If $H = D_\infty$, recall that $H$ is isomorphic to $\Z/2\Z \ast \Z/2\Z$. We let $a$ and $b$ be the generators of the two copies of $\Z/2\Z$, $a'$ and $b'$ be pre-images under $\alpha$ of $a$ and $b$, and $z = a'b'$. Then $\alpha(z) = ab$ has infinite order, and hence so does $z$. Moreover $\alpha(\psi(z))$ (as well as every element of $H$) is of the form $b^\epsilon(ab)^\ell a^\eta$ for some $\ell\in \N$, $\epsilon, \eta\in \{0,1\}$. Since $\psi$ is injective, $\alpha(\psi(z))$ has infinite order, and hence $\epsilon = \eta$. Thus $\alpha(\psi(z))$ is either a positive power of $ab$, or a positive power of $ba = (ab)\inv$: in either case, $\alpha(\psi(z))$ is a power of $\alpha(z)$ and we are done.
\end{proof}

We now consider small graphs of virtually $\Z$ groups with respect to the \fgip

\begin{prop}
\label{prop: vZ_amalgam2}
    If $G$ splits as the graph of virtually $\ZZ$ groups
    \[
    \begin{tikzcd}
    B & A \arrow[l, "2"' near start, "e"', "2"' near end] \arrow[r, "2" near start, "f", "2" near end] & C
    \end{tikzcd}
    \]
    where the integers on the edges indicate the indices of the edge groups in the adjacent vertex groups, then $G$ contains a copy of $F_2\times \Z$.
\end{prop}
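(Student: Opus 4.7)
The plan is to produce an infinite cyclic subgroup $\langle z\rangle$ that is normal in $G$, observe that the quotient $G/\langle z\rangle$ is virtually free of negative Euler characteristic (hence contains $F_2$), and finally split the resulting central extension by $\langle z\rangle$ using the cohomological dimension of $F_2$. For the first step, apply Lemma~\ref{lm: elementary virtually Z} to the virtually $\Z$ edge groups $E_e, E_f$ to obtain injectively characteristic infinite cyclic subgroups $Z_e\leqslant E_e$ and $Z_f\leqslant E_f$. Both inclusions of $E_e$ have index $2$, so $E_e$ is normal in $A$ and in $B$; Lemma~\ref{lm: general characteristic} then makes $Z_e$ normal in $A$ and in $B$, and symmetrically $Z_f$ is normal in $A$ and in $C$. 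Viewed inside $A$, both $Z_e$ and $Z_f$ are infinite cyclic of finite index (Fact~\ref{fact: elementary virtually Z}), so $\langle z\rangle:=Z_e\cap Z_f$ is infinite cyclic. Because every subgroup of an infinite cyclic group is preserved by $\mathrm{Aut}(\Z)=\{\pm 1\}$, the $B$-action on $Z_e$ preserves $\langle z\rangle$, so $\langle z\rangle$ is normal in $B$; symmetrically $\langle z\rangle$ is normal in $C$, and normality in $A$ is immediate. Hence $\langle z\rangle$ is normal in $G$.

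For the second step, the quotient $\bar G:=G/\langle z\rangle$ inherits the graph-of-groups shape $\bar B*_{\bar E_e}\bar A*_{\bar E_f}\bar C$, with all vertex and edge groups now finite (being virtually $\Z$ modulo a finite-index copy of $\Z$). The index-$2$ edge inclusions force $|\bar A|=|\bar B|=|\bar C|=2|\bar E_e|=2|\bar E_f|$, so the rational Euler characteristic is
\[\chi(\bar G)=\frac{3}{|\bar A|}-\frac{2}{|\bar E_e|}=-\frac{1}{|\bar A|}<0.\]
Being the fundamental group of a finite graph of finite groups, $\bar G$ is finitely generated virtually free; any torsion-free finite-index subgroup $F_k\leqslant\bar G$ satisfies $1-k=[\bar G:F_k]\cdot\chi(\bar G)<0$, so $k\geqslant 2$ and $F_2\hookrightarrow\bar G$.

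For the final step, let $G':=C_G(z)$; since the conjugation action of $G$ on $\langle z\rangle$ factors through $\mathrm{Aut}(\Z)=\Z/2$, the subgroup $G'$ has index at most $2$ in $G$. Then $\bar G':=G'/\langle z\rangle$ has index at most $2$ in $\bar G$, so the intersection of the $F_2$ from the previous step with $\bar G'$ has index $1$ or $2$ in $F_2$, and by Nielsen--Schreier is a free group $F$ of rank $2$ or $3$. Let $H\leqslant G'$ be the preimage of $F$ under $G'\twoheadrightarrow\bar G'$; then $H$ fits in the central extension $1\to\langle z\rangle\to H\to F\to 1$, which splits because $F$ has cohomological dimension $1$ and so $H^2(F,\Z)=0$. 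Thus $H\cong F\times\Z$, which contains $F_2\times\Z$. The main subtlety lies in the first step: an injectively characteristic cyclic subgroup of $A$ is not automatically preserved by $G$, since $A$ itself is not normal in $G$. One must instead build $\langle z\rangle$ inside the edge groups, which \emph{are} normal in both adjacent vertex groups by the index-$2$ hypothesis, and then exploit the extreme smallness of $\mathrm{Aut}(\Z)$ to secure the $B$- and $C$-invariance of the intersection.
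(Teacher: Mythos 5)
Your proof is correct. The first half --- producing a normal infinite cyclic subgroup $\langle z\rangle$ of $G$ by intersecting injectively characteristic cyclic subgroups of the two edge groups and exploiting the normality of the index-$2$ edge images together with Lemmas~\ref{lm: general characteristic} and~\ref{lm: elementary virtually Z} --- is essentially the same construction the paper uses for its element $d$. Where you genuinely diverge is in how the $F_2$ factor is produced. The paper stays inside $G$: it picks $a_1,a_2,b,c$ outside the relevant edge-group images, forms $p=(1,e,b,e^{-1},a_1)^2$ and $q=(1,f,c,f^{-1},a_2)^2$, checks via reducedness of $\AA$-paths that $\langle p,q\rangle\cong F_2$ and that $d^p=d^q=d$, and concludes with Lemma~\ref{lm: semidirect product}. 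You instead pass to the quotient $G/\langle z\rangle$ (legitimately: $\langle z\rangle$ is normal and lies in every conjugate of each edge group, so it acts trivially on the Bass--Serre tree and the quotient inherits the same graph-of-groups shape with finite vertex and edge groups), compute that its rational Euler characteristic is $-1/|\bar A|<0$, extract a free subgroup of rank at least $2$ from a torsion-free finite-index subgroup, and pull it back through the central extension over $C_G(z)$, which splits because free groups have cohomological dimension one. Your route trades the paper's elementary normal-form argument for the Karrass--Pietrowski--Solitar/Serre theory of virtually free groups plus a cohomological splitting, so it is less self-contained relative to the toolkit the paper has already set up, but it avoids choosing explicit generators and verifying reducedness by hand, and it yields a slightly stronger conclusion essentially for free: running the argument with a finite-index torsion-free subgroup of $\bar G$ shows that $G$ is in fact virtually $F_n\times\Z$ for some $n\geqslant 2$, not merely that it contains $F_2\times\Z$.
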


\begin{proof}
Since $\alpha_e(A_e)$ has index 2 in $A$, it is infinite and hence virtually $\Z$. As a result, $A_e$ is virtually $\Z$ and so is $A_f$. By Lemma~\ref{fully_char}, there exist infinite orders elements $d_e \in A_e$ and $d_f\in A_f$ such that $\langle d_e\rangle$ and $\langle d_f\rangle$ are injectively characteristic in $A_e$ and $A_f$, respectively.

Then $\langle \alpha_e(d_e)\rangle$ and $\langle \alpha_f(d_f)\rangle$ are infinite and injectively characteristic in $\alpha_e(A_e)$ and $\alpha_f(A_f)$, respectively. By Fact~\ref{fact: elementary virtually Z} and Lemma~\ref{lm: elementary virtually Z}, the intersection $\langle \alpha_e(d_e)\rangle \cap \langle \alpha_e(d_f)\rangle$ is infinite, and injectively characteristic in both $\alpha_e(A_e)$ and $\alpha_f(A_f)$. Let $d$ be a generator of $\langle \alpha_e(d_e)\rangle \cap \langle \alpha_e(d_f)\rangle$, $d_B = \omega_e(\alpha_e\inv(d))$ and $d_C = \omega_f(\alpha_f\inv(d))$. Then $\langle d_B\rangle$ is infinite and injectively characteristic in $\omega_e(A_e)$, and $\langle d_C\rangle$ is infinite and injectively characteristic in $\omega_f(A_f)$.

Since $\alpha_e(A_e)$ has index 2 in $A$, it is normal and so is $\langle d\rangle$. Similarly, $\langle d_B\rangle$ is normal in $B$ and $\langle d_C\rangle$ is normal in $C$.

This implies that $\langle d\rangle$ is normal in $G$. Indeed, $G$ is generated by $A$, $(1,e,b,e\inv,1)$ ($b\in B$) and $(1,f,c,f\inv,1)$ ($c\in C$). Then, if $b\in B$, we have
$$d^{(1,e,b,e\inv,1)} = (1,e,b\inv,e\inv,d,e,b,e\inv,1) = (1,e,b\inv d_Bb,e\inv,1) = (1,e,d_B^\epsilon,e\inv,1),$$
where $\epsilon = \pm1$, and hence $d^{(1,e,b\inv,e\inv,1)} = d^\epsilon$. It follows that $\langle d\rangle^{(1,e,b\inv,e\inv,1)} = \langle d\rangle$ and, similarly, $\langle d\rangle^{(1,f,c,f\inv,1)} = \langle d\rangle$. Thus $\langle d\rangle$ is normal in $G$.

Let now $a_1$ be an element of the complement of $\alpha_e(A_e)$ in $A$, $a_2$ be in the complement of $\alpha_f(A_f)$ in $A$, $b$ in the complement of $\omega_e(A_e)$ in $B$ and $c$ in the complement of $\omega_f(A_f)$ in $C$. Let also
\begin{align*} 
        p_0 &= (1, e, b, e\inv, a_1),\\
        q_0 &= (1, f, c, f\inv, a_2),
\end{align*}
and $p = p_0^2$ and $q = q_0^2$. We claim that the subgroup $\langle d, p, q\rangle$ is isomorphic to $\Z \times F_2$. First note that $\langle d\rangle^{p_0} = \langle d\rangle$, so that $d^{p_0} \in \{d,d\inv\}$, and hence $d^p = d$. Similarly $d^q = q$.
In view of Lemma~\ref{lm: semidirect product}, we only need to show that $\langle p,q\rangle$ is isomorphic to $F_2$. Let indeed $\chi\colon F(x,y)\to G$ be given by $\chi(x) = p$ and $\chi(y) = q$. If $z$ is a non-trivial freely reduced word in $F(x,y)$, then $\chi(z)$ is a concatenation of copies of $p$, $q$, $p\inv$ and $q\inv$, which is always $\AA$-reduced. Accordingly, $\chi$ is one-to-one and $\langle p,q\rangle$ is isomorphic to $F_2$.
\end{proof}

\begin{prop}
\label{prop: vZ_amalgam}
    Let $G = A*_{C}B$ be an amalgam of two virtually $\Z$ groups over a virtually $\Z$ subgroup. The following are equivalent:
    \begin{enumerate}[(1)]
        \item\label{itm:1} $G$ has the \fgip
        \item\label{itm:2} $G$ does not contain a copy of $F_2\times \Z$ as a subgroup.
        \item\label{itm:3} $C$ is either equal to $A$ or $B$, or $C$ has index two in both $A$ and $B$.
    \end{enumerate}
\end{prop}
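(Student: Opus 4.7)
The plan is to establish the three implications $(1) \Rightarrow (2) \Rightarrow (3) \Rightarrow (1)$. The first is immediate from the fact that the \fgip\ is subgroup-hereditary and from the classical observation that $F_2 \times \Z$ is a finitely generated group without the \fgip\ \cite{mol68}. For $(3) \Rightarrow (1)$, I would treat the two subcases separately. If $C = A$ or $C = B$, the amalgam collapses and $G$ equals one of the vertex groups, which is virtually $\Z$ and hence Noetherian, so the \fgip\ holds trivially. If instead $[A:C] = [B:C] = 2$, then $C$ is normal of index $2$ in both $A$ and $B$ and therefore normal in $G = \langle A, B\rangle$, with quotient $G/C \cong \Z/2\Z \ast \Z/2\Z = D_\infty$. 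Taking a characteristic infinite cyclic subgroup of finite index in $C$ via Lemma~\ref{lm: elementary virtually Z} gives $G$ the structure of a $\Z$-by-virtually-$\Z$ group, so $G$ is virtually polycyclic, hence Noetherian, and in particular has the \fgip.

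The substance of the proposition lies in $(2) \Rightarrow (3)$, which I would prove by contrapositive. Assume $C$ is proper in both $A$ and $B$ and, without loss of generality, $[A:C] \geqslant 3$. The first step is to produce an infinite cyclic subgroup $\langle z\rangle \leqslant C$ that is normal in $G$. Lemma~\ref{lm: elementary virtually Z} provides injectively characteristic infinite cyclic subgroups $\langle d_A\rangle \leqslant A$ and $\langle d_B\rangle \leqslant B$, each of finite index in its ambient group by Fact~\ref{fact: elementary virtually Z}. Since $C$ also has finite index in both $A$ and $B$, each of $\langle d_A\rangle \cap C$ and $\langle d_B\rangle \cap C$ has finite index in $C$; their common intersection $\langle d_A\rangle \cap \langle d_B\rangle \cap C$ computed in $G$ is therefore of finite index in $C$, so is infinite cyclic, and I let $z$ be a generator. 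Because every subgroup of an infinite cyclic group is characteristic in it, $\langle z\rangle$ is normal in both $A$ and $B$, and hence in $G = \langle A, B\rangle$. The centraliser $C_G(z)$ then has index at most $2$ in $G$, since $G/C_G(z)$ embeds into $\aut(\langle z\rangle) \cong \Z/2\Z$.

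The second step is to find a rank-$2$ free subgroup of $C_G(z)$ meeting $\langle z\rangle$ trivially, and combine it with $\langle z\rangle$ to realise $F_2 \times \Z$ inside $G$. I would consider the Bass--Serre tree $T$ of the splitting $G = A\ast_C B$: since $\langle z\rangle \leqslant C$ is normal in $G$, it acts trivially on $T$, so the quotient $\bar G := G/\langle z\rangle$ acts on $T$ with finite vertex and edge stabilisers (the images of $A$, $B$, and conjugates of $C$). Because the vertex fixed by $A$ has degree $[A:C] \geqslant 3$ in $T$, the tree has infinitely many ends, so $\bar G$ cannot be virtually cyclic; acting properly and cocompactly on a tree with finite stabilisers, $\bar G$ is finitely generated virtually free (Karrass--Pietrowski--Solitar), and therefore contains a copy of $F_2$. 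The index-at-most-$2$ subgroup $C_G(z)/\langle z\rangle$ then also contains $F_2$, and pulling back along the quotient map $\pi\colon G \to \bar G$ gives a short exact sequence $1 \to \langle z\rangle \to F \to F_2 \to 1$ inside $C_G(z)$. Since $F_2$ is free, the sequence splits, and the centrality of $\langle z\rangle$ in $F$ forces the splitting to be a direct product, giving $F \cong F_2 \times \Z \leqslant G$. The main obstacle, I expect, is the first step: producing a single cyclic subgroup normalised by both $A$ and $B$ simultaneously (rather than separately), which relies essentially on the commensurability of all the relevant subgroups in the virtually-$\Z$ vertex groups; the Bass--Serre argument in the second step is then routine.
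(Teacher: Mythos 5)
Your proof is correct, and the two easy implications essentially coincide with the paper's: $(1)\Rightarrow(2)$ is identical, and for $(3)\Rightarrow(1)$ both arguments reduce to showing that every subgroup of $G$ is finitely generated by exhibiting a finite-index subgroup splitting as a semidirect product $(C\cap K)\rtimes\Z$ with $C\cap K\leqslant C$ (the paper takes $K$ to be the preimage of the derived subgroup of $G/C\cong \Z/2\Z\ast\Z/2\Z$; your appeal to virtual polycyclicity is an equivalent packaging). The genuine divergence is in $(2)\Rightarrow(3)$. Both proofs share the first step, which you correctly identify as the crux: intersecting injectively characteristic infinite cyclic subgroups of $A$ and $B$ inside $C$ (Fact~\ref{fact: elementary virtually Z} together with Lemmas~\ref{lm: general characteristic} and~\ref{lm: elementary virtually Z}) to obtain an infinite cyclic $\langle z\rangle\leqslant C$ that is normal in $G$. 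From there the paper stays inside $G$: it chooses $a_1,a_2\in A$ in three distinct cosets of $C$ and $b\in B\setminus C$, verifies by a reduced $\AA$-path computation that $ba_1ba_1$ and $ba_2ba_2$ freely generate a rank-two free group, observes that each conjugates $z$ to $z^{\pm1}$ so that this free group centralises $\langle z\rangle$, and concludes with Lemma~\ref{lm: semidirect product}. You instead pass to $\bar G = G/\langle z\rangle$, which acts on the Bass--Serre tree with finite stabilisers, recognise it as an infinitely-ended virtually free group (hence one containing $F_2$), restrict to the index-at-most-two image of $C_G(z)$, and lift through the split central extension $1\to\langle z\rangle\to F\to F_2\to 1$. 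Your route avoids the normal-form verification at the cost of importing the structure theory of groups acting on trees with finite stabilisers; the paper's explicit element construction is more elementary and is recycled almost verbatim in the neighbouring results of Section~\ref{sec: vcyclic} (for instance Propositions~\ref{prop: vZ_amalgam2} and~\ref{prop: vZ_amalgam_HNN}), which is presumably why the authors keep it hands-on. Both approaches ultimately rest on the same two observations: normality of $\langle z\rangle$ in $G$, and the fact that $\aut(\Z)\cong\Z/2\Z$ forces the relevant elements (respectively, the centraliser of $z$) to act on $\langle z\rangle$ with index at most two.
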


\begin{proof}
\eqref{itm:1} implies \eqref{itm:2}, since $F_2\times \Z$ does not have the \fgip

We now show that \eqref{itm:3} implies \eqref{itm:1}. If $C$ is equal to $A$, then $G = B$, so $G$ is virtually $\Z$, and hence~$G$ has the \fgip\
The same holds if $C = B$. Let us now assume that $C$ has index 2 in both $A$ and $B$.

In this situation, $C$ is normal in $A$ and in $B$, and hence in $G$, and the quotient $G/C$ is isomorphic to $\Z/2\Z \ast \Z/2\Z$. The derived subgroup $Z$ of $G/C = \langle r,s \mid r^2 = s^2 = 1\rangle$ is the kernel of the abelianisation morphism $\alpha\colon G/C \to \langle r,s\mid r^2 = s^2 = [r,s] = 1\rangle = \Z/2\Z \times \Z/2\Z$; thus $Z$ is the set of sequences of alternating $r$'s and $s$'s with an even number of $r$'s and $s$'s, that is, $Z = \langle (rs)^2\rangle$, which is infinite cyclic, and has index 4 in $G/C$. Let $K$ be the pre-image of $Z$ in $G$. Then $K$ also has finite index in $G$, and the following short exact sequence
 \[
\begin{tikzcd} 1 \arrow[r] & C \cap K \arrow[r] & K \arrow[r,"\alpha"] & Z \arrow[r]  & 1 \end{tikzcd}
 \]
splits since $Z$ is free. It follows that $K$ is isomorphic to a semidirect product $(C\cap K) \rtimes Z$. If $H$ is a subgroup of $K$, then $H$ is isomorphic to $(C\cap H) \rtimes \alpha(H)$. Since $C$ is virtually $\Z$, all its subgroups are finitely generated, so $H$ is finitely generated. It follows, in turn and since $K$ has finite index in $G$, that every subgroup of $G$ is finitely generated. Therefore $G$ has the \fgip

We finally focus on the proof that \eqref{itm:2} implies \eqref{itm:3}, or rather, its contrapositive. Suppose that $C$ has index at least three in $A$ and index at least 2 in $B$. We first show that $C$ contains an infinite cyclic subgroup which is normal in $A$ and in $B$. Let indeed $a\in A$ and $b\in B$ be infinite order elements such that $\langle a\rangle$ is injectively characteristic in $A$ and $\langle b\rangle$ is injectively characteristic in $B$ (Lemma~\ref{fully_char}). By the same lemma, $C$ has finite index in both $A$ and $B$ and hence $C$ contains a finite power of $a$ and a finite power of $b$. Thus we may assume that $a, b \in C$. Then the intersection $\langle a\rangle \cap \langle b\rangle$ is an infinite cylic subgroup of both $\langle a\rangle$ and $\langle b\rangle$, injectively characteristic in both, and hence normal in $A$ and in $B$. Let $c$ be a generator of $\langle a\rangle \cap \langle b\rangle$.

Let now $a_1, a_2\in A$ be such that $C$, $Ca_1$ and $Ca_2$ are pairwise distinct, and let $b\in B$ such that $C$ and $Cb$ are distinct. Since $\langle c\rangle$ is normal in $A$ and in $B$, we have $\langle c\rangle^b = \langle c\rangle^{a_1} = \langle c\rangle^{a_2} = \langle c\rangle$. It follows that $c^{ba_1}$ is equal to $c$ or $c\inv$, and hence $c^{ba_1ba_1} = c$. Similarly, $c^{ba_2ba_2} = c$. This implies that $\langle ba_1ba_1, ba_2ba_2\rangle$ commutes with $\langle c\rangle$. 

Now we verify that $\langle ba_1ba_1, ba_2ba_2, c\rangle$ is isomorphic to $F_2\times \Z$. It is convenient, at this point, to use the graph of groups notation for $G$: $G = \pi_1(\AA,u)$, where $\gr A$ has two vertices $u$ and $v$ and an edge $e$ from $u$ to $v$, $A_u = A$, $A_v = B$, $A_e = C$ and $\alpha_e$ and $\omega_e$ are the identity map on $A_e$ (since we have, in this proof, considered $C$ as a subgroup of both $A$ and $B$).

Let $\chi\colon F(x,y) \to \langle ba_1ba_1, ba_2ba_2\rangle$ be the morphism which maps $x$ to $ba_1ba_1$ and $y$ to $ba_2ba_2$. If $z$ is a non-trivial reduced word in $F(x,y)$, then $\chi(z)$ is the $=_\AA$-equivalence class of a concatenation of the paths
$$(1,e,b,e\inv,a_1,e,b,e\inv,a_1),\quad (1,e,b,e\inv,a_2,e,b,e\inv,a_2)$$
and their inverses. The $=_\AA$-cancellations between these $\AA$-paths and their inverses are short, and the vertex group elements along this concatenation (once reduced) are equal to $b$, $a_1$, $a_2$, $a_1a_2\inv$, $a_1\inv a_2$ and their inverses. Our choices for $b$, $a_1$ and $a_2$ is such that none of these elements is in $C$. As a result, the $\chi$-image of a non-trivial element $z \in F(x,y)$ is a non-trivial reduced $\AA$-path. Thus $\chi$ is an isomorphism and $H$ is isomorphic to $F_2$. 

Lemma~\ref{lm: semidirect product} then asserts that $\langle ba_1ba_1, ba_2ba_2, c\rangle$ is isomorphic to $F_2\times \Z$.
\end{proof}

\begin{prop}
\label{prop: vZ_HNN}
    Let $G = A*_\psi$ be an HNN extension, where $\psi\colon C\to D$ is an isomorphism between two subgroups of $A$, and $A$, $C$ and $D$ are virtually $\Z$. The following are equivalent:
    \begin{enumerate}
        \item\label{itm:1'} $G$ has the \fgip
        \item\label{itm:2'} $G$ does not contain a copy of $F_2\times \Z$.
        \item\label{itm:3'} $C = A$ or $D = A$.
    \end{enumerate}
\end{prop}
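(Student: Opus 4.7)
The implication (1)$\Rightarrow$(2) is immediate, since $F_2\times\Z$ itself fails the \fgip. For (3)$\Rightarrow$(1), I would first reduce by symmetry (inverting the stable letter, i.e.\ replacing $\psi$ by $\psi^{-1}$) to the case $C=A$, and then apply Theorem~\ref{fgip_criterion_3} to the one-vertex, one-edge graph of groups defining $G$, with orientation $E^+=\{e\}$ chosen so that $\alpha_e(A_e)=C=A$. The hypotheses are straightforward: $A$ is virtually $\Z$, hence Noetherian and so has the \fgip; $A$ trivially has the \lcip relative to the singleton collection $\{A\}$ (the relevant double coset map is a bijection onto a point); and the edge group $A_e=A$, being virtually $\Z$, has uniformly bounded torsion, while $(A_e,\mathcal{F}_e)$ satisfies the \sfgip because every infinite subgroup of a virtually $\Z$ group has finite index. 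Theorem~\ref{fgip_criterion_3} then reduces the \fgip for $G$ to the \fgip relative to each edge subgroup, a condition that is automatic since every edge subgroup is a conjugate of $C$ or of $D$, both virtually $\Z$ and thus Noetherian.

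For (2)$\Rightarrow$(3), I would argue by contrapositive: assume $C\neq A$ and $D\neq A$, and construct an embedded copy of $F_2\times\Z$ inside $G$. The strategy is to locate inside $G$ a multi-edge amalgam of virtually $\Z$ groups to which Proposition~\ref{prop: vZ_amalgam} or~\ref{prop: vZ_amalgam2} applies. Write $t$ for the stable letter, with $tCt^{-1}=D$, and first consider the subgroup $H=\langle A,tAt^{-1}\rangle\leq G$. By Bass--Serre subgroup theory, $H$ acts on the subtree of the Bass--Serre tree spanned by the vertices fixed by $A$ and by $tAt^{-1}$, and Britton's normal form yields $A\cap tAt^{-1}=D$; hence $H$ is isomorphic to the amalgamated free product $A*_D tAt^{-1}$. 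Conjugation by $t$ identifies $tAt^{-1}\cong A$ and carries $D$ to $t^{-1}Dt=C$, so $H$ is an amalgam of two copies of $A$ over $D$ with edge-subgroup indices $[A:D]$ and $[A:C]$, both at least $2$ by hypothesis. If one of these indices is $\geq 3$, condition~(3) of Proposition~\ref{prop: vZ_amalgam} fails for $H$; that proposition then provides a copy of $F_2\times\Z$ in $H$, hence in $G$.

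The remaining case is $[A:C]=[A:D]=2$. Here I would enlarge to $H'=\langle t^{-1}At,A,tAt^{-1}\rangle$. The same kind of Bass--Serre analysis, now applied to the subtree with three consecutive vertices fixed respectively by $t^{-1}At$, $A$ and $tAt^{-1}$, identifies $H'$ with the three-vertex tree amalgam $t^{-1}At*_C A*_D tAt^{-1}$ of virtually $\Z$ groups, with both edges of index $(2,2)$ on each side. Proposition~\ref{prop: vZ_amalgam2} then immediately produces a copy of $F_2\times\Z$ in $H'$, and hence in $G$. I expect the main technical obstacle to lie in the clean justification of the amalgam structures of $H$ and $H'$, that is, the injectivity of the natural homomorphism from each abstract amalgam into $G$; this should follow from standard Bass--Serre subgroup theory, using that each of these subgroups acts on the relevant subtree of the Bass--Serre tree of $G$ with exactly the computed vertex and edge stabilizers.
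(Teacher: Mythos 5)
Your proposal is correct and follows essentially the same route as the paper: (3)$\Rightarrow$(1) via Theorem~\ref{fgip_criterion_3} after reducing to $C=A$, and (2)$\Rightarrow$(3) by exhibiting $\langle A, A^t\rangle\cong A\ast_{D}A^t$ (handled by Proposition~\ref{prop: vZ_amalgam} unless both indices are $2$) and, in the residual index-$(2,2)$ case, the three-vertex amalgam $\langle A^{t^{-1}}, A, A^{t}\rangle$ handled by Proposition~\ref{prop: vZ_amalgam2}; the paper realises these subgroups via explicit graph-of-groups immersions where you invoke Bass--Serre subgroup theory, which is an equivalent justification. The only quibble is notational: with the paper's convention $t^{-1}ct=\psi(c)$ one has $A\cap tAt^{-1}=C$ rather than $D$, but since your argument treats the two edge indices symmetrically this does not affect anything.
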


\begin{proof}
As in the previous statement, \eqref{itm:1'} implies \eqref{itm:2'} since $F_2\times \Z$ does not have the \fgip

By definition, $G = \langle A, t \mid c^t = \psi(c), c\in C\rangle$. Note that $G$ is the fundamental group of the graph of groups $\AA$ with a single vertex $u$, a single edge $e$ (and its inverse), $A_u = A$, $A_e = C$, $\alpha_e = \textsf{id}_C$ and $\omega_e = \psi$.

We now show that \eqref{itm:3'} implies \eqref{itm:1'}. By symmetry, we may assume that $C = A$. Let $\mathcal{F}$ be the collection of finite subgroups of $A$. Since $A$ is virtually $\Z$, the pair $(A, \mathcal{F})$ has the \sfgip\ by Proposition \ref{prop: hyperbolic_sfgip_2}. Moreover, there is a bound on the order of finite subgroups of $A$. Indeed, by definition, $A$ admits a subgroup $H$ isomorphic to $\ZZ$, such that $H$ has index $d < \infty$ in $A$. If $B$ is a subgroup of $A$, then $B \cap H$ has index at most $d$ in $B$; thus, if $B$ is finite, then $B \cap H$ is trivial and hence $|B| \le d$. Since every subgroup of $A$ is finitely generated, every finitely generated subgroup of $G$ intersects each conjugate of $A$ in a finitely generated subgroup. Thus, all the hypotheses of Theorem \ref{fgip_criterion_3} are satisfied and we may conclude that $G$ has the \fgip

Finally, we prove (the contrapositive of the fact) that \eqref{itm:2'} implies \eqref{itm:3'}. Suppose that $C$ and $D = C^t$ are proper subgroups of $A$. We note that $A \cap A^t = D$, and $\langle A, A^t\rangle$ is isomorphic to the amalgamated product $A *_D A^t$. If $D$ has index at least three in $A$ and at least 2 in $A^t$ (or vice versa), then $\langle A, A^t\rangle$ (and hence $G$) contains a subgroup isomorphic to $F_2\times \Z$ by Proposition~\ref{prop: vZ_amalgam}. Thus we are reduced to considering the case where $D$ has index two in $A$ and in $A^t$. Conjugation by $t\inv$ shows that $C = D^{t\inv}$ has index two in $A$ as well.

Consider the graph of groups $\BB$
    \[
    \begin{tikzcd}
    A  \arrow[r, "f'", "C"'] & A  \arrow[r, "f", "C"'] & A
    \end{tikzcd}
    \]
rooted at the middle vertex $v_0$, with $\alpha_f = \alpha_{f'} = \textsf{id}_C$ and $\omega_f = \omega_{f'} = \psi$. Then $\BB$ admits a morphism $\mu$ to $\AA$, which maps both $f$ and $f'$ to $e$, is the identity on each vertex and edge group, and has trivial twisting elements. It is directly verified that $\pi_1(\BB,v_0)$ is generated by $A$, $(1,f,1)\,A\,(1,f\inv,1)$ and $(1,{f'}\inv,1)\,A\,(1,f',1)$, so that its image $\mu_*(\pi_1(\BB,v_0))$ is $\langle A, A^t, A^{t\inv}\rangle$. By Proposition~\ref{prop: vZ_amalgam2}, $\pi_1(\BB,v_0)$ contains a copy of $\Z\times F_2$, and since $\mu_*$ is injective, so do $\langle A, A^t, A^{t\inv}\rangle$ and $G$.
\end{proof}

\begin{prop}
\label{prop: vZ_HNN2}
    If $G$ is the fundamental group of the 2-vertex graph of virtually $\Z$ groups below,
    \[
    \begin{tikzcd}
    A_u \arrow[r, "2" near start, "e'", "2" near end, bend left=49] \arrow[r, "2"' near start, "e"', "2"' near end, bend right=49] & A_{u'}
    \end{tikzcd}
    \]
    where the integers indicate the index of the edge groups in the adjacent vertex groups, then $G$ contains a copy of $F_2\times\Z$.
\end{prop}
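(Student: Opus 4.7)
The plan is to exhibit inside $G = \pi_1(\AA, u)$ a subgroup isomorphic to the fundamental group of a 3-vertex graph of groups of the type appearing in Proposition \ref{prop: vZ_amalgam2}; this mirrors the reduction used at the end of the proof of Proposition \ref{prop: vZ_HNN}. Specifically, I would introduce the graph of groups $\BB$ with three vertices $v_1, v_2, v_3$ and two edges $g_1, g_2$ (with $g_1$ from $v_1$ to $v_2$ and $g_2$ from $v_3$ to $v_2$), take $B_{v_1} = B_{v_3} = A_u$, $B_{v_2} = A_{u'}$, $B_{g_1} = A_{e'}$, $B_{g_2} = A_e$, and let the edge monomorphisms $\alpha_{g_i}, \omega_{g_i}$ coincide with $\alpha_{e'}, \omega_{e'}, \alpha_e, \omega_e$ as appropriate. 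Then $\BB$ is precisely a 3-vertex graph of virtually $\Z$ groups whose two edge groups each embed with index $2$ into both adjacent vertex groups, so Proposition \ref{prop: vZ_amalgam2} immediately yields $F_2 \times \Z \leqslant \pi_1(\BB, v_2)$.

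Next, I would define a morphism of graphs of groups $\mu\colon \BB \to \AA$ by $v_1, v_3 \mapsto u$, $v_2 \mapsto u'$, $g_1 \mapsto e'$, $g_2 \mapsto e$, using identity maps on all vertex and edge groups and trivial twisting elements. The coherence relations \eqref{eq: twisted commutation alpha}--\eqref{eq: twisted commutation omega} are then tautological. Both immersion conditions of Definition \ref{def: folded} also hold: condition (1) is vacuous because no two distinct edges of $\BB$ share both an origin vertex and an image edge (at $v_2$ the two outgoing edges map to $e'^{-1}$ and $e^{-1}$, which are distinct), and condition (2) reduces to the equality $\alpha_{[f]}(A_{[f]}) = A_{\mu(o(f))} \cap \alpha_{[f]}(A_{[f]})$, which is immediate once all morphisms are identities and all twists are trivial. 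By Proposition \ref{folded}, $\mu_*\colon \pi_1(\BB, v_2) \to \pi_1(\AA, u')$ is injective, and since $\gr A$ is connected, $\pi_1(\AA, u') \cong \pi_1(\AA, u) = G$, delivering the desired embedding $F_2 \times \Z \leqslant G$.

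The only step requiring even mild attention is the verification that $\mu$ is an immersion, but as sketched above this reduces to routine tautologies due to the identity morphisms and trivial twists, together with the asymmetric shape of $\BB$ (each edge of $\AA$ has at most one preimage through any given vertex of $\BB$). The genuine content of the proposition is already packaged inside Proposition \ref{prop: vZ_amalgam2}.
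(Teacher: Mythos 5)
Your proposal is correct and follows essentially the same route as the paper: both construct the auxiliary three-vertex graph of virtually $\Z$ groups with middle vertex group $A_{u'}$ and outer vertex groups $A_u$, map it to $\AA$ by an immersion with identity vertex/edge group maps and trivial twisting elements, and then invoke Proposition~\ref{prop: vZ_amalgam2} together with the injectivity of $\mu_*$ to embed $F_2\times\Z$ into $G$. The only difference is that you spell out the verification that $\mu$ is an immersion, which the paper leaves implicit.
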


\begin{proof}
Let $\BB$ be the following graph of groups
    \[
    \begin{tikzcd}[column sep = large]
    B_v \arrow[r, "2" ' near start, "f" , "2" ' near end] & B_{v'}  & B_{v''}  \arrow[l, "2" near start, "f'" ', "2" near end]
    \end{tikzcd}
    \]
where $B_v = B_{v''} = A_u$, $B_{v'} = A_{u'}$, $B_f = A_e$, $B_{f'} = A_{e'}$, and the edge maps $\alpha_f$, $\alpha_{f'}$, $\omega_f$ and $\omega_{f'}$ are as in $\AA$.

Let $\mu\colon \BB \to \AA$ be the morphism which maps $v$ and $v''$ to $u$, $v'$ to $u'$, $f$ to $e$ and $f'$ to $e'$, with $\mu_v$, $\mu_{v'}$, $\mu_{v''}$, $\mu_f$ and $\mu_{f'}$ the identity morphism on the appropriate group, and with trivial twisting elements.

Then $\mu_*\colon \pi_1(\BB,v) \to G = \pi_1(\AA,u)$ is injective. Proposition~\ref{prop: vZ_amalgam2} shows that $\pi_1(\BB,v)$ contains a copy of $F_2\times \Z$, and hence so does $G$.
\end{proof}

\begin{prop}
\label{prop: double_HNN}
    If $G$ is a double HNN extension of virtually $\Z$ groups of the form:
    \[
    \begin{tikzcd}
    A \arrow["1"' near start, "e"', "m"' near end, loop, distance=2em, in=215, out=145] \arrow["1"' near start, "e'"', "n"' near end, loop, distance=2em, in=35, out=325]
    \end{tikzcd}
    \]
    where $m, n\geqslant 1$ and the integers indicate the index of the image of the edge groups in the corresponding vertex groups, then $G$ contains a copy of $F_2\times \Z$.
\end{prop}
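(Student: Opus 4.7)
My approach is to construct, for each case on $(m,n)$, an immersion $\mu\colon(\BB,v_0)\to(\AA,u)$ of graphs of groups such that $\pi_1(\BB,v_0)$ contains a copy of $F_2\times\Z$; the injectivity of $\mu_*$ (Proposition \ref{folded}) then transports this subgroup into $G=\pi_1(\AA,u)$. By symmetry of the two loops, I may assume $n\ge m$, so I split into the case $n\ge 2$ and the case $n=m=1$.

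\textbf{Case $n\ge 2$.} Let $\BB$ consist of two vertices $v_0,v_1$ (both with vertex group $A$), a loop $\tilde e$ at $v_0$ with $\mu(\tilde e)=e$, and a non-loop edge $\tilde{e'}\colon v_0\to v_1$ with $\mu(\tilde{e'})=e'$, taking all vertex and edge morphisms to be the identity and all twisting elements trivial. Since the three outgoing edges at $v_0$ have pairwise distinct images in $\AA$ (and similarly at $v_1$), Conditions \eqref{item: immersion 1} and \eqref{item: immersion 2} are immediate, so $\mu$ is an immersion. To compute $\pi_1(\BB,v_0)$, I first collapse the tree edge $\tilde{e'}$: this gives the amalgamated product of $B_{v_0}=A$ and $B_{v_1}=A$ over $B_{\tilde{e'}}=A$, with the $\alpha$-map the identity and the $\omega$-map equal to $\omega_{e'}$; this amalgam is isomorphic to $B_{v_1}=A$, with $B_{v_0}$ identified with $\omega_{e'}(A)$. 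The loop $\tilde e$ then produces an HNN extension of this $A$ whose associated subgroups are $\omega_{e'}(A)$ and $\omega_{e'}(\omega_e(A))$, of indices $n$ and $mn$ in $A$ respectively. Since $n\ge 2$, neither associated subgroup equals all of $A$, so Proposition \ref{prop: vZ_HNN} yields a copy of $F_2\times\Z$ in $\pi_1(\BB,v_0)$, and hence in $G$.

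\textbf{Case $m=n=1$.} Here both $\omega_e$ and $\omega_{e'}$ are isomorphisms, so $A$ is normalised by both stable letters, and hence is normal in $G$. The quotient $G/A$ is generated by the images of the two stable letters, with no further relations, so $G/A\cong\pi_1(\gr A)\cong F_2$; because $F_2$ is free the short exact sequence $1\to A\to G\to F_2\to 1$ splits, giving $G\cong A\rtimes F_2$, with $F_2$ acting on $A$ by automorphisms. By Lemma \ref{fully_char}, $A$ admits an infinite cyclic injectively characteristic subgroup $\langle g\rangle$, which by Lemma \ref{lm: general characteristic} is preserved by every automorphism of $A$. The induced action of $F_2$ on $\langle g\rangle$ therefore factors through $\Aut(\langle g\rangle)=\Z/2\Z$; let $K$ denote its kernel, which has index at most $2$ in $F_2$. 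Then $K$ is a non-abelian free group (of rank $2$ or $3$ by Nielsen--Schreier), in particular contains $F_2$ and has trivial centre, and centralises $\langle g\rangle$ by construction. Lemma \ref{lm: semidirect product} then yields $\langle K,g\rangle\cong K\times\langle g\rangle\supseteq F_2\times\Z$, completing the proof.

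The main technical step is the HNN-extension computation of $\pi_1(\BB,v_0)$ in the first case: one must keep careful track of how $B_{v_0}$ is identified with the subgroup $\omega_{e'}(A)$ of $B_{v_1}$ after collapsing the tree edge, so that the loop $\tilde e$ really contributes an HNN extension of virtually $\Z$ groups with both associated subgroups of proper index, to which Proposition \ref{prop: vZ_HNN} can be applied.
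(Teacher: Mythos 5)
Your proof is correct, and it reaches the same two pillars as the paper's argument --- Proposition~\ref{prop: vZ_HNN} for the non-degenerate cases and Lemmas~\ref{fully_char} and~\ref{lm: semidirect product} for the degenerate one --- but organises the cases differently. The paper splits into three cases ($m=n=1$; $m,n\neq 1$; exactly one of $m,n$ equal to $1$) and uses a separate auxiliary graph of groups for each of the last two: a two-vertex graph with two parallel edges for $m,n\neq 1$, and a further two-vertex graph with a connecting edge and two loops to reduce the mixed case to the previous one. Your single ``loop plus connecting tree edge'' construction handles both of these at once: after collapsing the non-reduced tree edge, the loop $\tilde e$ yields an HNN extension of $A$ with associated subgroups of indices $n$ and $mn$, both proper as soon as $\max(m,n)\geqslant 2$, so one application of Proposition~\ref{prop: vZ_HNN} suffices. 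This is a genuine simplification of the case analysis; your verification that the map $\BB\to\AA$ is an immersion (conditions~\eqref{item: immersion 1} and~\eqref{item: immersion 2} with trivial twisting elements) and your bookkeeping of the indices $n$ and $mn$ through the identification $B_{v_0}\cong\omega_{e'}(A_{e'})$ are both sound. For $m=n=1$, the paper exhibits explicit commuting elements ($a$ together with the squares $p=(1,e,1,e,1)$ and $q=(1,e',1,e',1)$), whereas you use the structural observation that $A$ is normal with free quotient, so $G\cong A\rtimes F_2$, and then pass to the kernel $K$ of the $F_2$-action on the injectively characteristic copy of $\Z$; since $K$ has index at most $2$ it is non-abelian free, and Lemma~\ref{lm: semidirect product} applies. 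Both routes work; the paper's is more hands-on and stays closer to the $\AA$-path formalism, while yours buys a cleaner conceptual statement at the cost of invoking the splitting of extensions by free groups and Nielsen--Schreier. The only blemishes are notational: writing $\omega_{e'}(A)$ and $\omega_{e'}(\omega_e(A))$ suppresses the implicit compositions with $\alpha_{e'}^{-1}$ and $\alpha_e^{-1}$ (which you acknowledge), and $\pi_1(\gr A)$ should be $\pi_1(\gr A,u)$.
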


\begin{proof}
We distinguish three cases, where $m = n = 1$, $m,n \ne 1$ and exactly one of $m, n$ equals 1.

    Let us first suppose that $m = n = 1$, that is, $\omega_e$ and $\omega_{e'}$ are both isomorphisms. Let $a\in A$ be an element generating an infinite injectively characteristic subgroup, which exists by Lemma \ref{fully_char}. Then $\omega_e\circ\alpha_e^{-1}$ and $\omega_{e'}\circ\alpha_{e'}^{-1}$ induce automorphisms of $\langle a\rangle$. In particular, their squares fix $a$ itself. In other words, if $p = (1, e, 1, e, 1)$ and $q = (1, e', 1, e', 1)$, then $a^p = a$ and $a^q = a$. Now consider the morphism $\chi\colon F(x,y) \to \langle p,q\rangle$ which maps $x$ to $p$ and $y$ to $q$. It is immediate that a reduced word in $F(x,y)$ maps to an $\AA$-path which is already reduced, and hence $\chi$ is an isomorphism. As in the previous lemmas, Lemma~\ref{lm: semidirect product} then shows that $\langle a,p,q\rangle$ is isomorphic to $\Z\times F_2$.
    
Now suppose that $m,n\ne 1$, so that neither $\omega_e$ nor $\omega_{e'}$ is an isomorphism. Let $\BB$ be the following graph of groups
\[
\begin{tikzcd}
B_v \arrow[r, "1" near start, "f'", "m" near end, bend left=49] \arrow[r, "1"' near start, "f"', "n"' near end, bend right=49] & B_{v'}
\end{tikzcd}
\]
where $B_v = B_{v'} = A$, $B_f = A_e$, $B_{f'} = A_{e'}$ and the edge maps are as in $\AA$. Let $\mu\colon \BB\to \AA$ be the morphism which maps $v$ and $v'$ to $u$, $f$ to $e$ and $f'$ to $e'$ with $\mu_v, \mu_{v'}, \mu_f, \mu_{f'}$ the identity morphism on the appropriate group, and with trivial twisting elements. Then $\mu_*\colon \pi_1(\BB, v)\to \pi_1(\AA, u)$ is injective. Now let $C = \omega_e(A_e)$, $D = \omega_{e'}(A_{e'})$ and $\psi = \omega_{e'} \circ \alpha_{e'}\inv \circ \alpha_e \circ \omega_e\inv \colon C \to D$. Then $A\ast_\psi$ contains a copy of $\Z\times F_2$ by Lemma~\ref{prop: vZ_HNN}. Since $A\ast_{\psi}\cong \pi_1(\BB, v)$, it follows that $\pi_1(\AA, u)$ contains a copy of $F_2\times \Z$.

Finally suppose that exactly one of $m$ and $n$ is equal to 1, say, $\omega_{e'}$ is an isomorphism and $\omega_e$ is not. Let $\BB$ be the following graph of groups
\[
\begin{tikzcd}
B_v \arrow[r, "1"' near start, "f'", "1"' near end] \arrow["1"' near start, "f", "m"' near end, loop, distance=2em, in=125, out=55] & B_{v'} \arrow["1"' near start, "f''", "m"' near end, loop, distance=2em, in=125, out=55]
\end{tikzcd}
\]
where $B_v = B_{v'} = A$, $B_f = B_{f''} = B_e$, $B_{f'} = A_{e'}$ and the edge maps are as in $\AA$. Let $\mu\colon \BB\to \AA$ be the morphism which maps $v$ and $v'$ to $u$, $f, f''$ to $e$, $f'$ to $e'$ with $\mu_v, \mu_{v'}, \mu_f, \mu_{f'}, \mu_{f''}$ identity morphisms on the appropriate group, and with trivial twisting elements. Since $\pi_1(\BB, v)$ is isomorphic to the fundamental group of the graph of groups obtained from $\BB$ by collapsing $f'$ to a point, we are reduced to the case in which $m, n\neq 1$ which we already handled.
\end{proof}

\begin{prop}
\label{prop: vZ_amalgam_HNN}
    If $G$ is the fundamental group of the graph of virtually $\Z$ groups below,
    \[
    \begin{tikzcd}
    A \arrow[r, "2" near start, "e", "2" near end] & B \arrow["1"' near start, "f"', "k"' near end, loop, distance=2em, in=35, out=325]
    \end{tikzcd}
    \]
    where $k\geqslant 1$ and the integers indicate the index of the image of each edge group in the corresponding vertex group, then $G$ contains a copy of $F_2\times \Z$.
\end{prop}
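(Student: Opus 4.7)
I would mimic the morphism strategy used in Propositions~\ref{prop: vZ_HNN2} and~\ref{prop: double_HNN}: build an auxiliary graph of groups $\BB$ together with an immersion $\mu\colon\BB\to\AA$ (so that $\mu_*$ is injective on fundamental groups by Proposition~\ref{prop: folded morphisms}) whose own fundamental group already contains $F_2\times\Z$ by an earlier case. Concretely, I would take $\BB$ to be the $4$-vertex path
\[
\begin{tikzcd}
v_1 \arrow[r, "g_1"] & v_2 \arrow[r, "g_2"] & v_3 \arrow[r, "g_3"] & v_4
\end{tikzcd}
\]
with $v_1, v_4$ mapping to $A$ and $v_2, v_3$ mapping to $B$, and with $g_1, g_2, g_3$ mapping respectively to $e$, $f$, $e^{-1}$; all vertex and edge group maps $\mu_{v_i}$, $\mu_{g_j}$ are identities and all twisting elements are trivial. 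At each vertex of $\BB$ there is at most one outgoing edge mapping to each edge of $\AA$, so condition~\ref{item: immersion 1} is vacuous; condition~\ref{item: immersion 2} reduces at each edge to a routine equality such as $B\cap\omega_e(A_e)=\omega_e(A_e)$ or $B\cap\omega_f(A_f)=\omega_f(A_f)$, so $\mu$ is an immersion.

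The key step is to compute $\pi_1(\BB,v_1)$. Since $\alpha_f$ is an isomorphism, the middle amalgam $B_{v_2}*_{A_f}B_{v_3}$ collapses: identifying $A_f$ with $B_{v_2}$ via $\alpha_f^{-1}$, it becomes isomorphic to $B_{v_3}=B$, with $B_{v_2}$ embedded as the subgroup $\psi(B)=\omega_f(A_f)$ of index $k$, where $\psi=\omega_f\circ\alpha_f^{-1}$. Under this identification, the subgroup $C=\omega_e(A_e)\leqslant B_{v_2}$ along which the first amalgam identifies becomes $\psi(C)\leqslant B_{v_3}$, of index $[B:\psi(B)]\cdot[\psi(B):\psi(C)]=k\cdot 2=2k$ in $B$. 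Consequently, $\pi_1(\BB,v_1)$ is isomorphic to the tripod amalgam
\[
A*_{A_e}B*_{A_e}A,
\]
in which the left edge embeds $A_e$ with index $2$ in $A$ (as $C$) and with index $2k$ in $B$ (as $\psi(C)$), while the right edge embeds $A_e$ with index $2$ in both $B$ (as $C$) and $A$ (as $C$).

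I would then split on the value of $k$. If $k\geqslant 2$, the left sub-amalgam $A*_{A_e}B\leqslant\pi_1(\BB,v_1)$ has edge-group indices $2$ and $2k\geqslant 4$ in its two vertex groups, so Proposition~\ref{prop: vZ_amalgam} directly yields a copy of $F_2\times\Z$ inside it. If $k=1$, all four indices in the tripod are $2$, so Proposition~\ref{prop: vZ_amalgam2} applies to $\pi_1(\BB,v_1)$ (with central vertex group $B$ and the two copies of $A$ as side vertex groups) and again yields $F_2\times\Z$. In either case, $G\supseteq\mu_*(\pi_1(\BB,v_1))$ contains a copy of $F_2\times\Z$.

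The main conceptual obstacle is spotting the right trick: the amalgam $A*_CB$ already inside $G$ has $C$ of index $2$ on both sides and so by Proposition~\ref{prop: vZ_amalgam} does \emph{not} by itself contain $F_2\times\Z$. The role of the HNN loop $f$ is precisely to replace $C\leqslant B$ by $\psi(C)\leqslant B$ on one side, boosting the $B$-side index from $2$ to $2k$ and thereby escaping the exceptional cases of Propositions~\ref{prop: vZ_amalgam} and~\ref{prop: vZ_amalgam2}.
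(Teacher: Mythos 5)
Your proof is correct, but it takes a genuinely different route from the paper. The paper's proof of Proposition~\ref{prop: vZ_amalgam_HNN} is a bare-hands construction: it extracts an injectively characteristic infinite cyclic subgroup $\langle a\rangle$ via Lemma~\ref{fully_char}, builds two explicit $\AA$-circuits $x,y$ out of coset representatives $c\in A\setminus\alpha_e(A_e)$ and $d\in B\setminus\omega_e(A_e)$ and the stable letters $t_e,t_f$, checks by hand that $\langle x,y\rangle\cong F_2$ and that $a^{x^2}=a^{y^2}=a$, and concludes via Lemma~\ref{lm: semidirect product}. You instead pull back to an auxiliary immersed graph of groups (the path $e,f,e^{-1}$), collapse the non-reduced middle edge to see that the loop $f$ distorts the index of $\omega_e(A_e)$ in $B$ from $2$ to $2k$, and then quote Proposition~\ref{prop: vZ_amalgam} (for $k\geqslant 2$) or Proposition~\ref{prop: vZ_amalgam2} (for $k=1$). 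All the steps check out: your $\mu$ is an immersion (the double-coset condition is vacuous and condition~\eqref{item: immersion 2} is trivial since all vertex group maps are identities with trivial twisting), the index computation $[B:\psi(\omega_e(A_e))]=2k$ is right, and the sub-amalgam on two adjacent vertices of a segment of groups does embed. Your approach is arguably preferable: it is shorter, it unifies this proposition with the authors' own treatment of Propositions~\ref{prop: vZ_HNN2} and~\ref{prop: double_HNN} (which use exactly this auxiliary-immersion device), and your closing remark correctly isolates the conceptual point --- the loop $f$ exists precisely to push the amalgam out of the exceptional index-$(2,2)$ case. What the paper's explicit construction buys in exchange is independence from the earlier amalgam propositions and a concrete description of the $F_2\times\Z$ subgroup in terms of generators.
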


\begin{proof}
Let $c$ be in the complement of $\alpha_e(A_e)$ in $A$ and let $d$ be in the complement of $\omega_e(A_e)$ in $B$.

By Lemma \ref{fully_char}, there exist elements $a_0\in A_e$ and $a_1\in B$, which generate infinite, injectively characteristic subgroups of $A_e$ and $B$, respectively.  Then the intersection $\langle \omega_e(a_0)\rangle \cap \langle a_1\rangle$ is infinite and injectively characteristic in $B$ (see Fact~\ref{fact: elementary virtually Z} and Lemma~\ref{lm: elementary virtually Z}). Let $a_2$ be a generator of that intersection and let $a = \alpha_e(\omega_e\inv(a_2)) \in A$.

We note the following:
\begin{itemize}
\item $\langle a\rangle$ is injectively characteristic in $A$, $\langle a_2\rangle$ is injectively characteristic in $B$, and hence $a^c = a^\epsilon$ and $a_2^d = a_2^\eta$ for some $\epsilon, \eta \in \{1,-1\}$.

\item $\omega_f\circ\alpha_f\inv$ is an isomorphism from $B$ to $\omega_f(A_f)$; let $p$ be the index of $\langle a_2\rangle$ in $B$. Then $\langle \omega_f(\alpha_f\inv(a_2))\rangle$ has index $p$ in $\omega_f(A_f)$, and hence index $pk$ in $B$. Moreover, there exists $m\ne 0$ such that $\omega_f(\alpha_f\inv(a_2)) = a^m$: then $\langle \omega_f(\alpha_f\inv(a_2))\rangle$ has index $m$ in $\langle a_2\rangle$ and hence index $p|m|$ in $B$. It follows that $|m| = k$, that is, $m = \zeta k$ for some $\zeta = \pm1$.
\end{itemize}

Let $t_e = (1,e,1)$ and $t_f = (1,f,1)$. Then $t_e\inv at_e = a_2$, $t_f\inv a_2t_f = a_2^{\zeta k}$. Let now
\begin{align*}
        x &= t_e\,(d)\,t_f\,t_e\inv\, (c)\, t_e\, t_f\inv\,(d)\, t_e\inv\\
        y &= (c)\, t_e\,t_f\,t_e\inv\, (c)\, t_e\,(d)\, t_f\inv\, t_e\inv.
\end{align*}
First, we observe as in earlier proofs that $\langle x,y\rangle \le A$ is freely generated by $x$ and $y$, and hence is isomorphic to $F_2$.

Next it is directly verified that $a^x = x\inv ax = a^{\zeta\epsilon}$, and that $a^y = a^{\eta\epsilon}$, so that both are in $\{a,a\inv\}$. Therefore $a^{x^2} = a^{y^2} = a$. It follows, by Lemma~\ref{lm: semidirect product}, that $\langle a,x^2,y^2\rangle$ is isomorphic to $\Z\times F_2$, which concludes the proof.
\end{proof}

We can finally prove Theorem~\ref{thm: vcyclic_fgip}.

\begin{proof}[Proof of Theorem~\ref{thm: vcyclic_fgip}]
Proposition~\ref{prop: finite generation}~\eqref{item: finite_gen_implies_finite_graph} and Theorem~\ref{thm: bijection subgroups immersions covers} show that, when considering the intersection of two given finitely generated subgroups of $\pi_1(\AA, u)$, only a finite fragment of $\gr A$ needs to be considered: thus we may assume that $\AA$ is finite. Proposition~\ref{prop: reduced gog} then shows that we can assume $\AA$ to be reduced.

Let $e$ be an edge of $\AA$: depending on whether $e$ is a loop, we are in one of the two configurations in Figure~\ref{fig: single edge}, where $k, \ell, m, n$ are the indices of $\alpha_e(A_e)$ and $\omega_e(A_e)$ in the adjacent vertex groups.

\begin{figure}[htb]
\centering
    \begin{tikzcd}
A \arrow[r, "m" near start, "e" ', "n" near end] & B & \textrm{or} & A  \arrow["\ell"' near start, "e"', "k"' near end, loop, distance=2em, in=35, out=325]
    \end{tikzcd}
\caption{A single edge in $\AA$}
\label{fig: single edge}
\end{figure}

If $e$ is not a loop and the indices $m$ and $n$ do not satisfy $m = 1$, or $n = 1$, or $m = n =2$, then Proposition~\ref{prop: vZ_amalgam} shows that $\pi_1(\AA,u)$ contains a copy of $F_2\times \Z$ and hence does not have the \fgip

Similarly, if $e$ is a loop and if $k, \ell \ne 1$, then Proposition~\ref{prop: vZ_HNN} shows that $\pi_1(\AA,u)$ contains a copy of $F_2\times \Z$ and hence does not have the \fgip

Therefore, if $\pi_1(\AA,u)$ has the \fgip, then every non-loop edge $e$ of $\AA$ is such that the edge group $A_e$ has index 2 in both adjacent vertex groups, and every loop $e$ is such that $\alpha_e$ or $\omega_e$ is an isomorphism. If $\AA$ contains at least two edges (up to orientation), then $\AA$ contains a subgraph of groups covered by one of Propositions~\ref{prop: vZ_amalgam2}, \ref{prop: vZ_HNN2}, \ref{prop: double_HNN}  and \ref{prop: vZ_amalgam_HNN}. It follows that $\pi_1(\AA, u)$ contains a copy of $F_2\times \Z$, and hence it does not have the \fgip

Thus $\AA$ has a single edge, and we conclude by Propositions~\ref{prop: vZ_amalgam} and~\ref{prop: vZ_HNN}.
\end{proof}

\section{The \fgip\ for graphs of locally quasi-convex hyperbolic groups}\label{sec: hyperbolic}

In this final section, our aim will be to establish the main application of our \fgip\ criteria: a characterisation of the fundamental groups of locally quasi-convex hyperbolic groups with virtually $\ZZ$ edge groups which have the \fgip, Corollary~\ref{cor: main locally qc characterization}, and a proof of the decidability of this property, Corollary~\ref{cor: decidability for locally qcv hyperbolic}. In order to prove this result, we begin in Section \ref{sec: fcip for qcv subgroups} by attempting to understand the \fcip\ property for subgroups of hyperbolic groups. The main result in Section \ref{sec: fcip for qcv subgroups} states that pairs $(B, C)$ of quasi-convex subgroups of hyperbolic groups have finite coset interaction relative to any almost malnormal quasi-convex subgroup. In Section \ref{sec: hyperbolic_new} we use this, together with well-known facts about hyperbolic groups, to obtain the announced results. We shall also combine the main result of Section \ref{sec: fcip for qcv subgroups} with our \fgip\ criteria to prove the weaker, but more general, Theorems~\ref{hyperbolic_amalgam} and~\ref{hyperbolic_HNN}.

\subsection{The \fcip\ for quasi-convex subgroups of hyperbolic groups}\label{sec: fcip for qcv subgroups}

The aim of this section is to prove that quasi-convex subgroups of hyperbolic groups have the \fcip\ relative to almost malnormal quasi-convex subgroups. We refer the reader to \cite{gh90,bh99} for the basic definitions and results regarding hyperbolic groups. Let us first set up some notation.

Let $G$ be a group, $S$ be a finite generating set of $G$ and $\Gamma = \Gamma(G, S)$ be the corresponding Cayley graph. If $g\in G$, we denote by $|g|_S$ the word length (also: the \emph{$S$-length}) of $g\in G$, that is, $|g|_S = \min\{n \mid g = s_1\ldots s_n, s_i\in S\}$. For each $g\in G$ with length $n$, we fix an arbitrary geodesic path $p_g$ in $\Gamma$, starting at vertex $1$ and ending at vertex $g$. We say that a factorisation $g = g_1g_2\dots g_n$ is ($S$-)\emph{geodesic} if $|g|_S = \sum_i|g_i|_S$.

Let $A$ be a subgroup of $G$, generated by a finite set $S_A$, and let $a \in A$. If $|a|_{S_A} = n$, we fix an $S_A$-geodesic factorisation $a = a_1\cdots a_n$, with $a_1,\dots, a_n\in S_A$ and we denote by $q_a$
the concatenation of the paths $p_{a_1}, a_1\cdot p_{a_2}, \dots, (a_1\dots a_{n-1})\cdot p_{a_n}$ in $\Gamma$, a path from 1 to $a$. We will use the following facts.

\begin{prop}\label{prop: standard hyperbolic}
Let $G$ be a group and $S$ a finite generating set for $G$. Suppose that $G$ is hyperbolic with hyperbolicity constant $\delta$ (relative to $S$). Let $A$ be a quasi-convex subgroup of $G$ with finite generating set $S_A$.
\begin{enumerate}[(1)]
\item \label{item: lambda}There exists $\lambda \ge 1$ such that $A$ is $\lambda$-quasi-convex and, for each $a\in A$, $q_a$ is a $\lambda$-quasi-geodesic in $\Gamma$. In particular
$$\frac 1\lambda\,|a|_S - \lambda \le |a|_{S_A} \le \lambda\,|a|_S + \lambda,\quad\text{and hence}\quad\frac 1\lambda\,(|a|_{S_A} - \lambda) \le |a|_S \le \lambda\,(|a|_{S_A} + \lambda).$$
\item \label{item: Morse}(Generalised Morse Lemma). Let $\lambda\ge 1$ and $n\ge 2$. There exists $\lambda'_n$ (depending on $|S|$, $n$, $\lambda$ and $\delta$) such that, if $(q_1,\dots,q_n)$ is an $n$-gon composed of $\lambda$-quasi-geodesics, then each point of a side $q_i$ is within distance $\lambda'_n$ from the union of the other $n-1$ sides.
\end{enumerate}
\end{prop}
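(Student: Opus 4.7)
\textbf{Proof plan for Proposition~\ref{prop: standard hyperbolic}.}

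For part~\eqref{item: lambda}, the central observation is that a finitely generated quasi-convex subgroup of a hyperbolic group is quasi-isometrically embedded, which is essentially the classical result of Short. The plan is to first verify the two-sided inequality between $|a|_S$ and $|a|_{S_A}$. The upper bound $|a|_S \le M\,|a|_{S_A}$, with $M = \max_{s\in S_A}|s|_S$, is immediate from the triangle inequality. For the reverse inequality, let $\mu$ be a quasi-convexity constant of $A$, take a $\Gamma$-geodesic from $1$ to $a$, sample vertices along it at unit spacing, and replace each by a nearest element of $A$. Consecutive chosen elements lie at $\Gamma$-distance at most $2\mu + 1$, hence at $S_A$-distance at most some constant $N$ depending only on $\mu$ and $S_A$; this yields $|a|_{S_A} \le N\,|a|_S$, and absorbing everything into a single constant $\lambda \ge 1$ gives the stated inequalities.

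Still for part~\eqref{item: lambda}, the quasi-geodesic property of $q_a$ follows from this. Its total length is at most $M\,|a|_{S_A}$, which by the previous paragraph is linearly controlled by $|a|_S$. To control a subsegment: given $u,v \in q_a$, write $u = (a_1\cdots a_i)\,w_u$ and $v = (a_1\cdots a_j)\,w_v$ where $w_u,w_v$ are prefixes of $p_{a_{i+1}},p_{a_{j+1}}$ respectively. Set $g_u = a_1\cdots a_i$ and $g_v = a_1\cdots a_j$; both lie in $A$ at $\Gamma$-distance at most $M$ from $u,v$. The subpath of $q_a$ between them has length at most $M\,|g_u^{-1}g_v|_{S_A}$, which by the quasi-isometric embedding is at most a linear function of $d_S(g_u,g_v) \le d_S(u,v) + 2M$. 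Enlarging $\lambda$ absorbs all additive constants.

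For part~\eqref{item: Morse}, I would proceed by induction on $n$. The case $n = 3$ is the classical Morse lemma (stability of quasi-geodesics in a $\delta$-hyperbolic space), which yields a constant $\lambda'_3 = \lambda'_3(\lambda,\delta)$. For $n > 3$, fix a triangulation of the $n$-gon by $n-3$ $\Gamma$-geodesic diagonals $d_1,\ldots,d_{n-3}$, producing $n-2$ geodesic triangles. Given a point $x$ on a side $q_i$, it lies on a side of one such triangle $T$; by the $n=3$ case, $x$ is within $\lambda'_3$ of one of the other two sides of $T$, each of which is either another side $q_j$ of the $n$-gon (in which case we are done) or a diagonal $d_k$. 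In the latter case, the nearby point on $d_k$ is in turn within $\lambda'_3$ of the remaining sides of the other triangle sharing $d_k$, and we iterate. Since any sequence of triangles traversed this way exhausts the $n-3$ diagonals in at most $n-2$ steps, we obtain $\lambda'_n \le (n-2)\,\lambda'_3$, giving the desired bound.

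The only step requiring care is the quasi-geodesic estimate in the first two paragraphs, since one must keep track of constants when passing from $S_A$-length to $\Gamma$-length along the concatenated subpath; this is routine but the bookkeeping is the main technical point.
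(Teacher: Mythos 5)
Your argument is correct. The paper does not prove this proposition at all --- it is stated as standard background (``We will use the following facts''), with the surrounding text deferring to \cite{gh90,bh99} for basics on hyperbolic groups --- and your sketch is a faithful reconstruction of the standard proofs: part~(1) is Short's lemma that quasi-convex subgroups are finitely generated and quasi-isometrically embedded, and part~(2) is the Morse lemma combined with a geodesic triangulation of the $n$-gon, where the dual tree structure of the triangulation guarantees the iteration terminates after at most $n-2$ steps and yields $\lambda'_n \le (n-2)\lambda'_3$.
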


Before proving the main results of this section, we require a couple of lemmas.

\begin{lem}
\label{finding_intersection}
Let $G$ be a finitely generated hyperbolic group and $S$ a finite generating set. Let $B$ and $C$ be quasi-convex subgroups of $G$, with finite generating sets $S_B$ and $S_C$, respectively, and let $f, g \in G$. There exists a constant $L$ with the following property: if $b\in B$, $c\in C$ and $fb = cg$, then either $|b|_{S_B}, |c|_{S_C} \le L$, or there exist $h\in G$ and geodesic factorisations $b = b_1b_2b_3$ and $c = c_1c_2c_3$, in $B$ and $C$ respectively, such that $|h|_S\leqslant L$, $b_2, c_2 \ne 1$ and one of the following statement holds (see Figure~\ref{fig: straight h crossed h}):
\begin{enumerate}[(a)]
\item $fb_1h = c_1$, $b_2^h = c_2$ and $h\inv b_3 = c_3g$

\item $fb_1h = c_1c_2$, $b_2^h = c_2\inv$ and $h\inv b_3 = c_2c_3g$.
\end{enumerate}
The constant $L$ depends on $|S|$, $\max(|f|_S,|g|_S)$, a hyperbolicity constant for $G$ and a quasi-convexity constant for $B$ and $C$.
\end{lem}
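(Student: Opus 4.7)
The plan is to apply the Generalised Morse Lemma (Proposition~\ref{prop: standard hyperbolic}\eqref{item: Morse}) to the quadrilateral $\mathcal{Q}$ in the Cayley graph $\Gamma$ with vertices $1$, $f$, $u$, $c$, where $u=fb=cg$, whose four sides are $p_f$, $f\cdot q_b$, the reverse of $c\cdot p_g$, and the reverse of $q_c$. Fixing $\lambda\ge 1$ as in Proposition~\ref{prop: standard hyperbolic}\eqref{item: lambda} will make all four sides $\lambda$-quasi-geodesics, with $p_f$ and $c\cdot p_g$ of length at most $M := \max(|f|_S, |g|_S)$. The Morse Lemma with $n = 4$ will produce a constant $\lambda'$ such that every point on one side of $\mathcal{Q}$ lies within $\Gamma$-distance $\lambda'$ of the union of the other three; hence any point of $f\cdot q_b$ at $\Gamma$-distance more than $M + \lambda'$ from both $f$ and $u$ will be within $\lambda'$ of $q_c$, and therefore within a uniform constant of a $C$-vertex of $q_c$ corresponding to a prefix of the $S_C$-geodesic factorisation of $c$.

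Next, I would write $b = b^{(1)} \cdots b^{(n)}$ geodesically in $S_B$ and set $v_i := f\cdot b^{(1)}\cdots b^{(i)}$. A standard length comparison using Proposition~\ref{prop: standard hyperbolic}\eqref{item: lambda} together with $fb = cg$ shows that ``$|b|_{S_B} > L$'' and ``$|c|_{S_C} > L$'' are equivalent up to a shift of additive and multiplicative constants, so I may assume $|b|_{S_B}$ is large. For indices $i$ in a suitable middle range --- excluding bounded initial and terminal segments of size depending only on $M$, $\lambda$, $\lambda'$ and $\max_{s\in S_B}|s|_S$ --- the vertex $v_i$ will lie within a bounded distance of some $\xi_i\in C$ that is a prefix of $c$ along $q_c$. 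The offsets $h_i := v_i^{-1}\xi_i$ will then all lie in a fixed finite ball of $G$ of some cardinality $K$, and picking $L$ large enough so that ``$|b|_{S_B}>L$'' forces the middle range to contain more than $K$ indices, the pigeonhole principle will yield indices $i<j$ with $h_i=h_j=:h$.

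Finally, setting $b_1 = b^{(1)}\cdots b^{(i)}$, $b_2 = b^{(i+1)}\cdots b^{(j)}$, $b_3 = b^{(j+1)}\cdots b^{(n)}$ gives a geodesic $S_B$-factorisation of $b$ with $b_2\ne 1$. Since $v_i\ne v_j$ yet $h_i = h_j$, the vertices $\xi_i$ and $\xi_j$ are distinct $C$-vertices on $q_c$, and the two possible relative orderings of $\xi_i$ and $\xi_j$ along $q_c$ will yield exactly the two cases of the statement. If $\xi_i$ precedes $\xi_j$, setting $c_1 = \xi_i$, $c_2 = \xi_i^{-1}\xi_j$ and $c_3 = \xi_j^{-1}c$ gives a geodesic $S_C$-factorisation of $c$ with $c_2\ne 1$, and the identities $fb_1h = c_1$ and $b_2^h = c_2$ follow immediately from $fb_1h = \xi_i$ and $fb_1b_2h = \xi_j$, while $h^{-1}b_3 = c_3g$ follows from $fb = cg$ by a short computation; the reversed ordering yields case (b) symmetrically. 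The main obstacle will be the routine but finicky bookkeeping of the constants --- in particular, choosing the buffer so that every $v_i$ in the middle range is indeed forced close to $q_c$ (and to a $C$-vertex on $q_c$), and ensuring that the middle range dominates $K$ so that the pigeonhole argument can fire.
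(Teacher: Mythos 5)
Your proposal is correct and follows essentially the same route as the paper: the quasi-geodesic $4$-gon on $1, f, fb=cg, c$, the Generalised Morse Lemma to push the intermediate $B$-vertices of $f\cdot q_b$ within a uniform distance of $C$-vertices on $q_c$, the pigeonhole principle on the bounded offsets $h_i$, and the two relative orderings of the matched $C$-vertices giving cases (a) and (b). The only cosmetic difference is that you reduce ``$|c|_{S_C}$ large'' to ``$|b|_{S_B}$ large'' via a length comparison, whereas the paper simply runs the symmetric argument; both are fine.
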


\begin{figure}[H]  
\centering
\begin{tikzpicture}[shorten >=1pt, node distance=1cm and 2cm, on grid, decoration={snake, segment length=2mm, amplitude=0.2mm,post length=1.5mm},>=stealth']
\newcommand{\dx}{1/2}
\newcommand{\dy}{1.5}
    \node[] (1) at (0, 0) {$1$};
    \node[] (f) [above = \dy of 1] {$f$};
    \node[smallstate] (b1) [right = 3*\dx of f] {};
    \node[smallstate] (b1b2) [right = 3*\dx of b1] {};

    \node[] (fb) [right = 3*\dx of b1b2] {$fb$};
    \node[] (cg) [below right = 0.05 and 0.65 of fb] {$= cg$};
    \node[smallstate] (c1) [right = 3*\dx of 1] {};
    \node[smallstate] (c1c2) [right = 3*\dx of c1] {};

    \node[] (c) [right = 3*\dx of c1c2] {$c$};
    
    \path[->] (1) edge[snake it] node[pos=0.5,left] {$f$} (f);
    \path[->] (c) edge[snake it] node[pos=0.5,right] {$g$} (fb);
   
    \path[->] (f) edge[snake it] node[pos=0.5,above] {$b_1$} (b1);
    \path[->] (b1) edge[snake it] node[pos=0.5,above] {$b_2$} (b1b2);
    \path[->] (b1b2) edge[snake it] node[pos=0.5,above] {$b_3$} (fb);
    
    \path[->] (1) edge[snake it] node[pos=0.5,below] {$c_1$} (c1);
    \path[->] (c1) edge[snake it] node[pos=0.5,below] {$c_2$} (c1c2);
    \path[->] (c1c2) edge[snake it] node[pos=0.5,below] {$c_3$} (c);
    
    \path[->] (b1) edge[snake it] node[pos=0.5,left] {$h$} (c1);
    \path[->] (b1b2) edge[snake it] node[pos=0.5,right] {$h$} (c1c2);
\end{tikzpicture}
\begin{tikzpicture}[shorten >=1pt, node distance=1cm and 2cm, on grid, decoration={snake, segment length=2mm, amplitude=0.2mm,post length=1.5mm},>=stealth']
\newcommand{\dx}{1/2}
\newcommand{\dy}{1.5}
    \node[] (1) at (0, 0) {$1$};
    \node[] (f) [above = \dy of 1] {$f$};
    \node[smallstate] (b1) [right = 3*\dx of f] {};
    \node[smallstate] (b1b2) [right = 3*\dx of b1] {};

    \node[] (fb) [right = 3*\dx of b1b2] {$fb$};
    \node[] (cg) [below right = 0.05 and 0.65 of fb] {$= cg$};
    \node[smallstate] (c1) [right = 3*\dx of 1] {};
    \node[smallstate] (c1c2) [right = 3*\dx of c1] {};

    \node[] (c) [right = 3*\dx of c1c2] {$c$};
    
    \path[->] (1) edge[snake it] node[pos=0.5,left] {$f$} (f);
    \path[->] (c) edge[snake it] node[pos=0.5,right] {$g$} (fb);
   
    \path[->] (f) edge[snake it] node[pos=0.5,above] {$b_1$} (b1);
    \path[->] (b1) edge[snake it] node[pos=0.5,above] {$b_2$} (b1b2);
    \path[->] (b1b2) edge[snake it] node[pos=0.5,above] {$b_3$} (fb);
    
    \path[->] (1) edge[snake it] node[pos=0.5,below] {$c_1$} (c1);
    \path[->] (c1) edge[snake it] node[pos=0.5,below] {$c_2$} (c1c2);
    \path[->] (c1c2) edge[snake it] node[pos=0.5,below] {$c_3$} (c);
    
    \path[->] (b1) edge[snake it] node[pos=0.2,left] {$h$} (c1c2);
    \path[->] (b1b2) edge[snake it] node[pos=0.7,left] {$h$} (c1);
\end{tikzpicture}
\caption{Case (a) and Case (b) factorisations in Lemma~\ref{finding_intersection}}
    \label{fig: straight h crossed h}
\end{figure}
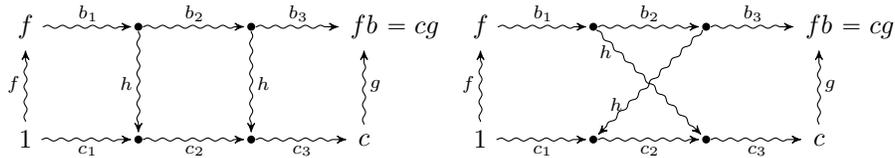

\begin{proof}
Let $\delta > 0$ be a hyperbolicity constant for $G$, relative to $S$. Let $\lambda\geqslant 1$ be given as in Proposition~\ref{prop: standard hyperbolic}\eqref{item: lambda}, simultaneously for the quasi-convex subgroups $B$ and $C$, and let $\lambda'$ be the constant $\lambda'_4$ from Proposition~\ref{prop: standard hyperbolic}\eqref{item: Morse} (which depends on $|S|$, $\lambda$ and $\delta$).

Let also 
$$\lambda'' = \lambda' + \max(|f|_S, |g|_S) + \frac{\lambda(\lambda+1)}2$$
and $L$ the number of elements in a ball of radius $\lambda''$ in $\Gamma$, where $\Gamma$ is the Cayley graph of $G$ relative to $S$.

Suppose that $n = |b|_{S_B}$, $m = |c|_{S_C}$, and let $b = x_1\dots x_n$ and $c = y_1\dots y_m$ ($x_i\in S_B$, $y_j\in S_C$) are the fixed $S_B$- and $S_C$-factorisations of $b$ and $c$, so that $q_b$ (resp. $q_c$) is the concatenation of the appropriate translates of the $S$-geodesics $p_{x_i}$ (resp. $p_{y_j}$).

Consider the quasi-geodesic 4-gon with sides $p_f$, $f\cdot q_b$, $q_c$ and $c\cdot p_g$, see Figure~\ref{fig: rectangle}. For each $i \in [1,n-1]$, the vertex $f\cdot (x_1\dots x_i)$ on $f\cdot q_b$ lies at $S$-distance at most $\lambda'$ from a vertex in $p_f$, $q_c$ or $c\cdot p_g$, and therefore at $S$-distance at most $\lambda' + \max(|f|_S, |g|_S)$ from a vertex $z$ in $q_c$. Note now that $|y|_S \le \lambda(1+\lambda)$ for each $y\in S_C$. Thus there exists $j(i) \in [0,m]$ such that $z$ lies within $S$-distance at most $\frac12\,\lambda(1+\lambda)$ from the vertex $y_1\dots y_{j(i)}$. In particular, if $h_i = (fx_1\cdots x_i)\inv y_1\dots y_{j(i)}$, then $|h_i|_S \le \lambda''$.  
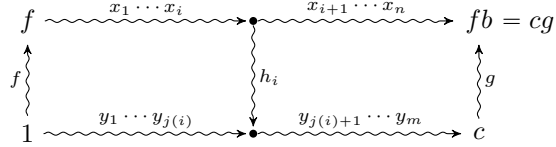
\begin{figure}[H]  
\centering
\begin{tikzpicture}[shorten >=1pt, node distance=1cm and 2cm, on grid, decoration={snake, segment length=2mm, amplitude=0.2mm,post length=1.5mm},>=stealth']
\newcommand{\dx}{1/2}
\newcommand{\dy}{1.5}
    \node[] (1) at (0, 0) {$1$};
    \node[] (f) [above = \dy of 1] {$f$};
    \node[smallstate] (bi) [right = 6*\dx of f] {};

    \node[] (fb) [right = 6*\dx of bi] {$fb$};
    \node[] (cg) [below right = 0.05 and 0.65 of fb] {$= cg$};

    \node[smallstate] (cj) [right = 6*\dx of 1] {};
    
    \node[] (c) [right = 6*\dx of cj] {$c$};   

    \path[->] (1) edge[snake it]
    node[pos=0.5,left] {$f$}
    (f);
   
    \path[->] (f) edge[snake it] 
    node[pos=0.5,above] {$x_1\cdots x_i$}
    (bi);
    
    \path[->] (bi) edge[snake it] 
    node[pos=0.5,above] {$x_{i+1}\cdots x_n$}
    (fb);
   
    \path[->] (1) edge[snake it]
    node[pos=0.5,above] {$y_1 \cdots y_{j(i)}$}
    (cj);
    
    \path[->] (cj) edge[snake it] 
    node[pos=0.5,above] {$ y_{j(i)+1} \cdots y_m$}
    (c);
  
    \path[->] (c) edge[snake it] 
    node[pos=0.5,right] {$g$}
    (fb);

    \path[->] (bi) edge[snake it] 
    node[pos=0.5,right] {$h_i$}
    (cj);
          
\end{tikzpicture}
\caption{A 4-gon between 1, $f$, $c$ and $fb = cg$}
    \label{fig: rectangle}
\end{figure}

If $|b|_{S_B} > L$, there exist $1 \le i < k < n$ such that $h_i = h_j$: let $h$ be this common value. Let also $b_1 = x_1\cdots x_i$, $b_2 = b_{i+1}\dots b_j$ and $b_3 = b_{j+1}\dots b_n$, so that $b = b_1b_2b_3$ is a geodesic factorisation. In particular, $b_2 \ne 1$.

Note that $j(i) \ne j(k)$. If $j(i) < j(k)$, we let $c_1 = y_1\dots y_{j(i)}$, $c_2 = y_{j(i)+1}\dots y_{j(k)}$ and $c_3 = y_{j(k)+1}\dots y_m$. Then $c = c_1c_2c_3$ is a geodesic factorisation in $C$, $c_2 \ne 1$ and we have $fb_1h = c_1$, $b_2^h = c_2$ and $h\inv b_3 = c_3g$.

If instead $j(k) < j(i)$, we let $c_1 = y_1\dots y_{j(k)}$, $c_2 = y_{j(k)+1}\dots y_{j(i)}$ and $c_3 = y_{j(i)+1}\dots y_m$. Again, $c = c_1c_2c_3$ is a geodesic factorisation in $C$, $c_2 \ne 1$ and we have $fb_1h = c_1c_2, b_2^h = c_2\inv$ and $h\inv b_3 = c_2c_3g$.

The case where $|c|_{S_C} > L$ is handled in the same fashion, and this completes the proof.
\end{proof}

\begin{lem}
\label{hyperbolic_lemma}
Let $G$ be a finitely generated hyperbolic group and $S$ a finite generating set. Let $A$, $A'$, $B$ and $C$ be quasi-convex subgroups of $G$, with finite generating sets $S_A$, $S_{A'}$, $S_B$ and $S_C$, respectively, and let $f, g, f', g'\in G$. There exists a constant $L$ such that the following holds. If $a\in A$, $a'\in A'$ are such that 
    \begin{align*}
        |a|_{S_A} &= \min\left\{ |x|_{S_A} \mid x \in (A\cap B^f)\,a\,(A\cap C^g) \right\}\\
        |a'|_{S_{A'}} &= \min\left\{ |x|_{S_{A'}} \mid x \in (A'\cap B^{f'})\,a'\,(A'\cap C^{g'}) \right\},
    \end{align*}
and if $b\in B$ and $c\in C$ satisfy $b\,(fag\inv) = (f'a'(g')\inv)\,c$, then either $|b|_{S_B}, |c|_{S_C}\leqslant L$, or $|a|_{S_A}, |a'|_{S_{A'}}\leqslant L$.

The constant $L$ depends on $|S|$, $\max(|f|_S,|g|_S,|f'|_S,|g'|_S)$, a hyperbolicity constant for $G$ and a quasi-convexity constant for $A$, $A'$, $B$ and $C$.
\end{lem}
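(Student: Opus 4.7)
My plan is to exploit the generalised Morse Lemma (Proposition~\ref{prop: standard hyperbolic}~\eqref{item: Morse}) applied to a quasi-geodesic quadrilateral in the Cayley graph derived from the relation $bfag^{-1}=f'a'(g')^{-1}c$. Setting $\tilde b=(f')^{-1}bf$ and $\tilde c=(g')^{-1}cg$, the relation rewrites as $\tilde b\cdot a=a'\cdot\tilde c$, yielding a quadrilateral with vertices $1$, $\tilde b$, $\tilde ba=a'\tilde c$ and $a'$. The $a$- and $(a')^{-1}$-sides are $\lambda$-quasi-geodesics by Proposition~\ref{prop: standard hyperbolic}~\eqref{item: lambda}, and the $\tilde b$- and $\tilde c^{-1}$-sides are quasi-geodesics whose constants depend only on $|f|_S,|f'|_S,|g|_S,|g'|_S$ and the quasi-convexity constants of $B$ and $C$ (using the canonical concatenation \emph{short path, $B$-quasi-geodesic, short path} for $\tilde b$, and similarly for $\tilde c$).

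Arguing by contrapositive, suppose $|b|_{S_B}>L$ for a constant $L$ to be fixed at the end. The Morse Lemma for $4$-gons provides a constant $\lambda'_4$ depending only on the parameters listed in the statement, such that every point on each side of the quadrilateral lies within $S$-distance $\lambda'_4$ of the union of the other three sides. A pigeonhole argument over the approximately $|b|_{S_B}/\lambda-O(1)$ vertices of the $\tilde b$-side lying sufficiently far from both endpoints then produces a long contiguous subpath of the $\tilde b$-side that fellow-travels with a single fixed other side. Selecting two well-chosen fellow-traveling pairs of vertices on this subpath and dividing yields an identity of the form $\alpha_0=\eta_1\cdot b_0^{\pm 1}\cdot\eta_2$ in $G$, where $b_0$ is a subword of the $S_B$-factorisation of $b$ (lying in $B$), $\alpha_0$ is a subword of $a$, $a'$ or $c$ (depending on which of the three other sides is fellow-travelled with), and $\eta_1,\eta_2$ have $S$-length bounded in terms of $\lambda'_4$ and $|f|_S,|f'|_S,|g|_S,|g'|_S$.

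The core of the proof is then a case analysis on which other side is involved. The crucial observation is that the bounded elements $\eta_1,\eta_2$ can be reorganised, by a careful choice of the two fellow-traveling pairs and by exploiting the explicit factorisation $\tilde b=(f')^{-1}\cdot b\cdot f$ of the $\tilde b$-quasi-geodesic, into a single conjugation by the short corner element $f$ (respectively $f'$, $g$, $g'$) so that $\alpha_0$ genuinely lies in $A\cap B^f$ (respectively $A\cap C^g$, $A'\cap B^{f'}$, $A'\cap C^{g'}$) and has $S_A$- or $S_{A'}$-length comparable to the length of the fellow-traveling subpath. The minimality hypotheses on $a$ and $a'$ in their respective double cosets then bound this length, contradicting $|b|_{S_B}>L$ for $L$ chosen large enough. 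The symmetric hypothesis $|c|_{S_C}>L$ is handled by the same argument applied to the $\tilde c^{-1}$-side, exchanging $b\leftrightarrow c$ and $(f,f')\leftrightarrow(g^{-1},(g')^{-1})$.

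The hard part will be the \enquote{corner trick} carrying through cleanly: a naïve application of Morse produces equalities only up to a conjugate of a bounded element by a generally long element, which can blow up $S$-length arbitrarily and gives no subgroup membership at all. The resolution is to anchor the two fellow-traveling pairs so that the bounded Morse errors absorb into the short corner element rather than into the long subword $b_0$, yielding honest membership in $A\cap B^f$ and its variants. This is the delicate technical step on which the reduction to the minimality hypothesis rests.
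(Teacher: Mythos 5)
Your overall strategy (a quasi-geodesic polygon for the relation, the generalised Morse Lemma, a pigeonhole, and then the minimality hypotheses) is the right family of argument, and your $4$-gon with sides $\tilde b$, $a$, $\tilde c^{-1}$, $(a')^{-1}$ is just the paper's $8$-gon with the corner geodesics $p_f,p_{f'},p_g,p_{g'}$ absorbed into the sides. But there is a genuine gap in your case analysis: when the long subpath of the $\tilde b$-side fellow-travels the $\tilde c$-side, the identity you extract relates a subword of $b$ to a subword of $c$, and hence produces an element of (a conjugate of) $B$ intersected with (a conjugate of) $C$ --- not an element of any of $A\cap B^f$, $A\cap C^g$, $A'\cap B^{f'}$, $A'\cap C^{g'}$. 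The minimality hypotheses on $a$ and $a'$ are simply not engaged in that case, and no contradiction with $|b|_{S_B}>L$ can arise. Indeed, taking $A=A'=\{1\}$, $f=g=f'=g'=1$ and $B=C$ infinite, one has $b=c$ arbitrary and $a=a'=1$: this is why the lemma's conclusion is a disjunction rather than an unconditional bound on $|b|_{S_B}$ and $|c|_{S_C}$. Your contrapositive scheme, which aims to contradict $|b|_{S_B}>L$ in every case, would prove something false. The paper handles exactly this case with a second polygon (the $6$-gon of Figure~\ref{fig: 6-gon}): one locates the midpoint of the $a$-side relative to the remaining sides and concludes $|a|_{S_A}\le 2L$ (and symmetrically $|a'|_{S_{A'}}\le 2L$), which is the second branch of the disjunction, not a contradiction.

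A second, lesser issue is your ``corner trick''. The correct mechanism for upgrading the fellow-travelling into genuine membership in $A\cap B^f$ is not to absorb the Morse errors into the corner elements (the connectors occur at interior points of the sides and cannot be moved to the corners without traversing a long segment), but a further pigeonhole on the \emph{values} of the connectors: along a fellow-travelling subpath of $S_B$-length greater than the cardinality of a ball of the relevant radius, two connectors must be \emph{equal}, say to $h$; then the conjugation by $h$ cancels, giving $b_2^h=a_2$ for the intermediate subwords, whence $a_1a_2a_1^{-1}=f^{-1}(b_1b_2b_1^{-1})f\in A\cap B^f$ with $a_2\ne 1$, and deleting $a_2$ from $a$ yields a strictly shorter representative of $(A\cap B^f)\,a\,(A\cap C^g)$, contradicting minimality. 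This is precisely the content of the paper's Lemma~\ref{finding_intersection} and Claim~\ref{claim: short b0}; your proposal identifies the difficulty but the resolution you sketch does not work as stated.
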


\begin{proof}
Let $\delta > 0$ be a hyperbolicity constant for $G$, relative to $S$. Let $\lambda\geqslant 1$ be given as in Proposition~\ref{prop: standard hyperbolic}\eqref{item: lambda}, simultaneously for the quasi-convex subgroups $A$, $A'$, $B$ and $C$, and let $\lambda' = \max(\lambda'_6,\lambda'_8)$, where $\lambda'_6$ and $\lambda'_8$ are given in Proposition~\ref{prop: standard hyperbolic}\eqref{item: Morse}.

Let 
$$\lambda'' = \lambda' + \max(|f|_S, |f'|_S, |g|_S, |g'|_S) + \frac12\,\lambda(\lambda+1)$$
and let $L$ be the constant given by Lemma~\ref{finding_intersection} for the parameters $\max(|f|_S,|g|_S,|f'|_S,|g'|_S, 2\lambda'')$, $|S|$, $\delta$ and $\lambda$. We will use the following claim.

\begin{claim}\label{claim: short b0}
Let $h_0\in G$, $a_0$ a prefix of an $S_A$-geodesic factorisation of $a$ and $b_0$ a suffix of an $S_B$-geodesic factorisation of $b$, such that $b_0fa_0 = h_0$. If $|h_0|_S \le 2\lambda''$, then $|b_0|_{S_B} \le L$, see Figure~\ref{fig: shortcut}.
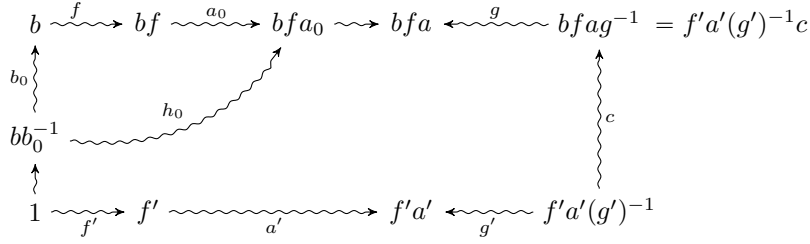
\begin{figure}[H]
\centering
\begin{tikzpicture}[shorten >=1pt, node distance=1cm and 2cm, on grid, decoration={snake, segment length=2mm, amplitude=0.2mm,post length=1.5mm},>=stealth']
\newcommand{\dx}{1}
\newcommand{\dy}{1}
    \node[] (1) at (0, 0) {$1$};
    \node[] (bb0i) [above = \dy of 1] {$bb_0^{-1}$};
    \node[] (b) [above = \dy*1.5 of bb0i] {$b$};
    \node[] (bf) [right = \dx*1.5 of b] {$bf$};

    \node[] (bfa0) [right = \dx*2 of bf] {$bfa_0$};
    \node[] (bfa) [right = \dx*1.5 of bfa0] {$bfa$};
    \node[] (bfagi) [right = \dx*2.5 of bfa] {$bfag^{-1}$};
    \node[] (ffaaggic) [right = 1.75 of bfagi] {$= f'a'(g')^{-1}c$};

    \node[] (ff) [right = \dx*1.5 of 1] {$f'$}; 
    \node[] (ffaa) [right = \dx*3.5 of ff] {$f' a'$};
    \node[] (ffaaggi) [right = \dx*2.5 of ffaa] {$f'a'(g')^{-1}$};
    
    \path[->] (1) edge[snake it]
    node[pos=0.5,left] {$$}
    (bb0i);

    \path[->] (bb0i) edge[snake it]
    node[pos=0.5,left] {$b_0$}
    (b);

    \path[->] (b) edge[snake it]
    node[pos=0.5,above left] {$f$}
    (bf);

    \path[->] (bf) edge[snake it]
    node[pos=0.5,above] {$a_0$}
    (bfa0);

    \path[->] (bfa0) edge[snake it]
    node[pos=0.5,above] {$$}
    (bfa);

    \path[->] (bfagi) edge[snake it]
    node[pos=0.5,above] {$g$}
    (bfa);

    \path[->] (1) edge[snake it]
    node[pos=0.5,below] {$f'$}
    (ff);

    \path[->] (ff) edge[snake it]
    node[pos=0.5,below] {$a'$}
    (ffaa);

   \path[->] (ffaaggi) edge[snake it]
    node[pos=0.5,below] {$g'$}
    (ffaa);

    \path[->] (ffaaggi) edge[snake it]
    node[pos=0.5, right] {$c$}
    (bfagi);

    \path[->] (bb0i) edge[snake it,bend right]
    node[pos=0.5, above left] {$h_0$}
    (bfa0);

\end{tikzpicture}
\caption{The factorisation in Claim~\ref{claim: short b0}}
    \label{fig: shortcut}
\end{figure}
\end{claim}

\begin{proof}
Suppose that $|b_0|_{S_B} > L$. Let $a'_0$ be such that $a = a_0a'_0$ is a geodesic factorisation in $A$.
By Lemma~\ref{finding_intersection}, there exist $h \in G$ and geodesic factorisations $a_0 = a_1a_2a_3$ and $b_0\inv = b_1b_2b_3$ in $A$ and $B$, respectively, such that $|h|_S\le L$, $x_2\ne 1$ and
\begin{enumerate}[(a)]
\item $fa_1h = b_1$, $h\inv a_3 = b_3h_0$ and $h\inv a_2h = b_2$, or

\item $fa_1a_2h = b_1$, $h\inv a_3 = b_2b_3h_0$ and $h\inv a_2h = b_2\inv$.
\end{enumerate}
If (a) holds, then $f\inv b_1b_2b_1\inv f = (a_1h)\,(h\inv a_2h)\,(h\inv a_1\inv) = a_1a_2a_1\inv$, and hence
$a_1a_2a_1\inv \in A \cap B^f$. It follows that $a_1a_3a'_0 = a_1\,((a_1a_2)\inv a_0)\,a'_0 = (a_1a_2a_1\inv)\inv a$ is a representative of $(A\cap B^f)a$, with shorter $S_A$-length than $a$, contradicting our hypothesis on $a$.

If instead (b) holds, then $f\inv b_1b_2b_1\inv f = (a_1a_2h)\,(h\inv a_2\inv h)\,(h\inv a_2\inv a_1\inv) = a_1 a_2\inv a_1\inv$, so that $a_1a_2\inv a_1\inv \in A \cap B^f$. Therefore $a_1a_3a'_0 = a_1\,((a_1a_2)\inv a_0)\,a'_0 = (a_1a_2\inv a_1\inv) a$ is a representative of $(A\cap B^f)a$, with shorter $S_A$-length than $a$, again a contradiction. This concludes the proof of the claim.
\end{proof}

Let $b\in B$ and $c\in C$ such that $b\,(fag\inv) = (f'a'{g'}\inv)\,c$. Consider the $\lambda$-quasi-geodesic 8-gon in Figure~\ref{fig: 8-gon}, with sides the appropriate translates of the quasi-geodesics $q_b, p_f, q_a, p_g, q_c, p_{g'}, q_{a'}, p_{f'}$ so that reading the labels clockwise yields the equation $bfag\inv c\inv g'{a'}\inv{f'}\inv = 1$.
\begin{figure}[H]
\centering
\begin{tikzpicture}[shorten >=1pt, node distance=1cm and 2cm, on grid, decoration={snake, segment length=2mm, amplitude=0.2mm,post length=1.5mm},>=stealth']
\newcommand{\dx}{1}
\newcommand{\dy}{1}
    \node[] (1) at (0, 0) {$1$};
    \node[] (b) [above = \dy*1.5 of 1] {$b$};
    \node[] (bf) [above right = \dy and \dx*1.5 of b] {$bf$};

    \node[] (bfa) [right = \dx*2 of bf] {$bfa$};
    \node[] (bfagi) [below right = \dy and \dx*1.5 of bfa] {$bfag^{-1}$};
    \node[] (ffaaggic) [right = 1.75 of bfagi] {$= f'a'(g')^{-1}c$};

    \node[] (ff) [below right = \dy and \dx*1.5 of 1] {$f'$}; 
    \node[] (ffaa) [right = \dx*2 of ff] {$f' a'$};
    \node[] (ffaaggi) [above right = \dy and \dx*1.5 of ffaa] {$f'a'(g')^{-1}$};

    \path[->] (1) edge[snake it]
    node[pos=0.5,left] {$b$}
    (b);

    \path[->] (b) edge[snake it]
    node[pos=0.5,above left] {$f$}
    (bf);

    \path[->] (bf) edge[snake it]
    node[pos=0.5,above] {$a$}
    (bfa);

    \path[->] (bfagi) edge[snake it]
    node[pos=0.5,above right] {$g$}
    (bfa);

    \path[->] (1) edge[snake it]
    node[pos=0.5,below left] {$f'$}
    (ff);

    \path[->] (ff) edge[snake it]
    node[pos=0.5,below] {$a'$}
    (ffaa);

   \path[->] (ffaaggi) edge[snake it]
    node[pos=0.5,below right] {$g'$}
    (ffaa);

    \path[->] (ffaaggi) edge[snake it]
    node[pos=0.5, right] {$c$}
    (bfagi);         
\end{tikzpicture}
\caption{The 8-gon in the proof of Lemma~\ref{hyperbolic_lemma}}
\label{fig: 8-gon}
\end{figure}
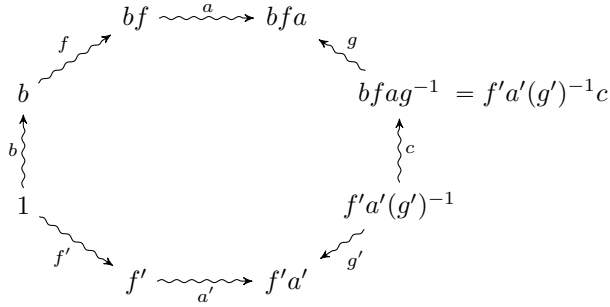

Suppose that $|b|_{S_B} > 2L+1$ and let $b = b_0b_1$ be a geodesic factorisation of $b$ in $B$ such that $|b_0|_{S_B} = \left\lfloor\frac{|b|_{S_B}}2\right\rfloor > L$. Then, reasoning as in the proof of Lemma~\ref{finding_intersection}, vertex $b_0$ is within $S$-distance $\lambda''$ from a vertex $w$ on the translate of $q_{a}$, $q_{a'}$ or $q_{c}$. Letting $h_0 = (b_0)\inv w$, we have $|h_0|_S \le \lambda''$ and one of the following conditions holds:
\begin{enumerate}[(1)]
\item there exists a geodesic factorisation $a = a_0a_1$ in $A$ such that $(b_1f)\,a_0 = h_0$;

\item there exists a geodesic factorisation $a' = a'_0a'_1$ in $A'$ such that $(b_0\inv f')\,a'_0 = h_0$;

\item there exists a geodesic factorisation $c = c_0c_1$ in $C$ such that $(b_0\inv f'a'{g'}\inv)\,c_0 = h_0$.
\end{enumerate}
Claim~\ref{claim: short b0} shows that, in Case (1), we have $|b_0|_{S_B} \le L$, a contradiction. Symmetrically, Case (2) also leads to a contradiction.

It follows that we must be in Case (3). We consider the ($\lambda$-quasi-geodesic) 6-gon with sides the appropriate translates of $q_{b_1}$, $p_f$, $q_a$, $p_g$, $q_{c_0}$ and $p_{h_0}$. Let $a = a_0a_1$ be the geodesic factorisation in $A$ such that $|a_0|_{S_A} = \left\lfloor\frac{|a|_{S_A}}2\right\rfloor$.
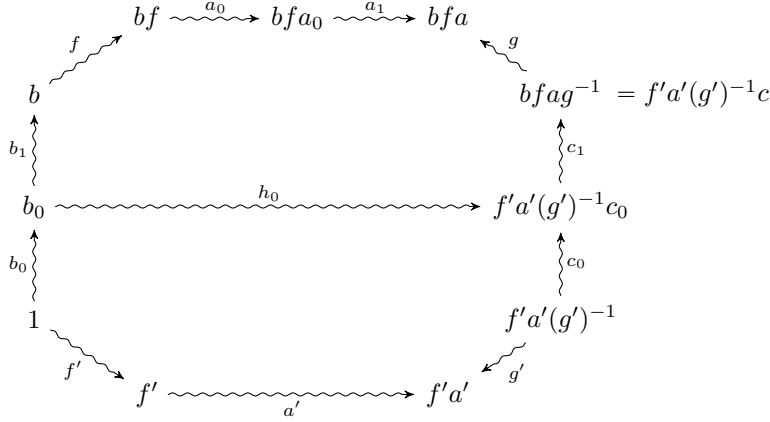
\begin{figure}[H]
\centering
\begin{tikzpicture}[shorten >=1pt, node distance=1cm and 2cm, on grid, decoration={snake, segment length=2mm, amplitude=0.2mm,post length=1.5mm},>=stealth']
\newcommand{\dx}{1}
\newcommand{\dy}{1}
    \node[] (1) at (0, 0) {$1$};
    \node[] (b0) [above = \dy*1.5 of 1] {$b_0$};
    \node[] (b) [above = \dy*1.5 of b0] {$b$};
    \node[] (bf) [above right = \dy and \dx*1.5 of b] {$bf$};

    \node[] (bfa0) [right = \dx*2 of bf] {$bfa_0$};
    \node[] (bfa) [right = \dx*2 of bfa0] {$bfa$};
    \node[] (bfagi) [below right = \dy and \dx*1.5 of bfa] {$bfag^{-1}$};
    \node[] (ffaaggic) [right = 1.75 of bfagi] {$= f'a'(g')^{-1}c$};

    \node[] (ff) [below right = \dy and \dx*1.5 of 1] {$f'$}; 
    \node[] (ffaa) [right = \dx*4 of ff] {$f' a'$};
    \node[] (ffaaggi) [above right = \dy and \dx*1.5 of ffaa] {$f'a'(g')^{-1}$};

    \node[] (ffaaggic0) [above = \dy*1.5 of ffaaggi] {$f'a'(g')^{-1}c_0$};

    \path[->] (1) edge[snake it]
    node[pos=0.5,left] {$b_0$}
    (b0);

    \path[->] (b0) edge[snake it]
    node[pos=0.5,left] {$b_1$}
    (b);

    \path[->] (b) edge[snake it]
    node[pos=0.5,above left] {$f$}
    (bf);

    \path[->] (bf) edge[snake it]
    node[pos=0.5,above] {$a_0$}
    (bfa0);

    \path[->] (bfa0) edge[snake it]
    node[pos=0.5,above] {$a_1$}
    (bfa);

    \path[->] (bfagi) edge[snake it]
    node[pos=0.5,above right] {$g$}
    (bfa);

    \path[->] (1) edge[snake it]
    node[pos=0.5,below left] {$f'$}
    (ff);

    \path[->] (ff) edge[snake it]
    node[pos=0.5,below] {$a'$}
    (ffaa);

   \path[->] (ffaaggi) edge[snake it]
    node[pos=0.5,below right] {$g'$}
    (ffaa);

    \path[->] (ffaaggi) edge[snake it]
    node[pos=0.5, right] {$c_0$}
    (ffaaggic0);

    \path[->] (ffaaggic0) edge[snake it]
    node[pos=0.5, right] {$c_1$}
    (bfagi);

    \path[->] (b0) edge[snake it]
    node[pos=0.5, above] {$h_0$}
    (ffaaggic0);

\end{tikzpicture}
\caption{The 6-gon in the proof of Lemma~\ref{hyperbolic_lemma}}
    \label{fig: 6-gon}
\end{figure}

Then $(bf)a_0$ is within $S$-distance $\lambda''$ from a vertex $w'$ in $B$ and $b_0\cdot q_{b_1}$, in $h_0C$ and $h_0\cdot q_{c_3}$ or in $p_{h_0}$.

If $w'$ is on $p_{h_0}$, then $a_0$ is within distance $2\lambda''$ from vertex $b_0$, that is, there exists $h_1\in G$ such that $|h_1|_S \le 2\lambda''$ and $b_1fa_0 = h_1$. Claim~\ref{claim: short b0} again shows that $|b_0|_{S_B} \le L$, a contradiction. Therefore $w'$ is on $b_0\cdot q_{b_1}$ (and hence in $q_b$) or in $h_0\cdot q_{c_3}$ (and hence in $(f'a'{g'}\inv)\cdot q_c$). A fresh application of Claim~\ref{claim: short b0} then shows that $|a_0|_{S_A} \le L$, and hence $|a|_{S_A}\le 2L$.

Thus, if $|b|_{S_B} > 2L+1$, then $|a|_{S_A}\le 2L$. We have proved, by symmetry, that either $|b|_{S_B}, |c|_{S_C} \le 2L+1$, or $|a|_{S_A}, |a'|_{S_{A'}} \le 2L+1$, as announced.
\end{proof}

\begin{lem}
Let $G$ be a hyperbolic group, let $A\leqslant G$ be a quasi-convex subgroup that has index $k$ in an almost malnormal subgroup and let $B, C\leqslant G$ be two quasi-convex subgroups. Then $B^a\cap C \cap A$ has index at most $k$ in $B^a\cap C$ for all but finitely many double cosets $(A\cap B)\,a\,(A\cap C)$ in $(A\cap B)\backslash A/(A\cap C)$. In particular, if $A$ itself is almost malnormal then $B^a\cap C\leqslant A$ for all but finitely many double cosets $(A\cap B)\, a\, (A\cap C)$.
\end{lem}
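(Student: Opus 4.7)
The plan is to reduce to showing that $B^a \cap C \leqslant A'$ for all but finitely many double cosets $(A \cap B) a (A \cap C)$ in $A$, where $A' \leqslant G$ denotes the almost malnormal subgroup containing $A$ with index $k$ (note that $A'$ is quasi-convex, being a finite-index supergroup of the quasi-convex $A$). The reduction is purely algebraic: since $a \in A \leqslant A'$, one has $B^a \cap C \cap A' = (B \cap A')^a \cap (C \cap A')$ and $B^a \cap C \cap A = (B \cap A)^a \cap (C \cap A)$. The subgroup $B^a \cap C \cap A' \leqslant A'$ contains $B^a \cap C \cap A = (B^a \cap C \cap A') \cap A$ with index at most $[A':A] = k$, so once $B^a \cap C \leqslant A'$ is established, the chain $B^a \cap C \cap A \leqslant B^a \cap C \cap A' = B^a \cap C$ yields the desired bound $[B^a \cap C : B^a \cap C \cap A] \leqslant k$. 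By Lemma \ref{double_coset_lemma}, the inclusion-induced map $(A \cap B)\backslash A/(A \cap C) \to (A' \cap B)\backslash A'/(A' \cap C)$ is at most $k$-to-one, so it suffices to show that only finitely many double cosets $(A' \cap B) a' (A' \cap C)$ (with $a' \in A'$) satisfy $B^{a'} \cap C \not\leqslant A'$.

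I would prove this by contradiction. Suppose infinitely many such bad double cosets exist and pick minimal-length representatives $a_i \in A'$ with respect to $|\cdot|_{S_{A'}}$. Since balls in $A'$ are finite, we may assume $|a_i|_{S_{A'}} > L$, where $L$ is the constant given by Lemma \ref{hyperbolic_lemma} applied in the special case where both quasi-convex subgroups $A, A'$ appearing in that lemma are taken to be our $A'$, and $f = g = f' = g' = 1$. For each such $a_i$, pick $c_i \in (B^{a_i} \cap C) \setminus A'$ and set $b_i := a_i c_i a_i^{-1} \in B$. The identity $b_i a_i = a_i c_i$, together with the minimality of $a_i$, forces via Lemma \ref{hyperbolic_lemma} the bounds $|b_i|_{S_B}, |c_i|_{S_C} \leqslant L$. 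Since balls of radius $L$ in $B$ and $C$ are finite, we may pass to an infinite subsequence on which $b_i = b$ and $c_i = c$ are both constant, with $c \in C \setminus A'$.

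From $a_i c a_i^{-1} = b$ for all $i$, one deduces $(a_1^{-1} a_i) c (a_1^{-1} a_i)^{-1} = c$, so $a_1^{-1} a_i \in A' \cap Z_G(c)$ for every $i$. For any $z \in A' \cap Z_G(c)$, commutativity gives $czc^{-1} = z \in A'$, so $z \in (A')^c$; thus $A' \cap Z_G(c) \leqslant A' \cap (A')^c$, which is finite by almost malnormality of $A'$ (since $c \notin A'$). But the $a_i$ lie in distinct double cosets, hence are pairwise distinct, making $\{a_1^{-1} a_i\}_i$ an infinite subset of this finite set -- a contradiction. The second assertion of the lemma (the case $A = A'$) is immediate, since then $k = 1$ forces $B^a \cap C = B^a \cap C \cap A$, i.e.\ $B^a \cap C \leqslant A$, for all but finitely many double cosets.

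The main obstacle is to convert the infinite family of bad double cosets into a single pair $(b, c)$ of short elements witnessing them all; this uses the uniform length bound of Lemma \ref{hyperbolic_lemma} crucially, combined with ball-finiteness in $B$ and $C$. The final collapse of the infinite family of $a_i$ into the finite set $A' \cap (A')^c$ is then where almost malnormality enters and produces the contradiction.
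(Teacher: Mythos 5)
Your proof is correct and follows essentially the same route as the paper's: the key geometric input is Lemma~\ref{hyperbolic_lemma} applied to the identity $b_ia_i = a_ic_i$ with $a_i$ a minimal-length double coset representative, followed by a pigeonhole argument on the uniformly short witnesses and an appeal to almost malnormality of the index-$k$ overgroup $A'$. The only differences are organizational: you reduce to $A'$ upfront via Lemma~\ref{double_coset_lemma} and pigeonhole on a single witness pair $(b,c)$ with $c\notin A'$, reaching a contradiction through the finiteness of $A'\cap (A')^c$, whereas the paper works in $A$ throughout, pigeonholes on the entire finite subgroups $B\cap C^{a^{-1}}$ and $B^a\cap C$, and concludes directly that these subgroups lie in $A'$.
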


\begin{proof}
For each $g\in G$, denote by $B_g = B\cap C^{g^{-1}}\leqslant B$ and $C_g = B^g\cap C\leqslant C$. Note that $B_g^g = C_g$. By Lemma \ref{hyperbolic_lemma}, there exists a constant $L$ such that, for all but finitely many double cosets $(A\cap B)\, a\, (A\cap C)\in (A\cap B)\backslash A/(A\cap C)$, every element of $B_a$ or $C_a$ has $S$-length at most $L$.
In particular $B_a, C_a$ are finite of order at most $(2|S|)^L$. 

Suppose that there are infinitely many double cosets $(A\cap B)\, a\, (A\cap C)$ such that $B_a, C_a\neq 1$. By the pigeonhole principle, there exist finite subgroups $F_B\leqslant B$, $F_C\leqslant C$ and infinitely many $a\in A$ in distinct $(A\cap B, A\cap C)$ double cosets, such that $B_a = F_B$, $C_a = F_C$. If $a_1, a_2\in A$ are such that $B_{a_1} = F_B = B_{a_2}$ and $C_{a_1} = F_C = C_{a_2}$, then since $a_1^{-1}F_Ba_1 = F_C$ and $a_2^{-1}F_Ba_2 = F_C$, we have that $F_B^{a_1a_2^{-1}} = F_B$. Thus, there are infinitely many elements $a\in A$ in distinct $(A\cap B, A\cap B)$ double cosets such that $F_B^a = F_B$. In particular, since $F_B$ is finite, there are also infinitely many such elements $a\in A$ such that $\gamma_a$ induces the identity automorphism on $F_B$. This implies that for each element $f\in F_B$, we have $|A^f\cap A| = \infty$. Since $A$ has index $k$ in an almost malnormal subgroup, say $A'$, it follows that if $|A^f\cap A| = \infty$, we must have that $f\in A'$. Hence $F_B\leqslant A'$ and so $F_B\cap A$ has index at most $k$ in $F_B$ as required. The same argument for $F_C$ implies that $F_C\cap A = B^a\cap C\cap A$ has index at most $k$ in $F_C = B^a\cap C$.
\end{proof}

\begin{cor} 
\label{cor: A-fgip}
If $G$ is locally quasi-convex hyperbolic and $A\leqslant G$ is an almost malnormal subgroup, then $(G, A)$ has the $A$-\fgip\
\end{cor}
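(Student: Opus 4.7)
The plan is to derive Corollary \ref{cor: A-fgip} as an essentially formal consequence of the preceding lemma together with Proposition \ref{prop: hyperbolic_sfgip}. Since $G$ is locally quasi-convex hyperbolic, Proposition \ref{prop: hyperbolic_sfgip} gives the \fgip\ half of the $A$-\fgip\ for free, so I need only bound the number of ``bad'' double cosets $BgC$ with $g \in A$ and $B^g \cap C \not\leqslant A$.

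Fix finitely generated (hence quasi-convex) subgroups $B, C \leqslant G$ and set
\[
S \;=\; \{a \in A \mid B^a \cap C \not\leqslant A\}.
\]
The preceding lemma, applied with $k = 1$ (which is allowed since $A$ is itself almost malnormal), asserts that $S$ meets only finitely many $(A\cap B, A\cap C)$-double cosets of $A$. The next step is to check that $S$ is actually saturated under this double coset equivalence. For $\beta \in A\cap B$ and $\gamma \in A\cap C$, a short computation yields
\[
B^{\beta a \gamma} \cap C \;=\; \gamma^{-1}(B^a \cap C)\gamma,
\]
and since $\gamma \in A$ normalises $A$, it follows that $B^{\beta a \gamma} \cap C \leqslant A$ if and only if $B^a \cap C \leqslant A$. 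Consequently $S$ is a \emph{finite} union of $(A\cap B, A\cap C)$-double cosets of $A$.

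Finally, I transfer this bound to the $(B,C)$-double coset structure of $G$: distinct $(B,C)$-double cosets of $G$ meet $A$ in disjoint subsets, each of which is a (possibly empty) union of $(A\cap B, A\cap C)$-double cosets of $A$. Hence the number of $(B, C)$-double cosets of $G$ containing any representative $g \in A$ with $B^g \cap C \not\leqslant A$ is at most the number of $(A\cap B, A\cap C)$-double cosets contained in $S$, which is finite. This is exactly the $A$-\fgip\ condition for $(G, A)$. I do not foresee any real obstacle here; the geometric content is entirely in the preceding lemma, and the present argument is a formal unpacking of the definition.
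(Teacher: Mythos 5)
Your proposal is correct and follows the route the paper intends: the corollary is stated as an immediate consequence of the preceding lemma (applied with $k=1$), combined with the fact that a locally quasi-convex hyperbolic group has the \fgip\ (Proposition \ref{prop: hyperbolic_sfgip}), and your saturation and double-coset transfer arguments correctly fill in the details the paper leaves implicit.
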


Using Lemma \ref{hyperbolic_lemma}, we may now prove that, in a hyperbolic group, pairs of quasi-convex subgroups have finite coset interaction relative to any quasi-convex subgroup.

\begin{thm}
\label{fcip_hyperbolic}
    Let $G$ be a hyperbolic group and let $B, C\leqslant G$ be quasi-convex subgroups. The following holds:
    \begin{enumerate}[(1)]
        \item $(B,C)$ has local finite coset interaction relative to any quasi-convex subgroup;
        \item $(B,C)$ has finite coset interaction relative to any almost malnormal quasi-convex subgroup;
        \item $(B, C)$ has weak finite coset interaction relative to any finite collection $\mathcal{A}$ of at least two quasi-convex subgroups of $G$, provided that $A_{\alpha}^g\cap A_{\beta}$ is finite for all $g\in G$ and for each pair of distinct subgroups $A_{\alpha}, A_{\beta}\in \mathcal{A}$;
        \item $(B,C)$ has $k$-finite coset interaction relative to any quasi-convex subgroup which has index $k$ in an almost malnormal subgroup.
    \end{enumerate}
\end{thm}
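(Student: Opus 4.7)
The plan is to reduce all four parts to the dichotomy provided by Lemma~\ref{hyperbolic_lemma}: whenever $a\in A,\ a'\in A'$ are chosen of minimum length in their respective double cosets and $b(fag^{-1}) = (f'a'(g')^{-1})c$ holds for some $b\in B, c\in C$, then either the short-connector bound $|b|_{S_B}, |c|_{S_C}\leqslant L$ holds, or the short-representative bound $|a|_{S_A}, |a'|_{S_{A'}}\leqslant L$ does. In each part we assume infinitely many witnesses to the failure of the property and pigeonhole to extract either finitely many minimum-length $a$'s (handled directly) or a fixed short connector $(b_0, c_0)$ witnessing an algebraic identity that the remaining hypotheses forbid. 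Part~(4) will then follow from Part~(2) applied to the almost malnormal overgroup of $A$, which is automatically quasi-convex since $A$ has finite index in it, combined with Proposition~\ref{index_k}.

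For Part~(1), fix $f, g, h$ and assume for a contradiction that infinitely many distinct double cosets $(A\cap B^f)a_i(A\cap C^g)$ lie in $(q_{f,g}^A)^{-1}(BhC)$. Pick minimum-length representatives $a_i$ and elements $b_i\in B, c_i\in C$ with $fa_ig^{-1} = b_ihc_i$. Eliminating $h$ from any pair of these relations gives $(b_jb_i^{-1})fa_ig^{-1} = fa_jg^{-1}(c_j^{-1}c_i)$, an instance of the Lemma~\ref{hyperbolic_lemma} equation with $A' = A$, $f'=f$, $g'=g$. Since only finitely many $a_i$ can have bounded $S_A$-length, for most pairs we are in the short-connector regime $|b_jb_i^{-1}|_{S_B}, |c_j^{-1}c_i|_{S_C}\leqslant L$. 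Fixing one such $j$, the elements $b_i$ and $c_i$ then range over finite sets, and so does $a_i = f^{-1}b_ihc_ig$, contradicting the assumption.

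For Part~(2), assume infinitely many $(B, C)$-double cosets are hit with multiplicity at least two by the maps $q_{f,g}^A$ as $f\in \mathcal{F}, g\in \mathcal{G}$ vary. Pigeonholing yields fixed $f, f'\in \mathcal{F}$, $g, g'\in \mathcal{G}$ and bounded-length elements $b_0\in B, c_0\in C$ such that infinitely many pairs of minimum-length representatives $(a, a')$ satisfy $b_0 fag^{-1} = f'a'(g')^{-1}c_0$, equivalently $a' = w_1 a w_2$ with $w_1 = (f')^{-1}b_0 f$ and $w_2 = g^{-1}c_0^{-1}g'$. Comparing two such pairs $(a, a'), (a^*, a'^*)$ gives $(a'^*)^{-1}a' = w_2^{-1}(a^*)^{-1}a\, w_2$, producing infinitely many distinct elements of $A\cap A^{w_2^{-1}}$; almost malnormality of $A$ then forces $w_2\in A$, and a symmetric calculation using $a = w_1^{-1}a'w_2^{-1}$ forces $w_1\in A$. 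If $(f,g) = (f',g')$, then $w_1\in A\cap B^f$ and $w_2\in A\cap C^g$, placing $a$ and $a'$ in the same double coset, a contradiction. If instead $g\neq g'$, then $w_2\in A$ translates into $g'\in c_0 gA \subseteq CgA$, contradicting the distinctness of $g$ and $g'$ modulo $(C, A)$-double cosets; the case $f\neq f'$ is symmetric.

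Part~(3) is handled by the same conjugation calculation: if $BhC$ lies in $\Ima(q_{f_1, g_1}^{A_1})\cap\Ima(q_{f_2, g_2}^{A_2})$ for distinct $A_1, A_2\in \mathcal{A}$, we obtain pairs $(a_1, a_2)$ with $a_2 = w_1 a_1 w_2$, and comparing pairs yields that $A_1\cap w_2 A_2 w_2^{-1}$ is infinite, contradicting the pairwise finite intersection hypothesis. Part~(4) then follows as outlined in the first paragraph. The main anticipated obstacle is the bookkeeping in Part~(2): one must carefully isolate the correct conjugator ($w_2$, or its counterpart $w_1$) from among the several algebraic identities produced by the short-connector regime, and verify that the contradiction for the case $(f,g)\neq(f',g')$ (coming from the coset structure of $\mathcal{F}, \mathcal{G}$) meshes cleanly with the contradiction for the case $(f,g)=(f',g')$ (coming from the structure of $A$-double cosets within $A$).
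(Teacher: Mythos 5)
Your proposal is correct and follows essentially the same route as the paper: each part is reduced to the dichotomy of Lemma~\ref{hyperbolic_lemma}, with pigeonhole arguments extracting fixed short connectors, the conjugation identity $a' = w_1 a w_2$ combined with almost malnormality (resp.\ the pairwise finite-intersection hypothesis) yielding the contradictions in Parts~(2) and~(3), and Part~(4) deduced from Part~(2) via Proposition~\ref{index_k} and quasi-convexity of finite-index overgroups exactly as in the paper. Your explicit case analysis in Part~(2) (separating $(f,g)=(f',g')$, where the contradiction comes from $w_1\in A\cap B^f$ and $w_2\in A\cap C^g$ forcing equal double cosets, from the mixed cases, where it comes from the coset conditions on $\mathcal{F},\mathcal{G}$) is in fact slightly more thorough than the paper's write-up, which only spells out the case where both $f,f'$ and $g,g'$ lie in distinct double cosets.
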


\begin{proof}
Let $A$ be a quasi-convex subgroup of $G$, let $S_A$, $S_B$, $S_C$ and $S$ be sets of generators for $A$, $B$, $C$ and $G$, with $S_A$, $S_B$ and $S_C$ finite. Let $\lambda \ge 1$ be given by Proposition~\ref{prop: standard hyperbolic}\eqref{item: lambda}, simultaneously for $A$, $B$ and $C$.

Let $f,g \in G$, let $X, X'$ be double cosets in $\dblcoset{(A\cap B^f)}A{(A \cap C^g)}$ and let $a$ and $a'$ be elements of $X$ and $X'$, with minimum $S_A$-length. By Lemma~\ref{hyperbolic_lemma}, there exists an integer $L$ such that, if $b\,(fag\inv) = (fa'g)\,c$ for some $b\in B$ and $c\in C$ (that is, if $q_{f,g}^A(X) = q^A_{f,g}(X')$), then either $|a|_{S_A}, |a'|_{S_A} \le L$, or $|b|_{S_B}, |c|_{S_C} \le L$.

In the second case, we have $a' = f\inv\,b\,f\,a\,g\inv\,c\inv\,g$, so that
\begin{align*}
|a'|_S &\le |a|_S + |b|_S + |c|_S + 2\,(|f|_S + | g|_S) \\
&\le |a|_S + \lambda(|b|_{S_B}+\lambda) + \lambda(|c|_{S_C}+\lambda) + 2\,(|f|_S + |g|_S) \\
&\le |a|_S + 2\lambda(L+\lambda) + 2(|f|_S + |g|_S).
\end{align*}
This proves that $q_{f,g}\inv\left(q_{f,g}(X)\right)$ is finite, thus establishing (1).

Now suppose in addition that $A$ is almost malnormal, and suppose that the pair $(B,C)$ does not have finite coset interaction relative to $A$. By an application of the pigeonhole principle, there exist $f, f', g, g' \in G$ such that $B\,f\,A \ne B\,f'\,A$ and $C\,g\,A \ne C\,g'\,A$, and infinitely many pairs $(a_i,a'_i)$ ($i\in \N$) of elements of $A$ such that $B\,(fa_ig\inv)\,C = B\,(f'a'_i{g'}\inv)\,C$. Note that the condition on $f, f'$ (resp. $g, g'$) implies that ${f'}\inv Bf$ (resp. ${g'}\inv Cg$) does not meet $A$. Without loss of generality, we may assume that each $a_i$ (resp. $a'_i$) has minimal $S_A$-length in the double coset $(A\cap B^f)a_i(A \cap C^g)$ (resp. $(A\cap B^{f'})a'_i(A \cap C^{g'})$). For each $i$, we have $f'a'_i{g'}\inv = b_i\,(fa_ig\inv)\,c_i$ for some $b_i\in B$ and $c_i\in C$.

Let $L$ be the constant given by Lemma~\ref{hyperbolic_lemma}. Using the pigeonhole principle again, we may assume that, for each $i$, $|a_i|_{S_A}, |a'_i|_{S_A} > L$. Lemma~\ref{hyperbolic_lemma} then shows that $|b_i|_{S_B}, |c_i|_{S_C} \le L$. A new application of the pigeonhole principle shows that we can assume that the $b_i$ (resp. $c_i$) are all equal, say to $b\in B$ (resp. $c\in C$).

Thus, for every $i\in \N$, $f'a'_i{g'}\inv = b\,(fa_ig\inv)\,c$. Let $i \ne j$. Then 
\begin{align*}
a_i &= (f\inv b\inv f')\,a'_i\,({g'}\inv c\inv g) \\
a_j &= (f\inv b\inv f')\,a'_j\,({g'}\inv c\inv g) \\
a_i{a_j}\inv &= (f\inv b\inv f')\,(a'_i{a'_j}\inv)\,(f\inv b\inv f')\inv.
\end{align*}
As observed before, ${f'}\inv b\inv f \not\in A$, so the almost malnormality of $A$ implies that the $(a_ia_j\inv)$ ($i,j\in \N$) take finitely many values. This contradicts the fact that the $a_i$ are pairwise distinct. Thus (2) is established.

Now we prove (3). It suffices to assume that $\mathcal{A} = \{A_1, A_2\}$ consists of two subgroups. Suppose for a contradiction that the pair $(B,C)$ does not have weak finite coset interaction relative to the pair $\{A_1, A_2\}$. By an application of the pigeonhole principle, there exist $f, f', g, g' \in G$ and infinitely many pairs $(a_i,a'_i)$ ($i\in \N$) of elements of $A$ such that $B\,(fa_ig\inv)\,C = B\,(f'a'_i{g'}\inv)\,C$. Without loss of generality, we may assume that each $a_i$ (resp. $a'_i$) has minimal $S_{A_1}$-length (resp. $S_{A_2}$-length) in the double coset $(A_1\cap B^f)a_i(A_1 \cap C^g)$ (resp. $(A_2\cap B^{f'})a'_i(A_2 \cap C^{g'})$). For each $i$, we have $f'a'_i{g'}\inv = b_i\,(fa_ig\inv)\,c_i$ for some $b_i\in B$ and $c_i\in C$. In the same way as before, we may use Lemma~\ref{hyperbolic_lemma} and the pigeonhole principle to find infinitely many distinct elements $a_ia_j^{-1}\in A_1^{f^{-1}b^{-1}f'}\cap A_2$, contradicting our assumption that $A_1^g\cap A_2$ is finite for all $g\in G$. This establishes (3).

Finally (4) follows from the combination of (2), Proposition~\ref{index_k} and the following elementary fact: if $A, A'$ are subgroups of $G$ and $A'$ has index $k$ in $A$, then $A$ is quasi-convex if and only if $A'$ is quasi-convex.
\end{proof}

Recall that a group is \emph{locally quasi-convex} if every finitely generated subgroup is quasi-convex. Theorem~\ref{fcip_hyperbolic} immediately yields the following corollary.

\begin{cor}
\label{fcip_qc_hyperbolic}
    If $G$ is a locally quasi-convex hyperbolic group, then:
    \begin{enumerate}[(1)]
        \item $G$ has the local \fcip\ relative to any quasi-convex subgroup;
        \item $G$ has the \fcip\ relative to any almost malnormal quasi-convex subgroup;
        \item $G$ has weak \fcip\ relative to any finite collection $\mathcal{A}$ of at least two quasi-convex subgroups such that $A_{\alpha}^g\cap A_{\beta}$ is finite for all $g\in G$ and all distinct pairs $A_{\alpha}, A_{\beta}\in \mathcal{A}$;
        \item $G$ has the $k$-\fcip\ relative to any quasi-convex subgroup which has index $k$ in an almost malnormal subgroup.
    \end{enumerate}
\end{cor}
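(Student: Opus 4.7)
The plan is to derive this corollary as a direct consequence of Theorem~\ref{fcip_hyperbolic} via the locally quasi-convex hypothesis. Recall that in the definitions of the \fcip, local \fcip, weak \fcip, and $k$-\fcip\ of $G$ relative to a collection $\mathcal{A}$, one quantifies over every pair $(B,C)$ of finitely generated subgroups of $G$. Under the locally quasi-convex hypothesis, each such $B$ and $C$ is automatically quasi-convex, so the hypotheses of Theorem~\ref{fcip_hyperbolic} are met by every such pair.

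Thus, for each part I would simply observe:
\begin{itemize}
    \item For (1), fix a quasi-convex subgroup $A$ and pick any two finitely generated $B, C \leqslant G$; these are quasi-convex, so Theorem~\ref{fcip_hyperbolic}(1) gives local finite coset interaction of $(B,C)$ relative to $A$. Since this holds for all such pairs, $G$ has the local \fcip\ relative to $A$.
    \item For (2), apply Theorem~\ref{fcip_hyperbolic}(2) in the same way to an almost malnormal quasi-convex subgroup $A$.
    \item For (3), apply Theorem~\ref{fcip_hyperbolic}(3) to the collection $\mathcal{A}$, whose pairwise conjugate-intersection hypothesis is exactly what is required there.
    \item For (4), apply Theorem~\ref{fcip_hyperbolic}(4) to a quasi-convex subgroup of index $k$ in an almost malnormal subgroup.
\end{itemize}

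There is no hard step here: the entire content of the corollary is repackaging Theorem~\ref{fcip_hyperbolic} as a statement about the whole group under the standing assumption that every finitely generated subgroup is quasi-convex. The proposal is therefore to write a one-sentence proof explicitly invoking the locally quasi-convex hypothesis to promote the pairwise conclusions of Theorem~\ref{fcip_hyperbolic} to the corresponding \fcip\ properties of $G$ as in Definition~\ref{defn: variants of fcip2}.
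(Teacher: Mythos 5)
Your proposal is correct and matches the paper exactly: the paper derives Corollary~\ref{fcip_qc_hyperbolic} as an immediate consequence of Theorem~\ref{fcip_hyperbolic}, using local quasi-convexity to guarantee that every pair of finitely generated subgroups $(B,C)$ satisfies the quasi-convexity hypotheses of that theorem. Nothing further is needed.
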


A finite collection of subgroups $\mathcal{A}$ of $G$ is an \emph{almost malnormal collection} if for any $A_{\alpha}, A_{\beta}\in \mathcal{A}$ and for any $g\in G$, either the subgroup $A_{\alpha}\cap A_{\beta}^g$ is finite or $\alpha = \beta$ and $g\in A_{\beta}$. In particular (choosing $\alpha = \beta$), every $A_{\alpha}$ is almost malnormal. The proof of the following theorem proceeds in exactly the same way as that of Theorem \ref{fcip_hyperbolic}.

\begin{thm}
\label{fcip_hyperbolic_2}
    Let $G$ be a hyperbolic group and let $B, C$ be quasi-convex subgroups. Then $(B, C)$ has finite coset interaction relative to any finite almost malnormal collection of quasi-convex subgroups.
\end{thm}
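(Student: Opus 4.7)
The plan is to adapt the proof of Theorem~\ref{fcip_hyperbolic}(2) to the setting of a finite almost malnormal collection $\mathcal{A}$. I would proceed by contradiction: assume $(B, C)$ does not have finite coset interaction relative to $\mathcal{A}$, so for some choice of finite families $\mathcal{F}_A, \mathcal{G}_A$ (as $A$ ranges over $\mathcal{A}$) in pairwise distinct double cosets, infinitely many $(B, C)$-double cosets each have at least two preimages across the union of the maps $q^A_{f, g}$. A first application of the pigeonhole principle produces subgroups $A_\alpha, A_\beta \in \mathcal{A}$ and elements $f \in \mathcal{F}_{A_\alpha}$, $g \in \mathcal{G}_{A_\alpha}$, $f' \in \mathcal{F}_{A_\beta}$, $g' \in \mathcal{G}_{A_\beta}$, together with infinitely many pairs $(a_i, a_i') \in A_\alpha \times A_\beta$ satisfying $B f a_i g^{-1} C = B f' a_i' (g')^{-1} C$, whose corresponding preimages remain distinct as indexed elements of the disjoint union.

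Next, mirroring the original argument, I would choose each $a_i$ (respectively $a_i'$) to be a minimum-length representative of its double coset in $A_\alpha$ (respectively $A_\beta$), and write $f' a_i' (g')^{-1} = b_i (f a_i g^{-1}) c_i$ with $b_i \in B, c_i \in C$. Lemma~\ref{hyperbolic_lemma}, whose statement was already formulated to allow two distinct subgroups $A$ and $A'$ on the two sides, yields a uniform bound $|b_i|_{S_B}, |c_i|_{S_C} \le L$ for all but finitely many $i$. A second pigeonhole step then allows me to assume $b_i = b$ and $c_i = c$ are constant. Setting $x = (f')^{-1} b f$ and $y = g^{-1} c g'$, the identity $a_i' = x a_i y$ yields, for $i \ne j$,
\[
a_i' (a_j')^{-1} = x\, (a_i a_j^{-1})\, x^{-1},
\]
so the subgroup $A_\alpha \cap A_\beta^x$ contains the infinitely many distinct elements $a_i a_1^{-1}$ ($i > 1$) and is therefore infinite.

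Finally, I would invoke the almost malnormality of the collection: infiniteness of $A_\alpha \cap A_\beta^x$ forces both $\alpha = \beta$ and $x \in A_\alpha$, and then the identity $a_i' = x a_i y$ together with $x, a_i, a_i' \in A_\alpha$ also gives $y \in A_\alpha$. A short case analysis rules out every way the two preimages could have been distinct in the first place: if $B f A_\alpha \ne B f' A_\alpha$ (respectively $C g A_\alpha \ne C g' A_\alpha$), then $x \in A_\alpha$ (respectively $y \in A_\alpha$) gives $f \in B f' A_\alpha$ (respectively $g \in C g' A_\alpha$), a contradiction; and in the remaining subcase $(\alpha, f, g) = (\beta, f', g')$, the inclusions $x \in A_\alpha \cap B^f$ and $y \in A_\alpha \cap C^g$ imply that $a_i' = x a_i y$ lies in the same $(A_\alpha \cap B^f, A_\alpha \cap C^g)$-double coset as $a_i$, contradicting the assumption that the two preimages were distinct. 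The main difficulty is essentially already solved before this proof begins, as all the hyperbolic geometry is concentrated in Lemma~\ref{hyperbolic_lemma}; the remaining work is a careful case analysis designed to exploit simultaneously both clauses of the definition of an almost malnormal collection (namely $\alpha = \beta$ and $x \in A_\beta$).
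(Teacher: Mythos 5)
Your proof is correct and follows essentially the same route as the paper, which omits the argument entirely and simply states that it ``proceeds in exactly the same way as that of Theorem~\ref{fcip_hyperbolic}'' --- namely the double pigeonhole, passage to minimal-length double-coset representatives, the uniform bound from Lemma~\ref{hyperbolic_lemma}, and then almost malnormality applied to the infinite subgroup $A_\alpha\cap A_\beta^x$. Your concluding case analysis (using both conclusions $\alpha=\beta$ and $x\in A_\beta$ to rule out every way the two preimages could be distinct) is exactly the detail the paper leaves implicit, so there is nothing to correct.
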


\begin{cor}
\label{fcip_qc_hyperbolic_2}
    If $G$ is a locally quasi-convex hyperbolic group then $G$ has the \fcip\ relative to any finite almost malnormal collection of subgroups.
\end{cor}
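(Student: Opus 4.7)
The plan is to deduce this corollary directly from Theorem~\ref{fcip_hyperbolic_2} by exploiting the local quasi-convexity of $G$. First, I would unpack the definition of the \fcip\ relative to a collection $\mathcal{A}$: per Definition~\ref{defn: variants of fcip2}, this means that every pair of finitely generated subgroups $(B,C)$ of $G$ has finite coset interaction relative to $\mathcal{A}$, where $\mathcal{A}$ is itself a finite collection of finitely generated subgroups. So let $\mathcal{A}$ be a finite almost malnormal collection of finitely generated subgroups of $G$, and let $B, C\leqslant G$ be an arbitrary pair of finitely generated subgroups.

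Since $G$ is locally quasi-convex, every finitely generated subgroup of $G$ is quasi-convex. In particular, $B$ and $C$ are quasi-convex, and each $A\in \mathcal{A}$ is quasi-convex. Thus $\mathcal{A}$ is a finite almost malnormal collection of quasi-convex subgroups of the hyperbolic group $G$, so the hypotheses of Theorem~\ref{fcip_hyperbolic_2} are satisfied for the pair $(B, C)$ and the collection $\mathcal{A}$. Applying that theorem yields that $(B, C)$ has finite coset interaction relative to $\mathcal{A}$.

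Since $B$ and $C$ were arbitrary finitely generated subgroups of $G$, this establishes that $G$ has the \fcip\ relative to $\mathcal{A}$, completing the proof. There is no substantive obstacle here: the corollary is essentially a quantifier reshuffle of the theorem, converting a per-pair statement about quasi-convex $(B,C)$ into a uniform statement over all finitely generated subgroups, using the definitional fact that in a locally quasi-convex hyperbolic group \emph{finitely generated} and \emph{quasi-convex} coincide.
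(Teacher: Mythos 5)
Your deduction is exactly the paper's (implicit) argument: the corollary follows immediately from Theorem~\ref{fcip_hyperbolic_2} because in a locally quasi-convex hyperbolic group every finitely generated subgroup --- in particular $B$, $C$, and each member of $\mathcal{A}$ (which is required to consist of finitely generated subgroups by the definition of the \fcip) --- is quasi-convex. The proposal is correct and matches the paper's route.
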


\subsection{The \fgip\ for graphs of locally quasi-convex hyperbolic groups}
\label{sec: hyperbolic_new}

Combining Theorem \ref{fgip_criterion_3} and Corollaries \ref{cor: A-fgip} and \ref{fcip_qc_hyperbolic}, we obtain the following two criteria for one-edge graphs of locally quasi-convex hyperbolic groups to have the \fgip\ Recall that locally quasi-convex hyperbolic groups have the \sfgip\ relative to the collection of their finite subgroups (see Proposition~\ref{prop: hyperbolic_sfgip_2}).

\begin{thm}
\label{hyperbolic_amalgam}
    Let $G \cong A*_CB$ be an amalgamated free product where $A$ is a locally quasi-convex hyperbolic group, $B$ has the \fgip\ and $C$ is finitely generated and almost malnormal in $A$. Then $G$ has the \fgip\ relative to $C$ if and only if $G$ has the \fgip\
\end{thm}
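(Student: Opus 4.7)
The plan is to deduce Theorem \ref{hyperbolic_amalgam} as a direct consequence of Corollary \ref{cor: single_edge_4}, so the task reduces to verifying its three hypotheses: that $A$ has the \fcip\ relative to $C$, that $(A,C)$ has the $C$-\fgip, and that $B$ has the \fgip. The last of these is immediate from the assumption on $B$.

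For the first two hypotheses, I would exploit the fact that, since $A$ is locally quasi-convex hyperbolic and $C$ is finitely generated, $C$ is a quasi-convex subgroup of $A$. Combined with almost malnormality, Corollary \ref{fcip_qc_hyperbolic}(2) yields that $A$ has the \fcip\ relative to $C$, and Corollary \ref{cor: A-fgip} gives that $(A,C)$ has the $C$-\fgip.

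With all three hypotheses of Corollary \ref{cor: single_edge_4} in place, the biconditional $\pi_1(\AA,u_0)$ has the \fgip\ if and only if it has the \fgip\ relative to $C$ follows at once. The reverse implication (from \fgip\ to \fgip\ relative to $C$) is trivial from the definition. Thus there is no real obstacle to speak of: the substantive work was done in Section \ref{sec: fcip for qcv subgroups}, where the quasi-convex/almost malnormal hypothesis was shown to imply both \fcip\ and $C$-\fgip\ via the hyperbolic geometry (Lemmas \ref{finding_intersection} and \ref{hyperbolic_lemma}), and Theorem \ref{hyperbolic_amalgam} is essentially a packaging of those results into the single-edge amalgam criterion.
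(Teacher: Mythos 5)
Your proposal is correct and follows essentially the same route as the paper, which obtains Theorem \ref{hyperbolic_amalgam} by feeding Corollaries \ref{cor: A-fgip} and \ref{fcip_qc_hyperbolic} (quasi-convexity of $C$ coming from local quasi-convexity of $A$) into the single-edge amalgam criterion of Corollary \ref{cor: single_edge_4}. The only hypothesis you leave implicit is that $A$ itself has the \fgip\ (required in the statement of Corollary \ref{cor: single_edge_4}), which is immediate from Proposition \ref{prop: hyperbolic_sfgip} since $A$ is locally quasi-convex hyperbolic.
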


\begin{thm}
\label{hyperbolic_HNN}
    Let $G = A*_{\varphi}$ be a HNN extension, where $\varphi\colon C\to \varphi(C)$ identifies two subgroups of $A$, where $A$ is locally quasi-convex hyperbolic and where $C$ is an almost malnormal subgroup of $A$. Then $G$ has the \fgip\ relative to $C$ if and only if $G$ has the \fgip\ 
\end{thm}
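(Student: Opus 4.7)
The plan is to realise $G$ as $\pi_1(\AA, u_0)$ where $\AA$ is the one-vertex, one-edge pointed graph of groups with vertex group $A_{u_0} = A$ and edge group $A_e = C$, $\alpha_e$ being the inclusion $C \hookrightarrow A$ and $\omega_e = \varphi$. With the orientation $E^+ = \{e\}$ consisting of the single positive edge, the strategy is to verify the hypotheses of Theorem~\ref{fgip_criterion_3} and to read off the announced equivalence directly.

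For the vertex-group hypothesis, $A$ has the \fgip\ by Proposition~\ref{prop: hyperbolic_sfgip}. For the \lcip\ requirement on $A$ relative to $\{C\}$, I would use that $C$ is finitely generated (implicit in treating $\varphi$ within the graph-of-groups framework) and hence quasi-convex in $A$, so that Corollary~\ref{fcip_qc_hyperbolic}(2) gives the \fcip\ of $A$ relative to $C$, while Corollary~\ref{cor: A-fgip} supplies the $C$-\fgip\ of the pair $(A, C)$. For the condition on the edge group $A_e = C$ itself, I would argue that a quasi-convex subgroup of a hyperbolic group is hyperbolic, and that local quasi-convexity of $A$ transfers to $C$: any finitely generated subgroup of $C$ is a finitely generated and hence quasi-convex subgroup of $A$, which then sits quasi-convexly inside $C$ by a standard composition argument for quasi-convex subgroups of quasi-convex subgroups. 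Thus $C$ is locally quasi-convex hyperbolic, so Proposition~\ref{prop: hyperbolic_sfgip} provides the $(C, \mathcal{F}_C)$-\sfgip, and the classical fact that a hyperbolic group admits only finitely many conjugacy classes of finite subgroups yields a uniform bound on their orders, completing the list of hypotheses.

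Theorem~\ref{fgip_criterion_3} then yields that $G$ has the \fgip\ if and only if it has the \fgip\ relative to every subgroup in $\mathcal{E}$. Since $o(e) = u_0$, the edge subgroups are exactly the $G$-conjugates of $C$ (the subgroups $\omega_e(A_e) = \varphi(C) = t^{-1}Ct$ contribute the same family, via the stable letter $t$). The identity $K \cap C^g = (K^{g^{-1}} \cap C)^g$ for finitely generated $K \leqslant G$ immediately shows that the \fgip\ relative to $C$ is equivalent to the \fgip\ relative to all its $G$-conjugates, and the theorem follows (the reverse direction being trivial). The one conceptual subtlety, and the reason not to simply invoke the cleaner Corollary~\ref{cor: single_edge_3}, is that this corollary would require $C$ itself to have the full \sfgip, which already fails for groups such as $\Z \times \Z/k\Z$ (Example~\ref{ex: no sfgip}); almost malnormality of $C$ in $A$ does not preclude such a $C$ from arising as an edge group, so we instead rely on the weaker $(C, \mathcal{F}_C)$-\sfgip\ together with the bound on the orders of finite subgroups of $C$, which is precisely what Theorem~\ref{fgip_criterion_3} tolerates.
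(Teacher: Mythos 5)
Your proposal is correct and follows exactly the route the paper takes: it realises $G$ as the one-loop graph of groups, orients the single edge, and verifies the hypotheses of Theorem~\ref{fgip_criterion_3} via Corollary~\ref{fcip_qc_hyperbolic}(2), Corollary~\ref{cor: A-fgip} and Proposition~\ref{prop: hyperbolic_sfgip} (applied to the locally quasi-convex hyperbolic edge group $C$, with the bound on orders of finite subgroups coming from hyperbolicity), which is precisely the combination the paper cites. Your closing remark correctly identifies why Corollary~\ref{cor: single_edge_3} is not the right tool here, and the reduction from the family $\mathcal{E}$ of edge subgroups to the single subgroup $C$ via conjugation is the intended (if unstated) final step.
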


We obtain more precise statements for graphs of locally quasi-convex hyperbolic groups when the edge groups are virtually infinite cyclic.

\def\WW{\mathbb{W}}

Let $\AA$ be such a graph, let $u_0\in V(\gr A)$ and let $A = \pi_1(\AA,u_0)$. We start with the construction of $\WW(\AA)$, a graph of virtually $\ZZ$ groups with virtually $\ZZ$ edge groups, associated with $\AA$. This should be thought of as a generalisation of Wise's graph of cyclic groups from \cite[Definition 4.16]{Wi00}, a construction which proved useful in the literature for characterising properties in graphs of free groups with cyclic edge groups. Indeed, it will turn out that $\pi_1(\AA, u_0)$ will have the \fgip\ if and only if $\pi_1(\WW(\AA))$ does.

Recall that if $G$ is a group and $H\leqslant G$ is a subgroup, then the commensurator of $H$ in $G$ is the subgroup
    \[
    \comm_G(H) = \{ g\in G \mid [H\colon H\cap H^g], [H^g \colon H\cap H^g]<\infty\}.
    \]
For each edge $e\in E(\gr{A})$, let $C_e = \comm_{A_{o(e)}}(\alpha_e(A_e))$. For convenience, we call $C_e$ the \defin{commensurator} of $e$. We note that, since a subgroup of a virtually $\Z$ group has finite index if and only if it is infinite (see Fact~\ref{fact: elementary virtually Z}), we have
\[
C_e = \{
y \in A_{o(e)} \mid \alpha_e(A_e) \cap \alpha_e(A_e)^y \text{ is infinite}  
\}.
\]

\begin{lem}
    For each edge $e \in E(\gr{A})$, the commensurator $C_e$ is virtually $\Z$, $\comm_{A_{o(e)}}(C_e) = C_e$ and $C_e$ is almost malnormal in $A_{o(e)}$.
\end{lem}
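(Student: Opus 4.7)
The plan is to invoke classical facts about virtually cyclic subgroups of hyperbolic groups. Since $A_e$ is virtually $\Z$ and $\alpha_e$ is injective, $\alpha_e(A_e)$ is an infinite virtually cyclic subgroup of the hyperbolic group $A_{o(e)}$. A classical theorem of Gromov (see for instance Bridson--Haefliger, Chapter III.$\Gamma$.3) states that every infinite virtually cyclic subgroup $H$ of a hyperbolic group $G$ is contained in a unique maximal virtually cyclic subgroup of $G$, which coincides with $\comm_G(H)$. Applying this to $H = \alpha_e(A_e)$ and $G = A_{o(e)}$ gives that $C_e$ is virtually cyclic; since it contains the infinite group $\alpha_e(A_e)$, it is in fact virtually $\Z$.

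For the equality $\comm_{A_{o(e)}}(C_e) = C_e$, I would use the well-known invariance of the commensurator under passing to finite index subgroups: since $C_e$ and $\alpha_e(A_e)$ are both virtually $\Z$ and $\alpha_e(A_e) \leqslant C_e$ is infinite, Fact~\ref{fact: elementary virtually Z} shows that $\alpha_e(A_e)$ has finite index in $C_e$. Consequently,
\[
\comm_{A_{o(e)}}(C_e) \;=\; \comm_{A_{o(e)}}(\alpha_e(A_e)) \;=\; C_e.
\]

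Almost malnormality is then a formal consequence. Suppose $g \in A_{o(e)}$ is such that $C_e \cap C_e^g$ is infinite. Because $C_e$ is virtually $\Z$, every infinite subgroup of $C_e$ has finite index in $C_e$ (Fact~\ref{fact: elementary virtually Z}); the same holds for $C_e^g$. Thus $C_e \cap C_e^g$ has finite index in both $C_e$ and $C_e^g$, which means by definition that $g \in \comm_{A_{o(e)}}(C_e) = C_e$. Hence $C_e \cap C_e^g$ is infinite only if $g \in C_e$, i.e.\ $C_e$ is almost malnormal in $A_{o(e)}$.

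There is no real obstacle here: the whole argument reduces to the structural theorem for virtually cyclic subgroups of hyperbolic groups plus elementary properties of virtually $\Z$ groups already recorded in Fact~\ref{fact: elementary virtually Z}. The only point to keep in mind is to make explicit the citation (or a short self-contained justification) for the fact that $\comm_G(H)$ is virtually cyclic whenever $H$ is an infinite virtually cyclic subgroup of a hyperbolic group $G$, since this is the single non-trivial input.
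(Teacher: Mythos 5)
Your proof is correct and follows essentially the same route as the paper: establish that $\alpha_e(A_e)$ has finite index in $C_e$ (so $C_e$ is virtually $\Z$), then deduce $\comm_{A_{o(e)}}(C_e)=C_e$ from the invariance of commensurators under passage to finite-index subgroups, and obtain almost malnormality as a formal consequence via Fact~\ref{fact: elementary virtually Z}. The only difference is the external input for the first step: the paper cites Kapovich--Short's theorem that an infinite quasi-convex subgroup of a hyperbolic group has finite index in its commensurator, whereas you invoke the structure theorem on maximal virtually cyclic subgroups --- equivalent in this setting, since virtually $\Z$ subgroups of hyperbolic groups are quasi-convex.
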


\begin{proof}
    Since every vertex group in $\AA$ is locally quasi-convex hyperbolic and every edge group $A_e$ is virtually $\ZZ$, the image $\alpha_e(A_e)$ is an infinite quasi-convex subgroup of a hyperbolic group, and hence it has finite index in its commensurator $C_e$ (see, for example, \cite[Theorem 1]{ks96}). Using Fact~\ref{fact: elementary virtually Z}, it follows that, for each edge $e \in E(\gr{A})$, the $C_e$ is virtually $\Z$, $\comm_{A_{o(e)}}(C_e) = C_e$ and hence $C_e$ is almost malnormal in $A_{o(e)}$.
\end{proof}

Next, we say that two edges $e, e' \in E(\gr{A})$ are \emph{conjugate-commensurable (c.c.)} if they have the same origin ($o(e) = o(e')$) and there exists $x\in A_{o(e)}$ such that the subgroups $\alpha_e(A_e)^x$ and $\alpha_{e'}(A_{e'})$ are commensurable
(or equivalently, in our case, if $\alpha_e(A_e)^x\cap \alpha_{e'}(A_{e'})$ is infinite). It is routine to check that this is an equivalence relation in $E(\gr{A})$. Moreover, if $o(e) = o(e')$ and $\alpha_e(A_e)^x$ and $\alpha_{e'}(A_{e'})$ are commensurable, then
\begin{align*}
C_e &= \comm_{A_{o(e)}}(\alpha_e(A_e)\cap \alpha_{e'}(A_{e'})^{x^{-1}})\\
C_{e'} &= \comm_{A_{o(e)}}(\alpha_e(A_e)^x\cap \alpha_{e'}(A_{e'}))
\, ,
\end{align*}
which implies that $C_{e}^x = C_{e'}$. Thus the commensurators of all the edges in any given c.c.-class are conjugate to each other.

Now, we successively modify our graph of groups $\AA$ into graphs of groups $\AA_1, \AA_2$ and $\AA_3$ as follows.

\begin{enumerate}[(a)]
    \item For each c.c.\ class of edges, pick a representative $e$, and for each other edge $e' \in [e]$, replace $\alpha_{e'}$ with a suitable $\gamma_x\circ\alpha_{e'}$ so that all the edges in the c.c.-class $[e]$ have the same commensurator $C_e$. Let $\AA_1$ be the resulting graph of groups.

    \item Subdivide every edge $e$ of $\AA_1$ by introducing two auxiliary intermediate vertices $v_e$ and~$v_{e\inv}$, with associated vertex groups $C_e$ and $C_{e^{-1}}$, realising the inclusions 
$\alpha_{e}(A_{e}) \leqslant C_{e} \leqslant A_{o(e)}$ and $\omega_{e}(A_{e}) \leqslant C_{e^{-1}} \leqslant A_{t(e)}$, respectively. More explicitly, the edge $e$ in~$\AA_1$
\begin{figure}[H]
\centering
\begin{tikzpicture}[shorten >=3pt, node distance=.6cm and 1.5cm, on grid,auto,-latex]
   
   \node[] (Au) {$A_u$};
   \node[] (Ae) [right = of Au] {$A_e$};
   \node[] (Auu) [right = of Ae] {$A_{u'}$};

    \path[>->] (Ae) edge[] 
    node[pos=0.5,above=-.2mm]
    {$\alpha_{e}$}(Au);

    \path[>->] (Ae) edge[] 
    node[pos=0.5,above=-.2mm]
    {$\omega_{e}$}(Auu);

    \node[] (u) [above = of Au] {$u$};
    \node[] (uu) [above = of Auu] {$u'$};
    \path[-stealth] (u) edge[] 
    node[pos=0.5,above=-.2mm]
    {$e$}(uu);
\end{tikzpicture}
\end{figure}
\vspace{-15pt}
is replaced by the three-edge path of groups:
\begin{figure}[H]
\centering
\begin{tikzpicture}[shorten >=0pt, node distance=.6cm and 1.5cm, on grid,auto,-latex]
   
    \node[] (Au) [right = of Auu] {$A_u$};
    \node[] (Ce) [right = of Au] {$C_e$};
    \node[] (Ae) [right = of Ce] {$A_e$};
    \node[] (Cee) [right = of Ae] {$C_{e^{-1}}$};
    \node[] (Av) [right = of Cee] {$A_{u'}$};

    \path[{Hooks[left, length=2pt, width=7pt]}->] (Ce) edge[] 
    node[pos=0.5,above=-.2mm]
    {}(Au);

    \path[>->] (Ae) edge[] 
    node[pos=0.5,above=-.2mm]
    {$\alpha_e$}(Ce);

    \path[>->] (Ae) edge[] 
    node[pos=0.5,above=-.2mm]
    {$\omega_e$}(Cee);

    \path[{Hooks[right, length=2pt, width=7pt]}->] (Cee) edge[] 
    node[pos=0.5,above=-.2mm]
    {}(Av);

    \node[] (u) [above = of Au] {$u$};
    \node[] (ve) [above = of Ce] {$v_e$};
    \node[] (vee) [above = of Cee] {$v_{e\inv}$};
    \node[] (uu) [above = of Av] {$u'$};

    \path[>=stealth] (u) edge[] 
    node[pos=0.5,above=-.2mm]
    {$e_{init}$}(ve);

    \path[>=stealth] (ve) edge[] 
    node[pos=0.5,above=-.2mm]
    {$\tilde{e}$}(vee);

    \path[>=stealth] (uu) edge[] 
    node[pos=0.5,above=-.2mm]
    {$e\inv_{init}$}(vee);
\end{tikzpicture}
\end{figure}
\vspace{-15pt}
where the edge group of $e_{init}$ is $C_e$, $\alpha_{e_{init}}$ is the inclusion of $C_e$ in $A_{o(e)}$ and $\omega_{e_{init}}$ is the identity. We denote the resulting graph of groups by $\AA_2$. We also let $V_2$ be the set of auxiliary vertices used in the subdivisions, $V_2 = \{v_e \mid e\in E(\gr A_1)\}$.

\item Now we quotient $\AA_2$ by the initial segments of edges in the same c.c.-class. That is, for every c.c.-class $[e]$ of $E(\gr{A})$ we identify in $\gr{A}_2$ the edges $e'_{init}$ (and their end vertices $v_{e'}$) for all $e'\in [e]$, to a single edge $[e]_{init}$ with end vertex $v_{[e]}$:
\begin{figure}[H]
\centering
\begin{tikzpicture}[shorten >=0pt, node distance=.6cm and 1.5cm, on grid,auto,-latex]

   \begin{scope}
    \node[] (Au) {$A_u$};
    \node[] (Ce) [right = of Au] {$C_{e}$};

    \path[{Hooks[left, length=2pt, width=7pt]}->] (Ce) edge[] 
    node[pos=0.5,above=-.2mm]
    {}(Au);

    \node[] (u) [above = of Au] {$u$};
    \node[] (ve) [above = of Ce] {$v_{e'}$};
    \node[] (fa) [left = 1.25 of u] {for all $e' \in [e]\,,$};
   
    \path[>=stealth] (u) edge[] 
    node[pos=0.5,above=-.2mm]
    {$e'_{init}$}(ve);

    \node[] (i) [right = 1 of ve]{};
    \node[] (f) [right = 2 of ve] {};
    \path[->]
        (i) edge[bend left] (f);
    \end{scope}

    \begin{scope}[xshift=4.5cm ]
    \node[] (Au) {$A_u$};
    \node[] (Ce) [right = of Au] {$C_{e}$};

    \path[{Hooks[left, length=2pt, width=7pt]}->] (Ce) edge[] 
    node[pos=0.5,above=-.2mm]
    {}(Au);

    \node[] (u) [above = of Au] {$u$};
    \node[] (ve) [above = of Ce] {$v_{[e]}$};
   
    \path[>=stealth] (u) edge[] 
    node[pos=0.5,above=-.2mm]
    {$[e]_{init}$}(ve);
    \end{scope}
\end{tikzpicture}
\end{figure}
\vspace{-10pt}
We denote the resulting graph of groups by $\AA_3$, and by $V_3$ the set of vertices of the form $v_{[e]}$. In particular, $V(\gr A_3)$ is the disjoint union of $V(\gr A)$ and $V_3$. 

\item Finally, we let $\WW(\AA)$ be the subgraph of groups of $\AA_3$ induced by $V_3$.
\end{enumerate}

\begin{prop}\label{prop: properties of W(A)}
    Let $(\AA,u_0)$ be a pointed graph of locally quasi-convex hyperbolic groups with virtually $\ZZ$ edge groups as above.
    \begin{enumerate}[(1)]
    \item\label{item: pi1 of A3 is pi1 of A} $\pi_1(\AA_3,u_0) = \pi_1(\AA,u_0)$.
    \item\label{item: W(A) is virtually Z}$\WW(\AA)$ is a graph of virtually $\ZZ$ groups with virtually $\ZZ$ edge groups.
    \end{enumerate}    
\end{prop}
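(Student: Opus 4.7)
The plan is to prove (1) by verifying that each of the three construction steps (a), (b), (c) preserves the fundamental group, yielding $\pi_1(\AA, u_0) \cong \pi_1(\AA_1, u_0) \cong \pi_1(\AA_2, u_0) \cong \pi_1(\AA_3, u_0)$; statement (2) will then be immediate from the construction. Step (a) replaces $\alpha_{e'}$ by $\gamma_x \circ \alpha_{e'}$ for some $x \in A_{o(e')}$, which can be realised as an isomorphism of graphs of groups $\AA \to \AA_1$ given by the identity on the underlying graph and on all vertex and edge groups, carrying suitable twisting elements (namely $(e')_\alpha = x^{-1}$, $(e')_\omega = 1$) so that~\eqref{eq: twisted commutation alpha} is satisfied. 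Step (b) subdivides each edge of $\AA_1$ by inserting auxiliary vertices $v_e, v_{e\inv}$ with vertex groups $C_e, C_{e\inv}$; the new edges $e_{init}, e^{-1}_{init}$ have one edge map equal to the identity on the corresponding vertex group, and therefore (in the direction away from $u$) fall under the notion of non-reduced edge from Section~\ref{sec: define gog}; collapsing them via the procedure of Remark~\ref{rk: reduction is computable} restores $\AA_1$.

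Step (c), the identification of the initial segments of c.c.-equivalent edges, is the delicate step and is expected to be the main obstacle. After step (a), the edges $\{e'_{init} : e' \in [e]\}$ at a common origin $u$ share identical data: common edge group $C_e$, common $\alpha$ given by the inclusion $C_e \hookrightarrow A_u$, and common $\omega = \mathrm{id}_{C_e}$. The plan to show $\pi_1(\AA_3, u_0) \cong \pi_1(\AA_2, u_0)$ is to collapse in both graphs all non-reduced initial edges and observe that the two resulting graphs of groups coincide: in $\AA_2$, collapsing each $e_{init}$ merges $v_e$ into $o(e)$ and slides $\tilde e$ to originate at $o(e)$ with edge map $\alpha_e$ into $A_u$; in $\AA_3$, collapsing the single $[e]_{init}$ merges $v_{[e]}$ into $u$ and slides every $\tilde{e'}$ (for $e' \in [e]$) to originate at $u$ with the same edge map $\alpha_{e'}$. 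In both cases the resulting graph of groups is $\AA_1$ with each edge subdivided once at its terminal end, so $\pi_1$ is preserved.

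For (2), the vertex groups of $\WW(\AA)$ are by construction the commensurators $C_e$, which are virtually $\ZZ$ by the lemma preceding this proposition, while the edge groups are (copies of) the original edge groups $A_e$ of $\AA$, which are virtually $\ZZ$ by the standing hypothesis of the section. Hence $\WW(\AA)$ is a graph of virtually $\ZZ$ groups with virtually $\ZZ$ edge groups, completing the proof.
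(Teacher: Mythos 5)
Your proof is correct and follows essentially the same route as the paper, whose own proof of (1) simply asserts that the transformations from $\AA$ to $\AA_1$, $\AA_2$ and $\AA_3$ do not modify the fundamental group; you supply the verification the paper leaves implicit (twisting elements for step (a), and the non-reduced-edge collapses of Remark~\ref{rk: reduction is computable} for steps (b) and (c)), and your treatment of (2) is identical to the paper's. The only slip is cosmetic: collapsing \emph{all} initial segments (those of $e$ and of $e^{-1}$ alike) returns $\AA_1$ itself rather than ``$\AA_1$ subdivided at terminal ends'', but this does not affect the argument, since $\AA_2$ and $\AA_3$ still reduce to one and the same graph of groups.
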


\begin{proof}
    The transformations from $\AA$ to $\AA_1$, $\AA_2$ and $\AA_3$ do not modify the fundamental groups, so \eqref{item: pi1 of A3 is pi1 of A} holds.

    The vertex groups of $\WW(\AA)$ are the vertex groups of $\AA_3$ at the vertices in $V_3$, namely the commensurators of the edges of $\AA$. We already observed that these groups are virtually $\ZZ$. The edges of $\WW(\AA)$ are the edges of $\AA_3$ between vertices in $V_3$, namely the $\tilde e$ ($e \in E(\gr A)$), and their edge groups are the $A_e$, which are virtually $\ZZ$ by assumption. This concludes the verification of~\eqref{item: W(A) is virtually Z}.
\end{proof}

We also record a computability result for $\WW(\AA)$. This requires the following technical (and well-known) results on hyperbolic groups.

\begin{fact}\label{fact: compute Ce}
    If a hyperbolic group $G$ is given by a finite presentation and $H$ is a virtually $\ZZ$ subgroup of $G$, given by a finite set of generators, then one can compute a generating set of $\comm_G(H)$ and the index of $\comm_G(H)$ in $G$. Moreover, if $H$ and $K$ are quasi-convex subgroups of $G$, one can decide whether $H$ and $K$ are conjugates.
\end{fact}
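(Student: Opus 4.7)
The plan is to handle the three algorithmic tasks separately, each reducing to standard computability facts for hyperbolic groups. Throughout, we use that a finite presentation of a hyperbolic group makes the following computable: a hyperbolicity constant $\delta$ (via Papasoglu's algorithm), the word and conjugacy problems, the torsion bound, and quasi-convexity constants for given virtually $\ZZ$ or quasi-convex subgroups.

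For the generating set of $\comm_G(H)$, first extract an element $h \in H$ of infinite order generating an injectively characteristic cyclic subgroup $\langle h \rangle$, as provided by Lemma~\ref{fully_char}; this can be found by enumerating words on the generators of $H$ and using the solvable order problem (bounded by the torsion bound of $G$) to detect infinite order, together with the enumeration argument in the proof of Lemma~\ref{fully_char} to guarantee injective characteristicity in $H$. The key observation is then that $\comm_G(H) = \comm_G(\langle h \rangle)$, since any element commensurating $H$ must commensurate its finite-index characteristic cyclic subgroup, and conversely any element commensurating $\langle h \rangle$ commensurates $H$ (using that both $\langle h\rangle$ and its conjugate have finite index in $H$, $H^g$ respectively). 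Now $\comm_G(\langle h\rangle)$ coincides with the unique maximal elementary (equivalently, two-ended) subgroup of $G$ containing $h$, namely the setwise stabiliser of the pair of fixed points of $h$ on $\partial G$. Using $\delta$-hyperbolicity and the computable translation length of $h$, one shows that every coset of $\langle h\rangle$ in $\comm_G(\langle h\rangle)$ has a representative of length bounded by an explicit function of $\delta$, $|h|_S$ and the torsion bound; enumerating such representatives and testing commensuration (reducing to the word problem) yields a finite generating set.

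For the index $[G:\comm_G(H)]$, note that since $\comm_G(H)$ is virtually $\ZZ$, this index is finite if and only if $G$ itself is virtually $\ZZ$. One can algorithmically determine which of the three classes $G$ belongs to in the classical trichotomy for hyperbolic groups (finite, virtually $\ZZ$, non-elementary) by running in parallel two semi-decision procedures: on one hand, searching for a pair of loxodromic elements with disjoint fixed point pairs on $\partial G$ (producing an embedded $F_2$, which certifies non-elementarity), and on the other hand searching for a presentation exhibiting $G$ as finite or virtually cyclic (via bounded growth checks, which become certificates once growth up to the appropriate radius is linear). If $G$ is non-elementary, output $\infty$; if $G$ is finite or virtually $\ZZ$, compute $[G:\comm_G(H)]$ by standard Todd--Coxeter-style coset enumeration, which terminates since $\comm_G(H)$ has finite index in $G$.

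For the conjugacy of quasi-convex subgroups $H$ and $K$: compute quasi-convexity constants $\lambda_H,\lambda_K$ for both subgroups and a hyperbolicity constant $\delta$. A standard thin-quadrilateral argument (essentially the one used in Lemma~\ref{finding_intersection} and Lemma~\ref{hyperbolic_lemma}) shows that if $g\in G$ satisfies $H^g=K$, then there exists such a conjugator of length bounded by an explicit function of $\delta$, $\lambda_H$, $\lambda_K$ and the lengths of the generators of $H$ and $K$; enumerate all $g$ of length below this bound, and for each test whether $H^g=K$ by checking the two inclusions, each of which reduces to testing membership of finitely many generators in a quasi-convex subgroup (decidable by Farb's or Kapovich's algorithm). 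The main obstacle across all three tasks is the effective extraction of the relevant constants: the hyperbolicity constant and the quasi-convexity constants of the given subgroups, together with the sharp length bound for commensurating/conjugating elements, which is what allows the search to terminate.
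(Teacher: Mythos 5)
Your proposal is correct in outline, but it takes a genuinely different --- and much heavier --- route than the paper. The paper's proof is a short reduction to the literature: a virtually $\ZZ$ subgroup of a hyperbolic group is quasi-convex \cite[Corollary III.$\Gamma$.3.10]{bh99}; since $H$ has finite index in $\comm_G(H)$ and the latter is almost malnormal, $\comm_G(H)$ is the union of the finitely many cosets $gH$ with $H\cap H^g$ infinite, so $[\comm_G(H):H]$ is the height of $H$, which is computable by \cite[Corollary 6.10]{kmw17}; a complete list of representatives $g_1,\dots,g_n$ of the $(H,H)$-double cosets with $H\cap H^g$ infinite is computable by \cite[Proposition 6.8 \& Corollary 6.10]{kmw17}, and then $\comm_G(H)=\langle H,g_1,\dots,g_n\rangle$; decidability of conjugacy of quasi-convex subgroups is again \cite[Corollary 6.10]{kmw17}. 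You instead rebuild these ingredients from scratch: reduction to $\comm_G(\langle h\rangle)$ for an infinite-order $h\in H$ (correct --- though injective characteristicity of $\langle h\rangle$ is superfluous here, since any infinite subgroup of a virtually $\ZZ$ group has finite index and commensurators depend only on the commensurability class, so any infinite-order $h$ works), identification with the maximal elementary subgroup $E(h)$, bounded coset representatives, and the standard short-conjugator argument behind decidability of conjugacy. What the paper's route buys is that all the delicate effectivity --- computing a quasi-convexity constant for a subgroup merely \emph{promised} to be quasi-convex, computing the double cosets, the length bounds --- is outsourced to \cite{kmw17}; your route needs those same inputs made precise or cited (e.g.\ Kapovich's quasiconvexity-detection algorithm) before your ``explicit function of $\delta$'' bounds are legitimate, and your conjugacy paragraph is essentially a sketch of the proof of the very result the paper cites.

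Two concrete caveats. First, the quantity you compute in your second task is not the one the paper's proof establishes: the proof computes $[\comm_G(H):H]$ (the height of $H$), and the phrase ``the index of $\comm_G(H)$ in $G$'' in the statement is evidently a slip --- that index is infinite whenever $G$ is non-elementary and plays no role in the application (Proposition~\ref{prop: A3 computable} only needs the commensurators, the finite-index inclusions $\alpha_e(A_e)\leqslant C_e$, and the c.c.-classes). Your literal reading forces you into deciding the elementary/non-elementary trichotomy, where your ``bounded growth checks'' step is not a valid certificate as stated: linear growth up to a finite radius does not certify virtual cyclicity without an effective threshold. If you keep that branch, replace it by a genuine semi-decision procedure --- e.g.\ enumerate pairs $(g,n)$ and run coset enumeration to verify $[G:\langle g\rangle]=n$ --- or cite decidability of elementarity for hyperbolic groups outright. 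With that repair, or better, with the intended reading of the statement (for which your enumeration of bounded coset representatives of $\langle h\rangle$ in $E(h)$ already yields $[\comm_G(H):H]$ after computing $[H:\langle h\rangle]$), your argument goes through.
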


\begin{proof}
    By \cite[Corollary III.$\Gamma$.3.10]{bh99}, a virtually $\Z$ subgroup $H$ of a hyperbolic group $G$ is quasi-convex. Since $H$ has finite index in its commensurator $\comm_G(H)$ and since $\comm_G(H)$ is almost malnormal in $G$, we see that $\comm_G(H)$ is the union of all (finitely many) cosets $gH$ so that $H\cap H^g$ is infinite (and hence, finite index in $H$ and $H^g$). Thus, the index of $H$ in $\comm_G(H)$ is precisely the height of $H$ which is computable by \cite[Corollary 6.10]{kmw17}. By combining \cite[Proposition 6.8 \& Corollary 6.10]{kmw17}, we see that a complete list of $(H, H)$ double cosets $g_1, \ldots, g_n$ so that $H\cap H^g$ is infinite is computable. Since $\comm_G(H) = \langle H, g_1, \ldots, g_n\rangle$, a generating set for $\comm_G(H)$ is computable as desired.

    Finally, \cite[Corollary 6.10]{kmw17} also establishes the decidability of conjugacy for all quasi-convex subgroups of $G$.
\end{proof}

\begin{prop}\label{prop: A3 computable}
    Let $(\AA,u_0)$ be a pointed graph of locally quasi-convex hyperbolic groups with virtually $\ZZ$ edge groups. If $\gr A$ is finite and its vertex and edge groups are given explicitly, then $\AA_3$ and $\WW(\AA)$ are computable.
\end{prop}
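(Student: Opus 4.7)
The plan is to show that every stage of the construction of $\AA_3$ (and hence $\WW(\AA)$) from $\AA$ is effective, given that $\gr A$ is finite, that each vertex group is a locally quasi-convex hyperbolic group presented explicitly, and that each virtually $\ZZ$ edge group is given by a finite generating set together with the edge morphisms $\alpha_e$ and $\omega_e$. The construction has two non-combinatorial inputs that require algorithmic justification: the commensurators $C_e$, and the partition of $E(\gr A)$ into c.c.-classes together with explicit conjugators witnessing c.c.-equivalence.

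First, for each edge $e \in E(\gr A)$, I would apply Fact~\ref{fact: compute Ce} to the virtually $\ZZ$ subgroup $\alpha_e(A_e)$ of the locally quasi-convex hyperbolic group $A_{o(e)}$ to compute a finite generating set for $C_e = \comm_{A_{o(e)}}(\alpha_e(A_e))$, as well as the (finite) index $[C_e:\alpha_e(A_e)]$. Second, for any two edges $e, e'$ sharing the same origin, the c.c.-equivalence $e \sim e'$ is equivalent to the conjugacy of $C_e$ and $C_{e'}$ in $A_{o(e)}$: one direction is recorded in the excerpt, and the converse follows because $\alpha_e(A_e)^x$ and $\alpha_{e'}(A_{e'})$ are finite-index subgroups of $C_e^x = C_{e'}$ as soon as such an $x$ exists, and hence commensurable. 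Both $C_e$ and $C_{e'}$ are quasi-convex (being virtually $\ZZ$ in a hyperbolic group), so this conjugacy is decidable by Fact~\ref{fact: compute Ce}; applying the test to each pair of coterminal edges partitions $E(\gr A)$ into c.c.-classes.

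With the c.c.-classes and explicit conjugators in hand, step (a) of the construction outputs $\AA_1$ effectively by replacing each $\alpha_{e'}$ with $\gamma_x \circ \alpha_{e'}$ for the appropriate $x$. Steps (b), (c) and (d) are then purely combinatorial graph-of-groups manipulations: subdividing each edge of $\AA_1$ by the auxiliary vertices $v_e$ and $v_{e\inv}$ with vertex groups $C_e$ and $C_{e\inv}$ (together with the computed inclusions $\alpha_e(A_e) \leqslant C_e \leqslant A_{o(e)}$); identifying within each c.c.-class the initial segments $e'_{init}$ and their endpoints to a single edge $[e]_{init}$ and vertex $v_{[e]}$; and finally restricting to the subgraph of groups induced by $V_3$ to obtain $\WW(\AA)$. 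Each of these steps produces a finite, explicit description of the resulting graph of groups.

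The main obstacle I foresee is conceptual rather than technical: Fact~\ref{fact: compute Ce} as stated asserts only the decidability of conjugacy of quasi-convex subgroups, not the explicit computation of a conjugator, whereas step (a) requires such a conjugator. This gap should be closed by invoking the constructive nature of the algorithms of \cite{kmw17} underlying Fact~\ref{fact: compute Ce} (which enumerate short double coset representatives with $H \cap H^g$ infinite and thus produce explicit conjugators when they exist). Making this observation precise is the part of the argument that demands the most care.
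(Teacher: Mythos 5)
Your proposal is correct and follows essentially the same route as the paper, whose entire proof is to invoke Fact~\ref{fact: compute Ce} to compute the commensurators $C_e$ and the c.c.-classes and then observe that the remaining steps are combinatorial; your reduction of c.c.-equivalence to conjugacy of the $C_e$ is exactly the intended use of that fact. Your only addition is the (legitimate, and correctly resolved) observation that step (a) needs an explicit conjugator rather than mere decidability of conjugacy, a point the paper glosses over but which is closed either by the constructive nature of the algorithms in \cite{kmw17} or by a decide-then-enumerate search.
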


\begin{proof}
    In view of Fact~\ref{fact: compute Ce}, we can compute the commensurators $C_e$ of all the edges of $A$ and the c.c.-classes. The computability of $\AA_1$, $\AA_2$, $\AA_3$ and $\WW(\AA)$ follows.
\end{proof}

We can now establish a characterisation of the \fgip\ for $\pi_1(\AA,u_0)$ in terms of the properties of the fundamental groups of the connected components of $\WW(\AA)$.

\begin{prop}\label{prop: charact fgip locally qcv hyp}
    Let $(\AA,u_0)$ be a pointed graph of locally quasi-convex hyperbolic groups with virtually $\Z$ edge groups. Then $\pi_1(\AA, u_0)$ has the \fgip\ if and only if the fundamental group of every connected component of $\WW(\AA)$ has the \fgip
\end{prop}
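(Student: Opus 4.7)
\medskip

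\noindent\textbf{Proof plan.}

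The forward direction is immediate. Each component $W_i$ is a sub-graph-of-groups of $\AA_3$, so the morphism of graphs of groups $W_i \hookrightarrow \AA_3$ induces an injection $\pi_1(W_i) \hookrightarrow \pi_1(\AA_3, u_0) = \pi_1(\AA, u_0)$ (by Proposition~\ref{prop: properties of W(A)}~\eqref{item: pi1 of A3 is pi1 of A} and Proposition~\ref{folded}). Since the \fgip\ passes to subgroups, $\pi_1(W_i)$ has the \fgip\ whenever $\pi_1(\AA,u_0)$ does.

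For the backward direction, I would apply Theorem~\ref{thm: combine_subgraphs} to $\AA_3$ with the following ``star-extended component'' decomposition. For each component $W_i$ of $\WW(\AA)$, let $\AA_i$ be the sub-graph-of-groups of $\AA_3$ consisting of $W_i$ together with, for every $v_{[e]} \in V(W_i)$, the arm $[e]_{init}$ and its original-vertex endpoint $u=o(e)$. These subgraphs cover $\gr A_3$ and intersect pairwise only at original vertices (since each $v_{[e]}$ lies in exactly one component $W_i$). The hypotheses of Theorem~\ref{thm: combine_subgraphs} are verified as follows. At each auxiliary vertex $v_{[e]}$, every adjacent edge of $\gr A_3$ (namely $[e]_{init}$ and the $\tilde{e'}$ for $e' \in [e]$) lies in the single subgraph $\AA_i$ containing $v_{[e]}$, so the weak \fcip\ condition is vacuous. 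At an original vertex $u$, a pair of adjacent arms $[e]_{init}, [e']_{init}$ lies in different subgraphs precisely when the c.c.-classes $[e]\neq[e']$ have their $v_{[e]}, v_{[e']}$ in different components of $\WW(\AA)$; by definition of c.c.-equivalence, $C_e^g\cap C_{e'}$ is finite for every $g\in A_u$, so $\{C_e : [e] \text{ a c.c.-class at } u\}$ is an almost malnormal collection in the locally quasi-convex hyperbolic group $A_u$. By Corollary~\ref{fcip_qc_hyperbolic_2}, $A_u$ has the \fcip, hence the weak \fcip, relative to this collection. Finally, the \fgip\ of $\pi_1(\AA_3,u_0)$ relative to each edge subgroup of $\AA_3$ is automatic since all edge groups are virtually $\Z$ and hence Noetherian.

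The remaining ingredient is that each $\pi_1(\AA_i, *)$ has the \fgip, which I would prove by induction on the number of edges of $\gr A$. If $\WW(\AA)$ has at least two components, then $\AA_i$ (viewed as a graph of locally quasi-convex hyperbolic groups with virtually $\Z$ edge groups) has strictly fewer edges than $\AA$, and its associated Wise graph equals the single component $W_i$, which has \fgip\ fundamental group by hypothesis; the inductive hypothesis then yields the \fgip\ for $\pi_1(\AA_i)$. The base case ($\gr A$ has no edges) is trivial, and single-vertex/single-edge cases can be read off directly from $\AA_3$.

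The main obstacle is the case in which $\WW(\AA)$ is itself connected, so that $\AA_1 = \AA_3$ and the induction does not progress. I would handle this by iteratively attaching the original vertices to $W_1$. Ordering the original vertices so that each is attached to the previously-built group by a single arm first (an amalgamation over the commensurator $C_e$), Theorem~\ref{hyperbolic_amalgam} applies directly: $A_u$ is locally quasi-convex hyperbolic, $C_e$ is finitely generated and almost malnormal in $A_u$ (as it is the commensurator of an infinite quasi-convex subgroup), the other side already has \fgip\ by induction on the number of previously attached arms, and $C_e$ is Noetherian. The genuinely delicate step is attaching subsequent arms from an already-present $u$ to a different $v_{[e']} \in W_1$, which produces an HNN extension whose base is not a single locally quasi-convex hyperbolic group, so Theorem~\ref{hyperbolic_HNN} cannot be invoked directly. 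Here I would exploit almost malnormality of $C_{e'}$ in $A_u$ together with the amalgamated structure of the base to verify the \fcip/$C_{e'}$-\fgip\ hypotheses of Corollary~\ref{cor: single_edge_3}; the \sfgip\ of $C_{e'}$ required there is the most subtle point, since virtually $\Z$ groups need not have the \sfgip, and one must argue that in the situation at hand the potentially problematic finite-torsion configurations cannot cause the sum of ranks to blow up.
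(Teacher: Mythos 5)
Your forward direction is fine and matches the paper. The backward direction, however, has a genuine gap exactly where you flag it, and the tool you propose to close it cannot work as stated. Your plan reduces everything to the case where $\WW(\AA)$ is connected, and there you propose to build $\pi_1(\AA,u_0)$ by iteratively attaching arms, invoking Corollary~\ref{cor: single_edge_3} for the HNN steps. But Corollary~\ref{cor: single_edge_3} requires the associated subgroup $C_{e'}$ to have the \sfgip, and a virtually $\Z$ group with torsion need not have it: Example~\ref{ex: near definition of fgip} shows $\Z\times\Z/2\Z$ fails the \sfgip, and commensurators $C_{e'}$ of this isomorphism type genuinely occur. So the hypothesis you would need to verify is simply false in general, not merely ``subtle.'' Moreover, even setting torsion aside, Corollary~\ref{cor: single_edge_3} also needs the base group of the HNN extension to have the \fcip\ relative to $C_{e'}$ and the $C_{e'}$-\fgip; your base is an amalgam built in earlier steps, not a locally quasi-convex hyperbolic group, so Corollaries~\ref{cor: A-fgip} and~\ref{fcip_qc_hyperbolic} do not apply to it and you have no way to verify these hypotheses. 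The iterative attachment therefore does not go through.

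The paper avoids all of this by never decomposing edge-by-edge. It forms $\AA_4$ from $\AA_3$ by collapsing each component $\WW_j$ to a single vertex with vertex group $W_j=\pi_1(\WW_j)$; the resulting graph of groups is bipartite between the original vertices $V(\gr A)$ and the collapsed vertices, all edge groups are the commensurators $C_e$, and one orients every edge from $V(\gr A)$ towards the collapsed vertices. Theorem~\ref{fgip_criterion_3} is then applied in one shot: the \fcip\ of $A_u$ relative to the almost malnormal collection $\{C_e\}$ comes from Corollary~\ref{fcip_qc_hyperbolic_2}, the $C_e$-\fgip\ from Corollary~\ref{cor: A-fgip}, and --- crucially --- the edge-group condition in Theorem~\ref{fgip_criterion_3} only asks that $(A_e,\mathcal F_e)$ have the \sfgip\ with bounded torsion, which every virtually $\Z$ group satisfies by Proposition~\ref{prop: hyperbolic_sfgip}. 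That weakened hypothesis is precisely what your route lacks. If you want to salvage your approach, you should replace Corollary~\ref{cor: single_edge_3} by Theorem~\ref{fgip_criterion_3} applied globally to the collapsed graph of groups $\AA_4$, at which point the component-by-component decomposition and the induction become unnecessary.
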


\begin{proof}
    Let $A = \pi_1(\AA, u_0)$.
    Note that $\WW(\AA)$ is a (possibly non-connected) graph of virtually $\ZZ$ groups, with virtually $\ZZ$ edge groups. We let $(\WW_j)_j$ be the collection of connected components of $\WW(\AA)$. For each $j$, let $W_j = \pi_1(\WW_j,v_j)$ where $v_j$ is a vertex of $\WW_j$ chosen arbitrarily.
    
    Of course, each $W_j$ is isomorphic to a subgroup of $A$. In particular, if $A$ has the \fgip, then so does each $W_j$.

    Suppose, conversely, that each $W_j$ has the \fgip\  Let $\AA_4$ be the graph of groups obtained from $\AA_3$ by quotienting out the $\mathbb{W}_j$s. That is, we collapse each $\mathbb{W}_j$ into a single vertex $w_j$ with associated vertex group $W_j$, naturally updating the adjacent edges and respective edge maps so that the resulting pointed graph of groups $(\AA_4,u_0)$ has the same fundamental group as $(\AA_3,u_0)$, and hence as $(\AA,u_0)$.

    By construction, $V(\gr A_4)$ is the disjoint union of $V(\gr A)$ and $V_4 = \{w_j\}_j$, and every edge of $\gr A_4$ connects a vertex of $V(\gr A)$ with one in $V_4$. Let $E^+$ be the orientation of $\AA_4$ which selects each edge from $V(\gr A)$ to $V_4$. Note that the edge groups of $\AA_4$ are all commensurators of edges in $E(\gr A)$, and hence are virtually $\ZZ$. We want to use Theorem~\ref{fgip_criterion_3} to establish that $A$ has the \fgip\ It suffices to verify the following.

    \begin{itemize}
    \item If $u\in V(\gr A)$, then $A_u$ has the \fcip\ relative to the collection $\{\alpha_e(A_e) \mid e\in E^+, \ o(e) = u\}$. This holds by Corollary \ref{fcip_qc_hyperbolic_2}. Indeed, $A_u$ is locally quasi-convex hyperbolic, and the $\alpha_e(A_e)$ form an almost malnormal collection of subgroups (because there are commensurators of virtually $\Z$ groups that lie in distinct c.c.-classes).

    \item If $u\in V(\gr A)$ and $e\in E^+$, then $(A_u, \alpha_e(A_e))$ has the $\alpha_e(C_e)$-\fgip\ This holds by Corollary~\ref{cor: A-fgip}.
    
    \item For each edge $e\in E^+$, $(A_e, \mathcal F_e)$ has the \sfgip, where $\mathcal F_e$ is the collection of finite subgroups of $A_e$, and there is a bound on the order of groups in $\mathcal F_e$. The \sfgip\ holds by Proposition~\ref{prop: hyperbolic_sfgip_2}. When $e$ is fixed, the corresponding virtually $\ZZ$ edge group is fixed, and the order of the finite groups in it is bounded.
    
    \item $A$ has the \fgip\ relative to each edge group, and this holds because each edge group is virtually $\ZZ$ and hence has only finitely generated subgroups.
    \end{itemize}
Theorem \ref{fgip_criterion_3} can now be used, to show that $A$ has the \fgip
\end{proof}

This yields the following important corollaries. The first is a characterisation of the \fgip\ for graphs of locally quasi-convex hyperbolic groups with virtually $\Z$ edge groups.

\begin{cor}\label{cor: main locally qc characterization}
    Let $(\AA,u_0)$ be a pointed graph of locally quasi-convex hyperbolic groups with virtually $\Z$ edge groups.
    Then
    $\pi_1(\AA, u_0)$ has the \fgip\ if and only if it does not contain $F_2 \times \ZZ$
\end{cor}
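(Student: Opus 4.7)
The plan is to reduce the statement to Theorem~\ref{thm: vcyclic_fgip} via Proposition~\ref{prop: charact fgip locally qcv hyp} applied to the auxiliary graph of groups $\WW(\AA)$ constructed just before the corollary.

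For the forward implication, I would simply note that the \fgip\ is inherited by subgroups. Since $F_2 \times \ZZ$ notoriously fails the \fgip\ (as already pointed out in the Introduction), if $\pi_1(\AA, u_0)$ had $F_2\times\ZZ$ as a subgroup, it could not itself have the \fgip\

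For the backward implication, suppose $\pi_1(\AA, u_0)$ does not contain $F_2 \times \ZZ$ as a subgroup. Let $(\WW_j)_j$ be the collection of connected components of $\WW(\AA)$, and for each $j$ let $W_j = \pi_1(\WW_j, v_j)$ for some choice of basepoint $v_j$. As observed in the proof of Proposition~\ref{prop: charact fgip locally qcv hyp}, each $W_j$ is isomorphic to a subgroup of $\pi_1(\AA, u_0)$, and hence $W_j$ does not contain $F_2\times \ZZ$ either. By Proposition~\ref{prop: properties of W(A)}~\eqref{item: W(A) is virtually Z}, each $\WW_j$ is a graph of virtually $\ZZ$ groups with virtually $\ZZ$ edge groups, so Theorem~\ref{thm: vcyclic_fgip} applies and gives that $W_j$ has the \fgip\ Applying Proposition~\ref{prop: charact fgip locally qcv hyp} in the other direction then yields that $\pi_1(\AA, u_0)$ has the \fgip

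There is essentially no obstacle: all the technical work has been done in the construction of $\WW(\AA)$, in Proposition~\ref{prop: charact fgip locally qcv hyp}, and in Theorem~\ref{thm: vcyclic_fgip}. The only point that might warrant a brief comment is the observation that each component fundamental group $W_j$ embeds in $\pi_1(\AA,u_0)$, which follows from the fact that $\WW(\AA)$ is a subgraph of groups of $\AA_3$ and $\pi_1(\AA_3, u_0) = \pi_1(\AA, u_0)$ (Proposition~\ref{prop: properties of W(A)}~\eqref{item: pi1 of A3 is pi1 of A}).
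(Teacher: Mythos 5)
Your proposal is correct and follows exactly the same route as the paper: subgroup-heredity of the \fgip\ together with the failure of the \fgip\ in $F_2\times\ZZ$ for the forward direction, and Theorem~\ref{thm: vcyclic_fgip} applied to the components of $\WW(\AA)$ combined with Proposition~\ref{prop: charact fgip locally qcv hyp} for the converse. The extra remark that each $W_j$ embeds in $\pi_1(\AA,u_0)$ via Proposition~\ref{prop: properties of W(A)} is a harmless (and correct) elaboration of a point the paper leaves implicit.
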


\begin{proof}
    If $A$ has the \fgip, then it cannot contain $F_2\times \ZZ$. Conversely, if $A$ does not contain $F_2\times \ZZ$, neither do the fundamental groups of the connected components of $\WW(\AA)$. By Theorem~\ref{thm: vcyclic_fgip}, it follows that these fundamentaly groups have the \fgip, and Proposition~\ref{prop: charact fgip locally qcv hyp} then shows that $A$ has the \fgip
\end{proof}

The second corollary is a decidability result for the \fgip

\begin{cor}\label{cor: decidability for locally qcv hyperbolic}
    Let $(\AA,u_0)$ be a pointed graph of locally quasi-convex hyperbolic groups with virtually $\Z$ edge groups. If $\gr A$ is finite and the vertex and edge groups of $\AA$ are given explicitly, then one can decide whether $\pi_1(\AA,u_0)$ has the \fgip
\end{cor}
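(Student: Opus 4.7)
The plan is to reduce the decidability of the \fgip\ for $\pi_1(\AA,u_0)$ to the decidability of the \fgip\ for each connected component of the associated graph of virtually $\ZZ$ groups $\WW(\AA)$, and then apply the algorithm provided by Remark~\ref{rk: decide fgip for virtually Z}.

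First, I would invoke Proposition~\ref{prop: A3 computable} to compute the graph of groups $\WW(\AA)$ from the given data. This uses Fact~\ref{fact: compute Ce} in an essential way: since vertex groups of $\AA$ are hyperbolic and finitely presented (hence the finite presentations can be extracted from the explicit description of the vertex groups), one can compute the commensurator of each virtually $\ZZ$ edge group inside the adjacent vertex group, decide when two such commensurators are conjugate (hence determine the c.c.-classes), and perform the subdivision and identification steps that define $\AA_3$ and $\WW(\AA)$. The connected components $(\WW_j)_j$ of $\WW(\AA)$ can then be enumerated, and since $\gr A$ is finite, there are finitely many of them, each with finite underlying graph and explicitly given virtually $\ZZ$ vertex and edge groups.

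Next, I would apply Proposition~\ref{prop: charact fgip locally qcv hyp}, which asserts that $\pi_1(\AA,u_0)$ has the \fgip\ if and only if every $\pi_1(\WW_j, v_j)$ has the \fgip\ Thus it suffices to decide the latter for each $j$. For this, I would invoke Remark~\ref{rk: decide fgip for virtually Z}, which states that the \fgip\ is decidable for fundamental groups of finite graphs of virtually $\ZZ$ groups with virtually $\ZZ$ edge groups, provided the data is given in the sense that one can decide whether an edge morphism is an isomorphism and, if so, invert it.

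The main technical point, and the potential obstacle, is verifying that the data of $\WW(\AA)$ meets the effectivity requirements of Remark~\ref{rk: decide fgip for virtually Z}. The vertex groups of $\WW(\AA)$ are commensurators $C_e$ of virtually $\ZZ$ subgroups of hyperbolic groups, and its edge groups are the original edge groups $A_e$ of $\AA$, with edge morphisms induced from the inclusions $\alpha_e,\omega_e$ composed with the identifications made in the construction. Since virtually $\ZZ$ groups are effectively computable (generating sets, indices, and isomorphism types can be read off), and the inclusions $\alpha_e(A_e)\leqslant C_e$ have computable finite index (again by Fact~\ref{fact: compute Ce}), one can decide whether any edge map is surjective (hence an isomorphism, by injectivity built into the graph of groups structure) and, when it is, compute its inverse on generators. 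With this verified, Remark~\ref{rk: decide fgip for virtually Z} applies to each component $\WW_j$ and the decidability of the \fgip\ for $\pi_1(\AA,u_0)$ follows by combining the finitely many yes/no answers through Proposition~\ref{prop: charact fgip locally qcv hyp}.
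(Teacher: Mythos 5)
Your proposal is correct and follows essentially the same route as the paper: reduce via Proposition~\ref{prop: charact fgip locally qcv hyp} to the finitely many components of $\WW(\AA)$, computed using Proposition~\ref{prop: A3 computable} (and Fact~\ref{fact: compute Ce}), and then decide the \fgip\ for each component by Remark~\ref{rk: decide fgip for virtually Z}. Your additional check that the edge maps of $\WW(\AA)$ meet the effectivity requirements of that remark is a slightly more careful treatment of a point the paper compresses into ``provided that $\WW(\AA)$ is computable,'' but it is the same argument.
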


\begin{proof}
    By Proposition~\ref{prop: charact fgip locally qcv hyp}, $A$ has the \fgip\ if and only if each connected component of $\WW(\AA)$ does. This in turn is decidable in view of Remark~\ref{rk: decide fgip for virtually Z}, provided that $\WW(\AA)$ is computable. This was established in Proposition~\ref{prop: A3 computable}.
\end{proof}



\renewcommand*{\bibfont}{\small}
\printbibliography


\appendix

\makeatletter
\renewcommand{\@seccntformat}[1]{}
\makeatother

\section{Appendix: Remarks on Burns subgroups}\label{sec: Burns}

Burns \cite{bur72} considered almost malnormal and finitely involved subgroups, which Cohen called \emph{Burns subgroups} \cite{coh76}. Kapovich generalised this property to tuples of subgroups in \cite[Definition 5.3]{kap02} as follows. 

\begin{defn}\label{def: Burns collection}
A tuple $(A_1, \ldots, A_n)$ of subgroups of a group $G$ is a \emph{Burns collection of subgroups} if the following holds:
\begin{enumerate}[(1)]
\item for each $i\in [1,n]$, $A_i$ admits a left transversal $T_i\subset G$ containing $1$ (that is, a set of unique representatives of the cosets $xA_i$ when $x$ runs through $G$) such that
\begin{enumerate}[(a)]
\item There exists a finite subset $F_i$ of $A_i$ such that $A_i\,(T_i-\{1\})\subseteq T_i\,F_i$,
\item\label{item: item 2 of Burns} For any finitely generated subgroup $H$ of $G$ and any element $g\in G$, there exists a finite subset $F$ of $A_i$ such that $H\,g\subseteq T_i\,F\,(H^g\cap A_i)$;
\end{enumerate}
\item for each $i,j \in [1,n]$ such that $i \ne j$,  $A_i\cap A_j = 1$;
\item for each $i \in [1,n]$, there exists a finite subset $S_i$ of $A_i$ such that, for each $j\in [1,n]$, $A_j\,(T_i - \{1\})\subseteq T_i\,S_i$.
\end{enumerate}
If $A$ is a subgroup of $G$ and the 1-tuple $(A)$ is a Burns collection, we simply say that $A$ is a \emph{Burns subgroup} (or is \emph{almost malnormal finitely involved}).
\end{defn}

The following proposition shows that Burns' condition is subsumed by our \fcip

\begin{prop}\label{prop: Burns collection}
If $(A_1, \ldots, A_n)$ is a Burns collection of subgroups of $G$, then $G$ has the \fcip\ relative to $(A_1,\dots, A_n)$. In particular, if $A$ is a Burns subgroup, then $G$ has the \fcip\ relative to $A$.
\end{prop}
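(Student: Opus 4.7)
The plan is to verify Definition~\ref{defn: finite coset interaction2} directly. Fix finitely generated subgroups $B, C \leqslant G$ and, for each $i \in \{1, \dots, n\}$, finite subsets $\mathcal{F}_{A_i}, \mathcal{G}_{A_i} \subset G$ of representatives in pairwise distinct $(B, A_i)$- and $(C, A_i)$-double cosets respectively. Let $\pi$ be the disjoint union of the maps $q^{A_i}_{f, g}$ over all tuples $(i, f, g)$ in the (finite) index set. The aim is to show that the set $\mathcal{E} = \{BhC \in B\backslash G/C : |\pi^{-1}(BhC)| \geq 2\}$ is finite and that $|\pi^{-1}(BhC)|$ is finite for each $BhC$. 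Suppose $BhC$ admits two distinct preimages, coming from tuples $(i, f, g)$ and $(j, f', g')$ with classes $[a] \ne [a']$, $a \in A_i$, $a' \in A_j$. Unwinding $Bfag^{-1}C = Bf'a'{g'}^{-1}C$ yields $a' = \alpha a \beta$, where $\alpha := f'^{-1}bf \in H := f'^{-1}Bf$ and $\beta := g^{-1}cg' \in K := g^{-1}Cg'$ for some $b \in B, c \in C$, and $H, K$ are finitely generated subgroups of $G$.

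Next, I would apply Burns condition (1)(b) of Definition~\ref{def: Burns collection} to the finitely generated subgroup $H$ (with group element $1$): there is a finite $F_\alpha \subset A_j$ such that $H \subseteq T_j F_\alpha (H \cap A_j)$. This gives a decomposition $\alpha = t_\alpha\, s_\alpha\, \alpha_0$ with $t_\alpha \in T_j$, $s_\alpha \in F_\alpha$, $\alpha_0 \in H \cap A_j$, and analogously $\beta = t_\beta\, s_\beta\, \beta_0$ with $t_\beta \in T_j$, $s_\beta \in F_\beta$, $\beta_0 \in K \cap A_j$. The crucial observation is that, since $T_j$ is a left transversal of $A_j$ containing $1$, we have $\alpha \in A_j$ if and only if $t_\alpha = 1$, and similarly for $\beta$.

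I would then split into cases. When $t_\alpha = t_\beta = 1$ and $(i, f, g) = (j, f', g')$, we get $\alpha \in A_j \cap B^f$ and $\beta \in A_j \cap C^g$, forcing $[a] = [a']$ in the domain, a contradiction. When $t_\alpha = t_\beta = 1$ but the tuples differ, the relation $\alpha = f'^{-1}bf \in A_j$ implies $Bf A_j = Bf'A_j$, contradicting the choice of $\mathcal{F}_{A_j}$ (and similarly for $g, g'$ using $\beta$); when $i \ne j$, Burns condition (2) ($A_i \cap A_j = 1$) further restricts the possibilities for $a, a'$ to a finite set. When at least one of $t_\alpha, t_\beta$ is non-trivial, I would apply Burns condition (3) ($A_j (T_j - \{1\}) \subseteq T_j S_j$ for finite $S_j \subset A_j$) together with the constraint $t_\alpha (s_\alpha \alpha_0 a) t_\beta \in A_j$, which follows from $a' \in A_j$ after absorbing $\beta_0^{-1} s_\beta^{-1}$. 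The transversal property of $T_j$ then uniquely pins down the $T_j$-representative of $(s_\alpha \alpha_0 a) t_\beta A_j$ in terms of $t_\alpha$, and conversely forces $(t_\alpha, t_\beta)$ into a specific finite pattern; tracking $[a]$ through the resulting identity $a = \alpha_0^{-1} s_\alpha^{-1} \tau(t_\alpha) s'' t_\beta^{-1}$, and absorbing the residual data using the left action by $A_j \cap B^f$ and the right action by $A_j \cap C^g$, shows that $[a]$ belongs to a finite set of double cosets.

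Combining the case analyses and using the finiteness of the set of tuples $(i, f, g)$, the set $\mathcal{E}$ is finite and $|\pi^{-1}(BhC)|$ is finite at each exceptional $BhC$, which verifies the \fcip. The main obstacle is the detailed bookkeeping in the final case: the infinite freedom in the choice of $t_\beta \in T_j$ must be eliminated by combining the constraint $\alpha a \beta \in A_j$ with Burns condition (3) and the residual double-coset action, and making this absorption precise requires a careful manipulation of the transversal structure. The argument essentially transcribes Burns' original double-coset enumeration \cite{bur72, bur73} (and Kapovich's extension to collections \cite{kap02}) into the formal language of the map $\pi$ from Definition~\ref{defn: finite coset interaction2}.
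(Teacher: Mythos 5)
Your overall strategy --- decomposing the witnesses of $B\,fag^{-1}\,C = B\,f'a'(g')^{-1}\,C$ through the Burns transversals and running a case analysis on whether the transversal components are trivial --- is the same as the paper's, but there is a genuine gap exactly where you flag ``the main obstacle'', and it comes from the orientation of your decomposition of $\beta$. First, a small repair: $f'^{-1}Bf$ and $g^{-1}Cg'$ are cosets, not subgroups, so condition (1)(b) must be applied to the subgroups $B^{f'}$ and $C^{g}$ with the elements $f'^{-1}f$ and $g^{-1}g'$; that is harmless. The real problem is that writing $\beta = g^{-1}cg' = t_\beta s_\beta \beta_0$ places $\beta_0$ in $C^{g'}\cap A_j$ and leaves the transversal element $t_\beta$ immediately to the right of $a$ in the word $a' = t_\alpha s_\alpha \alpha_0\, a\, t_\beta s_\beta \beta_0$. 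Two things then go wrong: the factor you need to absorb into the double coset of $a$ on the right must lie in $A_j\cap C^{g}$, not $A_j\cap C^{g'}$; and $t_\beta$ is trapped in the middle of the word, where the left-transversal uniqueness cannot clear it. Your final identity $a = \alpha_0^{-1}s_\alpha^{-1}\tau(t_\alpha)s''t_\beta^{-1}$ still contains the two unbounded parameters $t_\alpha$ and $t_\beta$; the constraint $\tau s'' t_\beta^{-1}\in A_j$ only confines $t_\beta$ to a right coset $A_j\tau s''$, which can meet the left transversal $T_j$ in infinitely many elements, so nothing you wrote places $a$ in finitely many $\left(A_j\cap B^f,\,A_j\cap C^g\right)$-double cosets.

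The paper's fix is to decompose the $C$-side in the opposite direction: $(g')^{-1}c^{-1}g = txy$ with $y\in C^g\cap A_i$, so that $g^{-1}cg' = y^{-1}x^{-1}t^{-1}$ and $a' = t'x'y'\,a\,y^{-1}x^{-1}t^{-1}$. Now both subgroup factors $y'\in B^f\cap A_i$ and $y^{-1}\in C^g\cap A_i$ sit adjacent to $a$ and lie in exactly the right subgroups, and both transversal elements sit at the outer ends of the word; multiplying $a'$ on the right by $t$, invoking $A_i(T_i-\{1\})\subseteq T_iF_i$ and the uniqueness of left-coset representatives forces the two transversal elements to match and yields $a\in (B^f\cap A_i)\,F_{i,B,f',f}^{-1}F_iF_{i,C,g',g}\,(C^g\cap A_i)$ outright. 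You should also note that your cross case $i\ne j$ is not actually argued: when $t_\alpha = t_\beta = 1$ the relation $f'^{-1}bf\in A_j$ does not contradict anything, since $f\in\mathcal F_{A_i}$ and $f'\in\mathcal F_{A_j}$ are never assumed to lie in distinct double cosets of one another, and the appeal to $A_i\cap A_j=1$ is not a proof. The paper treats this case separately, using condition (3) (the sets $S_i$) together with the triviality of $A_i\cap A_j$, with its own sub-case analysis on whether the transversal components and $a_j$ are trivial.
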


\def\calF{\mathcal{F}}
\def\calG{\mathcal{G}}
\begin{proof}
Let $B, C$ be two finitely generated subgroups of $G$ and, for each $i\in [1,n]$, let $\calF_i$ and $\calG_i$ be finite subsets of $G$ whose elements lie in pairwise distinct $(B,A_i)$- and $(C,A_i)$-double cosets, respectively. It suffices to show that there exists a finite subset $F$ of $G$ such that 
\begin{enumerate}[(i)]
\item\label{item: fg = f'g' collection} if $i\in [1,n]$, $a, a'\in A_i$, $f\in \calF_i$, $g\in \calG_i$, $B\,(f\,a'\,g\inv)\,C = B\,(f\,a\,g\inv)\,C$ and $(A_i\cap B^f)\,a\,(A_i\cap B^g) \ne (A_i\cap B^f)\,a'\,(A_i\cap B^g)$, then $a \in (A_i\cap B^f)\,F\,(A_i\cap B^g)$ (so that each $|(q^i_{f,g})\inv(B\,h\,C)|$ is finite), and hence $a \in B^f\,F\,C^g$ (so that only finitely many are greater than 1);

\item\label{item: fg not= f'g' same i} if $i\in [1,n]$, $a, a' \in A_i$, $f, f'\in \calF_i$, $g, g'\in \calG_i$, $(f,g) \ne (f',g')$ and $B\,(f\,a\,g\inv)\,C = B\,(f'\,a'\,(g')\inv)\,C$, then $a \in B^f\,F\,C^g$ (so that finitely many $(B,C)$-double cosets are in the range of two distinct $q^i_{f,g}$).

\item\label{item: fg not= f'g' ij} if $i,j\in [1,n]$ satisfy $i\ne j$, $a_i \in A_i$, $a_j\in A_j$, $f_i\in \calF_i$, $f_j\in \calF_j$, $g_i\in \calG_i$, $g_j\in \calG_j$ and $B\,(f_i\,a_i\,g_i\inv)\,C = B\,(f_j\,a_j\,g_j\inv)\,C$, then $a_i \in B^{f_i}\,F\,C^{g_i}$ (so that finitely many $(B,C)$-double cosets are in the range of $q^i_{f_i,g_i}$ and $q^j_{f_j,g_j}$ with $i\ne j$).
\end{enumerate}
Fix $i\in [1,n]$. By Definition~\ref{def: Burns collection}, there exists a left transveral $T_i$ of $A_i$, a finite subset $F_i$ of $A_i$ and, for all $h,h'\in G$,  finite subsets of $F_{i,B,h,h'}$ and $F_{i,C,h,h'}$ of $A_i$ such that
\begin{align*}
    A_i\, (T_i - \{1\})&\enspace\subseteq\enspace T_i\, F_i\\
    h\inv\,B\,h' \enspace=\enspace B^h\,(h\inv\,h') &\enspace\subseteq\enspace T_i\,F_{i,B,h,h'}\,(B^{h'} \cap A_i)\\
    h\inv\,B\,h' \enspace=\enspace C^h\,(h\inv\,h') &\enspace\subseteq\enspace T_i\,F_{i,C,h,h'}\,(C^{h'} \cap A_i).
\end{align*}
Let $f,f'\in \calF_i$ and $b\in B$. Then $f\inv\,b\,f' = t\,x\,y$ for some $t \in T_i$, $x\in F_{i,B,f,f'}$ and $y\in A_i\cap B^{f'}$. Our hypothesis on $\calF_i$ shows that, if $f\ne f'$, then $B\,f\,A_i$ and $B\,f'\,A_i$ do not meet, and hence $f\inv\,B\,f'$ does not meet $A_i$. Therefore $t = 1$ if and only if $f = f'$ and $b^f \in A_i$. A similar result holds for $g, g'\in \calG_i$ and $c\in C$.

Suppose that $f \in \calF_i$, $g\in \calG_i$, $a,a' \in A_i$ and $B\,(f'\,a'\,(g')\inv)\,C = B\,(f\,a\,g\inv)\,C$. Then $f'\,a'\,(g')\inv \in B\,(f\,a\,g\inv)\,C$, so
\begin{align*}
a' &= (f')\inv\,b\,f\,a\,g\inv\,c\,g'\quad\textrm{ for some $b\in B$ and $c\in C$, and hence} \\
a'&= t'\,x'\,y'\,a\,y\inv\,x\inv\,t\inv,
\end{align*}
where $t, t'\in T_i$, $x'\in F_{i,B,f',f}$, $x\in F_{i,C,g',g}$, $y'\in B^f\cap A_i$ and $y\in C^g\cap A_i$.

We note that $t = 1$ if and only if $t' = 1$, since $T \cap A_i = \{1\}$. In that situation, as we saw, $f = f'$, $g = g'$, and $b^f, c^g \in A_i$ and $a' \in (B^f\cap A_i)\,a\,(C^g \cap A_i)$. Thus we never have $t = 1$ or $t' = 1$ under the assumptions of Item~\eqref{item: fg = f'g' collection} or~\eqref{item: fg not= f'g' same i}.

If $t, t'\ne 1$, then $t'\,(x'\, y'\,a\,y\inv\,x\inv) = a'\,t = t''\,z$ for some $t''\in T_i$ and $z\in F_i$. Since $T_i$ is a left transversal, this implies $t' = t''$, and hence $a \in (B^f\cap A_i)\,(F_{i,B,f',f}\inv\,F_i\,F_{i,C,g',g})\,(C^g \cap A_i)$.
Thus Items~\eqref{item: fg = f'g' collection} and~\eqref{item: fg not= f'g' same i} hold for any finite set $F$ containing the $F_{i,B,f',f}\inv\,F_i\,F_{i,C,g',g}$, for all $i\in [1,n]$, $f, f'\in \calF_i$ and $g,g'\in \calG_i$.

Let now $i, j, a_i, a_j, f_i, f_j, g_i, g_j$ as in Item~\eqref{item: fg not= f'g' ij}, with $B\,(f_j\,a_j\,g_j\inv)\,C = B\,(f_i\,a_i\,g_i\inv)\,C$. As above,
\begin{align*}
a_j &= f_j\inv\,b\,f_i\,a_i\,(g_j\inv\,c\,g_i)\inv\textrm{ for some $b\in B$ and $c\in C$, and hence} \\
a_j&= t'\,x'\,y'\,a_i\,y\inv\,x\inv\,t\inv,
\end{align*}
with $t, t'\in T_i$, $x' \in F_{i,B,f_j,f_i}$, $x \in F_{i,C,g_j,g_i}$, $y' \in B^{f_i} \cap A_i$ and $y \in C^{g_i} \cap A_i$. Then $a_j\,t = t'\,(x'\, y'\,a_i\,y\inv\,x\inv)$. If $t\ne 1$, then $a_j\,t = t''\,z$ where $t''\in T_i$ and $z \in S_i$. Since $z$ and $x'\, y'\,a_i\,y\inv\,x\inv$ are in $A_i$, it follows that $t' = t''$, $x'\, y'\,a_i\,y\inv\,x\inv = z$, and hence
$$a_i \in (B^{f_i}\cap A_i)\,(F_{i,B,f_j,f_i}\inv\,S_i\,F_{i,C,g_j,g_i})\,(C^{g_i}\cap A_i).$$
Therefore $a_i \in B^{f_i}\,(F_{i,B,f_j,f_i}\inv\,S_i\,F_{i,C,g_j,g_i})\,C^{g_i}$.

If $t = 1$ and $a_j \ne 1$, then $t'\ne 1$ since $A_i \cap A_j$ is trivial. Moreover $a_j\inv\,t' = (x'\, y'\,a_i\,y\inv\,x\inv)\inv \in A_i$, and $a_j\inv\,t' = t''\,z$ for some $t''\in T_i$ and $z\in S_i$. As above, this implies $t'' = 1$ and $a_i \in (B^{f_i}\cap A_i)\,(F_{i,B,f_j,f_i}\inv\,S_i\inv\,F_{i,C,g_j,g_i})\,(C^{g_i}\cap A_i)$. Therefore $a_i, a_j \in B^{f_i}\,(F_{i,B,f_j,f_i}\inv\,S_i\inv\,F_{i,C,g_j,g_i})\,C^{g_i}$.

Finally, if $t = a_j = 1$, then $t' \in A_i$, so $t' = 1$, $a_i \in (B^{f_i}\cap A_j)\,(F_{i,B,f_j,f_i}\inv\,F_{i,C,g_j,g_i})\,(C^{g_i}\cap A_j)$ and $a_i, a_j \in B^{f_i}\,(F_{i,B,f_j,f_i}\inv\,F_{i,C,g_j,g_i})\,C^{g_i}$. This concludes the proof.
\end{proof}

We will also use the following property of Burns subgroups.

\begin{lem}
\label{lem:Burns_A_fgip}
    If $A\leqslant G$ is a Burns subgroup, then $(G, A)$ has the $A$-\fgip\
\end{lem}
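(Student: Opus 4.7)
The plan is to exploit the transversal structure guaranteed by the Burns hypothesis to force any $a \in A$ with $B^a \cap C \not\subseteq A$ to lie in a fixed finite set of $(A \cap B, A \cap C)$-double cosets, which then projects to a finite set of $(B, C)$-double cosets under $a \mapsto B\,a\,C$.

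First, given finitely generated $B, C \leqslant G$, I would apply condition~(1)(b) of the Burns definition with $H = B$ and $H = C$ (both with $g = 1$) to obtain a left transversal $T$ for $A$ in $G$ containing $1$, together with finite subsets $F_B, F_C \subseteq A$ satisfying $B \subseteq T\,F_B\,(B \cap A)$ and $C \subseteq T\,F_C\,(C \cap A)$. Condition~(1)(a) also provides a finite subset $F \subseteq A$ with $A\,(T - \{1\}) \subseteq T\,F$.

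Next, suppose $a \in A$ has $B^a \cap C \not\subseteq A$, and choose a witness $c \in (B^a \cap C) \setminus A$, so that $b := a\,c\,a^{-1} \in B$. I would decompose $c = t_C\,f_C\,\gamma$ and $b = t_B\,f_B\,\beta$ via the transversal factorisations above, noting that $c \notin A$ forces $t_C \neq 1$ (since $T \cap A = \{1\}$). Applying condition~(1)(a) to $a\,t_C$ then writes $a\,t_C = t'\,f'$ with $t' \in T$ and $f' \in F$, and a short argument using $T \cap A = \{1\}$ shows $t' \neq 1$. Substituting both decompositions into $b = a\,c\,a^{-1}$ yields $t'\,f'\,f_C\,\gamma\,a^{-1} = t_B\,f_B\,\beta$; since $T$ is a transversal for $G/A$, comparing the left coset representatives in this equation forces $t_B = t'$, and the remaining relation yields $a = \beta^{-1}\,f_B^{-1}\,f'\,f_C\,\gamma \in (A \cap B)\cdot (F_B^{-1}\,F\,F_C)\cdot (A \cap C)$.

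Since $F_B^{-1}\,F\,F_C$ is a finite subset of $A$, the set of elements $a \in A$ with $B^a \cap C \not\subseteq A$ is contained in a finite union of $(A \cap B, A \cap C)$-double cosets, and hence maps to finitely many $(B, C)$-double cosets under the well-defined map $(A \cap B)\,a\,(A \cap C) \mapsto B\,a\,C$; this is exactly the $A$-\fgip\ for $(G, A)$. The argument is essentially a direct unpacking of the Burns conditions and I do not anticipate a serious obstacle; the only point requiring care is the transversal-comparison step that pins down $t_B = t'$, where one must keep track of which factors lie in $A$ versus in $T$.
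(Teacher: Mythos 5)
Your proposal is correct and follows essentially the same route as the paper's proof: decompose the two sides of the conjugation relation via the Burns transversal, absorb the product of $a$ with a nontrivial transversal element back into $T\,F$ using condition (1)(a), match transversal representatives, and conclude that $a$ lies in the finite set $(A\cap B)\,F_B^{-1}F F_C\,(A\cap C)$. The only (immaterial) difference is that the paper works with $a^{-1}ba=c$ and absorbs $a^{-1}t_B$ on the $B$-side, whereas you work with $b=aca^{-1}$ and absorb $a\,t_C$ on the $C$-side; your observation that $t'\neq 1$ is true but not actually needed for the transversal comparison.
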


\begin{proof}
Let $H, K\leqslant G$ be finitely generated subgroups and let $H\,g\,K$ be a double coset which meets $A$ and satisfies $H^g\cap K\neq 1$. In particular, there exists $a\in H\,g\,K\cap A$ such that $H^a\cap K\neq 1$.

Since $A$ is a Burns subgroup, there exists a left transversal $T$ of $A$ in $G$ (containing $1$) and finite subsets $F, F_H, F_K\subset A$ such that
\begin{align*}
    A(T - 1)&\subset TF\\
    H &\subset TF_H(H\cap A)\\
    K &\subset TF_K(K\cap A)
\end{align*}
Suppose that $H^a\cap K$ is not contained in $A$. There exist elements $t_H, t_K\in T$, $f_H\in F_H$, $f_K\in F_K$, $a_H\in H\cap A$ and $a_K\in K\cap A$ such that
\[
a^{-1}t_Hf_Ha_Ha = t_Kf_Ka_K \notin A.
\]
Since $f_K, a_K \in A$, we see that $t_K\ne 1$. Moreover
\[
t_Hf_Ha_Ha = at_Kf_Ka_K \notin A,
\]
and hence $t_H \ne 1$ (since $a, a_H, f_H \in A$). It follows that $a\inv t_H = t\,f$ for some $t\in T$ and $f\in F$. Therefore
\[
tff_Ha_Ha = t_Kf_Ka_K.
\]
Since $a, f, f_H, a_H\in A$ and $T$ is a left transversal of $A$, we have $t = t_K$, and hence
\[
ff_Ha_Ha = f_Ka_K.
\]
Thus
\[
a\in (H\cap A)F_H^{-1}F^{-1}F_K(K\cap A)
\]
and, in particular, $a$ lies in one of finitely many $(H, K)$-double cosets. This completes the proof. 
\end{proof}

\bigskip
\subsection*{Acknowledgements}
The first and fourth authors acknowledge support from the Spanish Agencia Estatal de Investigación through grant PID2021-126851NB-100 (AEI/FEDER, UE).
The second author acknowledges support from the grant 202450E223 (Impulso
de l\'{i}neas cient\'{i}ficas estrat\'{e}gicas de ICMAT).
The third author has been supported by the Spanish Government
grant PID2020-117281GB-I00, partly by the European Regional Development Fund (ERDF), and the Basque Government, grant IT1483-22.
The fourth author expresses gratitude for the generous hospitality received from Harish-Chandra Research Institute, India and Universidad del Pais Vasco, Spain. We thank Ravi Tomar for comments on a previous version of this article.





\end{document}